\documentclass[10pt]{amsart}
\usepackage{amsmath}
  \usepackage{enumerate}
\usepackage{hyperref}

%%%%%

\usepackage{thmtools, thm-restate} %packages for recalling theorems
\usepackage{amssymb} %more symbols
\usepackage{stmaryrd} %nearest integer function
\usepackage{bbm} %additional blackboard bold characters

\usepackage{amsthm}
\usepackage[margin=1.5in]{geometry}

\theoremstyle{plain}
\newtheorem{theorem}{Theorem}[section]
\newtheorem{corollary}[theorem]{Corollary}
%%%%%\newtheorem*{main}{Main Theorem}

%%%%%

%%%%%

\newtheorem{lemma}[theorem]{Lemma}
\newtheorem{proposition}[theorem]{Proposition}
\newtheorem{conjecture}[theorem]{Conjecture}

%%%%%
\newtheorem{claim}[theorem]{Claim}
%%%%%

\theoremstyle{definition}
\newtheorem{definition}[theorem]{Definition}
\newtheorem{remark}[theorem]{Remark}

\newcommand{\ep}{\varepsilon}

%%%%%
\newtheorem{question}[theorem]{Question}
\newtheorem{example}[theorem]{Example}
%%%%%

%%%%%

\newcommand{\N}{\mathbb{N}}
\newcommand{\Z}{\mathbb{Z}}
\newcommand{\Q}{\mathbb{Q}}
\newcommand{\R}{\mathbb{R}}
\newcommand{\C}{\mathbb{C}}
\newcommand{\T}{\mathbb{T}}

\newcommand{\F}{\mathbb{F}}

\newcommand{\scrZ}{\mathcal{Z}}
\newcommand{\scrQ}{\mathcal{Q}}
\newcommand{\scrR}{\mathcal{R}}
\newcommand{\scrT}{\mathcal{T}}
\newcommand{\scrA}{\mathcal{A}}

\newcommand{\dual}{\mathbf{T}}

\renewcommand{\P}{\mathcal{P}}

\newcommand{\B}{\mathcal{B}}
\newcommand{\D}{\mathcal{D}}

\newcommand{\supp}[1]{\text{supp}\left( #1 \right)}

\newcommand{\acts}{\curvearrowright}

\newcommand{\tendsto}[1]{\xrightarrow[#1]{}}

\newcommand{\es}{\emptyset}

\newcommand{\ind}{\mathbbm{1}}

\newcommand{\id}{\text{id}}

\newcommand{\innprod}[2]{\left\langle #1, #2 \right\rangle}

\renewcommand{\hat}{\widehat}
\renewcommand{\tilde}{\widetilde}

\newcommand{\floor}[1]{\left[ #1 \right]}
\newcommand{\round}[1]{\left\llbracket #1 \right\rrbracket}
\newcommand{\fpart}[1]{\left\{ #1 \right\}}
\newcommand{\ceil}[1]{\left\lceil #1 \right\rceil}

%%%%%%%%%%%%%%%%%%%%%%%%%%%%%%%%%%%%%%%%%%%%%%%%%%%%%%%
%%%%%%%%%%%%%%%%%%%%%%%%%%%%%%%%%%%%%%%%%%%%%%%%%%%%%%%

\begin{document}

\title{Rigidity, weak mixing, and recurrence in abelian groups}
\author{Ethan M. Ackelsberg}
\address{Department of Mathematics, Ohio State University, Columbus, OH, USA 43210}
\email{ackelsberg.1@osu.edu}

\maketitle

\begin{abstract}
	The focus of this paper is the phenomenon of rigidity for measure-preserving actions
	of countable discrete abelian groups and its interactions with weak mixing and recurrence.
	We prove that results about $\Z$-actions extend to this setting:
	
	\begin{enumerate}[1.]
		\item	If $(a_n)$ is a rigidity sequence for an ergodic measure-preserving system,
	    		then it is a rigidity sequence for some weakly mixing system.
		\item	There exists a sequence $(r_n)$ such that every translate is both a rigidity sequence
			and a set of recurrence.
	\end{enumerate}
	
	\noindent The first of these results was shown for $\Z$-actions
	by Adams \cite{adams}, Fayad and Thouvenot \cite{ft}, and Badea and Grivaux \cite{bg}.
	The latter was established in $\Z$ by Griesmer \cite{griesmer}.
	While techniques for handling $\Z$-actions play a key role in our proofs,
	additional ideas must be introduced for dealing with groups with multiple generators.
	
	As an application of our results, we give several new constructions
	of rigidity sequences in torsion groups.
	Some of these are parallel to examples of rigidity sequences in $\Z$,
	while others exhibit new phenomena.
\end{abstract}

%%%%%%%%%%%%%%%%%%%%%%%%%%%%%%%%%%%%%%%%%%%%%%%%%%%%%%%
%%%%%%%%%%%%%%%%%%%%%%%%%%%%%%%%%%%%%%%%%%%%%%%%%%%%%%%

\section{Introduction}

In this paper, we study the phenomenon of \emph{rigidity} for measure-preserving actions of abelian groups.
We are particularly interested in how rigidity interacts with \emph{weak mixing} and \emph{recurrence}.
The interplay between these concepts has been examined in depth for the group of integers
(see, for example, \cite{bdjlr,adams,ft,bg,griesmer,fk}).
These properties have been explored separately for more general groups:
mild mixing, a notion complementary to rigidity, is explored in \cite{sw, schmidt};
general remarks on weak mixing can be found in \cite{dye, br};
and \cite{forrest} gives an overview of recurrence.
However, nontrivial relationships between them have not been established outside of the group of integers.
Moving to the context of general countable discrete abelian groups poses new challenges.
For instance, the group may be infinitely generated and contain torsion elements,
both of which can create difficulties in applying techniques from ergodic theory.
Nevertheless, in what follows, we show that several of the known results for $\Z$-actions
have natural extensions to the context of abelian groups.

We give special attention to $\bigoplus_{n=1}^{\infty}{\F_q}$, the direct sum of countably many copies of a finite field,
for constructing novel examples of rigidity sequences.
(We use $\F_q$ to denote the finite field as well as its additive group.)
On the one hand, we can view $\bigoplus_{n=1}^{\infty}{\F_q}$ as the additive group of the ring $\F_q[t]$,
which allows us to construct number-theoretically inspired rigidity sequences.
Namely, we construct rigidity sequences out of finite characteristic versions of
continued fraction expansions and Pisot--Vijayaraghavan numbers.
These objects are nontrivial to define in $\F_q[t]$,
but they can be considered as natural analogues to their counterparts in $\Z$.
On the other hand, $\bigoplus_{n=1}^{\infty}{\F_q}$ is an infinitely-generated torsion group,
and these properties can be exploited to construct rigidity sequences
that are dissimilar from any known constructions in $\Z$.

The general results relating rigidity to weak mixing and recurrence are explained below,
after providing the necessary background.

%%%%%%%%%%%%%%%%%%%%%%%%%%%%%%%%%%%%%%%%%%%%%%%%%%%%%%%

\subsection{Rigidity sequences}

Let $\Gamma$ be a countable discrete abelian group.
By a \emph{measure-preserving system} (or \emph{system} for short), we will mean a quadruple
$\left( X, \B, \mu,\right.\break\left. (T_g)_{g \in \Gamma} \right)$,
where $(X, \B, \mu)$ is a non-atomic Lebesgue probability space,
and $(T_g)_{g \in \Gamma}$ is an action of $\Gamma$ by measure-preserving transformations.
A sequence $(a_n)_{n \in \N}$ in $\Gamma$ is a \emph{rigidity sequence} for the system
$\left( X, \B, \mu, (T_g)_{g \in \Gamma} \right)$ if for every $f \in L^2(\mu)$, $\| f \circ T_{a_n} - f \|_2 \to 0$.
In this case, we also say that the system is \emph{rigid along the sequence} $(a_n)_{n \in \N}$,
or that $(a_n)_{n \in \N}$ is \emph{rigid} for $\left( X, \B, \mu, (T_g)_{g \in \Gamma} \right)$.

%%%%%%%%%%%%%%%%%%%%%%%%%%%%%%%%%%%%%%%%%%%%%%%%%%%%%%%

\subsection{Rigidity and weak mixing} \label{sec: rigid/WM}

Rigidity stands in contrast to mixing.
Whereas in rigid systems, the transformations $(T_g)_{g \in \Gamma}$ converge to the identity along some sequence,
actions with mixing properties produce asymptotic independence.
To make sense of asymptotic behavior, we need the notion of a \emph{F{\o}lner sequence}.
A sequence $(\Phi_N)_{N \in \N}$ of finite subsets of $\Gamma$ is a \emph{F{\o}lner sequence}
if for every $x \in \Gamma$,
\begin{align*}
	\frac{|(\Phi_N + x) \triangle \Phi_N|}{|\Phi_N|} \tendsto{N \to \infty} 0.
\end{align*}

\noindent Examples include the sequence of intervals $\Phi_N = \{1, \dots, N\}$ in $\Z$
and the sequence $\Phi_N = \bigoplus_{n=1}^N{\F_q}$ in $\bigoplus_{n=1}^{\infty}{\F_q}$.
Groups that admit a F{\o}lner sequence are called \emph{amenable},
and it is well-known that abelian groups are amenable.

Recall that a system $\left( X, \B, \mu, (T_g)_{g \in \Gamma} \right)$ is \emph{ergodic}
it there are no non-trivial invariant sets.
That is, if $T_gA = A$ for every $g \in \Gamma$, then $\mu(A) = 0$ or $\mu(A) = 1$.
By the mean ergodic theorem (see, e.g., \cite[Theorem 4.15]{et/dp}),
ergodicity can be expressed as a mixing property:
\begin{align*}
	\frac{1}{|\Phi_N|} \sum_{g \in \Phi_N}{\mu(A \cap T_gB)} \tendsto{N \to \infty} \mu(A)\mu(B)
\end{align*}

\noindent for every F{\o}lner sequence $(\Phi_N)_{N \in \N}$ in $\Gamma$ and all $A, B \in \B$.

A stronger property that we will deal with is \emph{weak mixing}:
\begin{align*}
	\frac{1}{|\Phi_N|} \sum_{g \in \Phi_N}{\left| \mu(A \cap T_gB) - \mu(A)\mu(B) \right|}
	 \tendsto{N \to \infty} 0
\end{align*}

\noindent for every F{\o}lner sequence $(\Phi_N)_{N \in \N}$ in $\Gamma$ and all $A, B \in \B$.
This is equivalent to the property:
there is an exceptional set $E \subseteq \Gamma$ of \emph{zero density}, meaning
\begin{align*}
	\lim_{N \to \infty}{\frac{|E \cap \Phi_N|}{|\Phi_N|}} = 0
\end{align*}

\noindent for every F{\o}lner sequence $(\Phi_N)_{N \in \N}$,
such that $\mu(A \cap T_gB) \to \mu(A) \mu(B)$ as $g \to \infty, g \notin E$.
Hence, rigidity sequences for weakly mixing systems, if they exist at all, must be relatively sparse:
they are (up to a finite initial segment) contained in the exceptional set $E$ of zero density.
Nevertheless, a generic (in the sense of category) measure-preserving automorphism
is simultaneously weakly mixing (see \cite{halmos}) and rigid (see \cite[Proposition 2.9]{bdjlr}).
So rigid weakly mixing $\Z$-actions are plentiful.

This does not, \emph{a priori}, say anything about which sequences arise
as rigidity sequences for weakly mixing systems.
However, Adams \cite{adams}, Fayad and Thouvenot \cite{ft}, and Badea and Grivaux \cite{bg}
showed that, in the case of $\Z$-actions, every possible rigidity sequence for an ergodic measure-preserving
transformation arises as a rigidity sequence for a weakly mixing transformation.
We extend this result to our setting:

\begin{restatable}{main}{rigidWM} \label{thm: rigidWM}
	Let $\Gamma$ be a countable discrete abelian group,
	and let $(a_n)_{n \in \N}$ be a sequence in $\Gamma$.
	If $(a_n)_{n \in \N}$ is a rigidity sequence for an ergodic measure-preserving system,
	then it is a rigidity sequence for some weakly mixing system.
\end{restatable}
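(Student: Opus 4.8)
The plan is to reduce to the spectral characterization of rigidity sequences and then build a weakly mixing system with the prescribed spectral behavior via a Gaussian (or generalized Riesz-product) construction. First I would recall that $(a_n)$ is a rigidity sequence for some ergodic system if and only if there is a continuous probability measure $\sigma$ on the dual group $\widehat{\Gamma}$ such that $\widehat{\sigma}(a_n) = \int_{\widehat{\Gamma}} \chi(a_n)\, d\sigma(\chi) \to 1$; ergodicity forces $\sigma$ to be continuous (no atoms), since an atom at a character $\chi_0$ would correspond to a rational eigenvalue/eigenfunction giving a nontrivial isometry in a cyclic factor, and after passing to the off-diagonal this is exactly the obstruction one must rule out. (In $\Z$ this is the classical fact that rigidity sequences are characterized by the existence of a continuous measure $\sigma$ on $\mathbb{T}$ with $\widehat{\sigma}(a_n)\to 1$; the abelian-group version is the analogue with $\mathbb{T}$ replaced by $\widehat{\Gamma}$.) The point is that weak mixing of a system is equivalent to its maximal spectral type being continuous, so what we need is to produce a weakly mixing system whose maximal spectral type gives full mass to a set of characters on which $\chi(a_n)\to 1$.

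Next I would carry out the construction. Starting from the continuous measure $\sigma$ on $\widehat{\Gamma}$ with $\widehat{\sigma}(a_n)\to 1$, form the Gaussian system $(X,\mathcal{B},\mu,(T_g)_{g\in\Gamma})$ associated to $\sigma$ — concretely, take a real Gaussian process $(\xi_g)_{g\in\Gamma}$ with covariance $\mathbb{E}[\xi_g \overline{\xi_h}] = \widehat{\sigma}(g-h)$, realized on a probability space with the natural shift action of $\Gamma$. Since $\sigma$ is continuous, the symmetric convolution powers $\sigma^{*n}$ are all continuous, hence the maximal spectral type $e^{\sigma} = \sum_{n\ge 0} \sigma^{*n}/n!$ is continuous, so the Gaussian system is weakly mixing (this is the standard Gaussian dichotomy, and it goes through verbatim for amenable groups using the Følner characterization of weak mixing above). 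Finally, rigidity along $(a_n)$ for the Gaussian system reduces, by density of the chaos decomposition, to rigidity on the first chaos, which is the $\Gamma$-representation on $L^2(\sigma)$; there $\|\chi\circ T_{a_n} - \chi\|^2$ unwinds to $2(1 - \mathrm{Re}\,\widehat{\sigma}(a_n)) \to 0$, and this propagates to all chaoses and hence to all of $L^2(\mu)$.

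The only genuinely group-theoretic point — and the step I expect to be the main obstacle — is establishing the spectral characterization in the direction we need, namely that an ergodic rigid system for a general countable discrete abelian $\Gamma$ does yield a \emph{continuous} measure $\sigma$ on $\widehat{\Gamma}$ with $\widehat{\sigma}(a_n)\to 1$. When $\Gamma$ has torsion, $\widehat{\Gamma}$ need not be connected and there can be many characters of finite order, so one must be careful that ergodicity genuinely excludes all atoms of the relevant spectral measure (not merely the trivial character). The clean way is: take any $f\in L^2(\mu)$ in the orthocomplement of the constants with $\|f\|=1$, let $\sigma_f$ be its spectral measure with respect to the Koopman representation of $\Gamma$; rigidity gives $\widehat{\sigma_f}(a_n) = \langle f\circ T_{a_n}, f\rangle \to 1$; and an atom of $\sigma_f$ at a character $\chi_0$ would produce a $\chi_0$-eigenfunction, whose modulus is a nonconstant invariant function, contradicting ergodicity — so $\sigma_f$ is continuous and we may take $\sigma = \sigma_f$. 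The remaining ingredients (Gaussian dichotomy, chaos decomposition, passing weak mixing and rigidity through the chaos expansion) are insensitive to the group and follow the $\Z$-case arguments of Adams, Fayad--Thouvenot, and Badea--Grivaux, with Følner averages replacing Cesàro averages throughout.
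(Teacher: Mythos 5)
There is a genuine gap, and it sits exactly at the step you yourself flagged as the main obstacle: the claim that ergodicity forces the spectral measure $\sigma_f$ of a mean-zero function to be continuous. This is false. If $\sigma_f$ has an atom at a character $\chi_0$, the resulting $\chi_0$-eigenfunction has invariant modulus, and for an \emph{ergodic} system an invariant function is constant a.e.\ --- so the modulus is constant and there is no contradiction. Ergodicity only excludes an atom at the trivial character (for mean-zero $f$); it says nothing about atoms at nontrivial characters. Were your argument correct, it would show that every ergodic system is weakly mixing, which already fails for an irrational rotation of the circle. And an irrational rotation is precisely the kind of example the theorem is about: it is ergodic, rigid along the denominators of its continued fraction convergents, and has purely discrete spectrum, so \emph{every} spectral measure is atomic and no choice of $f$ hands you a continuous $\sigma$.

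The case your reduction silently discards is therefore the entire content of the theorem. The paper handles it via Theorem 2.10: if the ergodic rigid system is not already weakly mixing in an exploitable way, one observes that the set $C = \{\chi \in \widehat{\Gamma} : \chi(a_n) \to 1\}$ is infinite (it contains the group of eigenvalues, which is infinite for an ergodic discrete-spectrum system on a non-atomic Lebesgue space), and then one \emph{constructs} a continuous probability measure with $\widehat{\sigma}(a_n) \to 1$ by a Cantor-type procedure: finitely supported measures $\sigma_p = 2^{-p}\sum_{i=1}^{2^p} \delta_{\chi_i}$ with the $\chi_i \in C$ chosen inside a nested family of pairwise disjoint open sets (so that any weak-$*$ limit is continuous), while simultaneously controlling, uniformly over the induction, the rate at which $\int \left| \chi(a_n) - 1 \right| d\sigma_p(\chi)$ tends to $0$ (so that the limit still satisfies $\widehat{\sigma}(a_n) \to 1$). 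The tension between the sparseness needed for continuity and the uniformity needed for rigidity is the heart of the proof, and it is absent from your proposal. By contrast, the back end of your argument --- passing from a continuous $\sigma$ with $\widehat{\sigma}(a_n) \to 1$ to a rigid weakly mixing Gaussian system --- is correct and is essentially what the paper does via Proposition 2.5 and Lemma 2.6 (the paper does not even need the chaos expansion in detail: continuity of $\sigma$ gives weak mixing of the Gaussian system, and rigidity on the first chaos propagates because the algebra generated by the orbit of a Gaussian element is dense in $L^2$).
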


Ideas from \cite{ft} and \cite{bg} turn out to be apt for generalization and are instrumental to our proof.
Because of this theorem, we will use the standalone term \emph{rigidity sequence}
to mean a sequence that is rigid for some weakly mixing system.
Theorem \ref{thm: rigidWM} is an extremely useful tool for constructing examples of rigidity sequences.
We explore this in Section \ref{sec: examples}.

One weakness of our notion of rigidity used here is that ``multi-dimensional'' groups such as
$\Z^d$ will possess many rigidity sequences coming from lower-dimen\-sional subgroups.
As an illustration of this phenomenon, given an ergodic automorphism $S : X \to X$,
we can define a $\Z^d$-action by $T_{(n_1, \dots, n_d)} := S^{n_1}$.
So long as $S$ is rigid along $(a_n)_{n \in \N}$, the sequence
$( a_n, b^{(2)}_n, \dots, b^{(d)}_n )_{n \in \N}$ will be a rigidity sequence in $\Z^d$
for every collection of sequences $( b^{(i)}_n )_{n \in \N}$ in $\Z$.

A natural way of avoiding these degenerated examples is to impose one additional condition.
A measure-preserving system $\left( X, \B, \mu, (T_g)_{g \in \Gamma} \right)$ is \emph{free}
if for every $g \in \Gamma \setminus \{0\}$, $\mu(\{x \in X : T_gx = x\}) = 0$.
If $T$ is ergodic, then this is equivalent to the seemingly weaker condition that $T$ is \emph{faithful}:
$\mu(\{x \in X : T_gx \ne x\}) > 0$ for every $g \in \Gamma$.
This is because the fixed points of a given automorphism $T_g$ are $T$-invariant.
As a consequence, every ergodic $\Z$-action on a Lebesgue space is automatically free,
so this assumption is superfluous in $\Z$.
However, the $\Z^d$-action above is ergodic but not free, and this is what produces the degeneration.
We can extend Theorem \ref{thm: rigidWM} to free actions as follows:

\begin{restatable}{main}{free} \label{thm: free}
	Let $\Gamma$ be a countable discrete abelian group.
	If $(a_n)_{n \in \N}$ is a rigidity sequence for a free ergodic measure-preserving system,
	then it is a rigidity sequence for some free weakly mixing system.
\end{restatable}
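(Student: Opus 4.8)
My plan is to deduce Theorem~\ref{thm: free} from Theorem~\ref{thm: rigidWM} by means of a spectral analysis that keeps track of freeness. Write $\hat\Gamma$ for the compact metrizable Pontryagin dual of $\Gamma$ and recall the dictionary between systems and their reduced maximal spectral types, which are finite symmetric measures on $\hat\Gamma$: a system is rigid along $(a_n)$ iff $\int_{\hat\Gamma}|\chi(a_n)-1|^2\,d\sigma(\chi)\to 0$, where $\sigma$ is its reduced maximal spectral type (one implication uses a vector realizing $\sigma$; the other reduces strong convergence of the Koopman operators to the case of bounded functions); it is weakly mixing iff $\sigma$ is non-atomic; and an ergodic system is free iff it is faithful, which in spectral terms says precisely that $\supp{\sigma}$ lies in no proper closed subgroup of $\hat\Gamma$, equivalently that $\supp{\sigma}$ generates a dense subgroup. (Every proper closed subgroup of $\hat\Gamma$ is annihilated by some nonzero $g\in\Gamma$, and $T_g=\id$ iff $\chi(g)=1$ for $\sigma$-a.e.\ $\chi$; the passage from faithful to free uses that fixed-point sets of the $T_g$ are invariant, $\Gamma$ being abelian, so ergodicity forces them to be null.) This last equivalence is where the integer case oversimplifies, since there every ergodic system is automatically free.

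It is convenient to realize everything with Gaussian $\Gamma$-systems: attached to a symmetric non-atomic finite measure $\tau$ on $\hat\Gamma$ is a Gaussian system that is weakly mixing, has first-chaos spectral type $\tau$, is rigid along $(a_n)$ whenever $\int|\chi(a_n)-1|^2\,d\tau\to 0$ (second quantization is continuous for strong convergence of unitaries, so one only checks the first chaos), and — by the dictionary above — is free exactly when $\langle\supp{\tau}\rangle$ is dense in $\hat\Gamma$. So it suffices to construct such a $\tau$. Let $(X,\B,\mu,(T_g))$ be the given free ergodic system, rigid along $(a_n)$, with reduced maximal spectral type $\rho=\rho_{\mathrm{at}}+\rho_{\mathrm c}$ split into atomic and continuous parts. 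Applying rigidity to an eigenfunction shows every eigenvalue $\lambda\in\hat\Gamma$ of $(T_g)$ satisfies $\lambda(a_n)\to 1$, so the countable eigenvalue group $E$ lies in the subgroup $A:=\{\chi\in\hat\Gamma:\chi(a_n)\to 1\}$; since $\supp{\rho_{\mathrm{at}}}\subseteq\overline{E}$ and $X$ is free (hence faithful, so $\langle\supp{\rho}\rangle$ is dense), the group $\langle E\cup\supp{\rho_{\mathrm c}}\rangle$ is dense in $\hat\Gamma$. Now apply Theorem~\ref{thm: rigidWM} to $X$ to obtain a weakly mixing system rigid along $(a_n)$; its reduced maximal spectral type $\mu_0$ is symmetric, non-atomic, nonzero, and satisfies $\int|\chi(a_n)-1|^2\,d\mu_0\to 0$. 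Set $\sigma_0:=\mu_0+\rho_{\mathrm c}$, pick weights $c_e=c_{-e}>0$ with $0<\sum_{e\in E}c_e<\infty$, and put
\begin{align*}
	\tau\ :=\ \sum_{e\in E}c_e\,\bigl(e+\sigma_0\bigr).
\end{align*}
Then $\tau$ is symmetric, non-atomic, and nonzero; it is rigid along $(a_n)$ because $\int|\chi(a_n)-1|^2\,d(e+\sigma_0)\le 2|e(a_n)-1|^2+2\int|\chi(a_n)-1|^2\,d\sigma_0\to 0$ for each $e\in E$ (using $E\subseteq A$), followed by dominated convergence in $e$; and $\supp{\tau}$ contains $\supp{\sigma_0}$ together with all its $E$-translates, so the subgroup it generates contains $\langle E\cup\supp{\rho_{\mathrm c}}\rangle$, which is dense. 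The Gaussian system attached to $\tau$ is therefore free, weakly mixing, and rigid along $(a_n)$, as required.

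The main obstacle, which is also what is new over $\Z$, is reconciling the two competing demands on $\tau$: rigidity wants $\tau$ concentrated on characters that tend to $1$ along $(a_n)$, whereas freeness wants $\supp{\tau}$ spread out enough to generate a dense subgroup of $\hat\Gamma$, and for multiply generated or torsion $\Gamma$ the ``rigid characters'' need not generate a dense subgroup on their own. What makes the argument go through is that the eigenvalue group $E$ of the given \emph{free} system is at once compatible with rigidity and, together with the continuous part of the spectrum, dense-generating — and here is where Pontryagin duality and the faithfulness of $X$ are essential; translating a continuous rigid measure by $E$ then widens the support as needed while preserving rigidity. A secondary point calling for some care is the behaviour of Gaussian $\Gamma$-systems when torsion is present (weak mixing versus non-atomicity of $\tau$, and the freeness criterion), but this comes down to invariance of fixed-point sets under the abelian action together with the chaos decomposition.
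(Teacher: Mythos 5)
Your argument is correct, and it reaches the conclusion by a genuinely different (and more unified) route than the paper. The paper splits into two cases according to whether the given free ergodic system has discrete spectrum: when it does not, it takes the convolution $\nu_a * \nu_c$ of the atomic and continuous parts of the maximal spectral type and checks freeness via $\hat{\sigma}(g) = \hat{\nu}_a(g)\hat{\nu}_c(g) \ne 1$ for $g \ne 0$; when it does, it uses the density of the eigenvalue group (Lemma \ref{lem: free DS}) and re-enters the Cantor construction of Theorem \ref{thm: DS implies WM} for each element of a countable dense subset, adding the constraint $V_{p,r} \subseteq \chi_r + U_p$ so as to force that element into the support of the resulting continuous rigid measure, and then sums. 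Your $\tau = \sum_{e \in E} c_e (e + \sigma_0)$ is in effect the convolution of an atomic measure carried by the full eigenvalue group $E$ with the continuous rigid measure $\sigma_0 = \mu_0 + \rho_{\mathrm c}$; the key twist is adjoining $\mu_0$ from Theorem \ref{thm: rigidWM}, which guarantees a nonzero continuous rigid measure to translate even when $\rho_{\mathrm c} = 0$, so the discrete-spectrum case needs no separate treatment and the proof of Theorem \ref{thm: DS implies WM} never has to be reopened to control supports. Density of $\langle \supp{\tau} \rangle$ then falls out of faithfulness of the original system via $\supp{\rho} \subseteq \overline{E} \cup \supp{\rho_{\mathrm c}}$ (the atoms of the maximal spectral type of an ergodic system being exactly the eigenvalues), rather than being engineered by hand. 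The small checks you defer are all fine: $1 \in E$ so $\supp{\sigma_0} \subseteq \supp{\tau}$; $\int |\chi(a_n)-1|^2\,d\rho_{\mathrm c} \le \int |\chi(a_n)-1|^2\,d\rho \to 0$ since $\rho_{\mathrm c} \le \rho$; and the translation/dominated-convergence estimate for rigidity of $\tau$ is exactly as you state. The paper's two-case argument buys slightly more explicit control (full support $\supp{\sigma} = \dual$ in Case 2, and a direct Fourier-coefficient verification of freeness in Case 1), but your single construction is cleaner and matches the paper's own Lemmas \ref{lem: rigidity Gaussian}, \ref{lem: free WM} and Proposition \ref{prop: WM Gaussian} for the final translation back to a free weakly mixing Gaussian system.
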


%%%%%%%%%%%%%%%%%%%%%%%%%%%%%%%%%%%%%%%%%%%%%%%%%%%%%%%

\subsection{Rigidity and recurrence}

In light of Theorem \ref{thm: rigidWM}, it is natural to ask what can be said about rigidity
for systems that do not exhibit weak mixing.
In particular, is every rigidity sequence rigid for a system with \emph{discrete spectrum}?
(Readers unfamiliar with discrete spectrum systems can turn to Section \ref{sec: DS/WM}
for a definition and several equivalent characterizations.
Included therein is a brief discussion of how discrete spectrum can be considered as a
complementary property to weak mixing.)
This was posed as an open question in \cite{bdjlr}
and answered negatively by Fayad and Kanigowski \cite{fk} in $\Z$.
A different construction due to Griesmer \cite{griesmer} gives an even stronger result
involving the phenomenon of \emph{recurrence}.
One advantage of Griesmer's approach is that it is generalizable to our setting.
To state the result, we need two more definitions.

\begin{restatable}{definition}{SetOfRec}
	Let $\Gamma$ be a countable discrete abelian group.
	A set $R \subseteq \Gamma$ is a \emph{set of recurrence}
	if for every measure-preserving system $\left( X, \B, \mu, (T_g)_{g \in \Gamma} \right)$
	and every $A \in \B$ with $\mu(A) > 0$,
	there is an $r \in R \setminus \{0\}$ such that $\mu \left( A \cap T_r^{-1}A \right) > 0$.
\end{restatable}

\begin{definition}
	Let $\Gamma$ be a countable discrete abelian group.
	A sequence $(r_n)_{n \in \N}$ in $\Gamma$ is \emph{rigid-recurrent} if $(r_n)_{n \in \N}$ is a rigidity sequence
	and $R = \{r_n : n \in \N\}$ is a set of recurrence.
\end{definition}

Using key insights from \cite{griesmer}, we prove the following (c.f. \cite[Theorem 2.1]{griesmer}):

\begin{restatable}{main}{RigRec} \label{thm: rigid-recurrent}
	Let $\Gamma$ be a countable discrete abelian group.
	There exists a sequence $(r_n)_{n \in \N}$ in $\Gamma$ such that, for every $t \in \Gamma$,
	$(r_n - t)_{n \in \N}$ is a rigid-recurrent sequence.
\end{restatable}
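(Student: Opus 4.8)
The plan is to construct $(r_n)$ directly as a rapidly-growing sequence in $\Gamma$ and to verify the two properties — rigidity (for a weakly mixing system, after Theorem \ref{thm: rigidWM} it suffices to produce any ergodic rigid system) and the translate-recurrence property — more or less simultaneously, by interlacing the defining requirements. Write $\Gamma = \{\gamma_1, \gamma_2, \dots\}$ as an enumeration, and exhaust $\Gamma$ by an increasing sequence of finite subgroups (or merely finite subsets) $F_1 \subseteq F_2 \subseteq \cdots$ with $\bigcup_k F_k = \Gamma$. I would build $(r_n)$ so that it is a rigidity sequence for a suitable \emph{group rotation} (Kronecker) system: choose a compact metrizable abelian group $Z$ with a dense homomorphic image of $\Gamma$, i.e. a homomorphism $\iota \colon \Gamma \to Z$ with dense image, and arrange $\iota(r_n) \to 0_Z$ very fast; then $(r_n)$ is automatically rigid for the rotation action of $\Gamma$ on $Z$, hence (being ergodic) a rigidity sequence in the sense of the paper. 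The point of working with a rotation is that rigidity along a translate $(r_n - t)$ then amounts to $\iota(r_n - t) = \iota(r_n) - \iota(t) \to -\iota(t)$, which does \emph{not} converge to $0_Z$ in general — so the Kronecker system alone cannot witness rigidity of the translates. This is the crux: the construction must make each translate rigid for a \emph{different} (or a cleverly amalgamated) weakly mixing system, following Griesmer's device.

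Concretely, following \cite[Theorem 2.1]{griesmer}, I would interleave the construction in stages indexed by $k$. At stage $k$ I handle the translate by $\gamma_k$ (and, cumulatively, by all of $F_k$): having already specified $r_1, \dots, r_{n_{k-1}}$, I append a long block $r_{n_{k-1}+1}, \dots, r_{n_k}$ chosen so that (i) the $\iota$-images of the new terms lie within $2^{-k}$ of $0_Z$, keeping the master Kronecker rigidity on track for the ``untranslated'' sequence, (ii) for each $t \in F_k$, a prescribed finite piece of the translated block $(r_n - t)$ is $\ep_k$-rigid for a fixed ergodic system whose rigidity we can steer — the natural choice being again a group rotation, but now on a product $\prod_j Z_j$ of circles/finite cyclic groups where along the $k$-th block we push the $j$-th coordinate of $r_n$ close to $0$ while allowing ourselves to have earlier fixed $\iota_j(t)$-shifts, so that a diagonal/telescoping argument makes \emph{every} translate rigid in the inverse limit; and (iii) the set $\{r_n - t : n\}$ meets the $\ep_k$-recurrence requirement, which I would secure via the standard correspondence: by Khintchine/averaging, for any system and any $A$ with $\mu(A) > 0$ one has $\frac{1}{|\Phi_N|}\sum_{g \in \Phi_N}\mu(A \cap T_{g}^{-1}A) \to \int |\hat{\mathbf 1}_A|^2 \, d(\text{spectral measure}) \geq \mu(A)^2 > 0$ along a Følner sequence, so it is enough to know that $(r_n - t)$ is ``large'' in an appropriate density/equidistribution sense relative to the Kronecker factor — which is exactly what a $\{r_n\}$ that equidistributes in a compact group (a so-called ``van der Corput'' or ``nice recurrence'' set) provides, and translation by $t$ preserves equidistribution. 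The recurrence of each translate will be deduced uniformly from the fact that $\{r_n\}$ is a \emph{set of recurrence of the strongest available type} — e.g. its closure in the Bohr compactification contains $0$ with the right multiplicity — a property invariant under translation of the generating sequence.

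So the mechanism is: a single Bohr-type equidistribution/recurrence property of $\{r_n\}$ that is manifestly translation-stable handles clause (2) for every $t$ at once; and a stage-wise diagonalization, in an inverse limit of rotations (or, if one prefers, in a single weakly mixing system manufactured by Theorem \ref{thm: rigidWM} once ergodic rigidity of every translate is in hand — but note Theorem \ref{thm: rigidWM} upgrades rigidity for \emph{one} ergodic system, so one must be careful to first secure ergodic rigidity of each $(r_n - t)$, e.g. on a fixed rotation system common to all $t$, which is precisely the technical heart), handles clause (1). The main obstacle, and the place where ``additional ideas must be introduced for dealing with groups with multiple generators'' as the introduction warns, is coordinating (i) and (ii): forcing $\iota(r_n) \to 0$ along the whole sequence while still being able, inside each block, to realize rigidity for the translate by an \emph{arbitrary} $t$ — since $\iota(r_n - t) \to -\iota(t) \neq 0$ — which is why one cannot use a fixed finite-dimensional rotation but must pass to an infinite product / inverse limit and allocate a fresh block of coordinates to each new $t$, checking that the telescoped estimates close up. I expect the bookkeeping for the multi-generator case (ensuring the Følner averages in the recurrence step behave correctly when $\Gamma$ has torsion or is infinitely generated, so that $|\Phi_N|$-normalized sums still converge and stay bounded below) to be the part requiring genuine care beyond the $\Z$ argument of \cite{griesmer}.
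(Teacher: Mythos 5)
Your proposal has two gaps, and one of them is structural.

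First, the rigidity mechanism you propose --- witnessing rigidity of each translate $(r_n - t)$ by a group rotation on an inverse limit of compact groups --- cannot coexist with the recurrence requirement. If every translate $R - t$ is a set of recurrence, then the only character $\chi \in \hat{\Gamma}$ with $\chi(r_n) \to 1$ is the trivial one: given such a $\chi$ and any $t$, recurrence of $R - t$ produces $r \in R$ with $|\chi(r - t) - 1|$ arbitrarily small, hence $\chi(t) = 1$ for every $t$. This is exactly Proposition \ref{prop: translates not DS}, and it shows that no enumeration of $R$ (nor any translate of it) can be rigid for \emph{any} ergodic system with discrete spectrum --- in particular not for a rotation, finite-dimensional or inverse limit. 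So the plan to ``first secure ergodic rigidity of each $(r_n - t)$ on a fixed rotation system'' and then upgrade via Theorem \ref{thm: rigidWM} is provably impossible; the rigidity of each translate has to be witnessed directly by a continuous measure $\sigma_t$ on $\hat{\Gamma}$ with $\hat{\sigma}_t(r_n - t) \to 1$ (Lemma \ref{lem: rigid characters}(2)), i.e.\ by a weakly mixing Gaussian system.

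Second, the recurrence mechanism is not sound. The Khintchine/averaging identity you invoke concerns Ces\`{a}ro averages over F{\o}lner sets and says nothing about a sparse sequence $\{r_n\}$ (and a rigidity sequence for a weakly mixing system is necessarily of zero density). The fallback --- that $\{r_n\}$ accumulate at $0$ in the Bohr compactification, a translation-stable property --- is not known to imply measurable recurrence; whether Bohr recurrence implies measurable recurrence is a well-known open problem, so this step cannot be taken for granted.

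The paper resolves both issues with one device your outline does not contain: a perfect Kronecker set $K \subseteq \hat{\Gamma}$ (a set of type $K_q$ in the finite-exponent case), split into disjoint perfect Kronecker pieces $K_t$ carrying mutually singular continuous measures $\sigma_t$, and a target function $f$ with $f|_{K_t} = e_t$. The sets $R_n = \{g : \|e_g - f\|_{L^1(\sigma)} < 1/n\}$ have the property that \emph{every translate} is a set of recurrence; this is the real work, done via Griesmer's concentration/pigeonhole lemma on $\Lambda_k^d$ together with the difference-set characterization of recurrence, and then uniformity of recurrence (Theorem \ref{thm: unif rec}) is used to glue finite pieces of the $R_n$ into a single set $R$. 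Finally $e_{r_n} \to f$ in $L^1(\sigma)$ forces $e_{r_n - t} \to 1$ in $L^1(\sigma_t)$, so each translate is rigid for the weakly mixing Gaussian system attached to the continuous measure $\sigma_t$. The Kronecker property (uniform approximation of arbitrary continuous circle-valued functions by characters) is precisely what makes the translated recurrence work, and it is much stronger than equidistribution in a compact group.
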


One consequence of Theorems \ref{thm: rigid-recurrent} is the following complement
to Theorems \ref{thm: rigidWM}, which we prove in Section \ref{sec: rigid rec}:

\begin{corollary} \label{cor: WM not DS}
	Let $\Gamma$ be a countable discrete abelian group.
	There is a sequence $(a_n)_{n \in \N}$ in $\Gamma$ such that $(a_n)_{n \in \N}$ is a rigidity sequence
	for a weakly mixing system but not for any ergodic system with discrete spectrum.
\end{corollary}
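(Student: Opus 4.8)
The plan is to show that the sequence $(r_n)_{n\in\N}$ furnished by Theorem \ref{thm: rigid-recurrent} already witnesses the corollary, so we set $a_n = r_n$. Applying Theorem \ref{thm: rigid-recurrent} with $t = 0$, the sequence $(r_n)_{n\in\N}$ is rigid-recurrent, hence in particular a rigidity sequence, i.e.\ rigid for some weakly mixing system. It therefore remains only to prove that $(r_n)_{n\in\N}$ is \emph{not} a rigidity sequence for any ergodic system with discrete spectrum. I would argue by contradiction, exploiting the fact that Theorem \ref{thm: rigid-recurrent} guarantees recurrence not just for $(r_n)_{n\in\N}$ but for \emph{every} translate $(r_n - t)_{n\in\N}$ — with $t$ permitted to depend on the hypothetical discrete-spectrum system.

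Suppose then that $(r_n)_{n\in\N}$ is rigid for an ergodic system with discrete spectrum. By the Halmos--von Neumann theorem, in the form recalled in Section \ref{sec: DS/WM}, that system is isomorphic to a group rotation: $X = G$ with normalized Haar measure, where $G$ is a compact metrizable abelian group, $\Gamma$ acts by $T_g x = x + \phi(g)$ for a continuous homomorphism $\phi \colon \Gamma \to G$ with dense image, and — because $(X,\mu)$ is non-atomic — $G$ is infinite. Rigidity along $(r_n)_{n\in\N}$ is equivalent to $\phi(r_n) \to 0$ in $G$. The point I would extract from this is that $C := \{\phi(r_n) : n \in \N\} \cup \{0\}$ is a \emph{compact countable} subset of $G$; since Haar measure on an infinite compact group is non-atomic, every countable set is Haar-null, so $\mu(C) = 0$.

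Now fix a neighborhood basis $(U_k)_{k\in\N}$ of $0$ in $G$ consisting of open symmetric sets. A short topological-group computation gives $\bigcap_k \bigl(C + (U_k - U_k)\bigr) = C$, and since $\mu(C) = 0$ an elementary continuity-of-measure argument (using only finite subadditivity) then produces a $k$ with $\mu\bigl(C + (U_k - U_k)\bigr) < 1$. The complement of this set is a nonempty open subset of $G$, so by density of $\phi(\Gamma)$ we may pick $t \in \Gamma$ with $\phi(t) \notin C + (U_k - U_k)$; equivalently, $\phi(t) - \phi(r_n) \notin U_k - U_k$ for every $n \in \N$. Taking $A := U_k$, which has positive Haar measure, we obtain $A \cap T_{r_n - t}^{-1}A = A \cap \bigl(A + \phi(t) - \phi(r_n)\bigr) = \varnothing$ for every $n \in \N$. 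Hence $\{r_n - t : n \in \N\}$ is not a set of recurrence, contradicting the fact that $(r_n - t)_{n\in\N}$ is rigid-recurrent by Theorem \ref{thm: rigid-recurrent}.

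The step I expect to be the crux — and the reason Theorem \ref{thm: rigid-recurrent} is stated for all translates rather than for $(r_n)_{n\in\N}$ alone — is the treatment of the finitely many initial terms $\phi(r_n)$ that are not yet close to $0$: these could, a priori, still witness recurrence of $(r_n - t)_{n\in\N}$, so it does not suffice to shrink $A$ toward $0$; one must simultaneously steer the translate $\phi(t)$ away from the \emph{entire} set $C$ with uniform room to spare. That this is possible rests precisely on $C$ being Haar-null. The remaining ingredients — the Halmos--von Neumann normal form, the equivalence of $L^2$-rigidity with $\phi(r_n) \to 0$, and the set manipulations above — are routine and would be dispatched quickly.
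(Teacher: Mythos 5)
Your strategy is sound and genuinely different from the paper's, but there is a real gap at the step where you choose $t$. The set $C + (U_k - U_k)$ is a union of translates of the open set $U_k - U_k$, hence \emph{open}; its complement is therefore \emph{closed}, not open as you assert. Knowing only that this complement has positive Haar measure does not let you find a point of the countable dense set $\phi(\Gamma)$ in it: a closed set of positive measure can be nowhere dense and can even be arranged to miss a prescribed countable dense subgroup (in $\T$, delete a dense open set of measure $\tfrac12$ containing every point $n\alpha$; what remains is closed, has positive measure, and is disjoint from the orbit of $\alpha$). So ``positive measure $+$ density of $\phi(\Gamma)$'' is not enough — you need the complement to have nonempty \emph{interior}, which your open set $C+(U_k-U_k)$ does not provide (it could a priori be dense while having measure $<1$).

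The repair is short: make the enlarged set compact rather than open. Take a decreasing neighborhood basis $(V_k)_{k\in\N}$ of $0$ and set $W_k := C + \overline{V_k}$, which is compact as the image of $C \times \overline{V_k}$ under addition; your nested-intersection argument gives $\bigcap_k W_k = C$, so $\mu(W_{k_0}) < 1$ for some $k_0$ by continuity from above, and now $G \setminus W_{k_0}$ is a nonempty \emph{open} set, which does meet $\phi(\Gamma)$. Pick $t$ with $\phi(t)\notin W_{k_0}$, then an open symmetric $U$ with $U-U\subseteq V_{k_0}$, and set $A:=U$; since $\phi(t)-\phi(r_n)\notin \overline{V_{k_0}}\supseteq A-A$ for every $n$, and $\phi(t)\notin C$ forces $r_n - t\neq 0$ for all $n$, the set $\{r_n-t : n\in\N\}$ fails to be a set of recurrence, contradicting Theorem \ref{thm: rigid-recurrent}. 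With this patch your proof is correct. For comparison, the paper never passes to the Halmos--von Neumann model: it proves (Proposition \ref{prop: translates not DS}) that if every translate of $R$ is a set of recurrence, then any character $\chi$ with $\chi(r_n)\to 1$ satisfies $\chi(t)=1$ for every $t$, hence $\chi\equiv 1$, and concludes via Lemma \ref{lem: rigid characters}(1); that route kills one character at a time and avoids the topological measure theory that tripped you up, whereas yours produces an explicit non-recurrent translate, which is more concrete once the compactness issue is handled.
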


\begin{remark}
	Though it is not stated explicitly,
	Corollary \ref{cor: WM not DS} also follows from the work of Badea, Grivaux, and Matheron in \cite{bgm}.
	In particular, they establish, for every discrete abelian group,
	the existence of a rigidity sequence that is dense in the Bohr topology
	(see \cite[Section 5b.2]{bgm}).
	This is a weaker result than Theorem \ref{thm: rigid-recurrent} but sufficient to prove Corollary \ref{cor: WM not DS}.
\end{remark}

One may ask whether a version of Theorem \ref{thm: rigid-recurrent} holds if we impose a freeness condition.
Call a sequence $(r_n)_{n \in \N}$ in $\Gamma$ a \emph{freely rigid-recurrent sequence} if $(r_n)_{n \in \N}$
is a rigidity sequence for a free action of $\Gamma$ and $\{r_n : n \in \N\}$ is a set of recurrence.
In contrast to the situation above, where Theorem \ref{thm: rigidWM} extended directly to Theorem \ref{thm: free},
it is not possible in general to construct a sequence such that every translate is a freely rigid-recurrent sequence.
Indeed, we will show that for any rigidity sequence $(a_n)_{n \in \N}$ for a free action of the group
$(\Z/3\Z) \oplus \bigoplus_{n=1}^{\infty}{(\Z/2\Z)}$, the projection of $a_n$ onto $(\Z/3\Z)$
must be zero for all but finitely many $n$ (see Proposition \ref{prop: local obstruction} for a more general statement).
It follows that translates by elements of the form $t = (t_0; t_1, t_2, \dots)$ with $t_0 \in (\Z/3\Z) \setminus \{0\}$
can never be rigidity sequences for free actions of $(\Z/3\Z) \oplus \bigoplus_{n=1}^{\infty}{(\Z/2\Z)}$.

However, for this group and other similar groups,
we show that this obstacle can be overcome by restricting to translates coming from a finite index subgroup.
Recall that the \emph{exponent} of an abelian group $(\Gamma,+)$ is the smallest $n \in \N$
such that $nx = 0$ for every $x \in \Gamma$.
If no such $n$ exists, the group is said to have infinite exponent.
Every countable abelian group with finite exponent can be expressed as a direct sum of cyclic groups
\begin{align*}
	\Gamma := \bigoplus_{k=1}^M{(\Z/r_k\Z)^{m_k} }
	 \oplus \bigoplus_{j=1}^N{\left( \bigoplus_{n=1}^{\infty}{(\Z/q_j\Z)} \right)}
\end{align*}

\noindent with $q_1, \dots, q_N, r_1, \dots, r_M$ distinct prime powers and $m_1, \dots, m_M \in \N$
(see, e.g., \cite[Theorem 17.2]{fuchs}).
If all of the numbers $q_1, \dots, q_N$ are prime, then we have the following result:

\begin{theorem} \label{thm: free fin exp}
	Let $\Delta = \bigoplus_{j=1}^N{\left( \bigoplus_{n=1}^{\infty}{(\Z/p_j\Z)} \right)}$ with $p_1, \dots, p_N$ prime.
	Suppose $\Gamma = \Delta \oplus F$, where $F$ is a finite abelian group.
	Then there exists a sequence $(r_n)_{n \in \N}$ such that
	for every $s \in \Delta$, $(r_n - s)_{n \in \N}$ is a freely rigid-recurrent sequence.
\end{theorem}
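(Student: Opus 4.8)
The plan is to peel off the finite part $F$, reduce to a single prime, and then combine the recurrence-producing construction of Theorem~\ref{thm: rigid-recurrent} with a choice of rigidity-witnessing systems that can be taken free.

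\emph{Reducing to $\Gamma = \Delta$.} Suppose first that $(r_n)_{n \in \N}$ is a sequence in $\Delta$ such that, for every $s \in \Delta$, $(r_n - s)_{n \in \N}$ is freely rigid-recurrent \emph{for actions of $\Delta$}; view it in $\Gamma = \Delta \oplus F$ with trivial $F$-coordinate. Recurrence transfers to $\Gamma$: given a $\Gamma$-system and $A$ with $\mu(A) > 0$, restrict the action to $\Delta$, apply recurrence of $\{r_n - s\}$ there, and observe that $r_n - s \ne 0$ in $\Delta$ forces $r_n - s \ne 0$ in $\Gamma$. For rigidity, if $(S_g)_{g \in \Delta}$ is a free ergodic $\Delta$-action rigid along $(r_n - s)$ and $(U_\phi)_{\phi \in F}$ is the translation action of $F$ on itself, then $T_{(g,\phi)} := S_g \times U_\phi$ on the (non-atomic) space $X \times F$ is free, since $S_g$ or $U_\phi$ is fixed-point free whenever $(g,\phi) \ne 0$; ergodic, since for each nontrivial $\chi \in \hat{F}$ the subspace $L^2(X) \otimes \chi$ contains no nonzero vector fixed by all $T_{(0,\phi)}$, so invariant functions come only from ergodicity of $S$; and rigid along $(r_n - s)$, as the $F$-coordinate is zero. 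Theorem~\ref{thm: free} then yields a free weakly mixing $\Gamma$-action rigid along $(r_n - s)$. So I may assume $\Gamma = \Delta$.

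\emph{Reducing to one prime.} Write $\Delta = \bigoplus_{j=1}^N \Delta_j$ with $\Delta_j \cong \bigoplus_n(\Z/p_j\Z)$, and let $\pi_j \colon \Delta \to \Delta_j$ be the coordinate projections. I will take $(r_n)$ from the construction in the proof of Theorem~\ref{thm: rigid-recurrent} (Griesmer's nested Bohr-set argument), which already makes every translate of $R := \{r_n : n \in \N\}$ a set of recurrence, so each $(r_n - s)$ is a rigidity sequence; what is left is to obtain \emph{free} witnesses. Choosing an integer $m_j \equiv 1 \pmod{p_j}$ and $\equiv 0 \pmod{p_i}$ for $i \ne j$ realizes $\pi_j$ as multiplication by $m_j$, so $T_{r_n - s}^{m_j} = T_{\pi_j(r_n - s)}$; hence a $\Delta$-action is free and rigid along $(r_n - s)$ precisely when each of its restrictions to $\Delta_j$ is free and rigid along $(\pi_j(r_n - s))$, and conversely a coordinatewise product of free ergodic $\Delta_j$-actions that are rigid along the $(\pi_j(r_n - s))$ is a free ergodic $\Delta$-action rigid along $(r_n - s)$. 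It therefore suffices to prove, for each prime $p \in \{p_1, \dots, p_N\}$, with $V := \bigoplus_n \F_p$: for every $s \in V$, the sequence $(\pi_j(r_n) - s)$ is rigid for some free ergodic $V$-action (then Theorem~\ref{thm: free} supplies a free weakly mixing one).

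\emph{Freeness at one prime, and the main obstacle.} Recurrence of \emph{all} translates of $R$ forces $R$, hence each $\pi_j(R)$, to be dense in every finite quotient of $\Delta$, i.e.\ dense in the profinite completion $\prod_n \F_p$ of $V$; consequently the freeness of the witness cannot be carried by a rotation (compact group) factor and must reside in a continuous spectral component. I would therefore take the witness to be a Gaussian $V$-system with symmetric spectral measure $\tau$ on $\hat{V} = \prod_n \F_p$: such a system is weakly mixing when $\tau$ is non-atomic, rigid along $(\pi_j(r_n) - s)$ when $\hat{\tau}(\pi_j(r_n) - s) \to 1$, and free when $\tau$ is not concentrated on any proper closed subgroup $\{\chi : \chi(w) = 1\}$ of $\hat{V}$ with $w \ne 0$. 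The crux --- and the step I expect to be hardest --- is to run Griesmer's construction so that, for every $s$ at once, such a $\tau$ exists while every $R - s$ remains \emph{genuinely} recurrent (not merely Bohr-recurrent): this requires interleaving, stage by stage, the Bohr-set requirements that drive recurrence with a reservation of ``spread-out'' spectral data, exploiting that $V$ is an infinite-dimensional $\F_p$-vector space so that point-separating families of characters, and the cosets needed to keep $\tau$ off every hyperplane $\{\chi : \chi(w) = 1\}$, are abundant and fit without disrupting the recurrence bookkeeping. This is exactly where the primality hypothesis is used: a finite cyclic summand offers no such room, which is the mechanism behind the obstruction of Proposition~\ref{prop: local obstruction}.
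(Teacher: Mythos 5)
Your two reductions (peeling off the finite summand $F$, and splitting $\Delta$ into its $p_j$-primary coordinates via the multipliers $m_j$) are sound in themselves, and the first is essentially what the paper does in Theorem \ref{thm: Delta shifts} by lifting measures through the finite quotient $\dual \to \dual/\Delta^{\perp}$. But the argument has a genuine gap exactly where you flag ``the step I expect to be hardest'': that step is not a technical refinement of Theorem \ref{thm: rigid-recurrent}, it is the content of the theorem, and your sketch does not supply the two ideas that make it work. First, if $(r_n)$ is taken from the unmodified construction of Theorem \ref{thm: rigid-recurrent}, the measure lives on a set of type $K_q$ sitting inside a single factor $\iota_{j_0}(D_{q_{j_0}})$ of $\dual$, so the characters used see only the $j_0$-th coordinate of $r_n$; the other projections $\pi_j(r_n)$ are completely unconstrained (they could all be $0$, in which case $(\pi_j(r_n)-s)$ is a constant nonzero sequence and is rigid for no free action). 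So ``what is left is to obtain free witnesses'' understates the problem: the construction itself must be redone with a measure supported on a union $\bigcup_j \iota_j(K_j)$ of $K_{q_j}$-sets, and the recurrence argument then needs a version of Griesmer's cube lemma for mixed products $\prod_l \Lambda_{k_l}$ of roots of unity of different orders --- this is Proposition \ref{prop: gen trans rec} and Lemma \ref{lem: gen cubes} in the paper, the latter proved via McDiarmid's concentration inequality.

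Second, even for a single prime, your plan to ``interleave'' recurrence requirements with ``spread-out spectral data'' is a restatement of the goal rather than a mechanism. The paper resolves it with two concrete devices you are missing: (i) a perfect set $K$ of type $K_p$ that \emph{topologically generates} $D_p$ exists because the closed subgroup generated by any perfect $K_p$-set is an infinite closed subgroup of $D_p$, hence isomorphic to $D_p$, and one transports $K$ through that isomorphism (Lemma \ref{lem: finite exp Kronecker gen}); this is precisely where primality enters, not through an abundance of ``cosets'' as you suggest. (ii) Rather than assigning to each $g \in \Gamma$ a perfect subset $K_g \subseteq K$ (whose generated subgroup one cannot control), one takes mutually singular continuous measures $\sigma_g$ all with full support $\supp{\sigma_g} = K$, carried by disjoint Borel sets; then every limiting Gaussian witness is free by Lemma \ref{lem: free WM}, while the function $f$ with $f = e_g$ on the carrier of $\sigma_g$ still makes sense and the recurrence bookkeeping of Theorem \ref{thm: rigid-recurrent} goes through unchanged. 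Without (i) and (ii) the freeness of the witnesses for all translates simultaneously is asserted, not proved.
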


More generally, we conjecture that the following is true:

\begin{conjecture} \label{conj: free rigid-recurrent}
	Let $\Gamma$ be a countable discrete abelian group.
	Then there exists a sequence $(r_n)_{n \in \N}$ and a finite index subgroup $\Delta \le \Gamma$ such that
	for every $s \in \Delta$, $(r_n - s)_{n \in \N}$ is a freely rigid-recurrent sequence.
\end{conjecture}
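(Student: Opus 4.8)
\section*{Proof proposal for Conjecture \ref{conj: free rigid-recurrent}}

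The plan is to combine the structure theory of countable abelian groups with the free-recurrence construction behind Theorem \ref{thm: free fin exp} and the Griesmer-type construction behind Theorem \ref{thm: rigid-recurrent}, with the choice of $\Delta$ dictated precisely by the obstructions of Proposition \ref{prop: local obstruction}. The first step is to isolate the \emph{obstructed directions}: Proposition \ref{prop: local obstruction} should, roughly, produce homomorphisms $\phi \colon \Gamma \to Q$ onto finite groups such that the $\phi$-coordinate of any rigidity sequence for a free action vanishes eventually (as happens for the projection to $\Z/3\Z$ in $(\Z/3\Z) \oplus \bigoplus_{n}(\Z/2\Z)$, where an order-$3$ eigenvalue coming from freeness cannot be cancelled by an eigenvalue of $\bigoplus_{n}(\Z/2\Z)$, which is a square root of unity). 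One first wants to show that, up to finite index, there are only finitely many such obstructions: there should be a finite-index subgroup $\Delta \le \Gamma$ through whose complement $\Gamma/\Delta$ every obstructing homomorphism factors. Concretely, $\Delta$ should absorb all torsion-free directions, all divisible directions, and, for each prime $p$, the ``unbounded'' part of the $p$-torsion (the $\bigoplus_{n}(\Z/p^k\Z)$ summands), excluding only the isolated finite $p$-torsion summands responsible for the obstruction; establishing that these can involve only finitely many primes and bounded exponent, so that $[\Gamma:\Delta] < \infty$, is the first technical hurdle and amounts to a \emph{converse} to Proposition \ref{prop: local obstruction}.

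The second step is to build a single sequence $(r_n)_{n \in \N}$ in $\Gamma$ such that for every $s \in \Delta$ one has both (i) $(r_n - s)_{n}$ rigid for some \emph{free} weakly mixing $\Gamma$-system, and (ii) $\{r_n - s : n \in \N\}$ a set of recurrence. For (ii) one runs the Griesmer-type construction of Theorem \ref{thm: rigid-recurrent} essentially unchanged, since recurrence of all translates is insensitive to freeness; the point is to run it \emph{globally} on $\Gamma$ rather than coordinate by coordinate, so that joint recurrence of the combined sequence is automatic. For (i) one applies the promotion mechanism of Theorem \ref{thm: free} uniformly in $s$: translating by $s \in \Delta$ does not disturb the eigenvalue constraints exploited in Theorem \ref{thm: free fin exp} precisely because $s$ lies in the unobstructed part of $\Gamma$. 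This requires extending Theorem \ref{thm: free fin exp} well beyond direct sums of copies of $\Z/p_j\Z$ with finitely many primes: one needs the analogous free construction for torsion parts involving infinitely many primes (as in $\bigoplus_{p}(\Z/p\Z)$ or $\Q/\Z$), for unbounded prime powers (as in $\bigoplus_{n}(\Z/p^k\Z)$), and for groups with nontrivial torsion-free part (for $\Z$ this is free automatically, so Theorem \ref{thm: rigid-recurrent} suffices, but $\bigoplus_{n}\Z$, $\Q$, and $\Z[1/p]$ are not covered).

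I expect the main obstacle to be exactly this last point: producing \emph{free} weakly mixing systems rigid along a prescribed sequence when the $p$-torsion of $\Gamma$ is complicated. In the setting of Theorem \ref{thm: free fin exp}, each generator has prime order $p_j$ and one can engineer a compact-group model on which freeness and rigidity are both transparent; for unbounded prime powers the torsion filtration $\Gamma \supseteq p\Gamma \supseteq p^2\Gamma \supseteq \cdots$ does not terminate, so the ``top-layer'' argument must be replaced by a limiting construction in which freeness at every level $p^k$ stays compatible with rigidity along $(r_n - s)$. One must also check that no \emph{new} obstruction, beyond those of Proposition \ref{prop: local obstruction}, is created by such configurations — that is, that every finite-index subgroup avoiding the finitely many flagged directions genuinely admits the required construction. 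Settling these two points — the converse to Proposition \ref{prop: local obstruction} and the free construction for arbitrary $p$-torsion — would simultaneously produce $\Delta$ and $(r_n)_{n \in \N}$ and hence prove Conjecture \ref{conj: free rigid-recurrent}; in view of the remark following Corollary \ref{cor: WM not DS}, even partial progress would sharpen the picture relating rigidity, weak mixing, and recurrence.
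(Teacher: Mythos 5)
Your proposal is not a proof, and indeed the statement you are addressing is a \emph{conjecture} that the paper itself does not resolve: the paper only reduces it to a harmonic-analytic question (Question \ref{q: dense Kronecker} and its finite-exponent analogue for the groups $D_q$) and settles the special case of Theorem \ref{thm: free fin exp}. Your sketch correctly identifies the two broad ingredients --- the obstruction of Proposition \ref{prop: local obstruction} forcing shifts to come from a finite-index subgroup, and the need to upgrade the rigid systems of Theorem \ref{thm: rigid-recurrent} to free ones --- but it misses the actual mechanism that makes the construction work and therefore misidentifies where the difficulty lies. In the paper, rigidity of \emph{every} translate and recurrence of every translate are achieved simultaneously by supporting mutually singular continuous measures $\sigma_t$ on a single perfect Kronecker set (or set of type $K_q$) $K \subseteq \dual$ and arranging $e_{r_n} \to f$ in $L^1$, where $f$ restricts to $e_t$ on the support of $\sigma_t$; freeness of the associated Gaussian system is then, by Lemma \ref{lem: free WM}, \emph{equivalent} to $\supp{\sigma_t}$ generating a dense subgroup of $\dual$. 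The entire remaining obstacle is thus whether $\dual$ contains a perfect Kronecker set (resp.\ set of type $K_q$) whose generated subgroup is dense --- the property the paper calls being topologically Kronecker-generated. Your plan never mentions Kronecker sets, so it cannot see this reduction, and the ``promotion mechanism of Theorem \ref{thm: free}'' you invoke does not apply here: that theorem starts from a sequence already known to be rigid for a \emph{free} ergodic system, which is exactly what is not available for the translates $(r_n - s)$.

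Concretely, the two ``technical hurdles'' you flag are either not the real issue or are left entirely open. (1) A ``converse to Proposition \ref{prop: local obstruction}'' is not needed in the form you describe: for groups of infinite exponent the expectation is that $\Delta = \Gamma$ works (no obstruction at all), and for finite exponent the paper takes $\Delta$ to be the sum of the infinite-rank summands, with Remark \ref{rem: Delta'} noting when this can be enlarged; the finiteness of the index is immediate from the classification of countable abelian groups of finite exponent, not a hurdle. (2) Your proposed ``limiting construction along the filtration $\Gamma \supseteq p\Gamma \supseteq p^2\Gamma \supseteq \cdots$'' for unbounded $p$-torsion is a placeholder, not an argument; the paper's route through sets of type $K_{q}$ for prime powers $q$ (and Lemma \ref{lem: gen Kronecker} for mixed torsion) shows what a workable substitute looks like, and even there the question of whether $D_{p^k}$ is topologically Kronecker-generated for $k \ge 2$ is left open. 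As written, your proposal restates the conjecture as a pair of unproved sub-conjectures and does not establish it.
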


In Section \ref{sec: dense Kronecker}, we reduce Conjecture \ref{conj: free rigid-recurrent}
to a statement about the existence of certain thin sets in compact abelian groups.

%%%%%%%%%%%%%%%%%%%%%%%%%%%%%%%%%%%%%%%%%%%%%%%%%%%%%%%

\subsection{Structure of the paper}

In Section \ref{sec: background}, we introduce definitions and review basic facts
about weakly mixing and discrete spectrum measure-preserving systems.
We also show that, with the help of Gaussian systems,
rigidity can be characterized in terms of Fourier transforms of measures
on the Pontryagin dual group of $\Gamma$.
This will allow us to prove results about rigidity by constructing measures,
rather than producing measure-preserving systems directly.

From there, the paper is broken into two main parts.
In the first part, consisting of Sections \ref{sec: DS implies WM} and \ref{sec: examples},
we prove Theorem \ref{thm: rigidWM} and draw out several consequences.
Of particular note, we prove in Section \ref{sec: examples} that some sequences arising from number theory
(namely, denominators of convergents for continued fractions and powers of Pisot--Vijayaraghavan numbers)
are rigidity sequences for weakly mixing actions in $\Z$ and in $\F_q[t]$.
The fact that continued fraction denominators are rigidity sequences in $\Z$
was shown in \cite[Corollary 3.51]{bdjlr}.
Badea and Grivaux observed that a version of Theorem \ref{thm: rigidWM} in $\Z$
provides a more streamlined proof (see \cite[Section 6.3]{bg}),
We can similarly verify that our constructions are rigidity sequences using Theorem \ref{thm: rigidWM}.

The second part, consisting of Sections \ref{sec: recurrence}, \ref{sec: rigid rec}, and \ref{sec: free}
is devoted to proving Theorem \ref{thm: rigid-recurrent} and exploring Conjecture \ref{conj: free rigid-recurrent}.
Two of the main ingredients for the proof of Theorem \ref{thm: rigid-recurrent}
are discussed in Section \ref{sec: recurrence}:
(1) a well-known characterization of sets of recurrence in terms of sets of differences
via the Furstenberg correspondence principle, and
(2) the phenomenon of uniformity of recurrence.
We give a new proof of uniformity of recurrence for abelian groups in Section \ref{sec: unif rec proof}.
(This is of independent interest but is not directly needed for Theorem \ref{thm: rigid-recurrent}.)
Using these tools, we give the proof of Theorem \ref{thm: rigid-recurrent} in Section \ref{sec: rigid rec},
based on a generalization of a construction due to Griesmer \cite{griesmer}.
Finally, in Section \ref{sec: free}, we prove Theorem \ref{thm: free fin exp}
and reduce Conjecture \ref{conj: free rigid-recurrent} to a problem in harmonic analysis.

%%%%%%%%%%%%%%%%%%%%%%%%%%%%%%%%%%%%%%%%%%%%%%%%%%%%
%%%%%%%%%%%%%%%%%%%%%%%%%%%%%%%%%%%%%%%%%%%%%%%%%%%%

%For acknowledgements section, please don't number the section, please begin it with \section*{Acknowledgements}
\section*{Acknowledgments}

Thanks to an anonymous referee for a number of helpful comments and suggestions that greatly improved the paper.
I thank my advisor, Vitaly Bergelson, for introducing me to several of the topics discussed in this paper
and asking a number of useful questions to push this work forward.
Thanks are also due to the participants in the Ergodic Theory and Combinatorial Number Theory Seminar
at Ohio State for being an active and encouraging audience for a talk on an incomplete version of this project.

%%%%%%%%%%%%%%%%%%%%%%%%%%%%%%%%%%%%%%%%%%%%%%%%%%%%%%%
%%%%%%%%%%%%%%%%%%%%%%%%%%%%%%%%%%%%%%%%%%%%%%%%%%%%%%%

\section{Preliminaries} \label{sec: background}

Throughout this section, let $\Gamma$ be a fixed countable discrete abelian group.

%%%%%%%%%%%%%%%%%%%%%%%%%%%%%%%%%%%%%%%%%%%%%%%%%%%%%%%

\subsection{Spectral measures}

Discrete spectrum and weak mixing are both \emph{spectral properties} of measure-preserving systems.
By this, we mean that these properties can be characterized in terms of the behavior
of the unitary action $f \mapsto f \circ T_g$ on $L^2(\mu)$.
We will adopt a standard abuse of notation and use $T_g$ to denote this unitary map on $L^2(\mu)$,
as well as the measure-preserving automorphism on $X$.

In dealing with spectral properties, it is useful to introduce \emph{spectral measures}.
These are measures that encapsulate the behavior of the orbit of a function $f \in L^2(\mu)$
under the action of $(T_g)_{g \in \Gamma}$ in terms of Fourier analysis.
Let $\dual := \hat{\Gamma}$ be the (compact abelian) Pontryagin dual group.
That is, $\dual$ is the group of homomorphisms $\chi : \Gamma \to \T$ under pointwise multiplication,
where we denote by $\T$ the unit circle in the complex plane.
The group $\dual$ is compact in the compact-open topology, since $\Gamma$ is countable and discrete.
We denote by $\P(\dual)$ the collection of Borel probability measures on $\dual$.
The subset of continuous\footnote{A measure $\sigma$ is \emph{continuous} if $\sigma(\{\chi\}) = 0$
for every $\chi \in \dual$.}
probability measures
will be denoted by $\P_c(\dual) \subseteq \P(\dual)$.
For $\sigma \in \P(\dual)$, we define the \emph{Fourier transform} $\hat{\sigma} : \Gamma \to \C$ by
\begin{align*}
	\hat{\sigma}(g) := \int_\dual{\chi(g)~d\sigma(\chi)}.
\end{align*}

The \emph{spectral measure} for a function $f \in L^2(\mu)$ is the positive measure
$\sigma_f$ on $\dual$ whose Fourier coefficients are given by
$\hat{\sigma}_f(g) = \int_X{\overline{f} \cdot T_gf~d\mu}$ for $g \in \Gamma$.
Such measures are guaranteed by Bochner's theorem,\footnote{Bochner's theorem states that if
$\varphi : \Gamma \to \C$ is a \emph{positive definite} function, meaning
$\sum_{i,j=1}^n{\overline{\xi}_i \varphi(g_i^{-1}g_j) \xi_j} \ge 0$
for all $n \in \N$, $g_1, \dots, g_n \in \Gamma$, and $\xi_1, \dots, \xi_n \in \C$,
then $\varphi$ is the Fourier transform of a positive measure on the dual group $\dual = \hat{\Gamma}$.
A proof of Bochner's theorem can be found in \cite[Theorem 30.3]{hr2} or \cite[Theorem 1.4.3]{rudin}.}
since the function $g \mapsto \int_X{\overline{f} \cdot T_gf~d\mu}$ is positive definite on $\Gamma$.
The \emph{maximal spectral type} of the system $\left( X, \B, \mu, (T_g)_{g \in \Gamma} \right)$
is a probability measure $\sigma \in \P(\dual)$ such that for every $f \in L^2(\mu)$,
the spectral measure $\sigma_f$ is absolutely continuous with respect to $\sigma$,
and if $\nu$ is any other measure with the same property, then $\sigma \ll \nu$.
Note that the maximal spectral type is only defined up to equivalence.
See \cite[Section 1]{kt} for more on the maximal spectral type.

%%%%%%%%%%%%%%%%%%%%%%%%%%%%%%%%%%%%%%%%%%%%%%%%%%%%%%%

\subsection{Discrete spectrum and weak mixing} \label{sec: DS/WM}

A function $f \in L^2(\mu)$ is an \emph{eigenfunction} for the system
$\left( X, \B, \mu, (T_g)_{g \in \Gamma} \right)$ if there is a function $\chi : \Gamma \to \C$
such that $T_gf = \chi(g) f$ for every $g \in \Gamma$.
Since $(T_g)_{g \in \Gamma}$ is a group action, $\chi$ is necessarily a character on $\Gamma$,
which we call an \emph{eigenvalue} or \emph{eigencharacter}.
We say the system $\left( X, \B, \mu, (T_g)_{g \in \Gamma} \right)$ has \emph{discrete spectrum}
if $L^2(\mu)$ is spanned by eigenfunctions.

Several other characterizations of discrete spectrum systems are given by the following:

\begin{proposition} \label{prop: DS equiv}
	Let $\left( X, \B, \mu, (T_g)_{g \in \Gamma} \right)$ be an ergodic measure-preserving system.
	The following are equivalent:
	\begin{enumerate}[(i)]
		\item	$T$ has discrete spectrum.
		\item	There is a compact abelian group $(Y, +)$ with Haar measure $\nu$
			on the Borel $\sigma$-algebra $\D = Borel(Y)$
			and an action $S : \Gamma \acts Y$ by group rotations $S_gy = y + \alpha_g$ with $\alpha_g \in Y$
			such that $\left( X, \B, \mu, (T_g)_{g \in \Gamma} \right)$
			and $\left( Y, \D, \nu, (S_g)_{g \in \Gamma} \right)$
			are isomorphic as measure-preserving systems;
		\item	For every $f \in L^2(\mu)$, the orbit $\{ T_gf : g \in \Gamma\}$ is totally bounded;
		\item	For every $f \in L^2(\mu)$, the spectral measure
			$\sigma_f$ is atomic.
		\item	The maximal spectral type is atomic.
	\end{enumerate}
\end{proposition}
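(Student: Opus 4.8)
The plan is to prove the chain of implications $(i) \Rightarrow (iii) \Rightarrow (iv) \Rightarrow (v) \Rightarrow (i)$ together with $(i) \Leftrightarrow (ii)$, recycling as much classical $\Z$-action theory as the abelian setting permits. For $(i) \Rightarrow (iii)$: if $L^2(\mu)$ is spanned by eigenfunctions, then by ergodicity every eigenvalue is simple and each eigenfunction has constant modulus, so each eigenfunction $f$ has a relatively compact orbit (it traces out a subset of a circle of radius $\|f\|_2$); a general $f$ is an $L^2$-limit of finite linear combinations of eigenfunctions, and one checks that a uniform-in-$g$ approximation of the orbit $\{T_g f\}$ by the (totally bounded) orbit of such a combination gives total boundedness of $\{T_g f\}$. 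For $(iii) \Rightarrow (iv)$: if $\{T_g f : g \in \Gamma\}$ is totally bounded in $L^2(\mu)$, then $\hat\sigma_f(g) = \innprod{f}{T_g f}$ is a positive-definite function whose associated unitary orbit is precompact; I would argue that a continuous (non-atomic) component in $\sigma_f$ would force, via a Wiener-type computation $\frac{1}{|\Phi_N|}\sum_{g \in \Phi_N} |\hat\sigma_f(g)|^2 \to \sum_{\chi} \sigma_f(\{\chi\})^2$ along a F\o{}lner sequence, the existence of group elements $g$ with $\|T_g f - f\|_2$ bounded away from $0$ yet $\{T_g f\}$ unable to be covered by finitely many small balls — more cleanly, precompactness of the orbit means the Gelfand/spectral measure must be supported on a set on which the characters form a precompact family, which by a standard argument forces atomicity. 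The implication $(iv) \Rightarrow (v)$ is essentially bookkeeping about the maximal spectral type: the maximal spectral type $\sigma$ can be realized as $\sigma_f$ for a suitable single $f$ (or as a countable convex combination $\sum 2^{-n}\sigma_{f_n}$ for an $L^2$-dense sequence $(f_n)$), and a countable sum of atomic measures is atomic. Finally $(v) \Rightarrow (i)$: if the maximal spectral type is atomic, then $L^2(\mu)$ decomposes as a direct sum of cyclic subspaces each with atomic spectral measure, and a cyclic subspace with spectral measure a single atom $\delta_\chi$ is one-dimensional and spanned by an eigenfunction with eigenvalue $\chi$; hence $L^2(\mu)$ is spanned by eigenfunctions.

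For $(i) \Leftrightarrow (ii)$ I would invoke the Halmos--von Neumann representation theorem in its abelian-group form. The direction $(ii) \Rightarrow (i)$ is immediate: for a group rotation action $S_g y = y + \alpha_g$ on a compact abelian group $Y$ with Haar measure, the characters $\psi \in \hat Y$ are eigenfunctions, since $\psi(S_g y) = \psi(y + \alpha_g) = \psi(\alpha_g)\psi(y)$, and they span $L^2(\nu)$ by Peter--Weyl / Stone--Weierstrass. For $(i) \Rightarrow (ii)$: let $\Lambda \le \dual = \hat\Gamma$ be the group of eigenvalues (it is a subgroup by ergodicity, since the product of eigenfunctions is again an eigenfunction of constant modulus and nonzero), equip $\Lambda$ with the discrete topology, and set $Y := \hat\Lambda$, a compact abelian group. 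The inclusion $\Lambda \hookrightarrow \dual$ is a homomorphism from $\Lambda$ (discrete) and dualizing gives a homomorphism $\Gamma \to Y = \hat\Lambda$, $g \mapsto \alpha_g$ where $\alpha_g(\chi) := \chi(g)$; the rotation action $S_g y = y + \alpha_g$ on $(Y, \text{Haar})$ is the model. One then builds the isomorphism $X \to Y$ by choosing, for each $\chi \in \Lambda$, a unimodular eigenfunction $f_\chi$ with $f_{\chi_1}f_{\chi_2} = f_{\chi_1\chi_2}$ (a cocycle/selection argument, using ergodicity to pin down the eigenfunctions up to scalars and countability of $\Lambda$ to make the choices coherently), and defining $\pi(x) \in Y$ by $\pi(x)(\chi) = f_\chi(x)$; this intertwines $T_g$ with $S_g$ and pushes $\mu$ forward to Haar measure on the closed subgroup it generates, which is all of $Y$ by ergodicity.

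The main obstacle is the selection argument in $(i) \Rightarrow (ii)$: producing a genuinely multiplicative family $(f_\chi)_{\chi \in \Lambda}$ of eigenfunctions rather than merely a family that is multiplicative up to unimodular constants. In the $\Z$ case one orders the eigenvalues as powers of a single generator (or handles a finitely/countably generated $\Lambda$ by hand), but for a general countable abelian $\Lambda$ one must instead appeal to the fact that any $2$-cocycle $\Lambda \times \Lambda \to \T$ arising this way is a coboundary — equivalently that the relevant group extension splits — which for countable abelian groups follows from divisibility of $\T$ and the vanishing of the relevant $\mathrm{Ext}$ group, or can be bypassed by a transfinite/inductive construction over a chain of finitely generated subgroups of $\Lambda$ using that $\T$ is injective. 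I expect this to be the only place where torsion and infinite generation of $\Gamma$ genuinely intervene; everything else transfers from the classical theory with only notational changes, replacing Ces\`aro averages over $\{1,\dots,N\}$ by averages over a F\o{}lner sequence $(\Phi_N)$ and the mean ergodic theorem by its amenable-group version already cited in the excerpt.
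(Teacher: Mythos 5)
The paper gives no proof of this proposition: it cites Walters and Nadkarni for the $\Z$-action case and simply asserts that the arguments extend to abelian groups, and your proposal is a correct unwinding of exactly that classical route (eigenfunction approximation for (i)$\Rightarrow$(iii), the Wiener/Jacobs--de Leeuw--Glicksberg argument for (iii)$\Rightarrow$(iv)$\Rightarrow$(v)$\Rightarrow$(i), and Halmos--von Neumann for (i)$\Leftrightarrow$(ii)). You also correctly isolate, and correctly resolve, the one point where the general abelian setting genuinely requires care beyond notation: choosing a multiplicative family of unimodular eigenfunctions, i.e.\ splitting the extension $1 \to \T \to U \to \Lambda \to 1$ of the eigenvalue group by the constants, which follows from divisibility (injectivity) of $\T$.
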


For the details of these equivalences for $\Z$-actions, see \cite{walters, nadkarni};
the proofs easily extend to abelian groups.

Systems with discrete spectrum are also called \emph{compact} systems because of properties (ii) and (iii).
Complementary to discrete spectrum is the notion of weak mixing.
Recall from Section \ref{sec: rigid/WM} that a system $\left( X, \B, \mu, (T_g)_{g \in \Gamma} \right)$
is \emph{weakly mixing} if
\begin{align*}
	\frac{1}{|\Phi_N|} \sum_{g \in \Phi_N}{\left| \mu(A \cap T_gB) - \mu(A)\mu(B) \right|}
	 \tendsto{N \to \infty} 0
\end{align*}

\noindent for every F{\o}lner sequence $(\Phi_N)_{N \in \N}$ in $\Gamma$ and every $A, B \in \B$.
This, too, has many equivalent characterizations, some of which we give here:

\begin{proposition}
	Let $\left( X, \B, \mu, (T_g)_{g \in \Gamma} \right)$ be a measure-preserving system.
	The following are equivalent:
	\begin{enumerate}[(i)]
		\item $T$ is weakly mixing;
		\item	$\left( X \times X, \B \otimes \B, \mu \otimes \mu, (T_g \times T_g)_{g \in \Gamma} \right)$
			is ergodic;
		\item	$T$ has no non-constant compact fuctions:
			if $f \in L^2(\mu)$ and the orbit $\{T_gf : g \in \Gamma\}$
			is totally bounded, then $f$ is a constant function;
		\item	$T$ has no non-constant eigenfunctions;
		\item	For every $f_1, f_2 \in L^2(\mu)$ and every F{\o}lner sequence
			$(\Phi_N)_{N \in \N}$ in $\Gamma$,
						\begin{align*}
				\frac{1}{|\Phi_N|} \sum_{g \in \Phi_N}{
				\left| \int_X{f_1 \cdot T_gf_2~d\mu} - \int_X{f_1~d\mu} \int_X{f_2~d\mu} \right|}
				\tendsto{N \to \infty} 0;
			\end{align*}
			
		\item	For every $f \in L^2(\mu)$ with $\int{f~d\mu} = 0$,
			the spectral measure $\sigma_f$ is continuous.
		\item	$T$ is ergodic and the maximal spectral type is the sum of
			a continuous measure and an atom at $1 \in \dual$.
	\end{enumerate}
\end{proposition}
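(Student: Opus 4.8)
The plan is to establish the cycle of implications
\[
(i)\Rightarrow(ii)\Rightarrow(iii)\Rightarrow(iv)\Rightarrow(vi)\Rightarrow(v)\Rightarrow(i)
\]
together with $(vi)\Leftrightarrow(vii)$; the arguments are the classical ones for $\Z$-actions, adapted to $\Gamma$. They rest on four facts valid for every countable discrete abelian group and every F{\o}lner sequence $(\Phi_N)$: (a) the mean ergodic theorem, so that $\frac{1}{|\Phi_N|}\sum_{g\in\Phi_N}T_gF$ converges in $L^2$ to the orthogonal projection of $F$ onto the $T$-invariant vectors; (b) $\frac{1}{|\Phi_N|}\sum_{g\in\Phi_N}\chi(g)\to\ind[\chi=1]$ for $\chi\in\dual$, which is immediate from the F{\o}lner property; (c) Wiener's lemma on $\dual$, $\frac{1}{|\Phi_N|}\sum_{g\in\Phi_N}|\hat\sigma(g)|^2\to\sum_{\chi\in\dual}|\sigma(\{\chi\})|^2$ for $\sigma\in\P(\dual)$, obtained from (b) by Fubini; and (d) the Cauchy--Schwarz reduction, by which a bounded sequence $(c_g)$ satisfies $\frac{1}{|\Phi_N|}\sum_{g\in\Phi_N}|c_g|\to0$ as soon as $\frac{1}{|\Phi_N|}\sum_{g\in\Phi_N}|c_g|^2\to0$. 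I will also use the spectral projection $P_\chi f:=\lim_N\frac{1}{|\Phi_N|}\sum_{g\in\Phi_N}\overline{\chi(g)}\,T_gf$ onto the $\chi$-eigenspace --- the ergodic average, via (a), of the unitary representation $g\mapsto\overline{\chi(g)}\,T_g$ --- which satisfies $\|P_\chi f\|_2^2=\sigma_f(\{\chi\})$.

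The routine arrows go as follows. For $(i)\Rightarrow(ii)$ it suffices to check ergodicity of $T\times T$ on product sets, and writing $c_i(g):=\mu(A_i\cap T_gB_i)-\mu(A_i)\mu(B_i)$ and expanding $\frac{1}{|\Phi_N|}\sum_{g\in\Phi_N}\mu(A_1\cap T_gB_1)\mu(A_2\cap T_gB_2)$, the limit equals $\mu(A_1)\mu(B_1)\mu(A_2)\mu(B_2)$, the only non-trivial contribution $\frac{1}{|\Phi_N|}\sum_{g\in\Phi_N}c_1(g)c_2(g)$ being bounded by $\frac{1}{|\Phi_N|}\sum_{g\in\Phi_N}|c_1(g)|\to0$. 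The step $(iii)\Rightarrow(iv)$ is trivial, since an eigenfunction has orbit contained in a sphere. For $(vi)\Leftrightarrow(vii)$: $(vi)$ forces $T$ ergodic (otherwise $\ind_A-\mu(A)$ for a $T$-invariant set of intermediate measure is a mean-zero function with atomic spectral measure), and then writing $f=\int f\,d\mu+f_0$ and using $\sigma_{f_0+c}=\sigma_{f_0}+|c|^2\delta_1$ shows the maximal spectral type is continuous plus an atom at $1$; conversely $\sigma_f\ll\sigma_{\max}$ together with $\sigma_f(\{1\})=|\int f\,d\mu|^2=0$ for mean-zero $f$ yields $(vi)$. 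Finally $(v)\Rightarrow(i)$ follows by taking $f_1=\ind_A$, $f_2=\ind_B$ and using that $(-\Phi_N)$ is again a F{\o}lner sequence.

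The implications carrying the content are $(iv)\Rightarrow(vi)$, $(ii)\Rightarrow(iii)$, and $(vi)\Rightarrow(v)$. For $(iv)\Rightarrow(vi)$ I argue by contraposition: if a mean-zero $f$ has $\sigma_f(\{\chi\})>0$, then $P_\chi f$ is a nonzero $\chi$-eigenfunction; if $\chi\neq1$ it is orthogonal to the constants, hence non-constant, contradicting $(iv)$, while if $\chi=1$ then --- $(iv)$ forcing $T$ ergodic --- $P_1f=\int f\,d\mu=0$, contradicting $\|P_1f\|_2^2=\sigma_f(\{1\})>0$. For $(ii)\Rightarrow(iii)$, let $f$ be compact and non-constant and put $h=f-\int f\,d\mu$, so $h\neq0$, $\int h\,d\mu=0$, and $\{T_gh:g\in\Gamma\}$ is still totally bounded. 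Applying (a) to $T\times T$ and the function $h\otimes\bar h$ defined by $(x,y)\mapsto h(x)\overline{h(y)}$, ergodicity of $T\times T$ forces $\frac{1}{|\Phi_N|}\sum_{g\in\Phi_N}(T_g\times T_g)(h\otimes\bar h)\to0$ in $L^2(\mu\otimes\mu)$, and pairing with $h\otimes\bar h$ gives $\frac{1}{|\Phi_N|}\sum_{g\in\Phi_N}|\langle h,T_gh\rangle|^2\to0$. On the other hand, total boundedness yields $g_1,\dots,g_m$ with $\Gamma=\bigcup_{i=1}^m(g_i+D_\varepsilon)$, where $D_\varepsilon=\{g:\|T_gh-h\|_2<\varepsilon\}$, so the F{\o}lner property forces $\liminf_N|D_\varepsilon\cap\Phi_N|/|\Phi_N|\ge1/m$; since $|\langle h,T_gh\rangle|\ge\|h\|_2^2-\varepsilon\|h\|_2$ on $D_\varepsilon$, choosing $\varepsilon$ small makes $\liminf_N\frac{1}{|\Phi_N|}\sum_{g\in\Phi_N}|\langle h,T_gh\rangle|^2>0$ --- a contradiction. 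For $(vi)\Rightarrow(v)$, write each $f_i$ as its mean plus a mean-zero part, reduce the bilinear quantity to diagonal spectral sums through the polarization identity $\langle u,T_gv\rangle=\frac{1}{4}\sum_{k=0}^3 i^{-k}\hat\sigma_{u+i^kv}(g)$, use continuity of the spectral measures of mean-zero functions and Wiener's lemma to get $\frac{1}{|\Phi_N|}\sum_{g\in\Phi_N}|\hat\sigma_{u+i^kv}(g)|^2\to0$, and conclude with reduction (d).

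The main obstacle is not conceptual: it is in verifying that the Hilbert-space inputs --- the mean ergodic theorem, Wiener's lemma on $\dual$, convergence of the twisted averages to the eigenprojections, and the positive lower density of the return-time sets $D_\varepsilon$ --- all hold along \emph{arbitrary} F{\o}lner sequences in $\Gamma$, rather than along the single sequence of intervals available for $\Z$. This is the only place where infinite generation or torsion could intrude, and it does so only through routine F{\o}lner-approximation estimates; once these are in place, the scheme above is the standard one.
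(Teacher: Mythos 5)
Your proposal is correct. Note that the paper does not actually prove this proposition: it simply refers the reader to the cited works of Bergelson--Rosenblatt and Nadkarni, adding only the remark that an abelian version of Wiener's lemma must replace the classical one to obtain property (vi) in this generality. Your write-up supplies precisely the argument those references contain, and it correctly isolates the two places where the passage from $\Z$ to a general countable discrete abelian group requires care: the character-averaging fact $\frac{1}{|\Phi_N|}\sum_{g\in\Phi_N}\chi(g)\to\ind[\chi=1]$ along an arbitrary F{\o}lner sequence (your fact (b)), and the Wiener lemma on $\dual$ that it yields via Fubini (your fact (c)) --- the latter being exactly the ingredient the paper flags. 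The individual implications all check out: the syndeticity of the return sets $D_\varepsilon$ and the resulting lower density bound $1/m$ along any F{\o}lner sequence is the right way to run $(ii)\Rightarrow(iii)$, the eigenprojections $P_\chi$ obtained from the mean ergodic theorem applied to the twisted representation $g\mapsto\overline{\chi(g)}T_g$ do satisfy $\|P_\chi f\|_2^2=\sigma_f(\{\chi\})$, and the polarization-plus-Wiener argument for $(vi)\Rightarrow(v)$ together with the Cauchy--Schwarz reduction (d) is standard. The only steps left implicit --- the mean ergodic theorem for amenable group actions and the reduction of ergodicity of $T\times T$ to product sets --- are routine and the former is already invoked elsewhere in the paper.
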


See \cite{br, nadkarni} for proofs.
To get property (vi) in this generality, an abelian version of Wiener's lemma must be used
in place of the classical Wiener's lemma as it appears in \cite{nadkarni}.

Weakly mixing systems are also known as systems with \emph{continuous spectrum} because of property (vi).
In fact, weakly mixing systems were first introduced in \cite{kvn} as
\emph{dynamical systems of continuous spectra}.

%%%%%%%%%%%%%%%%%%%%%%%%%%%%%%%%%%%%%%%%%%%%%%%%%%%%%%%

\subsection{Gaussian systems}

In order to complete the translation between dynamical properties and properties of measures,
we need to be able to construct measure-preserving $\Gamma$-systems from measures on $\dual$.
To this end, we review basic properties of \emph{Gaussian systems}.

Let $\Omega$ be the sequence space $\C^{\Gamma}$, and let $S_g : \Omega \to \Omega$ be the shift
$(S_g\omega)(h) = \omega(h-g)$.
Let $\pi_0 : \Omega \to \C$ be the function $\pi_0(\omega) := \omega(0)$.
An $S$-invariant probability measure $\nu$ on $\B_\Omega$, the Borel subsets of $\Omega$ with the product topology,
is called a \emph{Gaussian measure} if, for every $n \in \N$ and $g_1, \dots, g_n \in \Gamma$, the tuple
\begin{align*}
	\left( S_{g_1}\pi_0, \dots, S_{g_n}\pi_0 \right) : (\Omega, \B_{\Omega}, \nu) \to \C^n
\end{align*}

\noindent follows a Gaussian distribution.

A measure-preserving system $\left( X, \B, \mu, (T_g)_{g \in \Gamma} \right)$ is a \emph{Gaussian system}
if there is a Gaussian measure $\nu$ such that
$\left( X, \B, \mu, (T_g)_{g \in \Gamma} \right) \simeq \left( \Omega, \B_{\Omega}, \nu, (S_g)_{g \in \Gamma} \right)$.
A function $f \in L^2(\mu)$ is a \emph{Gaussian element} if $f$ maps to $\pi_0$ under this isomorphism.

\begin{theorem} \label{thm: unique Gauss}
	Let $\varphi : \Gamma \to \C$ be a positive definite function with $\varphi(0) = 1$.
	Then there exists a unique $S$-invariant symmetric Gaussian measure
	$\nu$ on $(\Omega, \B_{\Omega})$ with covariances determined by $\varphi$, i.e.
		\begin{align*}
		\int_{\Omega}{\pi_0~d\nu} = \int_{\Omega}{\pi_0^2~d\nu} = 0
	\end{align*}
	
	\noindent and
		\begin{align*}
		\int_{\Omega}{\overline{\pi_0} \cdot S_g\pi_0~d\nu} = \varphi(g)
	\end{align*}
	
	\noindent for every $g \in \Gamma$.
\end{theorem}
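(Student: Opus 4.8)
The plan is to realize $\nu$ as the measure on $\Omega = \C^{\Gamma}$ whose finite-dimensional distributions are the complex Gaussians prescribed by $\varphi$: existence will come from Kolmogorov's extension theorem, and all the remaining assertions --- including uniqueness --- will follow from the rigidity of Gaussian laws, namely that a centered symmetric complex Gaussian distribution on $\C^n$ is completely determined by its Hermitian covariance matrix. I read ``symmetric'' here to mean that the law is invariant under the rotations $\omega \mapsto \lambda\omega$ for $\lambda \in \T$; equivalently, the stationary process $(S_g\pi_0)_{g \in \Gamma}$ has vanishing means and vanishing pseudo-covariances $\int S_g\pi_0 \cdot S_h\pi_0 \, d\nu$, and is pinned down by the ordinary covariances $\int \overline{S_h\pi_0} \cdot S_g\pi_0 \, d\nu$.

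For the construction I would fix distinct $g_1, \dots, g_n \in \Gamma$ and put $M = \big( \varphi(g_j - g_i) \big)_{i,j=1}^{n}$. The hypothesis that $\varphi$ is positive definite says precisely that $M$ is Hermitian and positive semidefinite, so I may choose a complex matrix $A$ with $A A^{*} = M$ and let $\nu_{g_1, \dots, g_n}$ be the law on $\C^n$ of $A (W_1, \dots, W_n)^{\mathsf T}$, where $W_1, \dots, W_n$ are independent standard circularly-symmetric complex Gaussians; this is the centered symmetric complex Gaussian with covariance $M$ (when $M$ is invertible it has density proportional to $e^{-z^{*} M^{-1} z}$, but the square-root description also handles the degenerate case). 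Deleting or permuting coordinates of a complex Gaussian vector again yields a complex Gaussian vector, with covariance the corresponding submatrix of $M$, and the assignment $(g_1, \dots, g_n) \mapsto \big( \varphi(g_j - g_i) \big)$ is compatible with those operations, so the family $(\nu_{g_1, \dots, g_n})$ is consistent. Kolmogorov's extension theorem on the Polish space $\Omega$ then produces a Borel probability measure $\nu$ under which $\big( S_{g_1}\pi_0, \dots, S_{g_n}\pi_0 \big)$ --- the tuple of coordinate functionals indexed by $g_1, \dots, g_n$ --- has law $\nu_{g_1, \dots, g_n}$. The Gaussian-measure property and the fact that $\pi_0$ is a Gaussian element are then immediate; the scalar identities $\int \pi_0 \, d\nu = \int \pi_0^{2} \, d\nu = 0$ and $\int \overline{\pi_0} \cdot S_g\pi_0 \, d\nu = \varphi(g)$ are read off from the one- and two-dimensional marginals. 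For $S$-invariance I would note that $S_h$ permutes the coordinate functionals, carrying $S_g\pi_0$ to $S_{g-h}\pi_0$, so the finite-dimensional marginals of $(S_h)_{*}\nu$ are centered symmetric complex Gaussians with covariances $\big( \varphi((g_j - h) - (g_i - h)) \big) = \big( \varphi(g_j - g_i) \big)$ --- the same as for $\nu$; since such a marginal is determined by its covariance matrix and since cylinder sets form a generating $\pi$-system for $\B_\Omega$, this forces $(S_h)_{*}\nu = \nu$.

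Uniqueness runs along the same lines: if $\nu'$ is an $S$-invariant symmetric Gaussian measure satisfying the two displayed conditions, then $S$-invariance gives $\int \overline{S_h\pi_0} \cdot S_g\pi_0 \, d\nu' = \int \overline{\pi_0} \cdot S_{g-h}\pi_0 \, d\nu' = \varphi(g - h)$, so the finite-dimensional marginals of $\nu'$ are the centered symmetric complex Gaussians with covariances $\big( \varphi(g_j - g_i) \big)$ --- exactly those used to build $\nu$ --- and hence $\nu' = \nu$. I do not expect any step to be deep. The two places that need care are (i) permitting the Gaussian marginals to be degenerate when $\big( \varphi(g_j - g_i) \big)$ is singular, which is why they should be built from a matrix square root of $M$ rather than from a density, and (ii) fixing the exact content of ``symmetric'' so that, together with stationarity and the two scalar conditions, it determines \emph{all} first moments, covariances, and pseudo-covariances of the process $(S_g\pi_0)_{g \in \Gamma}$; once that is settled, the standard characterization of complex Gaussian laws by their covariance structure closes existence and uniqueness at once.
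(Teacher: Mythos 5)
Your proposal is correct and follows essentially the same route as the paper: construct the finite-dimensional centered symmetric complex Gaussian marginals from the positive definite matrix $\left( \varphi(g_j - g_i) \right)$, check consistency, and apply the Kolmogorov extension theorem, with $S$-invariance and uniqueness following from the fact that such marginals are determined by their covariances. The paper only sketches this (deferring to Janson and Kechris for the finite-dimensional step), whereas you spell out the degenerate case via a matrix square root and pin down the meaning of ``symmetric''; both of those points of care are consistent with the references the paper cites.
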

\begin{proof}
	The proof is essentially the same as in the real-valued case.
	We give a sketch of the proof for the complex case and refer the reader
	to \cite[Appendix C]{kechris} for additional details.
	
	Let $F \subseteq \Gamma$ be a finite set.
	There is a unique Gaussian measure $p_F$ on $\C^F$ satisfying
		\begin{align*}
		\int_{\C^F}{\omega(g)~dp_F} = \int_{\C^F}{\omega(g)^2~dp_F} = 0
	\end{align*}
	
	\noindent and
		\begin{align*}
		\int_{\C^F}{\overline{\omega(g)} \omega(h)~dp_F} = \varphi(g^{-1}h)
	\end{align*}
	
	\noindent for $g, h \in F$ (see \cite[Section 1.4]{janson}).
	By the Kolmogorov extension theorem, there is then a unique measure $\nu$ on $(\Omega, \B_{\Omega})$
	such that $(\pi_F)_*\nu = p_F$, where $\pi_F$ is the projection $\Omega \to \C^F$.
	The measure $\nu$ thus constructed is the desired measure.
\end{proof}

Using this theorem, we can construct a Gaussian system from any probability measure on $\dual$:

\begin{definition}
	Let $\sigma \in \P(\dual)$.
	The \emph{Gaussian system associated to $\sigma$} is the system
	$\left( \Omega, \B_{\Omega}, \nu_{\sigma}, (S_g)_{g \in \Gamma} \right)$,
	where $\nu_{\sigma}$ is the unique measure guaranteed by Theorem \ref{thm: unique Gauss}
	for the positive definite function $g \mapsto \hat{\sigma}(g)$.
\end{definition}

The dynamical properties of the Gaussian system associated to a measure $\sigma$
are well-understood in terms of properties of the measure.
We will make frequent use of the following two results.

\begin{proposition} \label{prop: WM Gaussian}
	Let $\sigma \in \P(\dual)$, and let $\left( X, \B, \mu, (T_g)_{g \in \Gamma} \right)$ be the Gaussian system
	associated to $\sigma$.
	The following are equivalent:
	
	\begin{enumerate}[(i)]
		\item	$T$ is ergodic;
		\item	$T$ is weakly mixing;
		\item	$\sigma$ is continuous.
	\end{enumerate}
\end{proposition}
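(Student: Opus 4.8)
The plan is to prove the chain of implications (ii) $\Rightarrow$ (i) $\Rightarrow$ (iii) $\Rightarrow$ (ii), translating each dynamical property of the Gaussian system into a statement about the measure $\sigma$ via the Wold-type decomposition of $L^2(\nu_\sigma)$ into Wiener chaos. The implication (ii) $\Rightarrow$ (i) is trivial since weak mixing implies ergodicity by definition. The substance is in the other two implications, and the key structural fact is that for a Gaussian system the $L^2$-space decomposes as an orthogonal sum $L^2(\nu_\sigma) = \bigoplus_{k \ge 0} H^{(k)}$ of chaos spaces, where $H^{(1)}$ is the closed span of $\{S_g \pi_0, \overline{S_g \pi_0} : g \in \Gamma\}$ and is spectrally isomorphic (as a $\Gamma$-unitary representation) to $L^2(\sigma) \oplus L^2(\sigma)$ (or $L^2(\sigma + \check\sigma)$ after symmetrization), and $H^{(k)}$ embeds in a symmetric tensor power so that its maximal spectral type is (equivalent to) the $k$-fold convolution $\sigma^{*k}$ restricted appropriately. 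I would cite \cite{kechris} or \cite{cfs} for this decomposition rather than reproving it.

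For (i) $\Rightarrow$ (iii): I would argue the contrapositive. If $\sigma$ is not continuous, it has an atom at some character $\chi \in \dual$. Then the function $\omega \mapsto \langle \omega, \chi\rangle$-component — more precisely, integrating $\pi_0$ against the atomic part — produces a nonconstant eigenfunction in $H^{(1)}$ with eigenvalue $\chi$; equivalently, the first chaos contains a $\Gamma$-invariant nonconstant function after multiplying by the conjugate character, so the system is not ergodic. Concretely, if $\sigma(\{\chi\}) = c > 0$, write $\sigma = c\,\delta_\chi + \sigma'$; the corresponding orthogonal splitting of $H^{(1)}$ gives a Gaussian element $f$ with $T_g f = \chi(g) f$, and then $|f|^2$ or a suitable real combination is a nonconstant invariant function. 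Alternatively, and perhaps cleaner: an atom in $\sigma$ means an atom in the maximal spectral type of the whole system (since $\sigma = \sigma_{\pi_0}$ feeds into the maximal spectral type), and an ergodic system whose maximal spectral type has an atom other than at $1$ has that atom contributing an eigenvalue; combined with ergodicity forcing eigenvalues to be simple, one extracts the eigenfunction. Either way the mechanism is: atom in $\sigma$ $\leadsto$ eigenfunction $\leadsto$ failure of ergodicity.

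For (iii) $\Rightarrow$ (ii): Suppose $\sigma$ is continuous. I want to show $T$ is weakly mixing, which by Proposition (the weak mixing equivalences, property (iv)) is equivalent to having no nonconstant eigenfunctions, or by property (vi) to $\sigma_f$ being continuous for every mean-zero $f$. The clean route is the spectral one: the maximal spectral type of the Gaussian system is (up to equivalence) $\delta_1 + \sum_{k \ge 1} \frac{1}{2^k} \sigma_{\mathrm{sym}}^{*k}$ where $\sigma_{\mathrm{sym}}$ is the symmetrized measure; since $\sigma$ is continuous, each convolution power $\sigma_{\mathrm{sym}}^{*k}$ is continuous (convolution of a continuous measure with anything is continuous), so the maximal spectral type is $\delta_1$ plus a continuous measure. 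By property (vii) of the weak-mixing proposition, this is exactly weak mixing (using that $T$ is ergodic — but we could instead note that continuity of $\sigma$ already gives ergodicity via the (iii)$\Rightarrow$... no, we need (ii) directly, so better to get ergodicity first from, say, the fact that $H^{(1)}$ has continuous spectral type hence no invariant vectors, and the same for all $H^{(k)}$, $k \ge 1$). So the argument is: continuity of $\sigma$ $\Rightarrow$ continuity of $\sigma_{\mathrm{sym}}^{*k}$ for all $k \ge 1$ $\Rightarrow$ maximal spectral type is $\delta_1 + (\text{continuous})$ $\Rightarrow$ weak mixing.

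The main obstacle I anticipate is making the chaos decomposition and its spectral description rigorous in the generality of a countable discrete abelian $\Gamma$ acting on $\C^\Gamma$, rather than quoting the $\Z$-case. One must be careful that: (a) the Wiener chaos decomposition and the identification of the spectral type of $H^{(k)}$ with convolution powers holds verbatim — this is standard and insensitive to the acting group, but the complex-valued (rather than real-valued) Gaussian setup used here means one should track how $\pi_0$ and $\overline{\pi_0}$ both sit in $H^{(1)}$, so that $H^{(1)}$'s spectral type is $\sigma + \check\sigma$ where $\check\sigma(A) = \sigma(A^{-1})$; (b) the relevant "abelian Wiener lemma" (mentioned in the paper right after the weak-mixing proposition) is what ensures properties (vi)/(vii) transfer, so citing that is the right move. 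Given the paper's stated intent ("the proofs easily extend to abelian groups"), I would keep the proof short: state the chaos decomposition with a citation, deduce the two nontrivial implications from continuity/atomicity of convolution powers, and invoke Proposition on weak mixing equivalences for the final step.

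Here is the proof I would write.

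\begin{proof}[Proof sketch]
	The implication (ii) $\Rightarrow$ (i) is immediate.

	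For the remaining implications, recall the Wiener chaos decomposition of a Gaussian system
	(see \cite[Chapter 14]{kechris}): $L^2(\nu_\sigma) = \bigoplus_{k \ge 0}{H^{(k)}}$,
	an orthogonal direct sum of $S$-invariant subspaces, where $H^{(0)}$ consists of the constants,
	$H^{(1)}$ is the closed linear span of $\{ S_g\pi_0, \overline{S_g\pi_0} : g \in \Gamma \}$,
	and $H^{(k)}$ for $k \ge 2$ is spanned by products of $k$ elements of $H^{(1)}$ in the appropriate sense.
	As a unitary representation of $\Gamma$, $H^{(1)}$ has maximal spectral type $\sigma + \check{\sigma}$,
	where $\check{\sigma}(A) := \sigma(A^{-1})$, and $H^{(k)}$ has maximal spectral type
	equivalent to $(\sigma + \check{\sigma})^{*k}$.
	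Consequently, the maximal spectral type of the system
	$\left( \Omega, \B_{\Omega}, \nu_\sigma, (S_g)_{g \in \Gamma} \right)$
	is equivalent to $\delta_1 + \sum_{k \ge 1}{2^{-k}(\sigma + \check{\sigma})^{*k}}$.

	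(i) $\Rightarrow$ (iii): We prove the contrapositive.
	If $\sigma$ is not continuous, then $\sigma + \check{\sigma}$ has an atom at some character $\chi \in \dual$,
	so $H^{(1)}$ contains a nonzero eigenvector $f$ with eigenvalue $\chi$: indeed, writing
	$\sigma + \check{\sigma} = c\,\delta_\chi + \rho$ with $c > 0$ and $\rho \perp \delta_\chi$,
	the spectral projection of $H^{(1)}$ onto $\{\chi\}$ is one-dimensional and $\Gamma$ acts on it by the
	character $\chi$. Then $T_g(|f|^2) = |f|^2$, and $|f|^2$ is nonconstant, so $T$ is not ergodic.

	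(iii) $\Rightarrow$ (ii): Suppose $\sigma$ is continuous.
	Then so is $\sigma + \check{\sigma}$, and since the convolution of a continuous measure with any
	finite measure is continuous, every convolution power $(\sigma + \check{\sigma})^{*k}$ with $k \ge 1$
	is continuous. Hence the maximal spectral type of the Gaussian system is the sum of a continuous measure
	and the atom $\delta_1$. In particular, $H^{(k)}$ has no nonzero $\Gamma$-invariant vectors for $k \ge 1$,
	so the system is ergodic, and then by property (vii) of the characterization of weak mixing
	(using the abelian form of Wiener's lemma as in Section \ref{sec: DS/WM}),
	$T$ is weakly mixing.
\end{proof}
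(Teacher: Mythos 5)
Your proof is correct and is essentially the argument the paper relies on without writing it out: the paper's entire proof is a citation to \cite[Section 14.2]{cfs} for the $\Z$-case plus the remark that the same method works for abelian groups, and that method is exactly the Wiener chaos decomposition with maximal spectral type $\delta_1 + \sum_{k\ge 1} 2^{-k}(\sigma+\check\sigma)^{*k}$ that you describe. The only small imprecision is the claim that the spectral projection of $H^{(1)}$ onto an atom $\{\chi\}$ is one-dimensional (it could have multiplicity two), but all your argument needs is that it is nonzero, so nothing breaks.
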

\begin{proof}
	See \cite[Section 14.2]{cfs} for a proof in the case $\Gamma = \Z$.
	The general case of countable discrete abelian groups holds by the same method.
\end{proof}

\begin{lemma} \label{lem: rigidity Gaussian}
	Let $\sigma \in \P(\dual)$, and let $\left( X, \B, \mu, (T_g)_{g \in \Gamma} \right)$ be the Gaussian system
	associated to $\sigma$.
	A sequence $(a_n)_{n \in \N}$ is a rigidity sequence for $T$ if and only if $\hat{\sigma}(a_n) \to 1$.
\end{lemma}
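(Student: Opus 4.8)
The plan is to reduce rigidity of the Gaussian system to convergence of the Fourier coefficients $\hat\sigma(a_n)$ by working with the spectral structure of the Gaussian system. The key fact I would invoke is that for a Gaussian system associated to $\sigma$, the $L^2$-space decomposes into \emph{Wiener chaoses} $L^2(\mu) = \bigoplus_{k \ge 0} \mathcal{H}^{(k)}$, where $\mathcal{H}^{(0)}$ is the constants, $\mathcal{H}^{(1)}$ is the closed span of the Gaussian elements $\{S_g\pi_0 : g \in \Gamma\}$ together with their conjugates, and $\mathcal{H}^{(k)}$ for $k \ge 2$ is built from polynomials of degree $k$ in Gaussian elements; moreover $T_g$ acts on each $\mathcal{H}^{(k)}$ and the maximal spectral type of the restriction to $\mathcal{H}^{(1)}$ is (equivalent to) $\sigma + \tilde\sigma$ (the pushforward of $\sigma$ under inversion added in), while the spectral type on $\mathcal{H}^{(k)}$ is (equivalent to) the $k$-fold convolution $\sigma^{*k}$ of the first-chaos type. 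This decomposition is exactly what makes the ``only if'' direction delicate and the ``if'' direction possible.

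For the forward direction (rigidity $\Rightarrow \hat\sigma(a_n)\to 1$): if $(a_n)$ is a rigidity sequence for $T$, then in particular $\|\pi_0 \circ S_{a_n} - \pi_0\|_2 \to 0$ for the Gaussian element $\pi_0$. Expanding $\|S_{a_n}\pi_0 - \pi_0\|_2^2 = \|S_{a_n}\pi_0\|_2^2 + \|\pi_0\|_2^2 - 2\,\mathrm{Re}\,\langle S_{a_n}\pi_0, \pi_0\rangle = 2 - 2\,\mathrm{Re}\,\overline{\hat\sigma(a_n)}$, using that $\langle S_{a_n}\pi_0,\pi_0\rangle = \overline{\int \overline{\pi_0}\cdot S_{a_n}\pi_0\,d\nu} = \overline{\hat\sigma(a_n)}$ and $\|\pi_0\|_2^2 = \hat\sigma(0) = 1$. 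So $\mathrm{Re}\,\hat\sigma(a_n) \to 1$, which forces $\hat\sigma(a_n) \to 1$ since $|\hat\sigma(a_n)| \le 1$. (One could also phrase this purely at the level of the maximal spectral type $\sigma$ of the first chaos: $(a_n)$ rigid for $T$ implies $(a_n)$ rigid for the unitary on $\mathcal{H}^{(1)}$, which is the multiplication action on $L^2(\sigma + \tilde\sigma)$, which by dominated convergence gives $\chi(a_n) \to 1$ in $L^2(\sigma)$ and hence $\hat\sigma(a_n)\to 1$.)

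For the reverse direction ($\hat\sigma(a_n)\to 1 \Rightarrow$ rigidity): I would show rigidity chaos by chaos. On $\mathcal{H}^{(1)}$, the unitary $T_g$ is unitarily equivalent to multiplication by $\chi \mapsto \chi(g)$ on $L^2(\dual, \sigma + \tilde\sigma)$; for $h \in L^2(\sigma+\tilde\sigma)$ we have $\|h\cdot(\chi(a_n)-1)\|_2^2 = \int |h(\chi)|^2 |\chi(a_n)-1|^2\,d(\sigma+\tilde\sigma)$, and since $\hat\sigma(a_n)\to 1$ forces $\int |\chi(a_n)-1|^2\,d\sigma = 2 - 2\,\mathrm{Re}\,\hat\sigma(a_n) \to 0$ (similarly for $\tilde\sigma$), we get $\chi(a_n) \to 1$ in $\sigma$-measure, hence by dominated convergence (integrand bounded by $4|h|^2$) that $\|T_{a_n}h - h\|_2 \to 0$ for every $h \in \mathcal{H}^{(1)}$. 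For higher chaos $\mathcal{H}^{(k)}$, the spectral type is $(\sigma+\tilde\sigma)^{*k}$, and $\widehat{(\sigma+\tilde\sigma)^{*k}}(a_n) = \hat\sigma(a_n)^k\,\overline{\hat\sigma(a_n)}^{\,\cdots}$ — more precisely a product of $k$ factors each of which is $\hat\sigma(a_n)$ or $\overline{\hat\sigma(a_n)}$ — which still tends to $1$; so the same argument gives rigidity on each $\mathcal{H}^{(k)}$. A clean way to package this without convolution bookkeeping: the set $\mathcal{H} := \{ f \in L^2(\mu) : \|T_{a_n}f - f\|_2 \to 0\}$ is a closed $T$-invariant subalgebra of $L^2(\mu) \cap L^\infty$ closure containing all Gaussian elements $S_g\pi_0$ (by the first-chaos estimate above), hence contains every polynomial in the $S_g\pi_0$ and $\overline{S_g\pi_0}$; since these polynomials are dense in $L^2(\mu)$ for a Gaussian system, $\mathcal{H} = L^2(\mu)$, i.e. $(a_n)$ is rigid for $T$.

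The main obstacle is controlling the higher chaoses: one must know that rigidity (equivalently, closeness to the identity in the strong operator topology along $(a_n)$) propagates from the first chaos to all of $L^2(\mu)$. The cleanest route is the density of polynomials in Gaussian elements plus the observation that $\{f : \|T_{a_n}f - f\|_2 \to 0\}$ is a closed subspace stable under multiplication on a suitable dense subalgebra; making the multiplicativity step rigorous (one is multiplying $L^2$-functions, so one should work in a space like $L^4$ or use that Gaussian polynomials of bounded degree form a closed subspace on which all $L^p$ norms are comparable, a standard hypercontractivity/Wick-calculus fact) is the one place genuine care is needed. Alternatively, invoking the explicit description of the maximal spectral type of a Gaussian system as $\delta_1 + \sum_{k \ge 1} c_k (\sigma+\tilde\sigma)^{*k}$ and checking that $\hat\sigma(a_n)\to 1$ makes the Fourier transform of this whole measure tend to $1$ — together with characterization (iv)/(v)-style spectral criteria for rigidity (rigidity of $T$ along $(a_n)$ $\iff$ $\hat\tau(a_n)\to 1$ where $\tau$ is the maximal spectral type) — dispatches both directions at once.
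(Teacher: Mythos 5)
Your proposal is correct and follows essentially the same route as the paper: the forward direction is the identical computation with the spectral measure of the Gaussian element $\pi_0$, and the reverse direction rests on the density in $L^2(\mu)$ of the algebra generated by $\{S_g\pi_0 : g \in \Gamma\}$, which is exactly the paper's (much terser) argument. Your extra care about the multiplicativity step — controlling products of $L^2$-functions via equivalence of $L^p$ norms on fixed-degree chaoses — fills in a detail the paper compresses into a single ``hence,'' and is the right way to make that step rigorous.
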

\begin{proof}
	Let $f \in L^2(\mu)$ be a Gaussian element.
	By definition, the spectral measure $\sigma_f$ coincides with the measure $\sigma$.
	This immediately gives the equivalence: $T_{a_n}f \to f$ in $L^2(\mu)$ if and only $\hat{\sigma}(a_n) \to 1$.
	
	Now, if $(a_n)_{n \in \N}$ is a rigidity sequence, then clearly $T_{a_n}f \to f$ in $L^2(\mu)$.
	For the converse, note that the algebra of functions generated by $\{T_gf : g \in \Gamma\}$ is dense in $L^2(\mu)$.
	Hence, if $T_{a_n}f \to f$, then $T_{a_n}h \to h$ for every $h \in L^2(\mu)$.
\end{proof}

%%%%%%%%%%%%%%%%%%%%%%%%%%%%%%%%%%%%%%%%%%%%%%%%%%%%%%%

\subsection{Rigidity and Fourier analysis}

Using Gaussian systems, one can show that a sequence $(a_n)_{n \in \N}$
is a rigidity sequence for some weakly mixing system if and only if there is a
continuous probability measure $\sigma \in \P_c(\dual)$ such that $\hat{\sigma}(a_n) \to 1$.
This was observed for $\Z$-actions in \cite[Proposition 2.30]{bdjlr}.
Rigidity sequences for ergodic systems with discrete spectrum can also be characterized
in terms of the behavior of characters, an observation that is implicit in \cite{bg} in the context of $\Z$-actions.

\begin{lemma} \label{lem: rigid characters}
	A sequence $(a_n)_{n \in \N}$ in $\Gamma$ is
	\begin{enumerate}[(1)]
		\item	rigid for some ergodic system with discrete spectrum if and only if
			there are infinitely many characters $\chi \in \dual$ such that $\chi(a_n) \to 1$;
		\item	rigid for some weakly mixing system if and only if there is a measure $\sigma \in \P_c(\dual)$
			such that $\hat{\sigma}(a_n) \to 1$.
	\end{enumerate}
\end{lemma}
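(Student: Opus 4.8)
The plan is to treat the two statements in parallel, using the spectral characterizations established earlier: Proposition \ref{prop: DS equiv}(iv)--(v) for discrete spectrum and Proposition \ref{prop: WM Gaussian} together with Lemma \ref{lem: rigidity Gaussian} for weak mixing. For part (2), the construction side is immediate: given a continuous $\sigma \in \P_c(\dual)$ with $\hat\sigma(a_n) \to 1$, the Gaussian system associated to $\sigma$ is weakly mixing by Proposition \ref{prop: WM Gaussian} (continuity of $\sigma$ gives ergodicity, hence weak mixing), and $(a_n)$ is a rigidity sequence for it by Lemma \ref{lem: rigidity Gaussian}. Conversely, if $(a_n)$ is rigid for a weakly mixing system, take any $f \in L^2(\mu)$ with $\int f\,d\mu = 0$ that is not identically zero (such $f$ exists since the space is non-atomic); by Proposition (vi) for weak mixing, $\sigma_f$ is continuous, and after normalizing to $\sigma := \sigma_f / \sigma_f(\dual)$ we have $\hat\sigma(a_n) = \sigma_f(\dual)^{-1}\int \overline{f}\cdot T_{a_n}f\,d\mu \to \sigma_f(\dual)^{-1}\|f\|_2^2 = 1$ by rigidity and Cauchy--Schwarz (the inner product is real and bounded by $\|f\|_2^2$, and rigidity forces it to the maximum).

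For part (1), suppose first that there are infinitely many distinct characters $\chi_1, \chi_2, \dots \in \dual$ with $\chi_k(a_n) \to 1$ as $n \to \infty$ for each $k$. Form the atomic measure $\sigma := \sum_{k} 2^{-k}\delta_{\chi_k}$; its associated Gaussian system has atomic maximal spectral type, hence discrete spectrum by Proposition \ref{prop: DS equiv}(v) — but one must be slightly careful, since a Gaussian system with atomic spectral type need not be ergodic in general. The cleaner route is to build the compact group rotation directly: let $Y$ be the closure in $\dual^{\N}$ (or better, use the Pontryagin dual of the subgroup of $\Gamma$ generated appropriately) — actually the simplest approach is to take the subgroup $\Lambda \le \dual$ generated by $\{\chi_k\}$, give it the discrete topology, let $Y = \hat\Lambda$ be its (compact, metrizable since $\Lambda$ is countable) Pontryagin dual, and let $\Gamma$ act on $Y$ by the rotations $S_g y = y + \iota(g)$ where $\iota : \Gamma \to Y = \hat\Lambda$ is the canonical map dual to the inclusion $\Lambda \hookrightarrow \dual$. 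This is an ergodic group rotation iff $\iota(\Gamma)$ is dense in $Y$, which holds iff $\Lambda$ separates points of $\Gamma$ — if it does not, one simply passes to the quotient. The eigenvalues of this rotation are exactly the characters in $\Lambda$, and $\chi_k(a_n) \to 1$ for all $k$ forces $S_{a_n} \to \id$ strongly on $L^2(Y)$ (since the eigenfunctions span and there are only countably many, a diagonal/density argument applies), so $(a_n)$ is rigid for this ergodic discrete-spectrum system. For the converse, if $(a_n)$ is rigid for an ergodic system with discrete spectrum, then $L^2(\mu)$ is spanned by eigenfunctions, and there must be infinitely many distinct eigencharacters $\chi$ (else $L^2(\mu)$ would be finite-dimensional, contradicting non-atomicity); for each eigenfunction $f$ with eigencharacter $\chi$, rigidity gives $\|\chi(a_n)f - f\|_2 \to 0$, hence $\chi(a_n) \to 1$.

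The main obstacle I anticipate is the forward direction of (1): producing an \emph{ergodic} discrete-spectrum system from the infinitely many characters, while ensuring rigidity along $(a_n)$. The subtlety is two-fold — first, the characters $\chi_k$ may fail to separate points of $\Gamma$, so the naive group rotation on $\hat\Lambda$ need not be free or faithful, though ergodicity is still fine after quotienting; second, and more importantly, pointwise convergence $\chi_k(a_n) \to 1$ for each fixed $k$ does not obviously upgrade to the \emph{uniform} control needed for $S_{a_n} \to \id$ in the strong operator topology on all of $L^2(Y)$. The resolution is the standard one: the eigenfunctions corresponding to $\{\chi_k\}$ are an orthonormal-ish spanning set, and on any finite-dimensional piece convergence is automatic, while a tail estimate handles the rest — but one must set up the weights (the $2^{-k}$) or an exhaustion carefully so that this goes through. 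I would present this via the maximal spectral type (Proposition \ref{prop: DS equiv}(v)): choose $\sigma = \sum_k c_k \delta_{\chi_k}$ with $c_k > 0$, $\sum c_k = 1$, and verify directly that $\hat\sigma(a_n) = \sum_k c_k \chi_k(a_n) \to 1$ by dominated convergence, then realize $\sigma$ as the maximal spectral type of an ergodic group rotation (taking the quotient of $\Gamma$ by $\bigcap_k \ker\chi_k$ if necessary to achieve ergodicity), which is legitimate precisely because atomic measures on $\dual$ are exactly the maximal spectral types of (factors of) group rotations. This keeps the argument uniform with part (2) and isolates the one genuinely new point — the descent to an ergodic quotient — as a short remark.
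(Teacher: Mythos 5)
Your argument is correct and follows essentially the same route as the paper: spectral measures of mean-zero functions and Gaussian systems for part (2), and eigenfunctions together with the dual rotation on $\hat{\Lambda}$ for part (1). One minor correction to a side remark: the rotation $\Gamma \acts \hat{\Lambda}$ is \emph{automatically} ergodic, because the only $\lambda \in \Lambda \subseteq \dual$ that annihilates $\iota(\Gamma)$ is the trivial character, so $\iota(\Gamma)$ is always dense in $\hat{\Lambda}$; whether $\Lambda$ separates points of $\Gamma$ governs faithfulness of the action, not ergodicity, and no passage to a quotient is needed.
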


\begin{remark}
	Recall that we deal only with measure-preserving actions on non-atomic Lebesgue spaces.
	For rotations on finitely many points, it is possible to have a rigidity sequence $(a_n)_{n \in \N}$
	for which the set $\{\chi \in \dual : \chi(a_n) \to 1\}$ is finite.
	For example, the sequence $(2n)_{n \in \N}$ is a rigidity sequence for a rotation on two points (as a $\Z$-system),
	but $\lambda^{2n} \to 1$ only for $\lambda \in \{1, -1\}$.
	By assuming that we deal with non-atomic Lebesgue spaces, we rule out such trivial examples.
\end{remark}

\begin{proof}
	(1) Let $\left( X, \B, \mu, (T_g)_{g \in \Gamma} \right)$ be an ergodic measure-preserving system
	with discrete spectrum that is rigid along $(a_n)_{n \in \N}$.
	By Proposition \ref{prop: DS equiv}(ii), we may assume that $X$ is a compact abelian group
	and $(\alpha_g)_{g \in \Gamma}$ are elements of $X$ such that $T_gx = x + \alpha_g$.
	Note that $g \mapsto \alpha_g$ is a homomorphism.
	Hence, for any character $\lambda \in \hat{X}$, the function $\chi_{\lambda}(g) := \lambda(\alpha_g)$
	is a character on $\Gamma$.
	
	Now, for $\lambda \in \hat{X}$, we have $T_g\lambda = \chi_{\lambda}(g) \lambda$.
	Since $T$ is ergodic, it has no non-trivial invariant functions, so $\chi_{\lambda} = 1$
	if and only if $\lambda = 1$.
	That is, the map $\lambda \mapsto \chi_{\lambda}$ is injective,
	so $\left\{\chi_{\lambda} : \lambda \in \hat{X}\right\}$ is infinite.
	
	Finally, since $(a_n)_{n \in \N}$ is a rigidity sequence for $T$, we have
		\begin{align*}
		\left| \chi_{\lambda}(a_n) - 1 \right|= \left| \lambda(\alpha_{a_n}) - 1 \right|
		 = \left\| \lambda(x + \alpha_g) - \lambda(x) \right\|_2
		 = \left\| T_{a_n}\lambda - \lambda \right\|_2 \to 0.
	\end{align*}
	
	Conversely, suppose $C = \{\chi \in \dual : \chi(a_n) \to 1\}$ is infinite.
	Observe that $C$ is a subgroup of $\dual$.
	Let $\Lambda$ be a countably infinite subgroup of $C$ endowed with the discrete topology,
	and set $X := \hat{\Lambda}$.
	Equip $X$ with the Borel $\sigma$-algebra $\B$ and Haar probability measure $\mu$.
	Then $(X, \B, \mu)$ is a non-atomic Lebesgue space, and $\Gamma \acts (X, \B, \mu)$ by
	$T_gx = e_g \cdot x$, where $e_g(\lambda) := \lambda(g)$ for $\lambda \in \Lambda \subseteq \dual$.
	This is an action by group rotations, so it has discrete spectrum.
	Now, for every $\lambda \in \Lambda$, $(T_{a_n}x)(\lambda) = \lambda(a_n) x(\lambda) \to x(\lambda)$,
	so $T_{a_n}x \to x$ pointwise on $X$.
	It follows that $\|T_{a_n}f - f\|_2 \to 0$ for every $f \in L^2(\mu)$.
	
	\medskip
	
	(2) Let $\left( X, \B, \mu, (T_g)_{g \in \Gamma} \right)$ be a weakly mixing system
	such that $T_{a_n} \to \id$.
	Let $f \in L^2(\mu)$ with $\|f\| = 1$ and $\int_X{f~d\mu} = 0$.
	By Bochner's theorem, let $\sigma_f \in \P(\dual)$ be the spectral measure:
	$\hat{\sigma}_f(g) = \int_X{\overline{f} \cdot T_gf~d\mu}$.
	Since the action is weakly mixing, $\sigma_f \in \P_c(\dual)$.
	Moreover,
		\begin{align*}
		\hat{\sigma}_f(a_n) = \int_X{\overline{f} \cdot T_{a_n}f~d\mu} \to \int_X{|f|^2~d\mu} = \|f\|^2 = 1.
	\end{align*}
	
	Conversely, suppose $\sigma \in \P_c(\dual)$ and $\hat{\sigma}(a_n) \to 1$.
	By Lemma \ref{lem: rigidity Gaussian}, the Gaussian system associated to $\sigma$
	is rigid along $(a_n)_{n \in \N}$.
\end{proof}

Now Theorem \ref{thm: rigidWM} will follow from

\begin{theorem} \label{thm: DS implies WM}
	Let $(a_n)_{n \in \N}$ be a sequence in $\Gamma$, and suppose that the set
		\begin{align*}
		C := \{ \chi \in \dual : \chi(a_n) \to 1 \}
	\end{align*}
	
	\noindent is infinite.
	Then there is a continuous probability measure $\sigma \in \P_c(\dual)$
	such that $\hat{\sigma}(a_n) \to 1$.
\end{theorem}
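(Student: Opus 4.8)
The goal is to manufacture a continuous measure $\sigma \in \P_c(\dual)$ whose Fourier transform satisfies $\hat\sigma(a_n) \to 1$, given only that the group $C = \{\chi : \chi(a_n) \to 1\}$ is infinite. Since $C$ is a subgroup of the compact group $\dual$, a first instinct is to put normalized Haar measure on the closure $\overline C$; but $\overline C$ need not be a continuous-measure-supporting group — if $C$ happens to be contained in a finite subgroup's closure, or more generally if $\overline C$ is finite, Haar measure on it is atomic. So $C$ being infinite does not by itself prevent $\overline C$ from being, say, a Cantor group on which Haar measure works, but it also does not guarantee it. I would instead build $\sigma$ by hand as an infinite convolution (a Riesz-product-type construction), using a carefully chosen sequence of characters drawn from $C$.

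\textbf{Key steps.} First, I would extract from $C$ a sequence $\chi^{(1)}, \chi^{(2)}, \dots \in C$ that is "dissociated" or at least has the property that the subgroup it generates is infinite; since $C$ is an infinite abelian group this is routine, though one must handle the torsion case (every element has finite order) separately from the case where $C$ contains an element of infinite order. In the infinite-order case, pick $\chi \in C$ of infinite order and consider measures supported on the closure of $\langle \chi \rangle \cong \Z$, i.e. essentially a measure on $\T$ pushed forward; a continuous measure on $\T$ whose Fourier coefficients along the relevant subsequence tend to $1$ can be taken (e.g. a suitable Riesz product or an explicitly constructed singular continuous measure). In the torsion case, one chooses $\chi^{(k)}$ of orders $N_k \to \infty$ (or infinitely many of the same prime order, giving a Cantor-like group) so that the closed subgroup they generate is an infinite (hence continuous-measure-admitting) compact group, and takes $\sigma = \ast_{k} \left( \frac{1}{N_k}\sum_{j=0}^{N_k - 1} \delta_{(\chi^{(k)})^j} \right)$ or a weighted variant; this convolution is a continuous measure on a Cantor group, and $\hat\sigma(g) = \prod_k \hat\mu_k(g)$. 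The crux is then to arrange the weights/selection so that $\hat\sigma(a_n) \to 1$: for each fixed $k$, $\hat\mu_k(a_n) \to 1$ since $\chi^{(k)} \in C$ (for the uniform-on-cyclic-group measure $\hat\mu_k(a_n) = \ind[N_k \mid \text{order-data of } a_n]$ needs care — better to use $\mu_k = (1-\ep_k)\delta_1 + \ep_k(\text{something})$, so $\hat\mu_k \to 1$ uniformly-ish), and one uses a diagonal argument: choose an increasing sequence $n \mapsto K(n)$ so that the first $K(n)$ factors are all within $\delta_n$ of $1$ at $a_n$, while the tail product is controlled by $\sum_{k > K(n)} \ep_k$ being small.

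\textbf{The main obstacle.} The hard part is reconciling two competing demands on the tail of the convolution: the measure must be \emph{continuous} (which forces infinitely many nontrivial factors that are "spread out", preventing the weights $\ep_k$ from decaying too fast or the supports from collapsing), yet $\hat\sigma(a_n)$ must converge to $1$ \emph{uniformly enough} that the infinite tail doesn't spoil it. In the $\Z$-case literature this is handled by quantitative rigidity estimates; here the subtlety is that in a general (possibly torsion, possibly infinitely generated) group we cannot choose the $\chi^{(k)}$ to be uniformly well-behaved at $a_n$ — the rate at which $\chi^{(k)}(a_n) \to 1$ depends on $k$. The resolution I expect is to first pass to a subsequence of $(a_n)$ is \emph{not} allowed (we need all $n$), so instead one fixes, for each $k$, an index $m_k$ beyond which $|\chi^{(k)}(a_n) - 1| < 2^{-k}$ for all $n \geq m_k$, then builds $\sigma$ as a convolution whose $k$-th factor has mass parameter $\ep_k$ small enough that $\prod_{k}(1 - C\ep_k) > 1 - \eta$ — guaranteeing $|\hat\sigma(g) - 1| \leq \sum_k \ep_k |\hat\mu_k(g)/\ep_k - \dots|$ type bound — while still $\sum \ep_k = \infty$ or the supports are chosen in "independent" coordinates so continuity holds regardless of how fast $\ep_k \to 0$. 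Choosing the $\chi^{(k)}$ to live in independent directions (genuinely possible when $C$ is infinite, by a Zorn/linear-algebra argument over the relevant prime fields or over $\Z$) is what makes continuity automatic and decouples it from the Fourier-decay bookkeeping; that independence argument, in the generality of arbitrary countable abelian $C$, is where I'd expect to spend the most care.
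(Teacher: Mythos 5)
Your overall strategy --- an infinite convolution of two-point measures supported on characters drawn from $C$ --- runs into a genuine obstruction that your proposed resolution does not overcome. Write $\sigma = \mu_1 * \mu_2 * \cdots$ with $\mu_k = (1-\ep_k)\delta_1 + \ep_k\delta_{\chi^{(k)}}$, $\chi^{(k)} \in C$. If $\sum_k \ep_k < \infty$ (which is what your tail estimate requires: for fixed $n$, the infinitely many factors whose rigidity threshold $m_k$ exceeds $n$ admit no bound better than $|\hat{\mu}_k(a_n)-1| \le 2\ep_k$), then $\sigma$ is the law of $\prod_k (\chi^{(k)})^{X_k}$ with $X_k$ independent Bernoulli$(\ep_k)$; by Borel--Cantelli this is almost surely a finite product and $\sigma(\{1\}) \ge \prod_k(1-\ep_k) > 0$. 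The measure is atomic no matter how ``independent'' the $\chi^{(k)}$ are: independence of the supports controls where the atoms sit, not whether they exist. If instead $\sum_k \ep_k = \infty$, continuity can be arranged, but now $1 - |\hat{\sigma}(a_n)|^2$ is comparable to $\sum_k \ep_k\bigl(1 - \mathrm{Re}\,\chi^{(k)}(a_n)\bigr)$, and the sub-sum over the infinite set $\{k : m_k > n\}$ can only be controlled if the characters are uniformly close to $1$ along the \emph{entire} sequence, i.e.\ $\sup_n|\chi^{(k)}(a_n)-1| \le \eta_k$ with $\sum_k \ep_k\eta_k < \infty$. That uniform control is not a consequence of membership in $C$: for $\Gamma = \Z$ and $a_n = 2^n$ one checks that $C$ is exactly the group of $2^k$-th roots of unity, and every nontrivial $\chi = e^{2\pi i j/2^k}$ with $j$ odd satisfies $\chi(a_{k-1}) = -1$, so $\sup_n|\chi(a_n)-1| = 2$ for every $\chi \in C \setminus \{1\}$. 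On this example, which satisfies the hypothesis of the theorem, both horns of your dichotomy fail.

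The paper's proof sidesteps the uniformity issue entirely and uses no convolutions. It builds purely atomic approximants $\sigma_p = 2^{-p}\sum_{i=1}^{2^p}\delta_{\chi_i}$ with all atoms in $C$, obtained by repeatedly splitting half of each atom's mass onto a new point of $C$ chosen (i) pointwise close to the old atom, which controls the finitely many $a_n$ below the current threshold, and (ii) automatically close to $1$ at $a_n$ for $n$ beyond a new, larger threshold, simply because the new point again lies in $C$. Continuity of the weak-$*$ limit is then forced by a nested system of disjoint open sets (a Cantor scheme), each carrying mass exactly $2^{-q}$ at stage $q$, rather than by any independence of the chosen characters. The structural fact making the splitting always possible is that $C$, being an infinite subgroup of the compact metrizable group $\dual$, has every one of its points as a cluster point. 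To salvage a convolution-type argument you would need factors that are already continuous (which is circular) or some other mechanism for killing the atom at the identity; as written, your construction cannot produce a continuous measure with the required Fourier behavior.
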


This is a direct generalization of \cite[Corollary 2.5]{bg}.
We prove Theorem \ref{thm: DS implies WM} in Section \ref{sec: DS implies WM proof}.

%%%%%%%%%%%%%%%%%%%%%%%%%%%%%%%%%%%%%%%%%%%%%%%%%%%%%%%
%%%%%%%%%%%%%%%%%%%%%%%%%%%%%%%%%%%%%%%%%%%%%%%%%%%%%%%

\section{Proof of Theorem \ref{thm: rigidWM} and corollaries} \label{sec: DS implies WM}

The goal of this section is to prove Theorem \ref{thm: rigidWM} and explore some of its consequences,
including Theorem \ref{thm: free} and related topological results.
The proof follows the strategy used by Fayad--Thouvenot \cite{ft} in the case $\Gamma = \Z$
(see also \cite[Section 4]{bg}).
With some modifications, we are able to extend their result to the setting of general abelian groups.
As noted above, it suffices to prove Theorem \ref{thm: DS implies WM}, which we do now.

%%%%%%%%%%%%%%%%%%%%%%%%%%%%%%%%%%%%%%%%%%%%%%%%%%%%%%%

\subsection{Proof of Theorem \ref{thm: DS implies WM}} \label{sec: DS implies WM proof}

The basic outline of the proof is as follows.
We are given that $C = \{ \chi \in \dual : \chi(a_n) \to 1\}$ is infinite.
From this, we deduce that $\overline{C}$ is an (uncountable) compact subgroup of $\dual$.
We then construct a sequence of discrete measures sitting on $C$.
On the one hand, we need to pick the points using a Cantor-like construction so that any limiting measure is continuous.
On the other hand, we need to ensure that convergence to 1 along the sequence
$(a_n)_{n \in \N}$ happens sufficiently uniformly among the chosen points from $C$
so that the Fourier coefficients of the limiting measure converge to 1.

Now we proceed with the proof as outlined above.
By assumption, $C = \{ \chi \in \dual : \chi(a_n) \to 1 \}$ is infinite.
Moreover, $C$ is a subgroup of $\dual$.
Now, since $\dual$ is compact and $C$ is infinite, $C$ has a cluster point.
The (topological) group structure guarantees that every point of $C$ is a cluster point.
This is the key property we will exploit to construct a continuous probability measure
$\sigma \in \P_c(\dual)$ with $\hat{\sigma}(a_n) \to 1$.

We will construct a sequence of characters $(\chi_i)_{i \in \N}$ in $C$,
a sequence of nonnegative integers $(N_p)_{p \ge 0}$, and a family of open subsets
$\left( V_{p,r} \right)_{1 \le r \le 2^p, p \ge 0}$ of $\dual$ so that the measures
$\sigma_p = 2^{-p} \sum_{i=1}^{2^p}{\delta_{\chi_i}}$ have the following three properties for $p \ge 0$:
\begin{itemize}
	\item	For $0 \le j \le p - 1$ and $N_j \le n \le N_{j+1}$,
				\begin{align} \label{eq: near 1, small index}
			\int_\dual{\left| \chi(a_n) - 1 \right|~d\sigma_p(\chi)} & < 2^{-(j-1)}.
	\intertext{\item	For $n \ge N_p$,} \label{eq: near 1, large index}
			\int_\dual{\left| \chi(a_n) - 1 \right|~d\sigma_p(\chi)} & < 2^{-(p+1)}.
	\intertext{\item	The sets $V_{p,1}, \dots, V_{p,2^p}$ are mutually disjoint,
	and for $0 \le q \le p - 1$, $0 \le l \le 2^{p-q} - 1$, and $1 \le r \le 2^q$,}
		\label{eq: Cantor}
			\chi_{l \cdot 2^q + r} \in V_{p, l \cdot 2^q + r}
			 & \subseteq V_{q,r}.
		\end{align}
		
\end{itemize}

Properties \eqref{eq: near 1, small index} and \eqref{eq: near 1, large index} control the rate of
convergence of the characters along $(a_n)_{n \in \N}$ so that the Fourier coefficients of a limiting measure
will converge to 1 along $(a_n)_{n \in \N}$.
Property \eqref{eq: Cantor} ensures that the mass assigned
to a sufficiently small neighborhood of $\chi_r$ is bounded by $2^{-q}$ if $r \le 2^q$,
so the limiting measure is continuous.

Now we make these claims precise.
Suppose we have constructed such sequences, and let $\sigma$ be a weak-$^*$
limit point of the sequence of measures $(\sigma_p)_{p \ge 0}$.

\begin{claim}
	$\hat{\sigma}(a_n) \to 1$.
\end{claim}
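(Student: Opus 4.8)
The plan is to combine weak-$^*$ convergence with the uniform estimate \eqref{eq: near 1, small index}. Since $\sigma$ is a probability measure, for every $n$ we have
\begin{align*}
	\left| \hat{\sigma}(a_n) - 1 \right|
	= \left| \int_\dual{\left( \chi(a_n) - 1 \right)~d\sigma(\chi)} \right|
	\le \int_\dual{\left| \chi(a_n) - 1 \right|~d\sigma(\chi)}.
\end{align*}
The function $\chi \mapsto \left| \chi(a_n) - 1 \right|$ is continuous and bounded on the compact group $\dual$, so, fixing a subsequence $(\sigma_{p_k})_{k \in \N}$ with $p_k \to \infty$ and $\sigma_{p_k} \to \sigma$ weak-$^*$, one gets
\begin{align*}
	\int_\dual{\left| \chi(a_n) - 1 \right|~d\sigma(\chi)}
	= \lim_{k \to \infty}{\int_\dual{\left| \chi(a_n) - 1 \right|~d\sigma_{p_k}(\chi)}}.
\end{align*}

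Next I would estimate the terms on the right for a fixed (large) $n$. Let $j = j(n)$ be the largest index with $N_j \le n$, so that $N_j \le n \le N_{j+1}$; since $N_p \to \infty$, we have $j(n) \to \infty$ as $n \to \infty$. For every $k$ with $p_k \ge j + 1$ we may apply property \eqref{eq: near 1, small index} with $p = p_k$ and this value of $j$ (the constraints $0 \le j \le p_k - 1$ and $N_j \le n \le N_{j+1}$ are met), obtaining
\begin{align*}
	\int_\dual{\left| \chi(a_n) - 1 \right|~d\sigma_{p_k}(\chi)} < 2^{-(j - 1)}.
\end{align*}
Letting $k \to \infty$ gives $\int_\dual{\left| \chi(a_n) - 1 \right|~d\sigma(\chi)} \le 2^{-(j(n) - 1)}$, hence $\left| \hat{\sigma}(a_n) - 1 \right| \le 2^{-(j(n) - 1)} \to 0$ as $n \to \infty$. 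This proves the claim.

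I do not expect this step to pose any difficulty: it is a soft consequence of the properties already imposed on the $\sigma_p$, and only \eqref{eq: near 1, small index} enters here. Property \eqref{eq: near 1, large index} is auxiliary, used to propagate \eqref{eq: near 1, small index} through the inductive construction, and \eqref{eq: Cantor} will be used separately to prove that $\sigma$ is continuous. The real work in the surrounding argument is the construction itself: simultaneously producing the characters $\chi_i \in C$, the thresholds $N_p$, and the nested neighborhoods $V_{p,r}$ meeting all three conditions at each stage, which is precisely where one exploits that every point of $C$ is a cluster point of $C$.
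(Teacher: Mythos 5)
Your argument is correct and is essentially the paper's own proof: pick $j_n$ with $N_{j_n} \le n < N_{j_n+1}$, use property \eqref{eq: near 1, small index} for all large $p$, pass to the weak-$^*$ limit using continuity of $\chi \mapsto |\chi(a_n)-1|$, and conclude since $j_n \to \infty$. The only (harmless) additions are the explicit triangle-inequality step bounding $|\hat{\sigma}(a_n)-1|$ by the integral and the explicit extraction of a convergent subsequence.
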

\begin{proof}[Proof of Claim]
	For $n \in \N$, let $j_n \in \N$ be such that $N_{j_n} \le n < N_{j_n + 1}$.
	Then for $p > j_n$, property \eqref{eq: near 1, small index} gives
		\begin{align*}
		\int_\dual{\left| \chi(a_n) - 1 \right|~d\sigma_p(\chi)} < 2^{-(j_n - 1)}.
	\end{align*}
	
	\noindent Since $\chi \mapsto \left| \chi(a_n) - 1 \right|$ is a continuous function on $\dual$,
		\begin{align*}
		\int_\dual{\left| \chi(a_n) - 1 \right|~d\sigma(\chi)} \le 2^{-(j_n - 1)} \tendsto{n \to \infty} 0.
	\end{align*}
\end{proof}

\begin{claim} \label{claim: cts meas}
	$\sigma$ is continuous.
\end{claim}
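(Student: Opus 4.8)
The plan is to show $\sigma(\{\chi\}) = 0$ for every $\chi \in \dual$ by exploiting property \eqref{eq: Cantor}: it forces the mass of the approximating measures $\sigma_p$ to be spread evenly across the disjoint cells $V_{q,1}, \dots, V_{q,2^q}$, and this evenness survives the passage to a weak-$*$ limit.

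First I would record the mass of each cell. Fix $0 \le q < p$ and $1 \le r \le 2^q$. By \eqref{eq: Cantor}, the characters among $\chi_1, \dots, \chi_{2^p}$ that lie in $V_{q,r}$ are precisely those indexed by $i = l \cdot 2^q + r$ with $0 \le l \le 2^{p-q} - 1$; since the $V_{q,r}$ ($1 \le r \le 2^q$) are pairwise disjoint, each $\chi_i$ with $i \le 2^p$ lies in exactly one of them. Hence $\sigma_p(V_{q,r}) = 2^{p-q} \cdot 2^{-p} = 2^{-q}$ for all $p > q$, and $\sigma_p$ is supported on the closed set $\bigcup_{r=1}^{2^q} \overline{V_{q,r}}$. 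Now pass to a subsequence $p_k \to \infty$ with $\sigma_{p_k} \to \sigma$ in the weak-$*$ topology (possible since $\P(\dual)$ is weak-$*$ compact). Applying the portmanteau theorem to the open set $V_{q,r}$ gives $\sigma(V_{q,r}) \le \liminf_k \sigma_{p_k}(V_{q,r}) = 2^{-q}$; applying it to the open set $\dual \setminus \bigcup_{r=1}^{2^q} \overline{V_{q,r}}$, on which $\sigma_{p_k}$ vanishes for $p_k > q$, shows that $\sigma$ is supported on $\bigcup_{r=1}^{2^q} \overline{V_{q,r}}$. Intersecting over $q$, one gets $\supp{\sigma} \subseteq K := \bigcap_{q \ge 0} \bigcup_{r=1}^{2^q} \overline{V_{q,r}}$.

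It remains to see that every $\chi \in K$ actually lies \emph{inside} one of the open cells at each level, which then yields $\sigma(\{\chi\}) \le \inf_{q} 2^{-q} = 0$. Here I would use a mild strengthening of \eqref{eq: Cantor} when constructing the $V_{p,r}$ — harmless since $\dual$ is compact and metrizable, $\Gamma$ being countable — namely that the closures $\overline{V_{p,1}}, \dots, \overline{V_{p,2^p}}$ are pairwise disjoint and $\overline{V_{p,\, l \cdot 2^q + r}} \subseteq V_{q,r}$. Granting this, fix $\chi \in K$ and $q \ge 0$: since $\chi \in K$, at level $q+1$ there is a (unique, by disjointness of closures) index $s$ with $\chi \in \overline{V_{q+1,s}}$, and $\overline{V_{q+1,s}} \subseteq V_{q,r}$ for the level-$q$ cell $V_{q,r}$ containing it, so $\chi \in V_{q,r}$ and $\sigma(\{\chi\}) \le \sigma(V_{q,r}) \le 2^{-q}$. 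Letting $q \to \infty$ gives $\sigma(\{\chi\}) = 0$, so $\sigma$ is continuous.

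I expect the last point to be the main obstacle: \eqref{eq: Cantor} as stated supplies only disjoint \emph{open} cells with open-into-open nestings, which is not enough to stop a weak-$*$ limit from concentrating mass on the ``seams'' $\bigcup_{r} \partial V_{q,r}$ — indeed, without some such condition a weak-$*$ limit of measures of the form $\sigma_p$ can perfectly well be a single point mass. Once the construction is pinned down with disjoint closures and closure-into-open nestings, the rest is routine portmanteau bookkeeping.
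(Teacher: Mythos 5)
Your proof is correct and follows the same route as the paper: the uniform mass $\sigma_p(V_{q,r}) = 2^{-q}$ of the cells, passed through the portmanteau theorem, bounds any atom by $2^{-q}$. The one substantive point of divergence is that you have put your finger on a step the paper's own write-up passes over too quickly. The paper asserts that $\sigma(V_{q,r}) = 2^{-q}$ and that any atom $\chi$ must lie in some open cell $V_{q,r}$; but the portmanteau theorem only yields $\sigma(V_{q,r}) \le 2^{-q} \le \sigma\left( \overline{V}_{q,r} \right)$, and property \eqref{eq: Cantor} as stated (disjoint open cells, open-into-open nesting) does not by itself prevent the weak-$*$ limit from charging the ``seams'' $\bigcup_r \partial V_{q,r}$, where the $2^{-q}$ bound is unavailable. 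Your repair --- shrinking the cells at each stage so that $\overline{V}_{p+1, \ep \cdot 2^p + s} \subseteq V_{p,s}$, which is possible because $\dual$ is compact and metrizable --- is exactly what is needed: it gives $\supp{\sigma} \subseteq \bigcup_i \overline{V}_{q+1,i} \subseteq \bigcup_r V_{q,r}$, so every point of the support, and in particular every atom, does lie in an open cell at every level. (The pairwise disjointness of the closures is not actually needed for this; any level-$(q+1)$ closed cell containing $\chi$ will do, unique or not.) This is a legitimate and necessary tightening of the construction rather than a genuinely different proof.
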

\begin{proof}[Proof of Claim]
	Fix $q \in \N$ and let $p \ge q$.
	For $1 \le i \le 2^p$, we can write $i = l \cdot 2^q + r$ for some $l \ge 0$ and $1 \le r \le 2^q$.
	By property \eqref{eq: Cantor}, we have $\chi_i \in V_{q,r}$.
	Hence,
		\begin{align*}
		\sigma_p(V_{q,r}) = \sigma_p \left( \overline{V}_{q,r} \right)
		 = 2^{-p} \#\{ 1 \le i \le 2^p : i \equiv r \pmod{2^q} \} = 2^{-q}.
	\end{align*}
	
	\noindent By the Portmanteau theorem (see, e.g., \cite[Theorem 2.1]{billingsley}),
	it follows that $\sigma(V_{q,r}) = 2^{-q}$.
	
	Now suppose for contradiction that $\sigma$ has an atom, $\sigma(\{\chi\}) > 0$.
	Then $\chi \in V_{q,r}$ for some $1 \le r \le 2^q$.
	Hence, $\sigma(\{\chi\}) \le \sigma(V_{q,r}) = 2^{-q}$.
	Letting $q \to \infty$, we get $\sigma(\{\chi\}) = 0$, a contradiction.
	Thus, $\sigma$ is a continuous measure on $\dual$.
\end{proof}

It remains to construct the sequences $(\chi_i)_{i \in \N}$, $(N_p)_{p \ge 0}$, and $(V_{p,r})_{1 \le r \le 2^p, p \ge 0}$.
We will construct the sequences by induction on $p$,
getting $\chi_i$ for $2^{p-1} + 1 \le i \le 2^p$ at each step along with $N_p$.

We begin with $N_0 = 0$, $\chi_1 = 1$, and $V_{0,1} = \dual$.
Consider $p = 1$.
Choose $\chi_2 \in C, \chi_2 \ne 1$.
Then $\sigma_1 = \frac{\delta_1 + \delta_{\chi_2}}{2}$, and we have
\begin{align*}
	\int_\dual{\left| \chi(a_n) - 1 \right|~d\sigma_1(\chi)} = \frac{1}{2} \left| \chi_2(a_n) - 1 \right| \le 1 < 2.
\end{align*}

\noindent Now since $\chi_2 \in C$, we may choose $N_1 \in \N$ so that for $n \ge N_1$,
$\left| \chi_2(a_n) - 1 \right| < \frac{1}{2}$.
Then for $n \ge N_1$,
\begin{align*}
	\int_\dual{\left| \chi(a_n) - 1 \right|~d\sigma_1(\chi)} = \frac{1}{2} \left| \chi_2(a_n) - 1 \right| < 2^{-2}.
\end{align*}

\noindent Since $\chi_1 \ne \chi_2$, we can find disjoint neighborhoods $V_{1,1} \ni \chi_1$ and $V_{1,2} \ni \chi_2$.

Suppose we have completed step $p$ of the induction.
Now we will do sub-induction on $1 \le s \le 2^p$ to construct distinct characters
$\chi_{2^p + s} \in C$, sets $V_{p+1,s}$ and $V_{p+1,2^p+s}$, measures $\sigma_{p,s} \in \P(\dual)$,
and $N_p < N_{p,1} < \cdots < N_{p, 2^p}$ satisfying the following properties:

\begin{itemize}
	\item	For $0 \le j \le p - 1$ and $N_j \le n \le N_{j+1}$,
		
		\begin{align} \label{eq: small n}
			\int_\dual{\left| \chi(a_n) - 1 \right|~d\sigma_{p,s}(\chi)} & < 2^{-(j-1)}.
	\intertext{\item	For $n \ge N_p$,} \label{eq: medium n}
			\int_\dual{\left| \chi(a_n) - 1 \right|~d\sigma_{p,s}(\chi)} & < 2^{-(p-1)}.
	\intertext{\item	For $n \ge N_{p,s}$,} \label{eq: large n}
			\int_\dual{\left| \chi(a_n) - 1 \right|~d\sigma_{p,s}(\chi)} & < 2^{-(p+2)}.
	\intertext{\item	The sets $V_{p+1,s}$ and $V_{p+1,2^p+s}$ are disjoint subsets of $V_{p,s}$,
	and for $\ep \in \{0,1\}$}
		\label{eq: Cantor nesting}
			\chi_{\ep \cdot 2^p + s} & \in V_{p+1,\ep \cdot 2^p + s}.
		\end{align}
		
\end{itemize}

First we consider $s = 1$.
Choose $\chi_{2^p + 1} \in (C \cap V_{p,1}) \setminus \{\chi_1, \dots, \chi_{2^p}\}$.
Set
\begin{align*}
	\sigma_{p,1} & := \sigma_p + 2^{-(p+1)} \left( \delta_{\chi_{2^p+1}} - \delta_1 \right). \\
	 & = 2^{-p} \left( \frac{\delta_1 + \delta_{\chi_{2^p+1}}}{2}
	 + \sum_{i=2}^{2^p}{\delta_{\chi_i}} \right).
\end{align*}

\noindent Then for $n \ge 0$, we have
\begin{align*}
	\int_\dual{\left| \chi(a_n) - 1 \right|~d\sigma_{p,1}(\chi)}
	 \le \int_\dual{\left| \chi(a_n) - 1 \right|~d\sigma_p(\chi)}
	 + 2^{-(p+1)} \left| \chi_{2^p+1}(a_n) - 1 \right|.
\end{align*}

\noindent Taking $\chi_{2^p+1}$ sufficiently close to $1$,
the strict inequality in property \eqref{eq: near 1, small index}
allows for property \eqref{eq: small n} being satisfied for $s = 1$.
Moreover, for $n \ge N_p$, we can use the bound $|\chi_{2^p+1}(a_n) - 1| \le 2$
to see that property \eqref{eq: medium n} follows from property \eqref{eq: near 1, large index}.
Finally, since $\chi_1, \dots, \chi_{2^p+1} \in C$, we can find $N_{p,1}$ to satisfy \eqref{eq: large n}.
The points $\chi_1$ and $\chi_{2^p+1}$ are distinct, so we can find disjoint open subsets
$V_{p+1,1}, V_{p+1,2^p+1} \subseteq V_{p,1}$ such that $\chi_1 \in V_{p+1,1}$ and $\chi_{2^p+1} \in V_{p+1,2^p+1}$.
This completes the base case of the induction.

Now suppose we have chosen $\chi_{2^p+s'}$, $\sigma_{2^p+s'}$, and $N_{2^p+s'}$
satisfying \eqref{eq: small n}, \eqref{eq: medium n}, and \eqref{eq: large n} for $s' < s$.
Then we choose $\chi_{2^p+s} \in (C \cap V_{p,s}) \setminus \{\chi_1, \dots, \chi_{2^p+s-1} \}$, and define
\begin{align*}
	\sigma_{p,s} := \sigma_{p,s-1} + 2^{-(p+1)} \left( \delta_{\chi_{2^p+s}} - \delta_{\chi_s} \right).
\end{align*}

\noindent As in the $s=1$ case, we have an inequality
\begin{align*}
	\int_\dual{\left| \chi(a_n) - 1 \right|~d\sigma_{p,s}(\chi)}
	 \le & \int_\dual{\left| \chi(a_n) - 1 \right|~d\sigma_{p,s-1}(\chi)} \\
	 & + 2^{-(p+1)} \left| \chi_{2^p+s}(a_n) - \chi_s(a_n) \right|,
\end{align*}

\noindent so choosing $\chi_{2^p+s}$ sufficiently close to $\chi_s$ implies that
\eqref{eq: small n} holds by induction.
We can also choose $\chi_{2^p+s}$ sufficiently close to $\chi_s$ so that \eqref{eq: medium n} holds
for the finitely many values $N_p \le n \le N_{p,s-1}$.
Then for $n \ge N_{p,s-1}$, property \eqref{eq: large n} for $\sigma_{p,s-1}$
implies \eqref{eq: medium n} for $\sigma_{p,s}$.
As in the $s=1$ case, since all of the characters are taken from $C$,
property \eqref{eq: large n} holds for all sufficiently large $N_{p,s}$.
Finally, let $V_{p+1,s}$ and $V_{p+1,2^p+s}$ be disjoint open subsets of $V_{p,s}$
such that $\chi_s \in V_{p+1,s}$ and $\chi_{2^p+s} \in V_{p+1,2^p+s}$.

Now that we have completed induction on $s$, we return to the induction on $p$.
Define $\sigma_{p+1} := \sigma_{p,2^p}$ and $N_{p+1} := N_{p,2^p}$.
Property \eqref{eq: small n} implies property \eqref{eq: near 1, small index} for $0 \le j \le p-1$.
The $j = p$ case of property \eqref{eq: near 1, small index} follows from \eqref{eq: medium n},
so \eqref{eq: near 1, small index} holds.
Property \eqref{eq: near 1, large index} follows immediately from \eqref{eq: large n}.
Thus, it remains only to check property \eqref{eq: Cantor}, which we do with a simple induction argument.

Assume that \eqref{eq: Cantor} holds for $p$.
We want to show that it holds for $p+1$.
Let $0 \le q \le p$, $0 \le l \le 2^{p+1-q} - 1$, and $1 \le r \le 2^q$.
Write $l = l' + \ep \cdot 2^{p-q}$ with $\ep \in \{0,1\}$ and $0 \le l' \le 2^{p-q} - 1$
so that $l \cdot 2^q + r = l' \cdot 2^q + r + \ep \cdot 2^p$.
Let $s := l' \cdot 2^q + r$.
Then $1 \le s \le (2^{p-q} - 1) 2^q + 2^q = 2^p$ and $l \cdot 2^q + r = \ep \cdot 2^p + s$.
Thus, by property \eqref{eq: Cantor nesting}, we have
\begin{align*}
	\chi_{l \cdot 2^q + r} \in V_{l \cdot 2^q + r} = V_{p+1,\ep \cdot 2^p + s} \subseteq V_{p,s}.
\end{align*}

\noindent By the induction hypothesis, we have $V_{p,s} = V_{p,l' \cdot 2^q + r} \subseteq V_{q,r}$.
Thus, property \eqref{eq: Cantor} holds, completing the induction. \qed

%%%%%%%%%%%%%%%%%%%%%%%%%%%%%%%%%%%%%%%%%%%%%%%%%%%%%%%

\subsection{Consequences of Theorem \ref{thm: rigidWM}}

As a first consequence of Theorem \ref{thm: rigidWM}, we prove Theorem \ref{thm: free}:

\free*

We recall some basic facts about Pontryagin duality that will be used in the proof.
The \emph{annihilator} of a set $E \subseteq \Gamma$ is the closed subgroup
\begin{align*}
	E^{\perp} := \left\{ \chi \in \dual : \chi(g) = 1~\text{for all}~g \in E \right\} \le \dual.
\end{align*}

\noindent Similarly, for a set $M \subseteq \dual$, the \emph{annihilator} is the subgroup
\begin{align*}
	M^{\perp} := \left\{ g \in \Gamma : \chi(g) = 1~\text{for all}~\chi \in M \right\} \le \Gamma.
\end{align*}

\noindent In general, the annihilator of the annihilator of a set is the closed subgroup generated by that set.
Moreover, given closed subgroups $\Delta \le \Gamma$ and $K \le \dual$ with $\Delta^{\perp} = K$,
we have isomorphisms $K \simeq \hat{\Gamma/\Delta}$ and $\dual/K \simeq \hat{\Delta}$
(see, e.g., \cite[Section 2.1]{rudin}).

As a first step towards proving Theorem \ref{thm: free}, we need a criterion for checking that a Gaussian system is free.
For a measure $\sigma \in \P(\dual)$, let $G(\sigma)$ be the closed subgroup generated by $\supp{\sigma}$.

\begin{lemma} \label{lem: free WM}
	Let $\sigma \in \P(\dual)$ be a probability measure.
	Then
		\begin{align*}
		G(\sigma) = \{g \in \Gamma : \hat{\sigma}(g) = 1\}^{\perp}.
	\end{align*}
	
	\noindent In particular, the Gaussian system associated to a measure $\sigma \in \P(\dual)$ is free
	if and only if $G(\sigma) = \dual$.
\end{lemma}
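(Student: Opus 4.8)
The plan is to prove both assertions by first identifying the subgroup $\{g \in \Gamma : \hat{\sigma}(g) = 1\}$ with an annihilator and then dualizing; the freeness criterion then falls out from a zero--one dichotomy for fixed-point sets in the associated Gaussian system.

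\emph{Step 1 (the identity).} Since $|\chi(g)| = 1$ for every $\chi \in \dual$, the triangle inequality gives $|\hat{\sigma}(g)| \le 1$, and by its equality case $\hat{\sigma}(g) = 1$ holds precisely when $\chi(g) = 1$ for $\sigma$-almost every $\chi$. As $\{\chi \in \dual : \chi(g) = 1\}$ is closed, this is the same as the inclusion $\supp{\sigma} \subseteq \{\chi : \chi(g) = 1\}$; and since $\{\chi : \chi(g) = 1\}$ is in fact a closed subgroup, that inclusion holds if and only if $G(\sigma) \subseteq \{\chi : \chi(g) = 1\}$, i.e.\ if and only if $g \in G(\sigma)^{\perp}$. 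Hence $\{g \in \Gamma : \hat{\sigma}(g) = 1\} = G(\sigma)^{\perp}$. Taking annihilators of both sides and using that the annihilator of the annihilator of a set is the closed subgroup it generates --- so that $\left(G(\sigma)^{\perp}\right)^{\perp} = G(\sigma)$, as $G(\sigma)$ is already a closed subgroup --- yields $\{g \in \Gamma : \hat{\sigma}(g) = 1\}^{\perp} = G(\sigma)$, the claimed formula.

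\emph{Step 2 (a dichotomy for fixed sets).} Let $\left( \Omega, \B_{\Omega}, \nu_{\sigma}, (S_g)_{g \in \Gamma} \right)$ be the Gaussian system associated to $\sigma$, and fix $g \in \Gamma$. I claim $\nu_{\sigma}\left( \{\omega : S_g\omega = \omega\} \right) > 0$ if and only if $\hat{\sigma}(g) = 1$. If $\hat{\sigma}(g) = 1$, then, as in the proof of Lemma \ref{lem: rigidity Gaussian}, $\|S_g\pi_0 - \pi_0\|_2^2 = 2 - 2\operatorname{Re}\hat{\sigma}(g) = 0$, so $S_g$ fixes the Gaussian element $\pi_0$ in $L^2$, and hence (by commutativity of the action) every coordinate function $S_h\pi_0$, $h \in \Gamma$; since $\Gamma$ is countable, for $\nu_{\sigma}$-almost every $\omega$ all of these coordinates are preserved simultaneously, which is exactly the statement $S_g\omega = \omega$, so the fixed set is conull. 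Conversely, if $\hat{\sigma}(g) \ne 1$, then $|\hat{\sigma}(g)| \le 1$ forces $\operatorname{Re}\hat{\sigma}(g) < 1$, so $\pi_0 - S_g\pi_0$ is a nonzero element of the complex Gaussian space and therefore has non-atomic distribution; since $\{\omega : S_g\omega = \omega\} \subseteq \{\omega : \pi_0(\omega) = (S_g\pi_0)(\omega)\}$, the fixed set is $\nu_{\sigma}$-null.

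\emph{Step 3 (conclusion) and the main difficulty.} By definition the Gaussian system is free if and only if $\nu_{\sigma}\left( \{\omega : S_g\omega = \omega\} \right) = 0$ for every $g \ne 0$; by Step 2 this is equivalent to $\hat{\sigma}(g) \ne 1$ for all $g \ne 0$, i.e.\ to $\{g \in \Gamma : \hat{\sigma}(g) = 1\} = \{0\}$. By Step 1 this reads $G(\sigma)^{\perp} = \{0\}$, and since $\Delta \mapsto \Delta^{\perp}$ is a bijection on closed subgroups with $\{0\}^{\perp} = \dual$, this is in turn equivalent to $G(\sigma) = \dual$. The genuinely delicate point is the dichotomy in Step 2: one cannot simply invoke ergodicity to conclude that a fixed set is null or conull, because the Gaussian system need not be ergodic --- it is ergodic exactly when $\sigma$ is continuous, by Proposition \ref{prop: WM Gaussian} --- so the argument must be made by hand, using the non-degeneracy of the Gaussian variable $\pi_0 - S_g\pi_0$ in one direction and the countability of $\Gamma$ in the other.
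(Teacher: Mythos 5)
Your Step 1 is exactly the paper's argument: the equality case of $|\hat{\sigma}(g)|\le 1$, the observation that $\{\chi : \chi(g)=1\}$ is a closed subgroup, and then taking annihilators. The paper stops there and treats the freeness equivalence as immediate, whereas your Step 2 supplies the justification it leaves implicit --- namely that the fixed-point set of $S_g$ in the Gaussian system is conull when $\hat{\sigma}(g)=1$ (by countability of $\Gamma$) and null when $\hat{\sigma}(g)\ne 1$ (by non-degeneracy of the Gaussian variable $\pi_0 - S_g\pi_0$); this dichotomy is correct and is a worthwhile addition, since, as you note, one cannot appeal to ergodicity here.
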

\begin{proof}
	Since $\sigma$ is a probability measure and
		\begin{align*}
		\hat{\sigma}(g) = \int_{\dual}{\chi(g)~d\sigma(\chi)},
	\end{align*}
	
	\noindent we have $\hat{\sigma}(g) = 1$ if and only if $\chi(g) = 1$ for $\sigma$-a.e. $\chi \in \dual$.
	Noting that, for any $g \in \Gamma$, the set $\{\chi \in \dual : \chi(g) = 1\}$ is a closed subgroup,
	we have that $\hat{\sigma}(g) = 1$ if and only if $\chi(g) = 1$ for every $\chi \in G(\sigma)$.
	That is, $\{g \in \Gamma : \hat{\sigma}(g) = 1\} = G(\sigma)^{\perp}$, so we are done.
\end{proof}

Similar considerations for discrete spectrum systems produce the following:

\begin{lemma} \label{lem: free DS}
	Suppose $\left( X, \B, \mu, (T_g)_{g \in \Gamma} \right)$ is an ergodic measure-preserving system
	with discrete spectrum that is rigid along a sequence $(a_n)_{n \in \N}$.
	Then
		\begin{align*}
		C := \{ \chi \in \dual : \chi(a_n) \to 1 \}
	\end{align*}
	
	\noindent is dense in the subgroup $\{g \in \Gamma : T_g = \id\}^{\perp} \subseteq \dual$.
	In particular, if the system is free, then $C$ is dense in $\dual$.
\end{lemma}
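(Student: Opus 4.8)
The plan is to realize the system as a group rotation and transport the relevant characters through Pontryagin duality, following the mechanism used for Lemma~\ref{lem: rigid characters}(1) but keeping track of the subgroup $\Delta := \{g \in \Gamma : T_g = \id\}$. Since the system is ergodic with discrete spectrum, Proposition~\ref{prop: DS equiv}(ii) lets me assume $X = Y$ is a compact abelian group with Haar measure $\nu$ and $T_g x = x + \alpha_g$, where $\phi : \Gamma \to Y$, $\phi(g) := \alpha_g$, is a homomorphism. Because Haar measure has full support and translations of $Y$ are continuous, $T_g = \id$ on $(Y,\nu)$ forces $\alpha_g = 0$, and the converse is immediate; hence $\Delta = \ker \phi$.

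For $\lambda \in \hat Y$ put $\chi_\lambda := \lambda \circ \phi \in \dual$; this is a character of $\Gamma$, and $E := \{\chi_\lambda : \lambda \in \hat Y\}$ is a subgroup of $\dual$. The key point is that $E^\perp = \Delta$: for $g \in \Gamma$ one has $\chi_\lambda(g) = \lambda(\alpha_g) = 1$ for every $\lambda \in \hat Y$ if and only if $\alpha_g = 0$ (characters separate points of the compact group $Y$), i.e. if and only if $T_g = \id$. By the biduality relation recalled before Lemma~\ref{lem: free WM} (the annihilator of the annihilator of a set is the closed subgroup it generates), together with the fact that $E$ is already a subgroup, it follows that $\overline{E} = (E^\perp)^\perp = \Delta^\perp$. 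Rigidity now enters only to place $E$ inside $C$: fixing $\lambda \in \hat Y$ and viewing it as a norm-one element of $L^2(Y,\nu)$, we have $T_{a_n}\lambda = \lambda(\alpha_{a_n})\,\lambda$, so
\[
|\chi_\lambda(a_n) - 1| = |\lambda(\alpha_{a_n}) - 1| = \|T_{a_n}\lambda - \lambda\|_2 \tendsto{n \to \infty} 0
\]
since $(a_n)_{n \in \N}$ is rigid for $T$. Hence $E \subseteq C$, and therefore $\Delta^\perp = \overline{E} \subseteq \overline{C}$; that is, $C$ (indeed $C \cap \Delta^\perp$, which contains the dense subgroup $E$ of $\Delta^\perp$) is dense in $\{g \in \Gamma : T_g = \id\}^\perp$. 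The ``in particular'' is the case $\Delta = \{0\}$: for a free system $T_g = \id$ only when $g = 0$, so $\Delta^\perp = \dual$ and $C$ is dense in $\dual$.

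The only step requiring genuine care is the duality bookkeeping — computing $E^\perp$ correctly and invoking biduality to land on exactly $\Delta^\perp$ — and even that is immediate from the Pontryagin facts already recorded in the paragraph preceding Lemma~\ref{lem: free WM}. Everything else is a direct adaptation of the discrete-spectrum computation already carried out for Lemma~\ref{lem: rigid characters}(1), so I expect no serious obstacle.
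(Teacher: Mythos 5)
Your proof is correct and follows essentially the same route as the paper: both arguments identify the group of eigenvalues (your $E = \{\lambda\circ\phi : \lambda\in\hat Y\}$ is exactly that group in the rotation model) as a subgroup of $C$ whose annihilator is $\{g : T_g = \id\}$, and then conclude density from biduality. The only cosmetic difference is that you pass through the concrete group-rotation realization of Proposition \ref{prop: DS equiv}(ii) and use that characters of $Y$ separate points, whereas the paper works abstractly with the eigenvalue group and the fact that eigenfunctions span $L^2(\mu)$.
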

\begin{proof}
	Let $\Gamma_0 := \{g \in \Gamma : T_g = \id\}$.
	If $\chi \in \dual$ is an eigenvalue for the system, then $\chi(a_n) \to 1$
	(see the proof of Lemma \ref{lem: rigid characters}).
	So, it suffices to check that the group of eigenvalues
		\begin{align*}
		\Lambda := \left\{ \chi \in \dual : \text{there exists}~f \in L^2(\mu)
		 ~\text{such that}~T_gf = \chi(g)f~\text{for every}~g \in \Gamma \right\}
	\end{align*}
	
	\noindent is dense in $\Gamma_0^{\perp}$.
	Equivalently, we want to show $\Lambda^{\perp} = \Gamma_0$.
	
	Let $g \in \Lambda^{\perp}$.
	By definition, $\chi(g) = 1$ for every $\chi \in \Lambda$.
	Since $L^2(\mu)$ is spanned by eigenfunctions, it follows that $T_g = \id$.
	That is, $g \in \Gamma_0$.
	
	Conversely, suppose $g \in \Gamma_0$.
	Let $\chi \in \Lambda$, and let $f \in L^2(\mu)$ such that $T_hf = \chi(h)f$ for every $h \in \Gamma$.
	Since $T_g = \id$, we have $T_gf = f$, so $\chi(g) = 1$.
	Thus, $g \in \Lambda^{\perp}$.
\end{proof}

Now we can prove Theorem \ref{thm: free}:

\begin{proof}[Proof of Theorem \ref{thm: free}]
	Suppose $(a_n)_{n \in \N}$ is a rigidity sequence for a free ergodic\break measure-preserving system
	$\left( X, \B, \mu, (T_g)_{g \in \Gamma} \right)$.
	
	Rather than producing a free weakly mixing system directly,
	it suffices by Lemmas \ref{lem: rigid characters} and \ref{lem: free WM} to construct
	a probability measure $\sigma \in \P(\dual)$ satisfying three conditions:
	
	\begin{enumerate}[(1)]
		\item	$\sigma$ is continuous;
		\item	$\hat{\sigma}(a_n) \to 1$;
		\item	$G(\sigma) = \dual$.
	\end{enumerate}
	
	The method for constructing such a measure depends on whether or not
	the system $\left( X, \B, \mu, (T_g)_{g \in \Gamma} \right)$ has discrete spectrum,
	so we break the proof into two cases from here.
	
	\vspace*{4pt}\noindent
	\textbf{Case 1.} $\left( X, \B, \mu, (T_g)_{g \in \Gamma} \right)$ does not have discrete spectrum.
	
	Let $\nu \in \P(\dual)$ be the maximal spectral type of $\left( X, \B, \mu, (T_g)_{g \in \Gamma} \right)$.
	We claim $\{g \in \Gamma : \hat{\nu}(g) = 1\} = \{0\}$.
	Indeed, suppose $\hat{\nu}(g) = 1$.
	Then $\chi(g) = 1$ for $\nu$-a.e. $\chi \in \dual$.
	Hence, for any $f \in L^2(\mu)$, since $\sigma_f \ll \nu$, we have $\chi(g) = 1$ for $\sigma_f$-a.e. $\chi \in \dual$.
	Therefore, $\hat{\sigma}_f(g) = \|f\|_2^2$, so $T_gf = f$.
	Since the system is free, this can only happen for $g = 0$.
	
	Decompose $\nu = t \nu_a + (1-t) \nu_c$, where $t \in (0,1)$,
	$\nu_a \in \P(\dual)$ is purely atomic and $\nu_c \in \P(\dual)$ is continuous.
	We claim that the convolution $\sigma := \nu_a * \nu_c$ satisfies the desired conditions.

	(1) The measure $\nu_c$ is continuous, so $\sigma$ is also continuous (see \cite[Theorem 19.16]{hr1}).
	
	(2) By the convolution theorem (see \cite[Theorem 1.3.3(b)]{rudin}),
	$\hat{\sigma}(g) = \hat{\nu}_a(g) \cdot \hat{\nu}_c(g)$ for every $g \in \Gamma$.
	Since $(a_n)_{n \in \N}$ is a rigidity sequence for $\left( X, \B, \mu, (T_g)_{g \in \Gamma} \right)$,
	we have $\hat{\nu}(a_n) \to 1$.
	Thus, $\hat{\nu}_a(a_n) \to 1$ and $\hat{\nu}_c(a_n) \to 1$, so $\hat{\sigma}(a_n) \to 1$ as desired.
	
	(3) By Lemma \ref{lem: free WM}, it suffices to show that $\hat{\sigma}(g) \ne 1$ for $g \in \Gamma \setminus \{0\}$.
	Let $g \in \Gamma \setminus \{0\}$.
	We know $\hat{\nu}(g) \ne 1$.
	Hence, either $\hat{\nu}_a(g) \ne 1$ or $\hat{\nu}_c(g) \ne 1$.
	In either case, taking the product, we have $\hat{\sigma}(g) = \hat{\nu}_a(g) \cdot \hat{\nu}_c(g) \ne 1$.

		\vspace*{4pt}\noindent\textbf{Case 2.} $\left( X, \B, \mu, (T_g)_{g \in \Gamma} \right)$ has discrete spectrum.
	
	By Lemma \ref{lem: free DS}, $C := \{ \chi \in \dual : \chi(a_n) \to 1 \}$ is dense in $\dual$.
	Let $C_0 \subseteq C$ be a countable dense subset
	(for example, we could take $C_0$ to be the group of eigenvalues, but such a concrete description is not needed).
	For each $\chi \in C_0$, let $\sigma_{\chi} \in \P_c(\dual)$ be a measure constructed as in the proof
	of Theorem \ref{thm: DS implies WM} with $\chi_2 = \chi$.
	Let $\alpha : C_0 \to (0,1)$ so that $\sum_{\chi \in C_0}{\alpha(\chi)} = 1$,
	and set $\sigma := \sum_{\chi \in C_0}{\alpha(\chi) \sigma_{\chi}}$.
	We claim that $\sigma$ has the desired properties.
	
	(1) Each $\sigma_{\chi}$ is continuous, so their weighted sum $\sigma$ is also continuous.
	
	(2) By construction, $\hat{\sigma}_{\chi}(a_n) \to 1$ for every $\chi \in C_0$.
	Hence, by the dominated convergence theorem,
		\begin{align*}
		\hat{\sigma}(a_n) = \sum_{\chi \in C_0}{\alpha(\chi) \hat{\sigma}_{\chi}(a_n)}
		 \to \sum_{\chi \in C_0}{\alpha(\chi)} = 1.
	\end{align*}
	
	(3) Since $\alpha(\chi) > 0$ for each $\chi \in C_0$, $\supp{\sigma}$ contains $\supp{\sigma_{\chi}}$.
	We want to check that $\chi \in \supp{\sigma_{\chi}}$ for each $\chi \in C_0$.
	Let $(U_p)_{p \ge 0}$ be a neighborhood basis at $0$ in $\dual$.
	Without loss of generality, we may assume $U_0 \supseteq U_1 \supseteq \cdots$.
	(For example, we could take $U_p$ to be a ball of radius $2^{-p}$ around 0 in some metric on $\dual$.)
	Now, in the proof of Theorem \ref{thm: DS implies WM}, we may impose the additional assumption
	that $V_{p,r}$ is a subset of $\chi_r + U_p$.
	(This can be done by replacing $V_{p,r}$ with the set $V_{p,r} \cap (\chi_r + U_p)$
	at each step of the induction process.)
	Now let $U$ be a neighborhood of $\chi$ in $\dual$.
	Then for some $p \ge 0$, we have $\chi + U_p \subseteq U$, so $V_{p,2} \subseteq U$.
	Therefore, $\sigma_{\chi}(U) \ge \sigma_{\chi}(V_{p,2}) = 2^{-p} > 0$.
	This show that $\chi \in \supp{\sigma_{\chi}}$, so $\chi \in \supp{\sigma}$ for every $\chi \in C_0$.
	But $C_0$ is dense in $\dual$, so $\supp{\sigma} = \dual$.
\end{proof}

We now deduce topological corollaries from Theorem \ref{thm: rigidWM},
including an extension of \cite[Theorem 3.8]{bgm}.
First, we need two definitions from \cite{bdmw}.

\begin{definition}
	A (Hausdorff) group topology $\tau$ on $\Gamma$ is \emph{pre-compact}
	if for every $\es \ne U \in \tau$, there is a finite set $F \subseteq \Gamma$ so that $U + F = \Gamma$.
\end{definition}

The term \emph{pre-compact} alludes to the existence of a \emph{compactification} with a compatible topology.
A topological space $X$ is a \emph{compactification} of another topological space $Y$
if $X$ is compact and there is a continuous injection $\iota : Y \to X$
such that $\iota(Y)$ is dense in $X$.
It is not hard to show that a group topology $\tau$ is pre-compact if and only if it is the subspace topology induced by some compactification of $\Gamma$.
In what follows, we assume that all topologies are Hausdorff.

\begin{definition}
	A sequence $(a_n)_{n \in \N}$ in $\Gamma$ is \emph{totally bounded} (a \emph{TB-sequence} for short)
	if there is a pre-compact group topology $\tau$ on $\Gamma$ such that $a_n \to 0$ in $(\Gamma, \tau)$.
\end{definition}

In \cite{bdmw}, it is shown that a sequence $(n_k)_{k \in \N}$ in $\Z$ is a TB-sequence
if and only if $\lambda^{n_k} \to 1$ for infinitely many $\lambda \in \T$.
By Lemma \ref{lem: rigid characters}, this means that TB-sequences in $\Z$ are rigidity sequences
for ergodic systems with discrete spectrum.
Badea, Grivaux, and Matheron use this observation to conclude that TB-sequences are rigidity sequences
(see \cite[Theorem 3.8]{bgm}).

In general groups, there may be rigidity sequences for discrete spectrum systems
that are not TB-sequences (see Remark \ref{rem: TB-seq} below).
However, every TB-sequence is still rigid for some system with discrete spectrum:

\begin{corollary} \label{cor: TB implies rig}
	TB-sequences are rigidity sequences.
\end{corollary}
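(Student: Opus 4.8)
The plan is to bypass constructing a weakly mixing system directly and instead feed the combinatorial input into Lemma~\ref{lem: rigid characters}(1) and Theorem~\ref{thm: rigidWM}. Concretely, given a TB-sequence $(a_n)_{n\in\N}$, I want to show that $C := \{\chi \in \dual : \chi(a_n) \to 1\}$ is infinite; Lemma~\ref{lem: rigid characters}(1) then makes $(a_n)_{n\in\N}$ rigid for some ergodic system with discrete spectrum, and Theorem~\ref{thm: rigidWM} upgrades this to rigidity for a weakly mixing system, which is exactly what ``rigidity sequence'' now means. (Here we assume $\Gamma$ is infinite, as elsewhere; for finite $\Gamma$ a non-atomic Lebesgue space admits no weakly mixing action, so the statement is of interest only in the infinite case.)

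The one genuine step is extracting infinitely many characters of $\Gamma$ that tend to $1$ along $(a_n)_{n\in\N}$ from the abstract hypothesis that some pre-compact Hausdorff group topology $\tau$ has $a_n \to 0$. First I would pass to the completion $\widetilde\Gamma$ of $(\Gamma,\tau)$, which, $\tau$ being a pre-compact Hausdorff group topology, is a compact abelian group containing $\Gamma$ as a dense subgroup. Since $\Gamma$ is infinite, $\widetilde\Gamma$ is an infinite compact abelian group, so its (discrete) dual $\hat{\widetilde\Gamma}$ is infinite. Each $\lambda \in \hat{\widetilde\Gamma}$ restricts to a homomorphism $\Gamma \to \T$, i.e.\ an element of $\dual$; restriction is injective because $\Gamma$ is dense in $\widetilde\Gamma$ and characters are continuous; and each such restriction is $\tau$-continuous, hence satisfies $\lambda(a_n) \to \lambda(0) = 1$ since $a_n \to 0$ in $\tau$. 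This exhibits an infinite subset of $C$, so $C$ is infinite, and the reduction of the previous paragraph finishes the proof. (Equivalently, one can invoke the standard description of pre-compact Hausdorff group topologies on $\Gamma$ as the topologies $\tau_H$ of pointwise convergence against point-separating subgroups $H \le \dual$; then $H$ is automatically infinite when $\Gamma$ is, and $H \subseteq C$ directly.)

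The main — essentially the only — obstacle is the topological-group input in the previous paragraph: that a pre-compact Hausdorff abelian group carries a point-separating (hence, for an infinite group, infinite) family of continuous characters. This is Peter--Weyl/Pontryagin duality applied to the compact completion, and the paper already takes for granted that pre-compact topologies arise from compactifications, so this should be a short remark rather than a real difficulty. Everything else is bookkeeping. As an alternative to routing through discrete-spectrum systems, once $C$ is known to be infinite one can instead apply Theorem~\ref{thm: DS implies WM} to get $\sigma \in \P_c(\dual)$ with $\hat\sigma(a_n)\to 1$, and then the Gaussian system attached to $\sigma$ is weakly mixing by Proposition~\ref{prop: WM Gaussian} and rigid along $(a_n)_{n\in\N}$ by Lemma~\ref{lem: rigidity Gaussian}.
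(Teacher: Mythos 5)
Your proof is correct and is essentially the paper's argument viewed through Pontryagin duality: the paper forms the same compactification of $(\Gamma,\tau)$ and exhibits the translation action on it directly as a minimal, uniquely ergodic (hence ergodic) rigid rotation system before invoking Theorem~\ref{thm: rigidWM}, whereas you pass to the dual of that compactification to produce infinitely many characters in $C$ and feed them into Lemma~\ref{lem: rigid characters}(1) and Theorem~\ref{thm: DS implies WM}. Both routes rest on the same fact---that the compact completion carries a point-separating, hence infinite, family of continuous characters---so the difference is one of packaging rather than substance.
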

\begin{proof}
	Let $\tau$ be a pre-compact group topology on $\Gamma$ such that $a_n \to 0$.
	Let $X$ be the corresponding compactification of $\Gamma$.
	Since $\Gamma$ is dense in $X$, the action of $\Gamma$ on itself by translation
	extends to a minimal action $T: \Gamma \acts X$.
	The space $X$ has a group structure, and $T$ acts by group translations.
	Any minimal such system is uniquely ergodic.
	In particular, $(T_g)_{g \in \Gamma}$ is ergodic and preserves the Haar measure $\mu$ on $X$.
	Moreover, since $a_n \to 0$ in $(\Gamma, \tau)$, we immediately have $T_{a_n} \to \id$,
	so $(a_n)_{n \in \N}$ is a rigidity sequence by Theorem \ref{thm: rigidWM}.
\end{proof}

\begin{remark}
	The system constructed in the preceding proof has discrete spectrum
	(see Proposition \ref{prop: DS equiv}(ii)).
	Moreover, since the topology $\tau$ is Hausdorff, $\Gamma$ acts freely on $X$.
	This is not needed to conclude that TB-sequences are rigidity sequences,
	but it does lead towards a dynamical characterization of TB-sequences
	(see Proposition \ref{prop: free rig implies null} and Remark \ref{rem: TB-seq} below).
\end{remark}

More generally, following Ruzsa \cite{ruzsa}, we say that a sequence $(a_n)_{n \in \N}$ is \emph{nullpotent}
if there is some Hausdorff group topology $\tau$ on $\Gamma$ such that $a_n \to 0$ in $(\Gamma, \tau)$.
Such sequences are also called \emph{T-sequences} by Protasov and Zelenyuk \cite{pz},
but we stick to Ruzsa's terminology to be consistent with \cite{bgm}.
Badea, Grivaux, and Matheron showed that every rigidity sequence in $\Z$ is nullpotent (see \cite[Corollary 3.6]{bgm}).
We extend this to our setting under the assumption that the measure-preserving action is free
(as discussed in the introduction, this assumption is extraneous in $\Z$).

\begin{proposition} \label{prop: free rig implies null}
	Suppose $(a_n)_{n \in \N}$ is rigid for a free ergodic action $\Gamma \acts (X, \B, \mu)$.
	Then $(a_n)_{n \in \N}$ is nullpotent.
\end{proposition}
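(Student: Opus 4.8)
The plan is to realize the desired group topology on $\Gamma$ as the pullback of the strong operator topology under the Koopman representation of a free ergodic system along which $(a_n)_{n \in \N}$ is rigid. So fix such a system $\left( X, \B, \mu, (T_g)_{g \in \Gamma} \right)$, let $U(L^2(\mu))$ denote the group of unitary operators on $L^2(\mu)$ equipped with the strong operator topology (SOT), and consider the homomorphism $\rho : \Gamma \to U(L^2(\mu))$, $\rho(g) = T_g$, where $T_g$ now denotes the associated Koopman operator. I would first recall that $(U(L^2(\mu)), \text{SOT})$ is a Hausdorff topological group: multiplication is jointly continuous on the unitary group since $\|U_n V_n \xi - U V \xi\| \le \|V_n \xi - V \xi\| + \|(U_n - U)(V\xi)\|$, and inversion is continuous there since $\|U^{-1}\xi - V^{-1}\xi\| = \|(V - U)V^{-1}\xi\|$ for unitaries $U, V$.

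The only place the freeness hypothesis enters is to ensure that $\rho$ is injective. Suppose $T_g = \id$ as an operator on $L^2(\mu)$. Then for every $A \in \B$ we have $\ind_{T_g^{-1}A} = T_g \ind_A = \ind_A$ in $L^2(\mu)$, so $T_g^{-1}A = A$ up to a $\mu$-null set. Choosing a countable family of sets in $\B$ that separates the points of $X$ modulo $\mu$ (possible since $(X, \B, \mu)$ is a Lebesgue space), one concludes that $T_g x = x$ for $\mu$-a.e.\ $x \in X$. Since the action is free, this forces $g = 0$; thus $\rho$ is injective. (This step genuinely requires freeness: it is exactly what fails for the non-free $\Z^d$-actions discussed in Section \ref{sec: rigid/WM}.)

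Now let $\tau$ be the initial topology on $\Gamma$ induced by $\rho$, i.e.\ the coarsest topology making $\rho$ continuous. Since $\rho$ is an injective group homomorphism into the Hausdorff topological group $(U(L^2(\mu)), \text{SOT})$, the topology $\tau$ is a Hausdorff group topology on $\Gamma$. Finally, by definition the fact that $(a_n)_{n \in \N}$ is rigid for $T$ means $\|T_{a_n}f - f\|_2 \to 0$ for every $f \in L^2(\mu)$, which is precisely the statement that $\rho(a_n) \to \id = \rho(0)$ in SOT, hence $a_n \to 0$ in $(\Gamma, \tau)$. Therefore $(a_n)_{n \in \N}$ is nullpotent, and one notes that ergodicity is not actually needed for this argument.

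There is no serious obstacle here: the content is just the faithfulness of the Koopman representation of a free action, together with the standard fact that SOT makes the unitary group a topological group, so I would flag the faithfulness step as the one point deserving care. As an alternative phrasing, one can run the same argument at the level of the maximal spectral type $\nu$ of $T$, taking $\tau$ to be the pullback of the $L^2(\dual, \nu)$-topology under $g \mapsto \left( \chi \mapsto \chi(g) \right)$: Hausdorffness then reduces to the implication $\hat{\nu}(g) = 1 \Rightarrow g = 0$, which follows from freeness exactly as in Case 1 of the proof of Theorem \ref{thm: free}, and $a_n \to 0$ because $\int_{\dual}{|\chi(a_n) - 1|^2 \, d\nu(\chi)} = 2 - 2\,\mathrm{Re}\,\hat{\nu}(a_n) \to 0$.
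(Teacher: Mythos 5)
Your proof is correct and is essentially the paper's argument: both exploit that freeness makes the Koopman data injective and that rigidity says $T_{a_n}\to\id$ in the $L^2$ sense, then pull back a Hausdorff group topology along this map. The only cosmetic difference is that the paper works with the orbit map $g \mapsto T_g f$ of a single injective $f \in L^2(\mu)$, giving a translation-invariant metric $d(g,h) = \|T_g f - T_h f\|_2$, rather than with the full Koopman representation into the unitary group with the strong operator topology.
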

\begin{proof}
	Let $f \in L^2(\mu)$ so that $T_gf \ne f$ for every $g \in \Gamma \setminus \{0\}$.
	Such an $f$ exists since the action is free (all that is required is that $f$ is injective).
	Then we can define a metric $d$ on $\Gamma$ by $d(g,h) := \left\| T_gf - T_hf \right\|$.
	Note that $d$ is translation-invariant, so it induces a group topology $\tau$ on $\Gamma$.
	Moreover, $d(a_n,0) = \|T_{a_n}f - f\| \to 0$.
	Thus, $(a_n)_{n \in \N}$ is nullpotent.
\end{proof}

\begin{remark} \label{rem: TB-seq}
	The same proof shows that if $\Gamma \acts (X, \B, \mu)$ is a free ergodic action
	with discrete spectrum for which $(a_n)_{n \in \N}$ is rigid, then $(a_n)_{n \in \N}$ is a TB-sequence,
	providing a partial converse to Corollary \ref{cor: TB implies rig}.
	This yields an alternative (dynamical) proof to the characterization of TB-sequences
	given in \cite[Proposition 3.2]{dmt}:
	$(a_n)_{n \in \N}$ is a TB-sequence if and only if $\{\chi \in \dual : \chi(a_n) \to 1\}$ is dense in $\dual$.
\end{remark}

Combining the above observations, we get the following topological
version of Theorems \ref{thm: rigidWM} and \ref{thm: free}:

\begin{corollary} \label{cor: top DS implies WM}
	Let $(a_n)_{n \in \N}$ be a sequence in $\Gamma$.
	If there is a (Hausdorff) group topology $\tau$ on $\Gamma$
	such that $a_n \to 0$ in $(\Gamma, \tau)$,
	then there is a non-pre-compact (Hausdorff) group topology $\tau'$ on $\Gamma$
	such that $a_n \to 0$ in $(\Gamma, \tau')$.
\end{corollary}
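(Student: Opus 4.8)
The plan is to treat separately the case where the given topology $\tau$ is pre-compact. If $\tau$ is not pre-compact, there is nothing to prove: I take $\tau' = \tau$. So assume $\tau$ is pre-compact; this says precisely that $(a_n)_{n \in \N}$ is a TB-sequence, and I work under this hypothesis from now on.

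First I would move to the measure-theoretic side. By Corollary \ref{cor: TB implies rig}, $(a_n)_{n \in \N}$ is a rigidity sequence, so Lemma \ref{lem: rigid characters}(2) supplies a continuous measure $\sigma \in \P_c(\dual)$ with $\hat\sigma(a_n) \to 1$. Let $\left( X, \B, \mu, (T_g)_{g \in \Gamma} \right)$ be the Gaussian system associated to $\sigma$, with Gaussian element $f = \pi_0 \in L^2(\mu)$. Then $\int_X f\,d\mu = 0$ while $\int_X |f|^2\,d\mu = \hat\sigma(0) = 1$, so $f$ is not a.e.\ constant; by Proposition \ref{prop: WM Gaussian} this system is weakly mixing; and by Lemma \ref{lem: rigidity Gaussian}, $\|T_{a_n}f - f\|_2 \to 0$.

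Next I would manufacture a (possibly non-Hausdorff) group topology from the orbit of $f$. Set $d(g,h) := \|T_g f - T_h f\|_2$; this is a translation-invariant pseudometric (as in the proof of Proposition \ref{prop: free rig implies null}, but without assuming $f$ injective), so it induces a group topology $\tau_d$ on $\Gamma$, and $a_n \to 0$ in $\tau_d$ because $d(a_n, 0) \to 0$. The point I would emphasize is that $\tau_d$ is \emph{not} pre-compact: if it were, then for each $\ep > 0$ the set $B_\ep := \{g \in \Gamma : d(g,0) < \ep\}$ would satisfy $B_\ep + F = \Gamma$ for some finite $F$, and by translation-invariance of $d$ this would make $(\Gamma, d)$ totally bounded, hence the orbit $\{T_g f : g \in \Gamma\} \subseteq L^2(\mu)$ totally bounded --- impossible in a weakly mixing system since $f$ is non-constant (property (iii) of the weak mixing characterization in Section \ref{sec: DS/WM}). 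Finally I would Hausdorff-ify by combining with $\tau$: let $\tau'$ be the coarsest group topology refining both $\tau$ and $\tau_d$, i.e.\ the one with neighborhood basis at $0$ given by the sets $U \cap B_\ep$ with $U$ a $\tau$-neighborhood of $0$ and $\ep > 0$. Then $a_n \to 0$ in $\tau'$ (it converges in both $\tau$ and $\tau_d$); $\tau'$ is Hausdorff because $\tau' \supseteq \tau$ and $\tau$ is Hausdorff; and $\tau'$ is not pre-compact because $\tau' \supseteq \tau_d$ and any topology coarser than a pre-compact topology is again pre-compact, so $\tau'$ pre-compact would force $\tau_d$ pre-compact.

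I expect the main obstacle to be exactly this last point. A weakly mixing system gives, through its Gaussian model, a non-constant orbit with non-compact closure, but that orbit sees $\Gamma$ only modulo the subgroup $\{g \in \Gamma : T_g f = f\}$, so the pseudometric topology $\tau_d$ need not separate points and is not on its own a valid candidate for $\tau'$. Intersecting with the given Hausdorff topology $\tau$ restores separation, and this cannot reintroduce pre-compactness precisely because pre-compactness is inherited by coarser topologies; the remaining checks (that the join of two group topologies is a group topology, that the stated neighborhood filter works, and that total boundedness of $(\Gamma,d)$ transfers to the orbit of $f$) are routine.
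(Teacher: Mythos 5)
Your proof is correct. The paper leaves this corollary as an unproved combination of the preceding results, and the natural reading of that combination differs from yours in one step: in the pre-compact case, the paper's route is to observe that the compactification construction in Corollary \ref{cor: TB implies rig} yields a \emph{free} ergodic system (because $\tau$ is Hausdorff), upgrade it to a \emph{free} weakly mixing system via Theorem \ref{thm: free}, and then apply Proposition \ref{prop: free rig implies null} verbatim: with $f$ injective on a free system, $d(g,h) = \| T_gf - T_hf \|_2$ is a genuine metric, so the induced topology is Hausdorff outright, and non-pre-compactness follows, exactly as in your argument, from weak mixing and the non-constancy of $f$. You instead take an arbitrary (not necessarily free) weakly mixing Gaussian model, accept that $d$ is only a pseudometric, and restore separation by passing to the join $\tau \vee \tau_d$; the key extra observation --- that the covering property defining pre-compactness is inherited by coarser group topologies, so the join cannot be pre-compact if $\tau_d$ fails the covering property --- is correct and is exactly what makes the repair work. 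Your route has the mild advantage of not needing Theorem \ref{thm: free} or any freeness considerations at all, at the cost of the (routine, and correctly flagged) verification that the join of two group topologies is a group topology. One terminological quibble: the paper defines pre-compactness only for Hausdorff topologies, so ``$\tau_d$ is not pre-compact'' should be read as ``$\tau_d$ fails the finite-translate covering property,'' which is how you in fact use it.
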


%%%%%%%%%%%%%%%%%%%%%%%%%%%%%%%%%%%%%%%%%%%%%%%%%%%%%%%

\subsection{Analyzing the assumption of Theorem \ref{thm: rigidWM}}

It is worth examining where we have used various properties of $\Gamma$.
The translation into Fourier analysis relies on $\Gamma$ being abelian.
For non-abelian groups, some more intricate analysis of the relationship
between finite and infinite-dimensional representations may have some utility,
but it is not at all clear how to extend the argument used here to the non-abelian setting.

Discreteness of the group $\Gamma$ cannot be dropped
from the assumptions of Theorem \ref{thm: rigidWM}.
As an example, Theorem \ref{thm: rigidWM} fails for $\Q$
when equipped with the subspace topology from $\R$.
Consider the $\Q$-action on $\T$ given by $T_q(x) = x + q \pmod{1}$ for $q \in \Q$.
This action is clearly continuous when $\Q$ is endowed with the subspace topology.
Moreover, $\Z$ is rigid for this action.
In fact, $T_n = \id$ for every $n \in \Z$.
By Theorem \ref{thm: rigidWM}, it follows that there is a weakly mixing action of $\Q$
(as a discrete group) for which $\Z$ is rigid.
The following propositions shows that any such action must be
discontinuous in the subspace topology on $\Q$:

\begin{proposition}
	Suppose $S \subseteq \Q$ has bounded gaps.
	That is, there is an $L > 0$ so that $S \cap [t, t+L] \ne \es$ for every $t \in \Q$.
	Then no enumeration of $S$ can be a rigidity sequence in $\Q$
	with the subspace topology.
\end{proposition}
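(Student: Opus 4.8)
The statement to prove, unwound, is this: if a weakly mixing system $\bigl(X,\B,\mu,(T_q)_{q\in\Q}\bigr)$ is continuous for the subspace topology on $\Q$ --- so that in particular $\|T_hf-f\|_2\to 0$ as $h\to 0$ in $\Q$, for every $f\in L^2(\mu)$ --- then no enumeration of $S$ can be rigid for it. So the plan is: assume $(a_n)_{n\in\N}$ enumerates $S$ and is rigid for such a system, and derive a contradiction.

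\emph{Step 1 (pass to an $\R$-action).} Since each $T_h$ is an isometry of $L^2(\mu)$, one has $\|T_hf-T_{h'}f\|_2=\|T_{h-h'}f-f\|_2$, so for each fixed $f$ the orbit map $q\mapsto T_qf$ is uniformly continuous on $\Q$; as $\Q$ is dense in $\R$ and $L^2(\mu)$ is complete it extends uniquely to a continuous map $t\mapsto U_tf$ on $\R$, and $(U_t)_{t\in\R}$ is a strongly continuous one-parameter unitary group extending the Koopman representation of $T$. \emph{Step 2 (continuity of spectral measures).} An eigenvector of $(U_t)$ with eigenvalue $\xi\neq 0$ would restrict to an eigenfunction of $T$ with the nontrivial eigencharacter $q\mapsto e^{2\pi i\xi q}$, which weak mixing forbids, while a $U$-invariant vector is $T$-invariant, hence constant by ergodicity; so by Stone's theorem, for every $f\in L^2(\mu)$ with $\int f\,d\mu=0$ the finite positive spectral measure $\sigma_f$ on $\R$ defined by $\langle f,U_tf\rangle=\int_\R e^{2\pi i\xi t}\,d\sigma_f(\xi)$ is atomless. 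Since $\mu$ is non-atomic, I fix such an $f$ with $\|f\|_2=1$ and set $\sigma:=\sigma_f\in\P_c(\R)$.

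\emph{Step 3 (rigidity in spectral terms).} Rigidity of $(a_n)$ makes $\{s\in S:\|T_sf-f\|_2\ge\ep\}$ finite for each $\ep>0$; since $T_s=U_s$ for $s\in\Q$ and $\|U_sf-f\|_2^2=2-2\operatorname{Re}\hat\sigma(s)$ with $|\hat\sigma(s)|\le 1$, where $\hat\sigma(t):=\int_\R e^{2\pi i\xi t}\,d\sigma(\xi)$, this forces $|\hat\sigma(s)-1|\to 0$ as $|s|\to\infty$ within $S$. Hence there is $M$ with $|\hat\sigma(s)|>1/2$ for all $s\in S$ with $|s|>M$, and by the bounded-gaps hypothesis every real $t$ with $|t|$ large lies within distance $L$ of such an $s$; so $\{t:|\hat\sigma(t)|>1/2\}$ is relatively dense near infinity. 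The tempting way to finish --- a relatively dense set has positive density, contradicting that $\hat\sigma$ is $L^2$-small on average --- does not apply directly, because $S\subseteq\Q$ is countable and hence Lebesgue-null, so $\{|\hat\sigma|>1/2\}$ need not carry any Lebesgue mass. \textbf{This is the main obstacle.}

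\emph{Step 4 (truncation).} The remedy is to replace $\sigma$ by a truncation with \emph{Lipschitz} Fourier transform. Choose $R$ with $\sigma(\{|\xi|>R\})<1/4$ and put $\rho:=\sigma|_{[-R,R]}$, a continuous finite measure. Differentiating under the integral, $\hat\rho$ is $2\pi R$-Lipschitz, and $\|\hat\rho-\hat\sigma\|_\infty\le\sigma(\{|\xi|>R\})<1/4$, so $|\hat\rho(s)|>1/4$ for $s\in S$ with $|s|>M$; by the Lipschitz bound $|\hat\rho|>1/8$ on an interval of fixed length $\asymp 1/R$ about each such $s$, and by relative density these intervals pack with positive frequency, giving $\liminf_{N\to\infty}\tfrac{1}{2N}\bigl|\{t\in[-N,N]:|\hat\rho(t)|>1/8\}\bigr|>0$. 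But $\rho$ is atomless, so the Wiener-type identity $\tfrac{1}{2N}\int_{-N}^N|\hat\rho(t)|^2\,dt=\iint_{\R^2}\operatorname{sinc}\bigl(2\pi N(\xi-\eta)\bigr)\,d\rho(\xi)\,d\rho(\eta)$ (with $\operatorname{sinc}(x)=(\sin x)/x$) together with dominated convergence yields $\tfrac{1}{2N}\int_{-N}^N|\hat\rho(t)|^2\,dt\to\int_\R\rho(\{\xi\})\,d\rho(\xi)=0$, so $\{|\hat\rho|>1/8\}$ has Lebesgue density $0$ --- contradicting the previous line. This finishes the proof. (It is precisely the continuity hypothesis, not discreteness of $\Q$, that powers Step~1; without it the representation need not extend to $\R$ and the scheme must break down, as it must, since $\Z$ is rigid for the subspace-continuous $\Q$-rotation on $\T$.)
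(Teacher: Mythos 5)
Your proof is correct, but it takes a genuinely different route from the paper's. The paper argues more softly and in fact proves something stronger: assuming only that the action is continuous in the subspace topology and rigid along $S$, it shows that every orbit $\{T_qf : q \in \Q\}$ is totally bounded --- rigidity makes the set $P_{\ep/2}(f)$ of $\ep/2$-almost-periods contain all but finitely many points of $S$, hence it still has gaps bounded by some $L$, and then a finite $\delta$-dense subset of $\Q \cap [0,L]$ (with $\delta$ from continuity at $0$) gives a finite $\ep$-net for the whole orbit. Thus the system has discrete spectrum, which is incompatible with weak mixing on a non-atomic space. Your argument instead extends the Koopman representation to a strongly continuous unitary group over $\R$, invokes Stone's theorem and weak mixing to get a continuous spectral measure $\sigma_f$ on $\R$, and then derives a contradiction with the Wiener-type averaging identity; the truncation to $[-R,R]$ to make $\hat\sigma$ Lipschitz is exactly the right device to convert the (Lebesgue-null) relatively dense set $S$ into a set of positive Lebesgue density where $|\hat\rho|$ is bounded below, and all the estimates check out. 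What the paper's approach buys is elementarity (no Stone's theorem, no Wiener lemma) and the sharper structural conclusion that any such continuous rigid action is compact; what yours buys is a self-contained spectral/harmonic-analytic mechanism that makes transparent exactly where bounded gaps collide with continuity of the spectral measure.
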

\begin{proof}
	Suppose $\left( X, \B, \mu, (T_q)_{q \in \Q} \right)$ is a measure-preserving system
	with $(T_q)_{q \in \Q}$ acting continuously in the subspace topology,
	and suppose that (some enumeration of) $S$ is rigid for this action.
	We will show that $(T_q)_{q \in \Q}$ has discrete spectrum.
	
	Fix $f \in L^2(\mu)$.
	We want to show that the orbit $\{T_qf : q \in \Q\}$ is pre-compact (totally bounded) in $L^2(\mu)$.
	Let $\ep > 0$.
	Since $S$ is rigid, the set
		\begin{align*}
		P_{\ep/2}(f) := \left\{ \tau \in \Q : \|T_{\tau}f - f\| < \frac{\ep}{2} \right\}
	\end{align*}
	
	\noindent contains all but finitely many elements of $S$, so it still has bounded gaps.
	Let $L = L(\ep) > 0$ so that $P_{\ep/2}(f) \cap [t, t +L] \ne \es$ for every $t \in \Q$.
	Since the action is continuous, let $\delta > 0$
	so that if $|q| < \delta$, then $\|T_qf - f\| < \frac{\ep}{2}$.
	Let $F \subseteq \Q \cap [0,L]$ be finite and $\delta$-dense.
	
	We claim that $\{T_tf : t \in F\}$ is $\ep$-dense in $\{T_qf : q \in \Q\}$.
	Let $q \in \Q$.
	Choose $s \in P_{\ep/2}(f) \cap [q-L, q]$.
	Then $q-s \in [0,L]$, so there is a $t \in F$ with $|q-s-t| < \delta$.
	Thus,
		\begin{align*}
		\|T_qf - T_tf\| & \le \|T_qf - T_{q-s}f\| + \|T_{q-s}f - T_tf\| \\
		 & = \|T_sf - f\| + \|T_{q-s-t}f - f\| < \frac{\ep}{2} + \frac{\ep}{2} = \ep.
	\end{align*}
	
	\noindent This proves that $\{T_tf : t \in F\}$ is $\ep$-dense, as claimed.
	Therefore, $\{T_qf : q \in \Q\}$ is totally bounded.
	Thus, $(T_q)_{q \in \Q}$ has discrete spectrum
	(condition (iii) of Proposition \ref{prop: DS equiv} is satisfied).
\end{proof}

Finally, although countability of $\Gamma$ does not come into play in Theorem \ref{thm: rigidWM}
(the Cantor-like construction uses only that $\dual$ is a compact Hausdorff space),
it is crucial to our proof of Theorem \ref{thm: free}.
Indeed, in order to construct a continuous measure $\sigma_{\chi}$ for which a pre-specified $\chi$ is in the support,
we needed the open sets $V_{p,2}$ to shrink to $\chi$ as $p \to \infty$
(see (3) in Case 2 of the proof of Theorem \ref{thm: free} above).
By \cite[Theorem 8.6]{hr1}, if $\{\chi\}$ can be written as the intersection of countably many open sets in $\dual$,
then $\dual$ is metrizable, and this happens if and only if $\Gamma$ is countable.

%%%%%%%%%%%%%%%%%%%%%%%%%%%%%%%%%%%%%%%%%%%%%%%%%%%%%%%
%%%%%%%%%%%%%%%%%%%%%%%%%%%%%%%%%%%%%%%%%%%%%%%%%%%%%%%

\section{Examples of rigidity sequences} \label{sec: examples}

Now that we have established Theorem \ref{thm: rigidWM},
we give several examples of nontrivial rigidity sequences for weakly mixing actions.

%%%%%%%%%%%%%%%%%%%%%%%%%%%%%%%%%%%%%%%%%%%%%%%%%%%%

\subsection{Pisot--Vijayaraghavan numbers in $\R$}

A number $\alpha \in \R$ is a \emph{Pisot--\break Vijayaraghavan (PV) number}
if $\alpha > 1$ is an algebraic integer all of whose Galois conjugates have absolute value less than 1.
PV numbers have the property that $\|\alpha^n\| \to 0$,
where $\|\cdot\|$ is the distance to the nearest integer.
Examples of PV numbers include the plastic number (the real root of $x^3-x-1$) and the golden ratio.

\begin{proposition} \label{prop: PV rigid}
	If $\alpha$ is a PV number, then $\round{\alpha^n}$ is a rigidity sequence in $\Z$,
	where $\round{\cdot}$ is the nearest integer function.
\end{proposition}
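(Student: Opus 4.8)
The plan is to verify the hypothesis of Lemma \ref{lem: rigid characters}(1), i.e.\ to exhibit infinitely many characters $\chi\in\hat{\Z}=\T$ with $\chi(\round{\alpha^n})\to 1$, and then invoke Theorem \ref{thm: rigidWM} to pass from an ergodic discrete spectrum system to a weakly mixing one. I would begin by recalling the standard arithmetic of a PV number $\alpha$ of degree $d$: if $\alpha=\alpha_1,\alpha_2,\dots,\alpha_d$ are the Galois conjugates of $\alpha$, then $|\alpha_i|<1$ for $2\le i\le d$, and the power sums $p_n:=\sum_{i=1}^d\alpha_i^n=\mathrm{Tr}_{\Q(\alpha)/\Q}(\alpha^n)$ are rational integers, being traces of the algebraic integers $\alpha^n$. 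Setting $\ep_n:=p_n-\alpha^n=\sum_{i=2}^d\alpha_i^n$, one has $|\ep_n|\le(d-1)\rho^n$ with $\rho:=\max_{2\le i\le d}|\alpha_i|<1$, so $\ep_n\to 0$ geometrically; in particular $\|\alpha^n\|\to 0$ (as already noted in the text) and $\round{\alpha^n}=p_n$ for all sufficiently large $n$. (One also sees in passing that $\round{\alpha^n}$ is well defined for every $n$, since $\alpha^n$, being an algebraic integer, can be a half-integer only if it is an integer.)

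Next I would fix, for each $k\ge 1$, the character $\lambda_k:=e^{2\pi i\alpha^{-k}}$ of $\Z$. Since $\alpha>1$, the numbers $\alpha^{-k}$ are pairwise distinct elements of $(0,1)$, so the $\lambda_k$ are infinitely many distinct characters. I then check that $\lambda_k^{\round{\alpha^n}}\to 1$ as $n\to\infty$ for every fixed $k$, using the identity, valid for $n\ge k$,
\[
	\alpha^{-k}p_n=\alpha^{n-k}+\alpha^{-k}\ep_n=p_{n-k}+\left(\alpha^{-k}\ep_n-\ep_{n-k}\right).
\]
Since $p_{n-k}\in\Z$ and $\alpha^{-k}\ep_n-\ep_{n-k}\to 0$, this gives $\|\alpha^{-k}p_n\|\to 0$, hence $\lambda_k^{p_n}=e^{2\pi i\alpha^{-k}p_n}\to 1$; and for $n$ large this equals $\lambda_k^{\round{\alpha^n}}$ because $\round{\alpha^n}=p_n$.

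Having produced infinitely many $\chi=\lambda_k$ with $\chi(\round{\alpha^n})\to 1$, Lemma \ref{lem: rigid characters}(1) shows $\round{\alpha^n}$ is rigid for some ergodic system with discrete spectrum, and Theorem \ref{thm: rigidWM} (equivalently, Theorem \ref{thm: DS implies WM} combined with Lemma \ref{lem: rigid characters}(2)) then shows it is rigid for a weakly mixing system, which is the meaning of \emph{rigidity sequence}. I do not expect a genuine obstacle here: the entire content is the classical arithmetic of PV numbers, and the only step that is not immediate is the refinement from ``$\alpha^n$ is near an integer'' to ``$\alpha^{-k}p_n$ is near an integer,'' which is exactly the one-line computation displayed above.
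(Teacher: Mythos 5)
Your proof is correct, and it follows the same high-level template as the paper (exhibit enough characters $\chi$ with $\chi(\round{\alpha^n})\to 1$, then invoke Lemma \ref{lem: rigid characters}(1) and Theorem \ref{thm: rigidWM}), but the witnesses and the verification are genuinely different. The paper does not use the conjugates or traces at all: it takes the single irrational $\beta=\alpha$, notes that the rotation $x\mapsto x+\alpha$ on $\T$ is ergodic with discrete spectrum, and verifies rigidity by the one-line estimate
\begin{align*}
	\left\| \round{\alpha^n}\alpha \right\|
	= \left\| \alpha^{n+1} \pm \|\alpha^n\|\cdot\alpha \right\|
	\le \left\| \alpha^{n+1} \right\| + \left\| \|\alpha^n\|\cdot\alpha \right\| \to 0,
\end{align*}
which needs only the defining property $\|\alpha^n\|\to 0$ of a PV number; the infinitude of characters is then automatic from the eigenvalue group $\{e^{2\pi i k\alpha}\}_{k\in\Z}$ of the irrational rotation. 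You instead produce the explicit family $\lambda_k=e^{2\pi i\alpha^{-k}}$ and check convergence via the power-sum identity $\alpha^{-k}p_n=p_{n-k}+(\alpha^{-k}\ep_n-\ep_{n-k})$; this costs you the classical trace arithmetic of PV numbers but buys two things: the argument is fully self-contained (it reproves $\|\alpha^n\|\to 0$ along the way rather than quoting it), and it covers the degenerate case where $\alpha$ is a rational integer $\ge 2$, which the paper's choice $\beta=\alpha$ silently excludes since a rational rotation of the circle is not ergodic. Both computations are sound; the paper's is shorter, yours is more robust.
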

\begin{proof}
	Since any irrational rotation of the circle is ergodic,
	it suffices to check that $\left\| \round{\alpha^n} \beta \right\| \to 0$
	for some irrational $\beta \in \R \setminus \Q$.
	We claim that $\beta = \alpha$ works.
	Indeed,
		\begin{align*}
		\left\| \round{\alpha^n} \alpha \right\|
		 = \left\| \alpha^{n+1} \pm \| \alpha^n \| \cdot \alpha \right\|
		 \le \left\| \alpha^{n+1} \right\| + \left\| \| \alpha^n \| \cdot \alpha \right\|
		 \to 0.
	\end{align*}
	\end{proof}

\begin{remark}
	The key to showing that $\round{\alpha^n}$ is a rigidity sequence is the existence of $\beta$
	such that $\left\| \alpha^n\beta \right\| \to 0$.
	It is known that for algebraic $\alpha$, this condition implies that $\alpha$ is a PV number.
	However, it is an open question whether there is a transcendental $\alpha$ such that
	$\left\| \alpha^n\beta \right\| \to 0$ for some $\beta \ne 0$.
\end{remark}

%%%%%%%%%%%%%%%%%%%%%%%%%%%%%%%%%%%%%%%%%%%%%%%%%%%%

\subsection{PV numbers in finite characteristic}

We can extend the previous example to a parallel situation in $\F_q[t]$.
PV numbers have a natural analogue in this setting, described in detail in \cite{bd}.
We first introduce some suggestive notation.
Let $\scrZ = \F_q[t]$, $\scrQ = \F_q(t)$, $\scrR = \F_q((t^{-1}))$, and $\scrT = \scrR/\scrZ$.
Define $|\cdot|$ on $\scrR$ by
\begin{align*}
	\left| \sum_{n=-\infty}^N{c_nt^n} \right| = q^N,
\end{align*}

\noindent where we assume $c_N \ne 0$.
As with the real line, $\scrR$ is the completion of $\scrQ$ with respect to $|\cdot|$.
Here, $\scrT$ can be seen as the subgroup $t^{-1}\F_q[[t^{-1}]]$ of $\scrR$ so that $\scrR = \scrZ \oplus \scrT$.
For $x \in \scrR$, let $\floor{x} \in \scrZ$ and $\fpart{x} \in \scrT$
so that $x = \floor{x} + \fpart{x}$.
That is, if $x = \sum_{n=-\infty}^{N}{c_nt^n}$, then
\begin{align*}
	\floor{x} & := \sum_{n=0}^{N}{c_nt^n}, \\
	\fpart{x} & := \sum_{n=-\infty}^{-1}{c_nt^n}.
\end{align*}

\noindent We let $\|\cdot\|$ denote the distance to the nearest ``integer'' $\|x\| = |\{x\}|$.

The absolute value $|\cdot|$ on $\scrR$ extends to an absolute value on the algebraic closure $\scrA$.
Suppose $\alpha \in \scrA$ is of degree $d$ over $\scrR$,
and let $N = N_{\scrR(\alpha)/\scrR}$ be the norm on $\scrR(\alpha)/\scrR$.
Then we define
\begin{align*}
	|\alpha| := |N(\alpha)|^{1/d}.
\end{align*}

\noindent This allows us to define PV numbers in the same way as in $\R$.
Namely, $\alpha \in \scrR \setminus \F_q$ is a \emph{Pisot--Vijayaraghavan element} if
$\alpha$ is integral over $\scrZ$ and all of its conjugates have absolute value less than 1 in $\scrA$.
With this definition, Bateman and Duquette show that $\scrR$ contains PV elements
of every (algebraic) degree over $\scrZ$ (see \cite[Theorem 1.1]{bd}).
They are also able to show the following strengthening of the characterization of PV numbers in $\R$:

\begin{theorem}[\cite{bd}, Theorems 2.1 and 3.4] \label{thm: PV fin char}
	Suppose $\alpha \in \scrR$ is a PV element of degree $d$ over $\scrZ$.
	Let $T_{\scrQ(\alpha)/\scrQ}$ denote the trace on the extension $\scrQ(\alpha)/\scrQ$.
	Suppose $\beta \in \scrQ(\alpha)$ such that for some $N \in \N$,
		\begin{align*}
		T_{\scrQ(\alpha)/\scrQ}\left( \alpha^{N+i}\beta \right) \in \scrZ
	\end{align*}
	
	\noindent for $0 \le i \le d - 1$.
	Then $\|\alpha^n \beta\| \to 0$.
	
	Conversely, if $\alpha \in \scrR$ with $|\alpha| > 1$,
	and $\beta \in \scrR$ is nonzero element such that $\|\alpha^n\beta\| \to 0$,
	then $\alpha$ is a PV element, and $\beta$ is an element of $\scrQ(\alpha)$
	such that for some $N \in \N$,
		\begin{align*}
		T_{\scrQ(\alpha)/\scrQ}\left( \alpha^{N+i}\beta \right) \in \scrZ
	\end{align*}
	
	\noindent for $0 \le i \le d - 1$.
\end{theorem}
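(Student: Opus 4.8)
The plan is to handle the two implications separately: the first (from the trace condition to $\|\alpha^n\beta\|\to 0$) is a short computation with linear recurrences, while the second is the $\F_q[t]$-analogue of Pisot's classical theorem and carries essentially all of the difficulty. For the forward direction, write the conjugates of $\alpha$ as $\alpha = \alpha^{(1)},\dots,\alpha^{(d)}\in\scrA$, so $|\alpha^{(j)}|<1$ for $2\le j\le d$, let $\sigma_1,\dots,\sigma_d$ be the corresponding embeddings of $\scrQ(\alpha)$ into $\scrA$, and put $\beta^{(j)}:=\sigma_j(\beta)$. Since $\alpha$ is integral over $\scrZ$ and $\scrZ$ is integrally closed in $\scrQ$, the minimal polynomial of $\alpha$ over $\scrQ$ is monic with coefficients in $\scrZ$, so the sequence $u_n := T_{\scrQ(\alpha)/\scrQ}(\alpha^n\beta) = \sum_{j=1}^d (\alpha^{(j)})^n\beta^{(j)}$ satisfies an order-$d$ linear recurrence over $\scrZ$; hence the hypothesis $u_N,\dots,u_{N+d-1}\in\scrZ$ propagates to $u_n\in\scrZ$ for all $n\ge N$. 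Then $\alpha^n\beta - u_n = -\sum_{j\ge 2}(\alpha^{(j)})^n\beta^{(j)}$ has absolute value at most $\max_{j\ge 2}|\alpha^{(j)}|^n|\beta^{(j)}|\to 0$, so for large $n$ it lies in $\scrT$; therefore $\floor{\alpha^n\beta} = u_n$ and $\|\alpha^n\beta\| = \left| \sum_{j\ge 2}(\alpha^{(j)})^n\beta^{(j)} \right| \to 0$. (If $\scrQ(\alpha)/\scrQ$ happens to be inseparable the trace is identically zero and the hypothesis is vacuous; there I would run the same argument with the coefficients of the minimal polynomial in place of the trace.)

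For the converse, suppose $\alpha\in\scrR$ with $|\alpha|>1$ and $\beta\in\scrR\setminus\{0\}$ with $\|\alpha^n\beta\|\to 0$, and write $\alpha^n\beta = p_n + \delta_n$ with $p_n = \floor{\alpha^n\beta}\in\scrZ$ and $\delta_n = \fpart{\alpha^n\beta}\in\scrT$, so $|\delta_n|\to 0$ and $|p_n| = |\alpha|^n|\beta|$ for all large $n$. The crux --- and the step with no counterpart over $\R$, where the analogous statement for transcendental $\alpha$ is a well-known open problem --- is to prove that $\alpha$ is algebraic over $\scrQ$. I would do this by showing that the generating series $P(z) := \sum_{n\ge 0} p_n z^n \in \scrZ[[z]]$ is a rational function of $z$. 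Since $P(z) = \frac{\beta}{1-\alpha z} - \sum_n \delta_n z^n$ and $\frac{\beta}{1-\alpha z}$ is rational, the (suitably shifted) Hankel determinants of $(p_n)$ differ from $0$ by an explicit polynomial expression in the $\delta_n$; as these determinants lie in $\scrZ$, and $\scrZ$ has no nonzero element of absolute value $<1$, it is enough to make that expression small --- and the real work is to do so even though $(\delta_n)$ need not decay geometrically. This is precisely where the finiteness of the residue field $\F_q$ should be used, and where I expect the main obstacle to lie. Granting rationality, Fatou's lemma over the PID $\scrZ$ gives $P = R/Q$ with $R,Q\in\scrZ[z]$ and $Q(0)\in\F_q^{\times}$; comparing coefficients of $Q(z)\cdot\frac{\beta}{1-\alpha z} - R(z)$ at indices exceeding $\deg R$ and using $|\alpha|>1$ and $\beta\ne 0$ forces $Q(1/\alpha) = 0$. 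Hence $\alpha$ is a root of the monic $\scrZ$-polynomial obtained by reversing a suitable scalar multiple of $Q$, so $\alpha$ is algebraic over $\scrQ$ and integral over $\scrZ$; set $d := [\scrQ(\alpha):\scrQ]$.

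Once $\alpha$ is algebraic, the remainder transcribes the classical Pisot argument. Using that the minimal polynomial of $\alpha$ over $\scrQ$ lies in $\scrZ[X]$, one checks that $(p_n)$ --- and therefore $(\delta_n)$ --- satisfies the associated order-$d$ recurrence for all large $n$, so $\delta_n = \sum_{j=1}^d c_j (\alpha^{(j)})^n$ for suitable $c_j$. Since $p_n\in\scrZ\subseteq\scrQ$, the $\scrQ$-linear isomorphism $\gamma\mapsto\left( T_{\scrQ(\alpha)/\scrQ}(\alpha^n\gamma) \right)_n$ from $\scrQ(\alpha)$ onto the space of $\scrQ$-valued solutions of this recurrence (nondegeneracy of the trace form, using separability) produces $\gamma\in\scrQ(\alpha)$ with $p_n = T_{\scrQ(\alpha)/\scrQ}(\alpha^n\gamma)$ for all large $n$; comparing with $\alpha^n\beta = p_n + \delta_n$ and using $|\alpha|>1$ forces $\beta = \gamma\in\scrQ(\alpha)$, whence $\delta_n = -\sum_{j\ge 2}(\alpha^{(j)})^n\beta^{(j)}$. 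If some conjugate had $|\alpha^{(j_0)}|\ge 1$ with $j_0\ge 2$, then $\delta_n\to 0$ would force $\beta^{(j_0)} = 0$, hence $\beta = 0$ (the embedding $\sigma_{j_0}$ is injective), contradicting $\beta\ne 0$; so $|\alpha^{(j)}|<1$ for all $j\ge 2$, i.e. $\alpha$ is a PV element. Finally $T_{\scrQ(\alpha)/\scrQ}(\alpha^n\beta) = p_n\in\scrZ$ for all large $n$, which gives the required integrality for $n = N,\dots,N+d-1$ once $N$ is chosen large enough.
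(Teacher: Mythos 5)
The paper does not prove this statement at all: it is quoted directly from Bateman and Duquette \cite{bd} and used as a black box, so there is no internal argument to compare yours against. Judged on its own terms, your outline follows the standard Pisot-type strategy and its skeleton is sound, but the one step you explicitly decline to carry out --- forcing the Hankel determinants of $(p_n)$ to vanish when $(\delta_n)$ is only assumed to tend to $0$ --- is precisely the content of the converse, so as written the proposal does not yet prove it.

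That gap is, however, closable, and more easily than you anticipate: the difficulty you flag is an artifact of the archimedean case. For the shifted Hankel determinant $D_n^{(m)}:=\det\left(p_{m+i+j}\right)_{0\le i,j\le n}\in\scrZ$, write $p_k=\alpha^k\beta-\delta_k$; the matrix $\left(\alpha^{m+i+j}\beta\right)_{i,j}$ has rank one, so by multilinearity only the terms using at most one column from it survive, and the ultrametric inequality (no sum over permutations, no factorial losses) gives $\left|D_n^{(m)}\right|\le\max\left(\ep_m^{\,n+1},\,|\alpha|^{m+2n}|\beta|\,\ep_m^{\,n}\right)$ with $\ep_m:=\sup_{k\ge m}|\delta_k|$. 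Choosing $m$ with $\ep_m<|\alpha|^{-2}$ makes this tend to $0$ as $n\to\infty$, hence $D_n^{(m)}=0$ for all large $n$ because a nonzero element of $\scrZ$ has absolute value at least $1$; Kronecker's criterion then yields rationality of $\sum p_kz^k$, and the rest of your argument proceeds. This is exactly why the function-field Pisot theorem requires only $\delta_n\to 0$ where the real-variable theorem requires $\sum\delta_n^2<\infty$. Two further caveats: your parenthetical on the inseparable case does not rescue the forward implication as stated, since if the trace of $\scrQ(\alpha)/\scrQ$ vanishes identically the hypothesis becomes vacuous while the conclusion is false for generic $\beta$, so one must either show PV elements are separable over $\scrQ$ or reformulate as in \cite{bd}; and in the converse you still need to justify that $(p_n)$ eventually satisfies the recurrence attached to the minimal polynomial of $\alpha$ itself (i.e., that the denominator produced by Fatou's lemma contributes no extraneous poles inside the closed unit disk) before invoking the trace-form parametrization of solutions.
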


In particular, if $\alpha \in \scrR$ is a PV element, then $\|\alpha^n\| \to 0$.
Repeating the argument from the proof of Proposition \ref{prop: PV rigid}, we have the following.

\begin{proposition} \label{prop: PV rig fin char}
	If $\alpha \in \scrR$ is a PV element, then $\floor{\alpha^n}$ is a rigidity sequence in $\scrZ$.
\end{proposition}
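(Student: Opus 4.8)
The plan is to run the argument of Proposition~\ref{prop: PV rigid} in the function-field setting, with $\R$, $\Z$, $\T$ replaced by $\scrR$, $\scrZ$, $\scrT$. By Theorem~\ref{thm: rigidWM} it suffices to exhibit an \emph{ergodic} measure-preserving $\scrZ$-system that is rigid along $\left( \floor{\alpha^n} \right)_{n \in \N}$. I would use the rotation action $\scrZ \acts \scrT$ defined by $T_g \bar{x} = \bar{x} + \overline{g\alpha}$, where $\bar{x}$ denotes the class of $x \in \scrR$ in $\scrT = \scrR / \scrZ$, equipped with Haar measure. This is a rotation on a compact abelian group, so it has discrete spectrum; identifying $\hat{\scrT} \cong \scrZ$ via the pairing $(h, \bar{x}) \mapsto \psi \left( \mathrm{res}(hx) \right)$ (with $\mathrm{res}$ the coefficient of $t^{-1}$ and $\psi : \F_q \to \T$ a fixed nontrivial character), one sees the action is ergodic precisely when $h\alpha \notin \scrZ$ for every $h \in \scrZ \setminus \{0\}$. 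Since $\scrZ$ is integrally closed in $\scrQ$ and $\alpha$ is integral over $\scrZ$ of degree at least $2$, we have $\alpha \in \scrR \setminus \scrQ$, and ergodicity follows. (If one wishes to allow degree-one PV elements, i.e.\ $\alpha \in \F_q[t]$, then $\floor{\alpha^n} = \alpha^n$ and one should instead use the $(\alpha)$-adic odometer $\varprojlim \F_q[t] / (\alpha^n)$, which is ergodic with discrete spectrum and along which $\alpha^n \to 0$.)

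The crux is the estimate $\left\| \floor{\alpha^n} \alpha \right\| \to 0$. Because there is no carrying in $\F_q((t^{-1}))$, one has $\fpart{x + y} = \fpart{x} + \fpart{y}$ for all $x, y \in \scrR$, so $\| \cdot \|$ inherits the ultrametric inequality $\| x + y \| \le \max \left\{ \| x \|, \| y \| \right\}$ from $|\cdot|$; moreover $\floor{\alpha^n} = \alpha^n - \fpart{\alpha^n}$ exactly. Hence
\begin{align*}
	\left\| \floor{\alpha^n} \alpha \right\| = \left\| \alpha^{n+1} - \fpart{\alpha^n} \alpha \right\|
	\le \max \left\{ \left\| \alpha^{n+1} \right\|, \left\| \fpart{\alpha^n} \alpha \right\| \right\},
\end{align*}
and both terms tend to $0$: $\left\| \alpha^{n+1} \right\| \to 0$ since $\alpha$ is a PV element (Theorem~\ref{thm: PV fin char}), while $\left\| \fpart{\alpha^n} \alpha \right\| \le \left| \fpart{\alpha^n} \alpha \right| = \left\| \alpha^n \right\| \cdot |\alpha| \to 0$ by multiplicativity of $|\cdot|$.

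It remains to deduce rigidity of the rotation along $\left( \floor{\alpha^n} \right)$. The characters $\chi_h$, $h \in \scrZ$, form an orthonormal basis of $L^2(\scrT)$, and $T_g \chi_h = \psi \left( \mathrm{res}(hg\alpha) \right) \chi_h$; since $\mathrm{res}$ annihilates $\scrZ$, we get $T_{\floor{\alpha^n}} \chi_h = \psi \left( \mathrm{res} \left( h \fpart{\floor{\alpha^n} \alpha} \right) \right) \chi_h$, and once $|h| \cdot \left\| \floor{\alpha^n} \alpha \right\| < 1$ this coefficient equals $1$. So $T_{\floor{\alpha^n}} \chi_h = \chi_h$ for all large $n$, and since the $T_g$ are unitary and trigonometric polynomials are dense in $L^2(\scrT)$, $\left\| T_{\floor{\alpha^n}} f - f \right\|_2 \to 0$ for every $f \in L^2(\scrT)$. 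Thus the rotation is rigid along $\left( \floor{\alpha^n} \right)$, and Theorem~\ref{thm: rigidWM} upgrades this to a rigidity sequence. I do not anticipate a serious obstacle: as in the real case the content is entirely in the one-line estimate above, and the non-archimedean arithmetic of $\scrR$ actually makes it cleaner than over $\R$; the only point needing care is the ergodicity of the rotation, which is why one takes $\alpha \notin \scrQ$.
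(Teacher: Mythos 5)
Your proof is correct and takes essentially the same approach as the paper, which establishes this proposition simply by observing that the argument of Proposition \ref{prop: PV rigid} carries over: the rotation of $\scrT$ by $\alpha$ is ergodic since $\alpha \notin \scrQ$, and $\left\| \floor{\alpha^n} \alpha \right\| \le \max \left\{ \left\| \alpha^{n+1} \right\|, \left\| \fpart{\alpha^n} \alpha \right\| \right\} \to 0$. Your extra care about ergodicity and the degree-one edge case goes beyond what the paper records but does not change the route.
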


\begin{remark}
	The conclusion of Proposition \ref{prop: PV rig fin char} can be strengthened
	to say that $\floor{\alpha^n}$ is a rigidity sequence for a free weakly mixing action of $\scrZ$
	with the help of Theorem \ref{thm: free}.
	Indeed, the ``irrational rotation'' $T_g x := \fpart{x + g \alpha}$ is free and ergodic.
\end{remark}

Motivated by these examples involving PV numbers,
we ask whether a converse exists to Proposition \ref{prop: PV rigid} or \ref{prop: PV rig fin char}.
Namely, we pose the following question.

\begin{question}
	For which $\alpha \in \R$ is $\round{\alpha^n}$ a rigidity sequence in $\Z$?
	Similarly, for which $\alpha \in \F_q((t^{-1}))$ is $\floor{\alpha^n}$ a rigidity sequence in $\F_q[t]$?
	In particular, we are interested in whether this class of numbers
	agrees with the class of PV numbers or if it is strictly larger.
\end{question}

%%%%%%%%%%%%%%%%%%%%%%%%%%%%%%%%%%%%%%%%%%%%%%%%%%%%

\subsection{Continued fractions}

Bergelson, del Junco, Lema\'{n}czyk, and Rosenblatt\break showed that in $\Z$,
if $\alpha \in \R$ is irrational, then its sequence of continued fraction denominators
is rigid for a weakly mixing transformation (see \cite[Corollary 3.51]{bdjlr}).
We show that the analogous result holds in $\F_q[t]$.
For an introduction to continued fractions in finite characteristic, see \cite{s00}.
We give the basics of the construction here.

Define a tranformation $T : \scrT \to \scrT$ by
\begin{align*}
	Tx = \begin{cases}
		\fpart{\frac{1}{x}}, & x \ne 0; \\
		0, & x = 0.
	\end{cases}
\end{align*}

\noindent For $\alpha \in \scrR$, we can then define $a_0 := \floor{\alpha}$
and $a_n := \floor{T^n\fpart{\alpha}}$ for $n \ge 1$
to obtain a continued fraction expression for $\alpha$:
\begin{align*}
	\alpha = a_0 + \cfrac{1}{a_1 + \cfrac{1}{a_2 + \cfrac{1}{a_3 + \frac{1}{\ddots}}}}.
\end{align*}

\noindent We will also use the notation $\alpha = [a_0; a_1, a_2, a_3, \dots]$.

Let $(q_n)_{n \in \N}$ be the sequence of denominators
for the convergents $\frac{p_n}{q_n} = [a_0; a_1, \dots,\break a_n]$.
These can be computed recursively by the formula
\begin{align*}
	q_0 & = 1, \\
	q_1 & = a_1, \\
	q_n & = a_n q_{n-1} + q_{n-2}.
\end{align*}

Continued fractions satisfy the inequality (see \cite{s00})
\begin{align*}
	\left| \alpha - \frac{p_n}{q_n} \right| < \frac{1}{|q_n|^2}.
\end{align*}

\noindent Thus, $\|q_n\alpha\| < |q_n|^{-1} \to 0$.

Irrational numbers have many of the familiar properties from characteristic zero
(see \cite[Theorem 3.1]{bl} for details):

\begin{lemma} \label{lem: irrational}
	The following are equivalent for $\alpha \in \scrR$:
	\begin{enumerate}[(i)]
		\item	$\alpha$ is irrational, i.e. $\alpha \notin \scrQ$;
		\item	$\alpha$ has an infinite continued fraction expansion;
		\item	The orbit $(n\alpha)_{n \in \scrZ}$ of $\alpha$ in $\scrT$ is infinite;
		\item $(n\alpha)_{n \in \scrZ}$ is dense in $\scrT$;
		\item $(n\alpha)_{n \in \scrZ}$ is well-distributed in $\scrT$ with respect to Haar measure.
	\end{enumerate}
\end{lemma}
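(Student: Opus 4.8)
The plan is to prove the cycle of implications $(i) \Rightarrow (ii) \Rightarrow (iii) \Rightarrow (i)$ together with the separate chain $(v) \Rightarrow (iv) \Rightarrow (iii)$ and $(i) \Rightarrow (v)$, so that all five statements become equivalent. Most of these are formal once the right facts about the continued fraction algorithm in $\scrR = \F_q((t^{-1}))$ are in hand, and the excerpt has already set up the recursion for the convergent denominators $q_n$ and the approximation inequality $|\alpha - p_n/q_n| < |q_n|^{-2}$, so I would lean on those.

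For $(i) \Leftrightarrow (ii)$: the continued fraction map $T$ on $\scrT$ terminates (produces $Tx = 0$) precisely when its argument lies in $\scrQ$, since $\floor{\cdot}$ and $\fpart{\cdot}$ preserve $\scrQ$ and the Euclidean algorithm in $\F_q[t]$ halts exactly on rationals. Running this backwards, $\alpha$ has a finite expansion iff $\alpha \in \scrQ$; this is the exact analogue of the classical argument and I would just cite \cite{s00} or \cite{bl} for the mechanics. For $(ii) \Rightarrow (iii)$: if the expansion is infinite then the $q_n$ are strictly increasing in $|\cdot|$ (from $q_n = a_nq_{n-1}+q_{n-2}$ with $|a_n| \ge q$), and $\|q_n\alpha\| < |q_n|^{-1} \to 0$ forces the points $q_n\alpha$ to be pairwise distinct in $\scrT$ for large $n$ — if $q_m\alpha = q_n\alpha$ in $\scrT$ with $m \ne n$ then $(q_m - q_n)\alpha \in \scrZ$, i.e. $\alpha \in \scrQ$, contradicting $(ii) \Rightarrow (i)$. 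Hence the orbit is infinite. For $(iii) \Rightarrow (i)$: contrapositive — if $\alpha = p/q \in \scrQ$ then $q\cdot\alpha \in \scrZ$, so the orbit $(n\alpha)_{n\in\scrZ}$ in $\scrT$ is contained in the finite set $\{0, \alpha, 2\alpha, \dots, (q-1)\alpha\}$ mod $\scrZ$.

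For the equidistribution statements, the cleanest route is via Weyl's criterion in $\scrT$: characters of $\scrT$ are parametrized by $\scrZ$ (via $m \mapsto (x \mapsto \text{exponential of the residue of } mx)$), and for $m \in \scrZ \setminus \{0\}$ the exponential sum $\frac{1}{|\Phi_N|}\sum_{n \in \Phi_N} \psi(m\alpha)^n$ over a Følner sequence $\Phi_N$ in $\scrZ$ vanishes in the limit precisely when $m\alpha \notin \scrZ$, i.e. for all nonzero $m$ exactly when $\alpha$ is irrational; the sum is actually $0$ for all sufficiently large $\Phi_N$ by the finite-field orthogonality of additive characters, which gives well-distribution $(v)$ directly rather than mere equidistribution. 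Then $(v) \Rightarrow (iv)$ is immediate (well-distribution with respect to Haar measure, which has full support on the compact group $\scrT$, implies density), and $(iv) \Rightarrow (iii)$ is trivial since a finite set cannot be dense in the infinite compact group $\scrT$. This closes the loop.

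The main obstacle — really the only place requiring care rather than bookkeeping — is getting the character theory of $\scrT = \scrR/\scrZ$ and the Weyl-type sum exactly right: identifying the dual of $\scrT$ with $\F_q[t]$ through the residue pairing $\langle m, x\rangle = \text{Res}(mx)$, checking that the associated additive character of $\F_q$ is nontrivial, and confirming that "$m\alpha$ has nonzero fractional part for every nonzero $m \in \scrZ$'' is equivalent to $\alpha \notin \scrQ$. Once that identification is pinned down, the orthogonality relations for characters of finite abelian groups make the exponential sums collapse and well-distribution falls out. I would therefore organize the proof as: (a) recall the dictionary $\hat{\scrT} \cong \scrZ$; (b) do $(i)\Leftrightarrow(ii)\Leftrightarrow(iii)$ via the algorithm and the approximation bound; (c) prove $(i)\Rightarrow(v)$ by the exponential sum computation; (d) note $(v)\Rightarrow(iv)\Rightarrow(iii)$ for free.
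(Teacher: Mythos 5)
Your outline is correct, but note that the paper does not actually prove this lemma: it is quoted verbatim from Bergelson--Leibman \cite[Theorem 3.1]{bl}, with \cite{s00} cited for the continued fraction mechanics. So any complete proof is ``different from the paper's.'' Your argument is the standard one and it works: (i)$\Leftrightarrow$(ii) is the Euclidean algorithm in $\F_q[t]$ (the degree of the denominator of $T^n\fpart{\alpha}$ strictly decreases when $\alpha\in\scrQ$, and a finite expansion is a rational expression); (i)$\Rightarrow$(iii) is immediate from injectivity of $n\mapsto n\alpha \bmod \scrZ$ (if $(n-m)\alpha\in\scrZ$ with $n\ne m$ then $\alpha\in\scrQ$) --- your detour through the convergent denominators $q_n$ is unnecessary, though not wrong; (iii)$\Rightarrow$(i) is the contrapositive you give; and (i)$\Rightarrow$(v)$\Rightarrow$(iv)$\Rightarrow$(iii) closes the loop via the Weyl criterion. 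The identification $\hat{\scrT}\cong\scrZ$ through the residue pairing is indeed the only point needing care, and you correctly isolate it: $\scrR$ is self-dual under $(x,y)\mapsto\psi(\mathrm{Res}(xy))$ with $\scrZ^{\perp}=\scrZ$, so $\hat{\scrT}\cong\scrZ$, and for $m\ne 0$ the character $n\mapsto e(mn\alpha)$ of $\scrZ$ is trivial iff $m\alpha\in\scrZ$ iff $\alpha\in\scrQ$.

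A few small corrections you should make when writing this up. First, the expression $\psi(m\alpha)^n$ does not parse: $n$ ranges over $\F_q[t]$, so characters of $\scrZ$ are not powers of a fixed root of unity; the Weyl sum is $|\Phi_N|^{-1}\sum_{n\in\Phi_N+x}e(mn\alpha)$, a sum of a character of the group $(\scrZ,+)$. Second, for general $q=p^k$ the paper's $e(\cdot)$ (defined only for prime $p$) must be replaced by $e^{2\pi i\,\mathrm{Tr}_{\F_q/\F_p}(c_{-1})/p}$, and in verifying $\scrZ^{\perp}=\scrZ$ you need to rescale by an element of $\F_q^{\times}$ to make the leading residue coefficient visible to $\psi$. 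Third, the claim that the sum is exactly $0$ for large $N$ is literally true only for the subgroup F{\o}lner sets $\Phi_N=\bigoplus_{n\le N}\F_q$ and their translates; for an arbitrary F{\o}lner sequence you should instead use the standard estimate $|1-\chi(h)|\cdot|\sum_{n\in\Phi_N+x}\chi(n)|\le|(\Phi_N+h)\triangle\Phi_N|$, which gives the required uniform-in-$x$ decay and hence well-distribution. Finally, in (iii)$\Rightarrow$(i) the finite set containing the orbit should be $\{r\alpha \bmod \scrZ : \deg r<\deg q\}$, the residues modulo $q$ in $\F_q[t]$, rather than $\{0,\alpha,\dots,(q-1)\alpha\}$.
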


Suppose $\alpha \in \scrR$ is irrational.
Then the $\scrZ$-action on $\scrT$ given by $T_nx := x + n \alpha \pmod{\scrZ}$
is ergodic (with respect to Haar measure) as a consequence of Lemma \ref{lem: irrational}.
Moreover, $\| T_{q_n}x - x \| = \| q_n \alpha\| \to 0$ for all $x \in \scrT$,
so $(q_n)_{n \in \N}$ is a rigidity sequence for this action.
Applying Theorem \ref{thm: free}, we have proven the following (c.f. \cite[Corollary 3.51]{bdjlr}):

\begin{theorem}
	Let $\alpha \in \scrR \setminus \scrQ$.
	Let $(q_n)_{n \in \N}$ be the sequence of denominators in $\scrZ$.
	Then there is a free weakly mixing $\scrZ$-action $(T_n)_{n \in \scrZ}$
	such that $T_{q_n} \to \id$.
\end{theorem}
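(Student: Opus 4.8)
The plan is to produce $(q_n)_{n \in \N}$ as a rigidity sequence for an \emph{explicit} free ergodic $\scrZ$-action and then quote Theorem \ref{thm: free} to upgrade to a free weakly mixing action. The candidate system is the one already introduced in the paragraph preceding the statement: the rotation action $T_n x := x + n\alpha \pmod{\scrZ}$ on $\scrT$ equipped with Haar measure $\mu$. Three properties need checking. It is measure-preserving, since each $T_n$ is a translation of the compact abelian group $\scrT$. It is free, because $\alpha \notin \scrQ$ forces $n\alpha \notin \scrZ$ for every $n \in \scrZ \setminus \{0\}$, so $T_n$ is a nontrivial translation and has no fixed points at all; in particular $\mu(\{x : T_n x = x\}) = 0$. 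And it is ergodic, which follows from Lemma \ref{lem: irrational}: the density (indeed well-distribution) of the orbit $(n\alpha)_{n \in \scrZ}$ in $\scrT$ rules out nonconstant invariant $L^2$ functions by the usual Fourier-analytic argument on the compact abelian group $\scrT$.

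Next I would verify rigidity along $(q_n)_{n \in \N}$. The continued fraction inequality $\left| \alpha - \frac{p_n}{q_n} \right| < |q_n|^{-2}$ gives $\| q_n \alpha \| < |q_n|^{-1}$, and since $\alpha$ is irrational the expansion is infinite with $|q_n| = |a_n| \, |q_{n-1}| > |q_{n-1}|$ strictly increasing, so $|q_n| \to \infty$ and hence $\| q_n \alpha \| \to 0$. Thus $\| T_{q_n} x - x \| = \| q_n \alpha \| \to 0$ uniformly in $x \in \scrT$; as the metric on $\scrT$ is bounded, dominated convergence yields $\| T_{q_n} \ind_A - \ind_A \|_{L^2(\mu)} \to 0$ for every measurable $A$, and then $\| T_{q_n} f - f \|_{L^2(\mu)} \to 0$ for all $f \in L^2(\mu)$ by density of simple functions. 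So $(q_n)_{n \in \N}$ is a rigidity sequence for this free ergodic system.

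Finally, Theorem \ref{thm: free} applies directly: since $(q_n)_{n \in \N}$ is a rigidity sequence for a free ergodic measure-preserving $\scrZ$-system, it is a rigidity sequence for some free weakly mixing $\scrZ$-system $(T_n)_{n \in \scrZ}$, i.e.\ $T_{q_n} \to \id$, which is exactly the claim. I do not expect a genuine obstacle: all of the substantive work has been absorbed into Theorem \ref{thm: free}, and the required input — an explicit free ergodic rotation rigid along $(q_n)$ — is obtained exactly as in the characteristic-zero argument of \cite[Corollary 3.51]{bdjlr}. The only point deserving a moment's care is freeness of the rotation on $\scrT$, for which irrationality of $\alpha$ is precisely what is needed, mirroring the earlier remark on PV elements.
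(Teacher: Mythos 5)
Your proposal is correct and follows essentially the same route as the paper: the paper also takes the rotation $T_n x = x + n\alpha$ on $\scrT$, gets ergodicity from Lemma \ref{lem: irrational}, gets rigidity from $\|q_n\alpha\| \to 0$ via the convergent inequality, and then invokes Theorem \ref{thm: free}. Your explicit check that the rotation is free (which Theorem \ref{thm: free} requires, and which the paper only notes in the earlier remark on PV elements) is a worthwhile detail to make explicit.
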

	
In particular, the sequence of \emph{Fibonacci polynomials} reduced modulo $p$ is a rigidity sequence in $\F_p[t]$.
This sequence is given by the recurrence relation
\begin{align*}
	F_0(t) & = 1, \\
	F_1(t) & = t, \\
	F_n(t) & = t F_{n-1}(t) + F_{n-2}(t)
\end{align*}

\noindent corresponding to the element $\alpha = [t; t,t,\dots] \in \F_p((t^{-1}))$.
The first several polynomials are given by
\begin{align*}
	F_0(t) & = 1, \\
	F_1(t) & = t, \\
	F_2(t) & = t^2 + 1, \\
	F_3(t) & = t^3 + 2t, \\
	F_4(t) & = t^4 + 3t^2 + 1, \\
	F_5(t) & = t^5 + 4t^3 + 3t, \\
	F_6(t) & = t^6 + 5t^4 + 6t^2 + 1, \\
	F_7(t) & = t^7 + 6t^5 + 10t^3 + 4t, \\
	F_8(t) & = t^8 + 7t^6 + 15t^4 + 10t^2 + 1.
\end{align*}

%%%%%%%%%%%%%%%%%%%%%%%%%%%%%%%%%%%%%%%%%%%%%%%%%%%%

\subsection{More examples in $\bigoplus{\F_p}$}

The next two examples seem to have no clear analogues in $\Z$.
Since we are no longer dealing with the multiplicative structure of $\F_q[t]$,
we will restrict for ease of notation to considering $\bigoplus{\F_p}$, where $p$ is prime.
We first need to introduce some notation.
For $x \in \scrT$ and $a \in \scrZ$, we denote by $\innprod{x}{a}$
the quantity $e(ax)$, where $e \left( \sum_{n=-\infty}^N{c_nt^n} \right) = e^{2\pi ic_{-1}/p}$.
This pairing demonstrates the duality between $\scrZ$ and $\scrT$:
for each $x \in \scrT$, the map $a \mapsto \innprod{x}{a}$ defines a character on $\scrZ$,
and for each $a \in \scrZ$, the map $x \mapsto \innprod{x}{a}$ defines a character on $\scrT$.
Handling characters concretely this way allows for easy computations in the following examples.

\begin{example}
	Consider the sequence $a_n(t) := \sum_{k=0}^{np-1}{t^k} \in \scrZ$.
	Let
		\begin{align*}
		C_0 := \left\{ \sum_{n=1}^{\infty}{c_n t^{-n}}\in \scrT :
		 c_{jp+1} = c_{jp+2} = \cdots = c_{(j+1)p}~\text{for}~j \ge 0 \right\}.
	\end{align*}
	
	\noindent Note that $C_0$ is infinite (it has a natural bijection with $\scrT$).
	Moreover, for $x \in C_0$ and $n \in \N$, $\innprod{x}{a_n} = 1$.
	Hence, $(a_n)_{n \in \N}$ is a rigidity sequence.
\end{example}

\begin{example}
	Let $I \subseteq \N \cup \{0\}$ be an infinite set with infinite complement.
	For $\es \ne F \subseteq I$ finite and $(c_i)_{i \in F} \in \F_p^F$ not all zero,
	let $a_{F; (c_i)}(t) = \sum_{i \in F}{c_i t^i}$.
	Set
		\begin{align*}
		C_0 := \left\{ \sum_{n=1}^{\infty}{c_n t^{-n}}\in \scrT : c_{n+1} = 0~\text{for all}~n \in I \right\}.
	\end{align*}
	
	\noindent The set $C_0$ is clearly infinite and $\innprod{x}{a_{F; (c_i)}} = 1$ for $x \in C_0$.
	Thus, any enumeration of $\{ a_{F; (c_i)} : \es \ne F \subseteq I~\text{finite}, (c_i) \in \F_p^F \}$
	is a rigidity sequence in $\scrZ$.
\end{example}

%%%%%%%%%%%%%%%%%%%%%%%%%%%%%%%%%%%%%%%%%%%%%%%%%%%%
%%%%%%%%%%%%%%%%%%%%%%%%%%%%%%%%%%%%%%%%%%%%%%%%%%%%

\section{Recurrence in abelian groups} \label{sec: recurrence}

Recall the definition of a set of recurrence.

\SetOfRec*

Specifying a bound on the measure of $A$ leads to a related notion.

\begin{definition}
	A set $R \subseteq \Gamma$ is a set of $\delta$-\emph{recurrence} if
	for every measure-preserving system $(X, \B, \mu, (T_g)_{g \in \Gamma})$
	and every $A \in \B$ with $\mu(A) \ge \delta$, there is an $r \in R \setminus \{0\}$ such that
	$\mu(A \cap T_r^{-1}A) > 0$.
\end{definition}

Poincar\'{e}'s Recurrence Theorem can be stated as the fact that $\Z$ is a set of recurrence (in $\Z$).
Analyzing the standard proof, it is not hard to see that $\left\{ 1, 2, \dots, \ceil{\frac{1}{\delta}} \right\}$
is a set of $\delta$-recurrence for every $\delta > 0$.
That is, once $\delta > 0$ is fixed, recurrence is guaranteed to happen in bounded time.
This phenomenon is very robust.
Forrest extended this observation into a fact about general countable discrete groups\footnote{Forrest's proof
does not require that $\Gamma$ be abelian or even amenable.
By imposing this extra condition, we are able to give a proof that
avoids functional analysis in favor of combinatorics.
}
using a clever compactness argument:

\begin{theorem}[Uniformity of Recurrence \cite{forrest}, Lemma 6.4] \label{thm: unif rec}
	A set $R \subseteq \Gamma$ is a set of recurrence if and only if
	for all $\delta > 0$, there is a finite subset $R_{\delta} \subseteq R$
	such that $R_{\delta}$ is a set of $\delta$-recurrence.
\end{theorem}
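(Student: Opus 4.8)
The plan is to prove the nontrivial direction: if $R$ is a set of recurrence, then for every $\delta > 0$ there is a finite $R_\delta \subseteq R$ that is a set of $\delta$-recurrence. (The converse is immediate, since any $r \in R_\delta \setminus \{0\}$ witnessing recurrence for a set $A$ with $\mu(A) > 0$ lies in $R$, and we may apply $R_\delta$ for $\delta = \mu(A)$.) For the forward direction I would argue by contraposition: assume that for some fixed $\delta > 0$, \emph{no} finite subset of $R$ is a set of $\delta$-recurrence, and build from this a single measure-preserving system and a set $A$ with $\mu(A) > 0$ for which no $r \in R \setminus \{0\}$ recurs, contradicting that $R$ is a set of recurrence.

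The key device is the Furstenberg correspondence principle, which (as the excerpt indicates will be recalled in Section~\ref{sec: recurrence}) lets us replace the analytic failure of $\delta$-recurrence with a combinatorial statement about sets of differences in $\Gamma$: for each finite $E \subseteq R$ there is a subset $B_E \subseteq \Gamma$ of upper Banach density at least $\delta$ such that $(B_E - B_E) \cap E \subseteq \{0\}$. Since $\Gamma$ is countable, I would enumerate $\Gamma = \{g_1, g_2, \dots\}$ and take an increasing exhaustion $R = \bigcup_k E_k$ by finite sets $E_k \subseteq R$. For each $k$ we have such a set $B_{E_k} \subseteq \Gamma$; translating so that $0 \in B_{E_k}$ (this preserves density and the difference-set condition), pass along a subsequence so that the indicator functions $\mathbbm{1}_{B_{E_k}}$ converge pointwise on $\Gamma$ — equivalently, the sets converge in the product topology $\{0,1\}^\Gamma$, which is compact — to a limiting set $B_\infty$. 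Along a further subsequence chosen compatibly with a fixed Følner sequence, the density lower bound $\delta$ is inherited by $B_\infty$ (using that density along a Følner sequence is upper semicontinuous under this kind of limit, after restricting attention to finitely many coordinates at a time), and for every $r \in R$, since $r \in E_k$ for all large $k$ and $(B_{E_k} - B_{E_k}) \cap E_k \subseteq \{0\}$, the limit satisfies $(B_\infty - B_\infty) \cap R \subseteq \{0\}$. Feeding $B_\infty$ back through the correspondence principle produces a system and a set of positive measure with no recurrence along $R \setminus \{0\}$, the desired contradiction.

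The main obstacle I anticipate is the second-to-last step: ensuring the density bound $\delta$ genuinely survives the pointwise/product-topology limit. Upper Banach density is a $\limsup$ of averages over translates of Følner sets, and limits only interact well with $\liminf$, so some care is needed — one wants, for each fixed finite Følner set $\Phi_N$, to choose the translates of $B_{E_k}$ realizing density close to $\delta$ on a copy of $\Phi_N$ sitting around the origin, then diagonalize over $N$ and over $k$ so that $B_\infty$ achieves density $\geq \delta - \varepsilon$ for every $\varepsilon$. Alternatively, and perhaps more cleanly, one can bypass sets entirely and run the compactness argument at the level of the systems themselves: take for each $E_k$ a system $(X_k, \mathcal{B}_k, \mu_k, (T^{(k)}_g))$ and a set $A_k$ with $\mu_k(A_k) \geq \delta$ and $\mu_k(A_k \cap (T^{(k)}_r)^{-1} A_k) = 0$ for all $r \in E_k \setminus \{0\}$, then form a limit system as a weak-$^*$ limit of the joint distributions of $(\mathbbm{1}_{A_k} \circ T^{(k)}_g)_{g \in \Gamma}$ — this is exactly a compactness argument in the space of shift-invariant measures on $\{0,1\}^\Gamma$, and it is the functional-analytic core that Forrest's original proof uses. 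Either way, the crux is a soft compactness argument; everything else is bookkeeping with the correspondence principle.
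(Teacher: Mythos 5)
Your overall strategy for the nontrivial direction --- contraposition, translation into difference sets via the correspondence principle, and a compactness argument --- is sound, but your two routes have very different statuses. The primary route (pointwise limits of the sets $B_{E_k}$ in $\{0,1\}^{\Gamma}$) has a genuine gap exactly where you suspect it: upper Banach density does not survive pointwise limits, and the diagonalization you propose does not repair this. The translate of $B_{E_k}$ realizing density $\ge \delta$ on the window $\Phi_N$ depends on $N$, and distinct windows may require incompatible translates, while a pointwise limit only records finitely many coordinates at a time; the limit set can therefore be empty (in $\Z$ with $\Phi_N = \{1, \dots, N\}$, take $D_N = \{\lceil N/2 \rceil, \dots, N\}$: each has density $\tfrac{1}{2}$ in $\Phi_N$, but the pointwise limit is $\es$). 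Overcoming precisely this obstruction is the actual content of the paper's proof: it constructs a special ($(\eps{\cdot})$-self-tiling) F{\o}lner sequence so that a dense ``bad'' subset of $\Phi_N$ can be pushed down to a dense bad subset of a translate of $\Phi_{N-1}$ with controlled density loss, and then uses K\H{o}nig's lemma to extract an infinite nested chain whose union genuinely has positive upper Banach density along a moving F{\o}lner sequence $\Phi_N + y_N$ while avoiding all differences in $R$.

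Your alternative route, by contrast, is correct and complete: push each counterexample system forward to a shift-invariant measure $\nu_k$ on $\{0,1\}^{\Gamma}$ via $x \mapsto \bigl( \ind_{A_k}(T^{(k)}_g x) \bigr)_{g \in \Gamma}$, note that $\nu_k(\{\omega : \omega(0) = 1\}) \ge \delta$ while $\nu_k(\{\omega : \omega(0) = \omega(r) = 1\}) = 0$ for $r \in E_k \setminus \{0\}$, and take a weak-$^*$ limit; shift-invariance and these clopen-set conditions pass to the limit, yielding a system and a set of measure at least $\delta$ with no return times in $R \setminus \{0\}$. (The only housekeeping is that the limit measure may be atomic, which the paper's convention on Lebesgue spaces excludes; take a product with a trivial non-atomic factor.) This is essentially Forrest's original functional-analytic argument \cite{forrest}, which the paper explicitly sets out to replace by a combinatorial one in the spirit of \cite{bh}. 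Your route buys brevity and requires no tiling machinery; the paper's route buys an explicit combinatorial witness (a single subset of $\Gamma$ of positive upper Banach density with no differences in $R$) and stays within the elementary toolkit of Lemma \ref{lem: upper Banach density} and Propositions \ref{prop: recurrence difference} and \ref{prop: delta-recurrence difference}. Your treatment of the easy converse direction is fine.
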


The proof in \cite{forrest} uses functional analytic techniques.
A simpler proof for $\Z$ is given in \cite{bh}.
We give a new proof of uniformity of recurrence for abelian groups in the spirit of the proof from \cite{bh}.
This requires a discussion of amenability and the introduction of a special class of F{\o}lner sequences,
which we term \emph{self-tiling} F{\o}lner sequences.

%%%%%%%%%%%%%%%%%%%%%%%%%%%%%%%%%%%%%%%%%%%%%%%%%%%%

\subsection{Upper Banach density}

It is well-known that every abelian group is\break \emph{amenable}.
Roughly speaking, this means that abelian groups have translation-invariant averaging schemes.
We will need some basic definitions and properties relating to amenability
in order to prove Theorem \ref{thm: unif rec}.
Some of these will also appear directly in the construction of rigid-recurrent sequences.

\begin{definition}
	An \emph{invariant mean} is a linear functional $m : \ell^{\infty}(\Gamma) \to \C$ such that
	\begin{enumerate}[(1)]
		\item	$m(\ind) = 1$;
		\item	$m(\tau_t \cdot f) = m(f)$ for every $t \in \Gamma$ and $f \in \ell^{\infty}(\Gamma)$,
			where $(\tau_t \cdot f)(x) := f(x-t)$.
	\end{enumerate}
\end{definition}

The following proposition is a precise statement of the fact that every abelian group is amenable:
\begin{proposition}
	Every abelian group admits an invariant mean.
\end{proposition}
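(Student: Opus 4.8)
The plan is to prove that every abelian group $\Gamma$ admits an invariant mean by a standard compactness-plus-averaging argument. First I would consider the space of all \emph{finitely additive} means --- i.e., positive linear functionals $m : \ell^{\infty}(\Gamma) \to \C$ with $m(\ind) = 1$ --- which is a nonempty (it contains point evaluations) weak-$*$ closed subset of the unit ball of the dual $\ell^{\infty}(\Gamma)^*$, hence weak-$*$ compact by Banach--Alaoglu. On this compact convex set, $\Gamma$ acts by the affine homeomorphisms $m \mapsto m \circ \tau_t$, and I want a common fixed point, which is exactly an invariant mean. Equivalently, I would phrase the whole thing as an instance of the Markov--Kakutani fixed point theorem applied to the commuting family of affine maps $\{m \mapsto m \circ \tau_t : t \in \Gamma\}$.

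The actual content is verifying the hypotheses of Markov--Kakutani: the maps $S_t : m \mapsto m \circ \tau_t$ are affine and weak-$*$ continuous (continuity is immediate since precomposition with translation is a bounded operator on $\ell^\infty(\Gamma)$), they map the compact convex set of finitely additive means into itself (positivity, normalization, and linearity are all preserved by precomposition with $\tau_t$), and crucially they \emph{commute} with one another --- this is precisely where I use that $\Gamma$ is abelian, since $\tau_s \tau_t = \tau_{s+t} = \tau_t \tau_s$. Markov--Kakutani then yields an $m$ in the set with $S_t m = m$ for all $t$, which is the desired invariant mean.

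If I wanted to avoid invoking Markov--Kakutani as a black box, I would instead run the averaging argument directly: enumerate a generating set or just work with the directed set of finitely generated subgroups, and for each finite $F \subseteq \Gamma$ and each finitely additive mean $m_0$, form the Cesàro-type average $m_F := \frac{1}{|F|} \sum_{t \in F} m_0 \circ \tau_t$ over a suitable Følner-like averaging window; a weak-$*$ cluster point of the net $(m_F)$ as the windows grow is then invariant, since translating shifts the window by a negligible amount. Either route works; the Markov--Kakutani phrasing is cleanest and I would present that.

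The main obstacle is essentially bookkeeping rather than mathematics: one must be careful that the set of finitely additive means is genuinely nonempty and weak-$*$ compact (Banach--Alaoglu gives compactness of the ball; one checks the defining conditions $m \geq 0$, $m(\ind) = 1$ cut out a weak-$*$ closed subset), and one must make sure the commutativity of the translation action --- the one place abelianness enters --- is stated explicitly. There is no serious analytic difficulty; the proof is a few lines once the framework is in place. I expect the author's proof to either cite Markov--Kakutani directly or to reduce to the already-cited existence of a Følner sequence and average against $\frac{\ind_{\Phi_N}}{|\Phi_N|}$, taking a weak-$*$ limit and using the Følner property to kill the translation error.
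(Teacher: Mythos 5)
Your proof is correct: the Markov--Kakutani argument (the set of means on $\ell^{\infty}(\Gamma)$ is a nonempty convex weak-$*$ compact set, preserved by the commuting affine maps $m \mapsto m \circ \tau_t$, so a common fixed point exists) is the classical proof that abelian groups are amenable, and the F{\o}lner-averaging alternative you sketch is equally valid. The paper itself offers no proof of this proposition --- it is stated as a well-known fact --- so there is nothing to compare against; your write-up would serve as a complete justification. One minor remark: the paper's definition of invariant mean does not explicitly demand positivity, but your construction produces a positive mean, which is a fortiori an invariant mean in the paper's sense, so no adjustment is needed.
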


Another characterization of amenability that we will make use of is the existence of a F{\o}lner sequence.
Recall that a sequence $(\Phi_N)_{N \in \N}$ of finite subsets of $\Gamma$ is a F{\o}lner sequence
if for every $x \in \Gamma$,
\begin{align*}
	\lim_{N \to \infty}{\frac{|(\Phi_N+x) \triangle \Phi_N|}{|\Phi_N|}} = 0.
\end{align*}

\noindent That is, the sets $(\Phi_N)_{N \in \N}$ are asymptotically invariant.
Averages along F{\o}lner sequences are closely related to invariant means,
and both can be used to define notions of density.

We define the \emph{upper density} of a set $E$ along a F{\o}lner sequence $\Phi$ by
\begin{align*}
	\overline{d}_{\Phi}(E) := \limsup_{N \to \infty}{\frac{|E \cap \Phi_N|}{|\Phi_N|}}.
\end{align*}

\noindent The \emph{upper Banach density} of a set $E \subseteq \Gamma$ is defined by
\begin{align*}
	d^*(E) := \sup_{\Phi}{\overline{d}_{\Phi}(E)}.
\end{align*}

\noindent This is equivalent to $d^*(E) = \sup_{m \in M(\Gamma)}{m(\ind_E)}$,
where $M(\Gamma)$ is the set of invariant means on $\ell^{\infty}(\Gamma)$.

A surprising characterization of upper Banach density that will be especially useful for us
is given by the following lemma:

\begin{lemma}[\cite{berg-glass}, Lemma 2.7] \label{lem: upper Banach density}
	Let $E \subseteq \Gamma$.
	Then
		\begin{align*}
		d^*(E) = \max \left\{ \alpha \ge 0 : \right. & \text{for every finite set}~F \subseteq \Gamma, \\
		& \left. \text{there is an}~x \in \Gamma~\text{with}~|(E-x) \cap F| \ge \alpha |F| \right\}.
	\end{align*}
	
\end{lemma}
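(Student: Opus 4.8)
The plan is to denote by $S$ the set appearing on the right-hand side and to prove two facts that together force $\max S = d^*(E)$: first, that $d^*(E) \in S$, and second, that $\alpha \le d^*(E)$ for every $\alpha \in S$. I would work throughout with the two equivalent descriptions of upper Banach density recalled above, namely $d^*(E) = \sup_{\Phi}\overline{d}_{\Phi}(E)$ over F{\o}lner sequences $\Phi$ and $d^*(E) = \sup_{m \in M(\Gamma)} m(\ind_E)$ over invariant means. The one computational input is the ``Fubini'' identity $\sum_{x \in \Phi}|(E - x) \cap F| = \sum_{f \in F}|E \cap (\Phi + f)|$, valid for any finite sets $F, \Phi \subseteq \Gamma$ and obtained by counting pairs $(x,f) \in \Phi \times F$ with $x + f \in E$.

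For $d^*(E) \in S$, fix a finite set $F$ (if $d^*(E) = 0$ this is trivial, since then $0 = d^*(E) \in S$ automatically, so assume $d^*(E) > 0$). Suppose toward a contradiction that $|(E - x) \cap F| < d^*(E)|F|$ for every $x \in \Gamma$; since the left-hand side is a nonnegative integer, it is then at most $M := \ceil{d^*(E)|F|} - 1 < d^*(E)|F|$. Plugging $\Phi = \Phi_N$, from an arbitrary F{\o}lner sequence, into the Fubini identity, dividing by $|\Phi_N|$, and using $\bigl|\,|E \cap (\Phi_N + f)| - |E \cap \Phi_N|\,\bigr| \le |(\Phi_N + f) \triangle \Phi_N| = o(|\Phi_N|)$ for each of the finitely many $f \in F$, I would get $|F|\,\overline{d}_{\Phi}(E) \le M$, hence $\overline{d}_{\Phi}(E) \le M/|F| < d^*(E)$. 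As this holds for every F{\o}lner sequence, it contradicts $d^*(E) = \sup_{\Phi}\overline{d}_{\Phi}(E)$. Therefore some $x$ works for this $F$, and since $F$ was arbitrary, $d^*(E) \in S$.

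For the reverse inequality, let $\alpha \in S$ and fix any F{\o}lner sequence $(\Phi_N)$. For each $N$ choose $x_N \in \Gamma$ with $|(E - x_N) \cap \Phi_N| \ge \alpha|\Phi_N|$, and define $L_N \in \ell^{\infty}(\Gamma)^*$ by $L_N(g) := \frac{1}{|\Phi_N|}\sum_{f \in \Phi_N} g(f + x_N)$. Each $L_N$ is positive with $L_N(\ind) = 1$ and $L_N(\ind_E) = \frac{|(E - x_N) \cap \Phi_N|}{|\Phi_N|} \ge \alpha$, and a short computation gives $|L_N(\tau_t g) - L_N(g)| \le \frac{|(\Phi_N - t) \triangle \Phi_N|}{|\Phi_N|}\|g\|_{\infty} \to 0$ for each $t \in \Gamma$. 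By Banach--Alaoglu the sequence $(L_N)$ has a weak-$^*$ cluster point $m$; positivity, the normalization $m(\ind) = 1$, the bound $m(\ind_E) \ge \alpha$, and translation invariance all pass to the limit, so $m$ is an invariant mean with $m(\ind_E) \ge \alpha$. Hence $d^*(E) \ge \alpha$. Combining the two parts, $d^*(E) \in S$ while every element of $S$ is at most $d^*(E)$, so $d^*(E) = \max S$.

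I expect the only delicate point to be in the second part: the hypothesis ``for every finite set'' must be applied specifically to the F{\o}lner sets $\Phi_N$, not to arbitrary finite sets, so that the averaging functionals $L_N$ are asymptotically translation-invariant and their weak-$^*$ limit is genuinely an invariant mean---an arbitrary finite set carries no such near-invariance. Everything else is bookkeeping: the first part is a counting argument together with the definition of the supremum, and the compactness step is standard once the near-invariance is in hand.
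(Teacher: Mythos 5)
Your proof is correct. Note that the paper does not prove this lemma at all --- it is quoted from Bergelson--Glasscock \cite{berg-glass} --- so there is no internal argument to compare against; judged on its own, both halves of your argument are sound: the double-counting identity and the F{\o}lner property correctly yield $d^*(E)\in S$, and the weak-$^*$ compactness argument correctly upgrades the witnesses $x_N$ to an invariant mean with $m(\ind_E)\ge\alpha$. One small remark: the second half can be shortened considerably by observing that $\Psi_N:=\Phi_N+x_N$ is again a F{\o}lner sequence (translates of F{\o}lner sets are F{\o}lner) and $|E\cap\Psi_N|=|(E-x_N)\cap\Phi_N|\ge\alpha|\Psi_N|$, so $\overline{d}_{\Psi}(E)\ge\alpha$ and hence $d^*(E)\ge\alpha$ directly from the F{\o}lner-sequence description of $d^*$, with no need for Banach--Alaoglu or the invariant-mean characterization. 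Your version is not wrong --- just heavier than necessary --- and your closing caveat about applying the hypothesis to the F{\o}lner sets themselves is exactly the right point to flag.
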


%%%%%%%%%%%%%%%%%%%%%%%%%%%%%%%%%%%%%%%%%%%%%%%%%%%%

\subsection{Furstenberg correspondence}

\begin{lemma}[Furstenberg Correspondence Principle \cite{et/dp}, Theorem 4.17] \label{lem: correspondence}
	Let $E \subseteq \Gamma$ with $d^*(E) > 0$.
	Then there is a measure-preserving system $(X, \B, \mu, (T_g)_{g \in \Gamma})$ and a set $A \in \B$
	such that $\mu(A) = d^*(E)$ and for every $k \in \N$ and $t_1, \dots, t_k \in \Gamma$, we have
		\begin{align*}
		d^* \left( \bigcap_{i=1}^k{(E-t_i)} \right) \ge \mu \left( \bigcap_{i=1}^k{T_{t_i}^{-1}A} \right).
	\end{align*}
	
\end{lemma}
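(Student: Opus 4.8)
The plan is to realize $E$ inside the universal shift system over $\Gamma$ and to transfer the upper Banach density of $E$ onto a shift-invariant measure by means of an invariant mean and the Riesz representation theorem. Concretely, I would take $X := \{0,1\}^{\Gamma}$ with the product topology, which is a compact metrizable space because $\Gamma$ is countable, together with the shift action $(T_g\omega)(h) := \omega(h+g)$; this is an action of $\Gamma$ by homeomorphisms with $T_g T_{g'} = T_{g+g'}$. Put $\xi := \ind_E \in X$ and let $A := \{\omega \in X : \omega(0) = 1\}$, which is clopen. The key identity is $\ind_A(T_g\xi) = \ind_E(g)$ for every $g \in \Gamma$: the forward orbit of $\xi$ encodes exactly the combinatorial structure of $E$, so a shift-invariant measure assigning $A$ the correct mass is the system we seek.

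The steps, in order, would be as follows. First, since the set $M(\Gamma)$ of invariant means on $\ell^{\infty}(\Gamma)$ is weak-$*$ compact (a weak-$*$ closed subset of the unit ball of $\ell^{\infty}(\Gamma)^{*}$) and $m \mapsto m(\ind_E)$ is weak-$*$ continuous, choose an invariant mean $m$ with $m(\ind_E) = \sup_{m' \in M(\Gamma)} m'(\ind_E) = d^{*}(E)$, using the equivalence between $d^{*}$ and the supremum over invariant means recorded above. Second, define $L : C(X) \to \C$ by $L(f) := m\bigl(g \mapsto f(T_g\xi)\bigr)$; since $f \mapsto (g \mapsto f(T_g\xi))$ is a unital positive linear map $C(X) \to \ell^{\infty}(\Gamma)$ and $m$ is positive, linear, and unital, $L$ is a positive linear functional on $C(X)$ with $L(1) = 1$, so by the Riesz representation theorem there is a Borel probability measure $\mu$ on $X$ with $L(f) = \int_X f \, d\mu$. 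Third, for $g_0 \in \Gamma$ and $f \in C(X)$, writing $h(g) := f(T_g\xi)$ one has $L(f \circ T_{g_0}) = m\bigl(g \mapsto h(g + g_0)\bigr) = m(h) = L(f)$ by translation-invariance of $m$, so $\mu$ is $T$-invariant and $(X, \B, \mu, (T_g)_{g \in \Gamma})$ is a measure-preserving system (here $\B$ is the Borel $\sigma$-algebra on $X$). Fourth, since $\ind_A$ is continuous and $g \mapsto \ind_A(T_g\xi)$ is the function $\ind_E$, we get $\mu(A) = L(\ind_A) = m(\ind_E) = d^{*}(E)$. Finally, for $t_1, \dots, t_k \in \Gamma$ the set $B := \bigcap_{i=1}^{k} T_{t_i}^{-1}A$ is clopen, and $T_g\xi \in B$ holds if and only if $T_{g+t_i}\xi \in A$ for every $i$, i.e. $g + t_i \in E$ for every $i$, i.e. $g \in \bigcap_{i=1}^{k}(E - t_i)$; hence $g \mapsto \ind_B(T_g\xi)$ is the function $\ind_{\bigcap_{i=1}^{k}(E - t_i)}$, and so $\mu(B) = m\bigl(\ind_{\bigcap_{i=1}^{k}(E - t_i)}\bigr) \le d^{*}\bigl(\bigcap_{i=1}^{k}(E - t_i)\bigr)$, which is the claimed inequality.

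One technical point to address is that $(X, \B, \mu)$ need not be non-atomic — if $E$ is periodic, say, $\mu$ can have an atom — whereas the paper's convention reserves the term \emph{measure-preserving system} for non-atomic Lebesgue probability spaces. This is patched by replacing $X$ with $X \times [0,1]$, $\mu$ with $\mu \times \mathrm{Leb}$, each $T_g$ with $T_g \times \id$, and $A$ with $A \times [0,1]$: the product is a non-atomic Lebesgue probability space, the action remains measure-preserving, and the measures of $A$ and of the sets $\bigcap_i T_{t_i}^{-1}A$ are unchanged. I do not expect a genuine obstacle here; the only point that really needs care is the bookkeeping that matches the cylinder set $\bigcap_i T_{t_i}^{-1}A$ with exactly the configuration set $\bigcap_i (E - t_i)$ under the orbit map of $\xi$, together with the observation that this keeps the inequality pointed the right way (it is $d^{*} \ge \mu$, since the value a mean assigns to a set never exceeds its upper Banach density). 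If one prefers to avoid invariant means, an essentially equivalent route is to use Lemma \ref{lem: upper Banach density} to produce a F{\o}lner sequence $\Phi = (\Phi_N)$ with $\overline{d}_{\Phi}(E) = d^{*}(E)$, pass to a subsequence along which the empirical measures $\frac{1}{|\Phi_N|}\sum_{g \in \Phi_N} \delta_{T_g\xi}$ converge weak-$*$ (possible since $C(X)$ is separable), and take $\mu$ to be that limit, with invariance coming from the F{\o}lner property.
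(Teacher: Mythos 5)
This lemma is quoted in the paper as a known result (cited to Bergelson's \emph{Ergodic theory and diophantine problems}, Theorem 4.17) and is not proved there, so there is no internal proof to compare against; your argument is the standard proof of the correspondence principle and it is correct. All the key points check out: the orbit identity $\ind_A(T_g\xi)=\ind_E(g)$, the attainment of $d^*(E)$ by an invariant mean via weak-$*$ compactness of $M(\Gamma)$, the invariance of the Riesz measure, the matching of $\bigcap_i T_{t_i}^{-1}A$ with $\bigcap_i(E-t_i)$ along the orbit of $\xi$, and the direction of the final inequality ($m(\ind_F)\le d^*(F)$ for every invariant mean $m$). Your patch for non-atomicity (taking the product with $([0,1],\mathrm{Leb})$ and the trivial action on the second factor) is exactly the right fix given the paper's convention that systems live on non-atomic Lebesgue spaces, and the Følner-subsequence alternative you sketch at the end is an equally valid route.
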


The inequality in Furstenberg's Correspondence Principle (Lemma \ref{lem: correspondence}) allows one
to deduce combinatorial results from corresponding results about multiple recurrence in dynamical systems.
The following lemma is a useful tool for translating in the other direction: from combinatorics to dynamics.
\begin{lemma}[Bergelson's Intersectivity Lemma \cite{codet}, Lemma 5.10] \label{lem: intersectivity}
	Suppose $(A_g)_{g \in \Gamma}$ is a sequence of sets in a probability space $(X, \B, \mu)$
	with $\mu(A_g) \ge a$ for all $g \in \Gamma$.
	Then for any F{\o}lner sequence $\Phi$ in $\Gamma$, there is a set $E \subseteq \Gamma$
	such that $\overline{d}_{\Phi}(E) \ge a$ and
		\begin{align*}
		\mu \left( \bigcap_{g \in F}{A_g} \right) > 0
	\end{align*}
	
	\noindent for every nonempty finite subset $F \subseteq E$.
\end{lemma}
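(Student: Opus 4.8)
The plan is to obtain the required set $E$ as the return-time set of a single, well-chosen point. For $x \in X$ put $E_x := \{g \in \Gamma : x \in A_g\}$; then automatically $x \in A_F := \bigcap_{g \in F} A_g$ for every finite nonempty $F \subseteq E_x$. So it suffices to find one point $x_0$ with (i) $\overline{d}_\Phi(E_{x_0}) \ge a$ and (ii) $\mu(A_F) > 0$ for every finite nonempty $F \subseteq E_{x_0}$, and then take $E := E_{x_0}$. Property (i) will be arranged by a density/averaging argument, property (ii) by countability of $\Gamma$.

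To handle (i), I would introduce the averaging functions $\phi_N := \frac{1}{|\Phi_N|}\sum_{g \in \Phi_N} \ind_{A_g}$ and set $f := \limsup_{N \to \infty} \phi_N$. Unwinding the definitions, $f(x) = \overline{d}_\Phi(E_x)$ for every $x$. Since $\int \phi_N\,d\mu = \frac{1}{|\Phi_N|}\sum_{g \in \Phi_N}\mu(A_g) \ge a$, while $0 \le \phi_N \le 1$ and $\mu$ is a probability measure, the reverse Fatou inequality yields $\int f\,d\mu \ge \limsup_N \int \phi_N\,d\mu \ge a$. Because $f \le 1$, this forces $\mu(\{f \ge a\}) > 0$: if instead $f < a$ almost everywhere, then $\int f\,d\mu = a - \int (a - f)\,d\mu < a$, contradicting the bound just obtained.

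For (ii), I would use that $\Gamma$ is countable: call a finite nonempty $F \subseteq \Gamma$ \emph{bad} if $\mu(A_F) = 0$. There are only countably many finite subsets of $\Gamma$, so $N := \bigcup\{A_F : F \text{ bad}\}$ is a $\mu$-null set, and hence $Z := \{f \ge a\} \setminus N$ has positive measure and is nonempty. Choosing $x_0 \in Z$ and setting $E := E_{x_0}$, property (i) holds since $\overline{d}_\Phi(E) = f(x_0) \ge a$, and property (ii) holds since any finite nonempty $F \subseteq E_{x_0}$ puts $x_0 \in A_F$, while $x_0 \notin N$ rules out $F$ being bad. I do not expect a serious obstacle; the only points that need care are using the $\limsup$ (reverse) form of Fatou's lemma, which relies on the uniform bound $\phi_N \le 1$ and $\mu(X) < \infty$, and invoking countability of $\Gamma$ so that $N$ is null. (Note that the F{\o}lner property of $\Phi$ is not actually used anywhere in this argument.)
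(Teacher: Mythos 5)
Your proof is correct. The paper does not prove this lemma --- it is quoted from \cite{codet} --- but your argument is essentially the standard one for Bergelson's intersectivity lemma: set $f := \limsup_N \frac{1}{|\Phi_N|}\sum_{g \in \Phi_N}\ind_{A_g}$, use reverse Fatou (valid since $0 \le \phi_N \le 1$ on a probability space) to get $\int f\,d\mu \ge a$ and hence $\mu(\{f \ge a\}) > 0$, discard the countable union of null sets $\bigcap_{g \in F}A_g$ over ``bad'' finite $F$, and take $E$ to be the return-time set of a surviving point. All the steps check out, including the observation that the F{\o}lner property of $\Phi$ is never used --- any sequence of nonempty finite sets would do.
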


Using the correspondence principle and the intersectivity lemma together,
one can produce the following characterization of sets of recurrence, which we will utilize in the next section:

\begin{proposition}[\cite{codet}, Theorem 5.13] \label{prop: recurrence difference}
	Suppose $\Gamma$ is abelian, and let $R \subseteq \Gamma$.
	Then $R$ is a set of recurrence if and only if for every $E \subseteq \Gamma$ with $d^*(E) > 0$,
	we have $(E - E) \cap R \ne \es$.
\end{proposition}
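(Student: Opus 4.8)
The plan is to prove the two implications separately, using the Furstenberg correspondence principle (Lemma~\ref{lem: correspondence}) for one direction and Bergelson's intersectivity lemma (Lemma~\ref{lem: intersectivity}) together with the characterization of upper Banach density (Lemma~\ref{lem: upper Banach density}) for the other. Throughout, I would use the abelian hypothesis only to freely swap $T_r$ and $T_r^{-1}$ and to identify $A \cap T_r^{-1}A$ with a self-intersection of a single orbit of sets.

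For the forward direction, suppose $R$ is a set of recurrence and let $E \subseteq \Gamma$ with $d^*(E) > 0$. Apply Lemma~\ref{lem: correspondence} to get a system $(X, \B, \mu, (T_g)_{g \in \Gamma})$ and a set $A \in \B$ with $\mu(A) = d^*(E) > 0$ such that $d^*\left( (E - t) \cap (E - s) \right) \ge \mu\left( T_t^{-1}A \cap T_s^{-1}A \right)$ for all $t, s \in \Gamma$; taking $s = 0$ gives $d^*\left( (E-t) \cap E \right) \ge \mu\left( T_t^{-1}A \cap A \right)$. Since $R$ is a set of recurrence and $\mu(A) > 0$, there is some $r \in R \setminus \{0\}$ with $\mu(A \cap T_r^{-1}A) > 0$, hence $d^*\left( (E - r) \cap E \right) > 0$; in particular $(E - r) \cap E \ne \es$, so there exist $x, y \in E$ with $x - r = y$, i.e. $r = x - y \in (E - E) \cap R$. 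Thus $(E - E) \cap R \ne \es$.

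For the converse, suppose that $(E - E) \cap R \ne \es$ for every $E$ with $d^*(E) > 0$, and let $(X, \B, \mu, (T_g)_{g \in \Gamma})$ be a system with $A \in \B$, $\mu(A) = a > 0$. Fix a F{\o}lner sequence $\Phi$ in $\Gamma$, and consider the sets $A_g := T_g^{-1}A$, which all have $\mu(A_g) = a$. By Lemma~\ref{lem: intersectivity}, there is a set $E \subseteq \Gamma$ with $\overline{d}_{\Phi}(E) \ge a > 0$ (hence $d^*(E) > 0$) such that $\mu\left( \bigcap_{g \in F} T_g^{-1}A \right) > 0$ for every nonempty finite $F \subseteq E$. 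By hypothesis there is $r \in (E - E) \cap R$; since $r \in E - E$ we may write $r = g_1 - g_2$ with $g_1, g_2 \in E$. Taking $F = \{g_1, g_2\}$ gives $\mu\left( T_{g_1}^{-1}A \cap T_{g_2}^{-1}A \right) > 0$; applying $T_{g_2}$ and using that the action is measure-preserving, $\mu\left( T_{g_1 - g_2}^{-1}A \cap A \right) = \mu\left( T_r^{-1}A \cap A \right) > 0$. The only thing to check is that $r \ne 0$: if $r = 0$ we instead need to produce a nonzero element of $R$, so I would handle this by noting that $E - E$ is infinite (since $d^*(E) > 0$ forces $E$ infinite), or more carefully by invoking the hypothesis on a translate/subset of $E$ to force the returned element to be nonzero — this small bookkeeping point about excluding $0$ is the one place the argument needs mild care. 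Granting that, $r \in R \setminus \{0\}$ witnesses recurrence for $A$, and since $A$ was arbitrary, $R$ is a set of recurrence.

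The main obstacle is essentially cosmetic rather than conceptual: the intersectivity lemma and the density characterization fit together almost mechanically, so the only delicate point is ensuring the recurrence element returned is nonzero — one wants to set up the application of the hypothesis (e.g. to $E$ minus a single point, or by passing to a subset of $E$ that still has positive density but whose difference set avoids $0$ trivially) so that $0 \in E - E$ does not short-circuit the conclusion. Everything else is a direct transcription of the $\Z$ (or general amenable) argument, and the abelian hypothesis enters only to move transformations past each other when rewriting $T_{g_1}^{-1}A \cap T_{g_2}^{-1}A$ as $T_r^{-1}A \cap A$.
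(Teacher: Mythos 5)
Your proof is correct and follows essentially the same route the paper takes for the analogous Proposition~\ref{prop: delta-recurrence difference} (the paper only cites Proposition~\ref{prop: recurrence difference} itself): Furstenberg correspondence for the forward implication and Bergelson's intersectivity lemma for the converse. The $r \ne 0$ point you flag is a genuine but standard convention issue --- the statement should be read with $R \setminus \{0\}$ (or with $0 \notin R$ assumed, since deleting $0$ changes neither side of the equivalence), and the paper's own proof of the $\delta$-version glosses over it in exactly the same way.
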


The same method can be used to characterize sets of $\delta$-recurrence.
This result is implicit in the proof of uniformity of recurrence for $\Z$ given in \cite{bh}.
We give a proof here in the full generality of abelian groups for completeness.

\begin{proposition} \label{prop: delta-recurrence difference}
	$R$ is a set of $\delta$-recurrence if and only if
	for every $E \subseteq \Gamma$ with $d^*(E) \ge \delta$, we have $(E - E) \cap R \ne \es$.
\end{proposition}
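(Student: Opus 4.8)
The plan is to mimic the proof of Proposition \ref{prop: recurrence difference} from \cite{codet}, carrying the mass bound $\delta$ through both directions of the argument. The statement is a ``$\delta$-uniform'' refinement of the already-cited equivalence for sets of recurrence, so both implications should go through with only cosmetic changes; the work is in checking that the constants match.

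For the forward direction, suppose $R$ is a set of $\delta$-recurrence, and let $E \subseteq \Gamma$ with $d^*(E) \ge \delta$. Apply the Furstenberg Correspondence Principle (Lemma \ref{lem: correspondence}) to obtain a measure-preserving system $(X,\B,\mu,(T_g)_{g\in\Gamma})$ and a set $A \in \B$ with $\mu(A) = d^*(E) \ge \delta$ such that $d^*\big((E - t_1) \cap (E - t_2)\big) \ge \mu(T_{t_1}^{-1}A \cap T_{t_2}^{-1}A)$ for all $t_1, t_2 \in \Gamma$. Since $\mu(A) \ge \delta$ and $R$ is a set of $\delta$-recurrence, there is $r \in R \setminus \{0\}$ with $\mu(A \cap T_r^{-1}A) > 0$. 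Taking $t_1 = 0$, $t_2 = r$ in the correspondence inequality gives $d^*(E \cap (E - r)) > 0$, so in particular $E \cap (E - r) \ne \es$; pick $x$ in this intersection, and then $x \in E$ and $x + r \in E$, so $r = (x+r) - x \in (E - E) \cap R$, which is therefore nonempty.

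For the converse, suppose $(E - E) \cap R \ne \es$ for every $E$ with $d^*(E) \ge \delta$. Let $(X,\B,\mu,(T_g)_{g\in\Gamma})$ be any measure-preserving system and $A \in \B$ with $\mu(A) \ge \delta$. Set $A_g := T_g^{-1}A$, so $\mu(A_g) = \mu(A) \ge \delta$ for every $g \in \Gamma$. By Bergelson's Intersectivity Lemma (Lemma \ref{lem: intersectivity}), fixing any F{\o}lner sequence $\Phi$ in $\Gamma$, there is a set $E \subseteq \Gamma$ with $\overline{d}_\Phi(E) \ge \delta$ — hence $d^*(E) \ge \delta$ — such that $\mu\big(\bigcap_{g \in F} A_g\big) > 0$ for every nonempty finite $F \subseteq E$. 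By hypothesis, there exist $x, y \in E$ and $r \in R \setminus \{0\}$ with $r = x - y$ (the element of $(E-E) \cap R$ is nonzero, since $0 \notin (E-E)\cap R$ would already force nonemptiness to come from a genuine difference — here one should note $R$ may or may not contain $0$; one takes $r \ne 0$, which is available because $x \ne y$). Applying the intersectivity conclusion to $F = \{x, y\}$ gives $\mu(T_x^{-1}A \cap T_y^{-1}A) > 0$, and translating by $T_y$ (which is measure-preserving) yields $\mu(A \cap T_{x-y}^{-1}A) = \mu(A \cap T_r^{-1}A) > 0$. Thus $R$ is a set of $\delta$-recurrence.

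The only subtlety — and the main point requiring care rather than a genuine obstacle — is the handling of the element $0$: one must ensure that the recurrence witness $r$ is nonzero, which in the forward direction comes for free from $d^*(E \cap (E-r)) > 0$ combined with choosing $r \ne 0$ in the definition of $\delta$-recurrence, and in the converse direction comes from choosing two \emph{distinct} elements $x, y$ of $E$ whose difference lies in $R$. If one worries that $(E-E) \cap R$ might only be witnessed by $0$, note that one is free to translate $E$ and, more to the point, the intersectivity set $E$ is infinite (it has positive density), so one can always extract a nonzero difference lying in $R$ by re-examining the hypothesis applied to $E$ itself. Beyond this bookkeeping, the proof is a direct transcription of the argument for Proposition \ref{prop: recurrence difference} with $\delta$ inserted as the uniform lower bound on $\mu(A)$.
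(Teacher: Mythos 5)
Your proposal is correct and follows essentially the same route as the paper: the forward direction via the Furstenberg Correspondence Principle (Lemma \ref{lem: correspondence}) applied to a set $A$ with $\mu(A)=d^*(E)\ge\delta$, and the converse via Bergelson's Intersectivity Lemma (Lemma \ref{lem: intersectivity}) applied to the sets $A_g=T_g^{-1}A$. The fuss over the element $0$ is a convention issue already implicit in the statement (and in Proposition \ref{prop: recurrence difference}), and the paper's own proof glosses over it in the same way.
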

\begin{proof}
	Suppose $R \subseteq \Gamma$ is a set of $\delta$-recurrence, and let $E \subseteq \Gamma$
	with $d^*(E) \ge \delta$.
	By Lemma \ref{lem: correspondence}, let $(X, \B, \mu, (T_g)_{g \in \Gamma})$
	be a measure-preserving system and $A \in \B$ with $\mu(A) = d^*(E) \ge \delta$ such that
		\begin{align} \label{eq: correspondence}
		d^* \left( \bigcap_{i=1}^k{(E - t_i)} \right) \ge \mu\left( \bigcap_{i=1}^k{T_{t_i}^{-1}A} \right)
	\end{align}
	
	\noindent for $k \in \N$ and $t_1, \dots, t_k \in \Gamma$.
	Since $R$ is a set of $\delta$-recurrence, there is an $r \in R$ so that $\mu(A \cap T_r^{-1}A) > 0$.
	It follows from the inequality \eqref{eq: correspondence} that $d^*(E \cap (E - r)) > 0$.
	In particular, $E \cap (E - r) \ne \es$, so let $x \in E \cap (E - r)$.
	Then $x \in E$ and $x + r \in E$, so $r = (x+r) - x \in E - E$.
	Hence, $(E - E) \cap R \ne \es$.
	
	Conversely, suppose $(E - E) \cap R \ne \es$ for every $E \subseteq \Gamma$
	with $d^*(E) \ge \delta$.
	We will show that $R$ is a set of recurrence.
	Let $(X, \B, \mu, (T_g)_{g \in \Gamma})$ be a measure-preserving system,
	and let $A \in \B$ with $\mu(A) \ge \delta$.
	Set $A_g := T_g^{-1}A$ for $g \in \Gamma$.
	Then $(A_g)_{g \in \Gamma}$ is a sequence in $(X, \B, \mu)$ with $\mu(A_g) = \mu(A) \ge \delta$.
	for every $g \in \Gamma$.
	Thus, by Lemma \ref{lem: intersectivity}, there is a set $E \subseteq \Gamma$ with $d^*(E) \ge \delta$
	such that
		\begin{align} \label{eq: intersection}
		\mu \left( \bigcap_{g \in F}{A_g} \right) > 0
	\end{align}
	
	\noindent for every nonempty finite subset $F \subseteq E$.
	Now let $r \in R \cap (E - E)$.
	Let $s, t \in E$ such that $t - s = r$.
	By \eqref{eq: intersection},
		\begin{align*}
		\mu \left( A \cap T_r^{-1}A \right) = \mu \left( A \cap T_{t-s}^{-1}A \right)
		 = \mu \left( T_s^{-1}A \cap T_t^{-1}A \right) = \mu \left( A_s \cap A_t \right) > 0.
	\end{align*}
	
	Thus, $R$ is a set of recurrence.
\end{proof}

%%%%%%%%%%%%%%%%%%%%%%%%%%%%%%%%%%%%%%%%%%%%%%%%%%%%

\subsection{Tilings}

The last ingredient needed for our proof of Theorem \ref{thm: unif rec}
is a special class of F{\o}lner sequences that we call \emph{self-tiling}.
First we need a round of definitions
(this terminology comes from \cite{ow} in establishing a version of Rokhlin's lemma for amenable groups).

\begin{definition}
	A set $T \subseteq \Gamma$ is a \emph{tile} if there are disjoint translates $\{T + s_i : i \in I\}$
	such that $\bigcup_{i \in I}{(T + s_i)} = \Gamma$.
\end{definition}

\begin{definition} \label{defn: almost invariant}
	Let $K \subseteq \Gamma$ be finite and $\ep > 0$.
	A set $F \subseteq \Gamma$ is \emph{$(K, \ep)$-invariant} if
		\begin{align*}
		\frac{|(K + F) \triangle F|}{|F|} < \ep.
	\end{align*}
	
\end{definition}

\begin{definition}
	A group $\Gamma$ is \emph{monotileable} if for every finite set $K \subseteq \Gamma$
	and every $\ep > 0$, there is a $(K, \ep)$-invariant finite tile $T \subseteq \Gamma$.
\end{definition}

\begin{remark}
	The properties of $T$ in the definition for monotileability can be summarized as follows:
	there is a (syndetic) set $S \subseteq \Gamma$ such that
	
	\begin{enumerate}[(1)]
		\item	$T + S = \Gamma$;
		\item	$(T - T) \cap (S - S) = \{0\}$;
		\item	$\frac{|(K + T) \triangle T|}{|T|} < \ep$.
	\end{enumerate}
	
\end{remark}

It is known that all abelian groups are monotileable (see \cite{ow}).
Other examples of monotileable groups are given in \cite{weiss}.
It remains an open question whether every amenable group is monotileable.

For our purposes, we will need a slightly different notion of almost-invariance than that
given in Definition \ref{defn: almost invariant}, which the following lemma grants:

\begin{lemma}[\cite{dhz}, Lemma 2.6] \label{lem: invariance}
	Let $K \subseteq \Gamma$ be finite and $\ep > 0$.
	If $F \subseteq \Gamma$ is finite and $\left( K, \frac{\ep}{|K|} \right)$-invariant,
	then there is a subset $F' \subseteq F$ such that $|F'| > (1 - \ep)|F|$ and $K + F' \subseteq F$.
\end{lemma}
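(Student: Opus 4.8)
The plan is to take $F'$ to be the ``$K$-interior'' of $F$, namely
\[
	F' := \{ x \in F : K + x \subseteq F \} .
\]
With this choice the inclusion $K + F' \subseteq F$ holds by construction, so the entire content of the lemma reduces to the cardinality estimate $|F'| > (1 - \ep)|F|$, equivalently $|F \setminus F'| < \ep|F|$.

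First I would decompose the complement $F \setminus F'$. By definition, $x \in F$ fails to belong to $F'$ exactly when $k + x \notin F$ for some $k \in K$, so
\[
	F \setminus F' \;=\; \bigcup_{k \in K} B_k, \qquad B_k := \{ x \in F : k + x \notin F \}.
\]
Hence it suffices to bound $|B_k| < \tfrac{\ep}{|K|}|F|$ for each fixed $k \in K$ and then sum over the (at most) $|K|$ sets $B_k$.

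To estimate $|B_k|$, I would use that translation by $k$ is a bijection of $\Gamma$ carrying $B_k$ onto $\{ y : y - k \in F,\ y \notin F \} = (k + F) \setminus F$, so that $|B_k| = |(k+F) \setminus F|$. Since $k + F \subseteq K + F$, we get $(k+F) \setminus F \subseteq (K+F) \setminus F \subseteq (K+F) \triangle F$, and the hypothesis that $F$ is $\left(K, \tfrac{\ep}{|K|}\right)$-invariant gives $|(K+F) \triangle F| < \tfrac{\ep}{|K|}|F|$. Combining, $|F \setminus F'| \le \sum_{k \in K} |B_k| < |K| \cdot \tfrac{\ep}{|K|}|F| = \ep|F|$, which is exactly the desired bound.

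I do not expect a substantive obstacle here: once the correct set $F'$ is identified, the argument is a short union-bound counting estimate. The only point that requires a little care is to work with the one-sided differences $(k+F)\setminus F$, which are monotone in $K$, rather than with the full symmetric differences $(k+F)\triangle F$ and $(K+F)\triangle F$ — the ``$F \setminus (\,\cdot\,)$'' halves of those symmetric differences are nested the opposite way, so that comparison would go in the wrong direction.
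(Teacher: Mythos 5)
Your proof is correct. The paper does not actually prove this lemma --- it only cites it to Downarowicz--Huczek--Zhang --- and your argument (take $F'$ to be the $K$-interior of $F$, translate each bad set $B_k$ onto $(k+F)\setminus F \subseteq (K+F)\triangle F$, and apply a union bound over $k \in K$) is exactly the standard short counting proof of this fact.
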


We also need to control the density of the set of shifts required to tile $\Gamma$.
This is easily calculated with the help of Lemma \ref{lem: upper Banach density}:

\begin{lemma} \label{lem: tile density}
	Suppose $T$ is a tile with shifts $S$ covering $\Gamma$.
	Then $d^*(S) = \frac{1}{|T|}$.
\end{lemma}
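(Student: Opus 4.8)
The plan is to prove the two inequalities $d^*(S) \le \frac{1}{|T|}$ and $d^*(S) \ge \frac{1}{|T|}$ separately; in fact I will show the sharper statement that $\overline{d}_{\Phi}(S) = \frac{1}{|T|}$ for \emph{every} F{\o}lner sequence $\Phi$, from which the claim follows by taking the supremum over $\Phi$. Throughout I use that $\{T + s : s \in S\}$ is a partition of $\Gamma$ into pairwise disjoint translates of the finite set $T$, which is exactly what it means for $T$ to be a tile with shifts $S$.

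For the upper bound I would fix a F{\o}lner sequence $\Phi = (\Phi_N)_{N \in \N}$ and observe that the translates $\{T + s : s \in S \cap \Phi_N\}$ are pairwise disjoint and all contained in $T + \Phi_N$, so $|T| \cdot |S \cap \Phi_N| \le |T + \Phi_N|$. Since $T$ is finite and $\Phi$ is F{\o}lner, $|T + \Phi_N| = |\Phi_N|(1 + o(1))$; dividing by $|T| \cdot |\Phi_N|$ and taking $\limsup$ gives $\overline{d}_{\Phi}(S) \le \frac{1}{|T|}$, hence $d^*(S) \le \frac{1}{|T|}$. Equivalently, one can phrase this directly through Lemma \ref{lem: upper Banach density}: for any finite $F \subseteq \Gamma$ and any $x \in \Gamma$, disjointness of the tiling forces $|T| \cdot |(S - x) \cap F| = |T| \cdot |S \cap (F + x)| \le |T + F|$, so if $\alpha$ satisfies the condition in Lemma \ref{lem: upper Banach density} then $\alpha \le |T + F|/(|T| \cdot |F|)$ for every finite $F$, and letting $F$ run through a F{\o}lner sequence yields $\alpha \le \frac{1}{|T|}$.

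For the lower bound the key point is that $\overline{d}_{\Phi}$ is finitely subadditive and, thanks to the F{\o}lner property, translation-invariant: $\overline{d}_{\Phi}(g + E) = \overline{d}_{\Phi}(E)$ for every $g \in \Gamma$ and $E \subseteq \Gamma$, since $|(g + E) \cap \Phi_N|$ and $|E \cap \Phi_N|$ differ by at most $|\Phi_N \triangle (\Phi_N - g)| = o(|\Phi_N|)$. Writing $\Gamma = T + S = \bigcup_{t \in T}(t + S)$ and applying subadditivity followed by translation-invariance,
\[
	1 = \overline{d}_{\Phi}(\Gamma) \le \sum_{t \in T}{\overline{d}_{\Phi}(t + S)} = |T| \cdot \overline{d}_{\Phi}(S),
\]
so $\overline{d}_{\Phi}(S) \ge \frac{1}{|T|}$. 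Combining with the upper bound gives $\overline{d}_{\Phi}(S) = \frac{1}{|T|}$ for every F{\o}lner sequence $\Phi$, and therefore $d^*(S) = \sup_{\Phi}\overline{d}_{\Phi}(S) = \frac{1}{|T|}$.

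There is no serious obstacle here; the only points needing (routine) care are the two F{\o}lner estimates — that $|T + \Phi_N|/|\Phi_N| \to 1$ for finite $T$ and that $\overline{d}_{\Phi}$ is translation-invariant — together with remembering to use disjointness of the tiling in the upper bound. If instead one wishes to route the entire argument through the characterization in Lemma \ref{lem: upper Banach density} rather than through a fixed F{\o}lner sequence, the lower bound is the slightly more delicate half: given a finite $F$, one averages $|S \cap (F + x)|$ over $x$ in a large F{\o}lner set and counts incidences with the tiling to conclude that $|S \cap (F + x)| \ge |F|/|T|$ for some $x$, which is precisely the condition needed to certify $d^*(S) \ge \frac{1}{|T|}$.
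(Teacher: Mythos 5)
Your proof is correct, and it takes a genuinely different route from the paper's. The paper runs both inequalities through the characterization of $d^*$ in Lemma \ref{lem: upper Banach density} using purely finite counting: for the upper bound it simply takes $F = T$ and notes that $|(S-x) \cap T| \le 1$ because $(T-T) \cap (S-S) = \{0\}$; for the lower bound it decomposes an arbitrary finite $F$ via the tiling as $F = \{t_1+s_1,\dots,t_n+s_n\}$ and pigeonholes on the $t_i$ to find a single shift $t_0$ with $|(S+t_0) \cap F| \ge |F|/|T|$. You instead work directly with F{\o}lner sequences: a packing argument ($|T|\cdot|S\cap\Phi_N| \le |T+\Phi_N| = |\Phi_N|(1+o(1))$) for the upper bound, and finite subadditivity plus translation-invariance of $\overline{d}_\Phi$ applied to $\Gamma = \bigcup_{t\in T}(t+S)$ for the lower bound. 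Both arguments are sound. Your version buys a slightly stronger conclusion --- that $\overline{d}_\Phi(S) = \frac{1}{|T|}$ for \emph{every} F{\o}lner sequence, not merely that the supremum over F{\o}lner sequences equals $\frac{1}{|T|}$ --- at the cost of two routine F{\o}lner estimates that the paper's finitary argument avoids entirely. The averaging alternative you sketch in your final paragraph for the lower bound is essentially the paper's pigeonhole argument, so you have in effect identified both proofs.
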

\begin{proof}
	First, for every $x \in \Gamma$, $|(S - x) \cap T| \le 1$.
	Indeed, if $s_1 - x = t_1$ and $s_2 - x = t_2$, then $s_1 - s_2 = t_1 - t_2$.
	Since $(T - T) \cap (S - S) = \{0\}$, it follows that $s_1 = s_2$ and $t_1 = t_2$.
	So by Lemma \ref{lem: upper Banach density}, $d^*(S) \le \frac{1}{|T|}$.
	
	Conversely, let $F \subseteq \Gamma$ be finite.
	Every element of $F$ is (uniquely) expressible in the form $t + s$ for some $t \in T$ and $s \in S$.
	For concreteness, write $F = \{t_1 + s_1, \dots, t_n + s_n\}$.
	By the pigeonhole principle, there is a $t_0 \in T$ such that
		\begin{align*}
		|\{1 \le i \le n : t_i = t_0\}| \ge \frac{n}{|T|}.
	\end{align*}
	
	\noindent Thus, $|(S + t_0) \cap F| \ge \frac{1}{|T|} \cdot |F|$.
	Hence, $d^*(S) \ge \frac{1}{|T|}$ by Lemma \ref{lem: upper Banach density}.
\end{proof}

Now we can construct \emph{self-tiling} F{\o}lner sequences.

\begin{lemma} \label{lem: self-tiling}
	Suppose $\Gamma$ is monotileable, and let $(\ep_N)_{N \in \N}$
	be a sequence of positive numbers with $\ep_N \to 0$.
	Then there is a F{\o}lner sequence $(\Phi_N)_{N \in \N}$ such that
	
	\begin{enumerate}[(1)]
		\item	for every $N \in \N$, $\Phi_N$ is a tile;
		\item	for every $N \in \N$ and every $1 \le j \le N-1$,
			there are elements $x_1, \dots, x_r \in \Gamma$ such that $\Phi_j + x_i$ are pairwise disjoint,
			$\bigcup_{i=1}^r{(\Phi_j + x_i)} \subseteq \Phi_N$, and
						\begin{align*}
				\frac{\left| \Phi_N \setminus \bigcup_{i=1}^r{(\Phi_j + x_i)} \right|}
				 {|\Phi_N|} < \ep_N.
			\end{align*}
			
	\end{enumerate}
	
\end{lemma}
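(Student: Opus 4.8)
The plan is to build the F\o lner sequence $(\Phi_N)_{N \in \N}$ recursively, at each stage using monotileability to obtain a tile that is highly invariant both under a prescribed finite set and under (a cover by) all of the previously constructed tiles. First I would observe that property (2) is essentially a statement about covering $\Phi_N$ efficiently by translates of the single tile $\Phi_j$; since $\Phi_j$ is itself a tile (by property (1) at the earlier stage), it has an associated syndetic set of shifts $S_j$ with $\Phi_j + S_j = \Gamma$ and $(\Phi_j - \Phi_j) \cap (S_j - S_j) = \{0\}$, and by Lemma \ref{lem: tile density} we have $d^*(S_j) = 1/|\Phi_j|$. The translates of $\Phi_j$ that lie entirely inside $\Phi_N$ are indexed by those $s \in S_j$ with $\Phi_j + s \subseteq \Phi_N$, and the portion of $\Phi_N$ not covered is contained in the ``boundary layer'' of $\Phi_N$ relative to $\Phi_j$. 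So if $\Phi_N$ is sufficiently invariant under the finite set $\Phi_j - \Phi_j$, Lemma \ref{lem: invariance} produces a large sub-tile-cover and the uncovered fraction is small.

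Here is the recursion in detail. Suppose $\Phi_1, \dots, \Phi_{N-1}$ have been constructed, each a tile, satisfying property (2) at all earlier levels, and forming the initial segment of a F\o lner sequence (i.e., they are more and more invariant under a fixed exhaustion $K_1 \subseteq K_2 \subseteq \cdots$ of $\Gamma$ by finite sets). Let $K := K_N \cup \bigcup_{j=1}^{N-1}(\Phi_j - \Phi_j)$, a finite set. Choose $\delta_N > 0$ small enough that $\delta_N < \ep_N$ and also small enough (depending on $|K|$) to feed into Lemma \ref{lem: invariance}; concretely, take $\delta_N \le \ep_N/ (N \cdot |K|)$ or similar. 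By monotileability, pick a $(K, \delta_N/|K|)$-invariant finite tile $\Phi_N \subseteq \Gamma$. This gives property (1) immediately. For property (2), fix $1 \le j \le N-1$. Since $\Phi_N$ is $(\Phi_j - \Phi_j, \,\delta_N/|\Phi_j - \Phi_j|)$-invariant (because $\Phi_j - \Phi_j \subseteq K$ and invariance under $K$ implies invariance under subsets with a controlled loss), Lemma \ref{lem: invariance} yields $F' \subseteq \Phi_N$ with $|F'| > (1 - \delta_N)|\Phi_N|$ and $(\Phi_j - \Phi_j) + F' \subseteq \Phi_N$; actually I want $\Phi_j + s \subseteq \Phi_N$ for the relevant shifts, so I would apply the lemma to the set $\Phi_j$ directly (viewing $\Phi_j + F' \subseteq \Phi_N$). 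Now use the shift set $S_j$ of $\Phi_j$: the translates $\{\Phi_j + s : s \in S_j,\ \Phi_j + s \cap F' \ne \es\}$ are pairwise disjoint (disjointness of all $S_j$-translates), each contained in $\Phi_N$ (since if $\Phi_j + s$ meets $F'$ then $s \in F' - \Phi_j$, whence $\Phi_j + s \subseteq \Phi_j - \Phi_j + F' \subseteq \Phi_N$), and their union covers $F'$, hence all but a $\delta_N < \ep_N$ fraction of $\Phi_N$. Relabel these finitely many shifts as $x_1, \dots, x_r$. Finally, the F\o lner property holds because $\Phi_N$ was chosen $(K_N, \text{small})$-invariant and $K_N \nearrow \Gamma$.

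I expect the main obstacle to be the bookkeeping that makes the two invariance demands on $\Phi_N$ — invariance under the F\o lner exhaustion $K_N$ (needed to ensure $(\Phi_N)$ is a F\o lner sequence at all) and invariance under $\bigcup_{j<N}(\Phi_j - \Phi_j)$ (needed for property (2)) — compatible, since the sets $\Phi_j - \Phi_j$ are only known after the earlier stages are complete. The resolution is simply that at stage $N$ everything from earlier stages is already fixed, so $K$ is a genuine finite set and one monotileability application handles both constraints at once. A secondary point requiring care is the precise constant in Lemma \ref{lem: invariance}: I must choose the invariance parameter for $\Phi_N$ small relative to $|K|$ (not just relative to $|\Phi_j - \Phi_j|$), so that passing from $K$-invariance to $\Phi_j$-invariance and then invoking Lemma \ref{lem: invariance} still leaves an uncovered fraction below $\ep_N$ simultaneously for all $j < N$; dividing the budget $\ep_N$ by $N$ across the $N-1$ values of $j$ suffices.
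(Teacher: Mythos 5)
Your proposal is correct and follows essentially the same route as the paper: a recursive construction in which $\Phi_N$ is chosen by monotileability to be a tile that is highly invariant under a finite set $K$ encoding both a F{\o}lner exhaustion and the earlier tiles, with Lemma \ref{lem: invariance} supplying the large interior set $F'$. The only (harmless) variation is in extracting the sub-tiling: you enlarge $K$ to contain $\Phi_j - \Phi_j$ and take all tiling translates of $\Phi_j$ meeting $F'$, whereas the paper puts $\Phi_j$ itself into $K$ and instead uses Lemmas \ref{lem: tile density} and \ref{lem: upper Banach density} to shift the tiling set $S$ so that $|S \cap F| \ge |F|/|\Phi_j|$; both yield the required $(1-\ep_N)$ coverage.
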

\begin{proof}
	Let $g_1, g_2, \dots$ be an enumeration of $\Gamma$.
	Suppose $\Phi_1, \dots, \Phi_{N-1}$ have been constructed to satisfy properties (1) and (2),
	and so that $\Phi_j$ is $\left( \{g_1, \dots, g_j\}, \frac{\ep_j}{j} \right)$-invariant for $1 \le j \le N-1$.
	This invariance is to ensure that the sequence we construct is a F{\o}lner sequence.
	
	Let $K = \bigcup_{j=1}^{N-1}{\Phi_j} \cup \{g_1, \dots, g_N\}$.
	Since $\Gamma$ is monotileable, let $\Phi_N$ be a $\left( K, \frac{\ep_N}{|K|} \right)$-invariant tile.
	By Lemma \ref{lem: invariance}, let $F \subseteq \Phi_N$ with $|F| > (1 - \ep_N) |\Phi_N|$
	such that $K + F \subseteq \Phi_N$.
	Fix $1 \le j \le N-1$.
	Since $\Phi_j$ is a tile, let $S \subseteq \Gamma$ so that $\Phi_j + S = \Gamma$
	and $(\Phi_j - \Phi_j) \cap (S - S) = \{0\}$.
	By Lemma \ref{lem: tile density}, $d^*(S) = \frac{1}{|\Phi_j|}$.
	
	Thus, replacing $S$ by a shift if necessary, we may assume
		\begin{align*}
		|S \cap F| \ge \frac{|F|}{|\Phi_j|} > (1 - \ep_N) \frac{|\Phi_N|}{|\Phi_j|}
	\end{align*}
	
	\noindent by Lemma \ref{lem: upper Banach density}.
	Write $S \cap F = \{x_1, \dots, x_r\}$, and let
		\begin{align*}
		A := \bigcup_{i=1}^r{(\Phi_j + x_i)} = \Phi_j + (S \cap F).
	\end{align*}
	
	\noindent Since this is a disjoint union, we have $|A| = r |\Phi_j| > (1 - \ep_N) |\Phi_N|$.
	Moreover, $A \subseteq \Phi_j + F \subseteq K + F \subseteq \Phi_N$.
	Thus, $\Phi_N$ satisfies property (2).
\end{proof}

\begin{definition}
	A F{\o}lner sequence $(\Phi_N)_{N \in \N}$ is \emph{$(\ep_N)$-self-tiling}
	if it satisfies the conclusion of Lemma \ref{lem: self-tiling}.
\end{definition}

\begin{remark}
	In $\Z$, it is straightforward to construct a self-tiling F{\o}lner sequence.
	For example, the sequence of exponentially growing intervals $\Phi_N = \{1, \dots, 2^N\}$ is self-tiling,
	and without any error $\ep_N$.
\end{remark}

%%%%%%%%%%%%%%%%%%%%%%%%%%%%%%%%%%%%%%%%%%%%%%%%%%%%

\subsection{Proof of uniformity of recurrence} \label{sec: unif rec proof}

Suppose $R$ is a set of recurrence.
Let $\ep_N \to 0$, and let $(\Phi_N)_{N \in \N}$ be an $(\ep_N)$-self-tiling F{\o}lner sequence.

\begin{claim} \label{claim: unif rec}
	For all $\delta > 0$, there is an $L = L(\delta) \in \N$ such that
	if $N \ge L$ and $F \subseteq \Phi_N$ with $|F| \ge \delta |\Phi_N|$, then $(F - F) \cap R \ne \es$.
\end{claim}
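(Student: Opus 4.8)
The plan is to argue by contradiction. Suppose the statement of the claim fails for some $\delta > 0$; then for every $L \in \N$ there is an $N \ge L$ and a \emph{bad} set $F \subseteq \Phi_N$, meaning $|F| \ge \delta|\Phi_N|$ and $(F - F) \cap R = \es$. In particular there are infinitely many indices $N$ carrying a bad set of density at least $\delta$. The goal is to assemble from these local bad sets a single set $E \subseteq \Gamma$ with $d^*(E) > 0$ and $(E - E) \cap R = \es$; since $R$ is a set of recurrence, this contradicts Proposition \ref{prop: recurrence difference}.

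First I would fix a sparse sequence of scales $M_0 < M_1 < M_2 < \cdots$ chosen so that each $\Phi_{M_k}$ carries a bad set of density at least $\delta$ and $\sum_{k \ge 0}\ep_{M_k} < \delta/2$; for each consecutive pair I fix once and for all a family of translates realizing the self-tiling of $\Phi_{M_{k+1}}$ by copies of $\Phi_{M_k}$ as in Lemma~\ref{lem: self-tiling}(2). The key local observation is a \emph{restriction step}: if $F \subseteq \Phi_{M_{k+1}}$ is bad with $|F| \ge \alpha |\Phi_{M_{k+1}}|$, then by pigeonhole one of the fixed tiles $\Phi_{M_k} + x_i$ meets $F$ in at least $(\alpha - \ep_{M_{k+1}})|\Phi_{M_k}|$ points, and translating that intersection back into $\Phi_{M_k}$ produces a bad set of density at least $\alpha - \ep_{M_{k+1}}$ in $\Phi_{M_k}$ whose difference set is contained in $F - F$.

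Next I would organize these restrictions into a finitely branching tree: level-$k$ vertices are bad subsets of $\Phi_{M_k}$ of density at least $\delta/2$, with an edge from a level-$k$ vertex $A$ to a level-$(k+1)$ vertex $B$ whenever $B$, restricted to one of the fixed tiles, is a translate of $A$. Iterating the restriction step from a bad $\delta$-dense set in $\Phi_{M_k}$ down through the scales $M_{k-1}, M_{k-2}, \dots, M_0$ produces a chain of vertices from level $k$ to level $0$ whose densities stay above $\delta - \sum_{j}\ep_{M_j} \ge \delta/2$ by the choice of scales; hence the tree has vertices at every level. K\"onig's lemma then yields an infinite path $A_0, A_1, A_2, \dots$, and choosing translations recursively (so that each $A_k$, translated to a set $E_k \subseteq \Gamma$, restricts to $E_{k-1}$ inside the appropriate tile) gives a nested sequence $E_0 \subseteq E_1 \subseteq E_2 \subseteq \cdots$ in which each $E_k$ is a translate of a bad set that is $(\delta/2)$-dense in a translate of $\Phi_{M_k}$.

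Finally I would set $E := \bigcup_k E_k$. Since the $E_k$ are nested, $E - E = \bigcup_k (E_k - E_k)$, and each $E_k - E_k$ is the difference set of a bad set, so $(E - E) \cap R = \es$. On the other hand $E$ contains, for arbitrarily large $k$, a $(\delta/2)$-dense subset of a translate of the F{\o}lner set $\Phi_{M_k}$, so a short averaging argument through Lemma~\ref{lem: upper Banach density} gives $d^*(E) \ge \delta/2 > 0$; this is the desired contradiction. I expect the construction of the nested tower to be the main obstacle, and it is where the self-tiling hypothesis is essential: one cannot obtain a bad set of positive density by naively tiling $\Gamma$ with a single bad $F_N$, because cross-tile differences are uncontrolled, whereas the self-tiling structure is precisely what allows the bad sets at different scales to be fitted together so that every difference occurring in $E - E$ lies inside a single bad set.
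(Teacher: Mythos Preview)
Your proposal is correct and follows essentially the same strategy as the paper's proof: both argue by contradiction, pass to a sparse subsequence of scales so that the self-tiling errors are summable below the density threshold, use a pigeonhole restriction step to pass a bad set from scale $\Phi_{M_{k+1}}$ down to scale $\Phi_{M_k}$ with controlled density loss, organize the resulting finite-level data into a locally finite graph, extract an infinite path via K\H{o}nig's lemma, and take the union of the resulting nested translates to obtain a set of positive upper Banach density whose difference set misses $R$. The only cosmetic differences are that the paper works with an arbitrary $\alpha < \delta$ rather than $\delta/2$, and it adds an explicit root vertex $\es$ at level $0$ to make connectedness of the graph transparent before invoking K\H{o}nig's lemma.
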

\begin{proof}[Proof of Claim]
	Suppose not.
	For $\alpha > 0$, set
		\begin{align*}
		\mathcal{F}_N(\alpha)
		 := \{F \subseteq \Phi_N + x : x \in \Gamma, |F| \ge \alpha |\Phi_N|, (F - F) \cap R = \es\}.
	\end{align*}
	
	\noindent Note that $\mathcal{F}_N(\alpha)$ is shift-invariant,
	and $\mathcal{F}_N(\alpha) \subseteq \mathcal{F}_N(\beta)$ for $\beta \le \alpha$.
	By assumption, $\mathcal{F}_N(\delta) \ne \es$ for infinitely many $N \in \N$.
	Taking a subsequence of $\Phi$, we may assume that $\mathcal{F}_N(\delta) \ne \es$
	for every $N \in \N$.
	
	Let $0 < \alpha < \delta$.
	Taking a further subsequence, we may assume that $\ep_N$ satisfies
	$\sum_{N=1}^{\infty}{\ep_N} < \delta - \alpha$.
	Let $t_N = \sum_{k=1}^N{\ep_k}$ so that $\alpha < \alpha + t_N < \delta$.
	
	Suppose $F \in \mathcal{F}_N(\alpha + t_N)$.
	Since $\Phi$ is $(\ep_N)$-self-tiling, there are elements $x_1, \dots, x_r\break \in \Gamma$
	such that $\Phi_{N-1} + x_i$ are pairwise disjoint,
	$\bigcup_{i=1}^r{(\Phi_{N-1} + x_i)} \subseteq \Phi_N$, and
		\begin{align*}
		\frac{\left| \Phi_N \setminus \bigcup_{i=1}^r{(\Phi_{N-1} + x_i)} \right|}{|\Phi_N|} < \ep_N.
	\end{align*}
	
	Now we average to find a shift of $\Phi_{N-1}$ in which $F$ has large density:
		\begin{align*}
		\frac{1}{r} \sum_{i=1}^r{\left| F \cap (\Phi_{N-1} + x_i) \right|}
		 & = \frac{1}{r} \left| F \cap \bigcup_{i=1}^r{(\Phi_{N-1} + x_i)} \right| \\
		 & \ge \frac{1}{r} \left( |F| - \left| \Phi_N \setminus \bigcup_{i=1}^r{(\Phi_{N-1} + x_i)} \right| \right) \\
		 & > \frac{1}{r} \left( |F| - \ep_N |\Phi_N| \right) \\
		 & \ge \frac{1}{r} (\alpha + t_N - \ep_N) |\Phi_N| \\
		 & \ge (\alpha + t_{N-1}) |\Phi_{N-1}|.
	\end{align*}
	
	\noindent Thus, $F_{N-1} := F \cap (\Phi_{N-1} + x_i)$ satisfies
	$|F_{N-1}| > (\alpha + t_{N-1})|\Phi_{N-1}|$ for some $1 \le i \le r$.
	Repeating this, we get a sequence $F_1 \subseteq F_2 \subseteq \cdots \subseteq F_{N-1} \subseteq F$
	with $F_j \in \mathcal{F}_j(\alpha + t_j)$.
	
	Now we define a graph in order to extract an infinite nested sequence.
	For $n \in \N$, let $V_n := \left\{ F \in \mathcal{F}_n(\alpha + t_n) : F \subseteq \Phi_n \right\}$.
	Put $V_0 := \{\es\}$, and let $V := \bigcup_{n \ge 0}{V_n}$.
	We define the edge set by
		\begin{align*}
		E := \left\{ \{F_n, F_{n+1}\} : F_n \in V_n, F_{n+1} \in V_{n+1},
		F_n \subseteq F_{n+1} + x~\text{for some}~x \in \Gamma \right\}.
	\end{align*}
	
	\noindent The calculations in the previous paragraph show that the graph $G = (V, E)$ is connected.
	Moreover, since each of the sets $V_n$ is finite for $n \ge 0$, $G$ is a locally finite\footnote{
	A graph is \emph{locally finite} if every vertex has finite degree.
	That is, each vertex is contained in only finitely many edges.}
	graph.
	Hence, by K\H{o}nig's lemma (see, e.g. \cite[Lemma 8.1.2]{diestel}),
	there is an infinite sequence $(F_n)_{n \ge 0}$
	such that $F_n \in V_n$ and $\{F_n, F_{n+1}\} \in E$ for every $n \ge 0$.
	
	For each $n \in \N$, since $\{F_n, F_{n+1}\} \in E$, there exists $x_n \in \Gamma$
	such that $F_n \subseteq F_{n+1} + x_n$.
	Let $y_0 = y_1 = 0$, and let $y_N := \sum_{n=1}^{N-1}{x_n}$ for $N \ge 2$.
	Put $E_N := F_N + y_N$.
	Then $E_N \subseteq F_{N+1} + y_N + x_N = E_{N+1}$.
	We have thus constructed an infinite sequence $E_1 \subseteq E_2 \subseteq \cdots$
	such that $E_N \in \mathcal{F}_N(\alpha + t_N) \subseteq \mathcal{F}_N(\alpha)$ for every $N \in \N$.
	
	Set $E := \bigcup_{N \in \N}{E_N}$.
	Averaging along the F{\o}lner sequence $\Psi_N := \Phi_N + y_N$,
	we have $d^*(E) \ge \overline{d}_{\Psi}(E) \ge \alpha > 0$.
	On the other hand, $(E - E) \cap R = \es$.
	This contradicts the assumption that $R$ is a set of recurrence
	by Proposition \ref{prop: recurrence difference}.
\end{proof}

Now we deduce uniformity of recurrence from the claim.
Given $\delta > 0$, let $L = L(\delta$) as in Claim \ref{claim: unif rec}.
Set $R_{\delta} := R \cap \left( \Phi_L - \Phi_L \right)$.
We claim $R_{\delta}$ is a set of $\delta$-recurrence.

Let $E \subseteq \Gamma$ with $d^*(E) \ge \delta$.
Then for every $N \in \N$, we can apply Lemma \ref{lem: upper Banach density}
to find $x_N \in \Gamma$ such that $|(E - x_N) \cap \Phi_N| \ge \delta |\Phi_N|$.
Let $F := \left( E - x_L \right) \cap \Phi_L$.
Then $F \subseteq \Phi_L$ and $|F| \ge \delta |\Phi_L|$,
so $(F - F) \cap R \ne \es$ by Claim \ref{claim: unif rec}.
But
\begin{align*}
	(F - F) \cap R
	 \subseteq (E - E) \cap \left( \Phi_L - \Phi_L \right) \cap R
	 = (E - E) \cap R_{\delta}.
\end{align*}

\noindent This proves that $R_{\delta}$ is a set of $\delta$-recurrence
by Proposition \ref{prop: delta-recurrence difference}. \qed

%%%%%%%%%%%%%%%%%%%%%%%%%%%%%%%%%%%%%%%%%%%%%%%%%%%%
%%%%%%%%%%%%%%%%%%%%%%%%%%%%%%%%%%%%%%%%%%%%%%%%%%%%

\section{Rigid-recurrent sequences} \label{sec: rigid rec}

We are now ready to deal with rigid-recurrent sequences.
Before turning to the construction, we prove Corollary \ref{cor: WM not DS}
to complete the picture about the interactions between rigidity and weak mixing.

%%%%%%%%%%%%%%%%%%%%%%%%%%%%%%%%%%%%%%%%%%%%%%%%%%%%

\subsection{Proof of Corollary \ref{cor: WM not DS}}

The key to Corollary \ref{cor: WM not DS} is the following:

\begin{proposition} \label{prop: translates not DS}
	Let $R \subseteq \Gamma$.
	Suppose every translate $R - t$, $t \in \Gamma$, is a set of recurrence.
	Then for every enumeration $(r_n)_{n \in \N}$ of $R$,
	$(r_n)_{n \in \N}$ is not rigid for any ergodic system with discrete spectrum.
\end{proposition}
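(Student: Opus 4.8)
The plan is to argue by contradiction. Suppose some enumeration $(r_n)_{n \in \N}$ of $R$ is rigid for an ergodic measure-preserving system with discrete spectrum. By Lemma~\ref{lem: rigid characters}(1) the set $C := \{\chi \in \dual : \chi(r_n) \to 1\}$ is then infinite. The goal is to turn this abundance of characters converging to $1$ along $(r_n)$ into a positive-measure set in a concrete ergodic discrete-spectrum system that witnesses that some translate $R - t$ fails to be a set of recurrence, contradicting the hypothesis. In other words, the Kronecker model that makes $(r_n)$ rigid will be made to simultaneously destroy recurrence for a well-chosen translate.

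Concretely, I would first fix a sequence $(\chi_i)_{i \in \N}$ of pairwise distinct elements of $C$ and form $j : \Gamma \to \T^{\N}$, $j(g) := (\chi_i(g))_{i \in \N}$, and let $Y := \overline{j(\Gamma)} \le \T^{\N}$. Then $Y$ is a compact metrizable abelian group, $j : \Gamma \to Y$ is a homomorphism with dense image, and $\Gamma$ acts on $(Y, \nu)$ (with $\nu$ Haar measure) by the measure-preserving rotations $T_g y = y + j(g)$; this action is minimal, hence uniquely ergodic, and has discrete spectrum. Since the $\chi_i$ are distinct, the coordinate projections restrict to pairwise distinct characters of $Y$, so $Y$ is infinite; being an infinite compact group it is uncountable with non-atomic Haar measure, so $(Y, \B_Y, \nu)$ is a bona fide non-atomic Lebesgue probability space. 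Because $\chi_i(r_n) \to 1$ for every $i$, we get $j(r_n) \to e_Y$, so $\overline{j(R)}$ is a countable (hence proper) closed subset of $Y$.

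The heart of the argument is then a short topological choice. Pick $w \in Y \setminus \overline{j(R)}$; choose an open neighborhood $W$ of $e_Y$ with $(w + W) \cap \overline{j(R)} = \es$; then a symmetric open neighborhood $V$ of $e_Y$ with $V + V + V \subseteq W$; and finally, using density of $j(\Gamma)$, an element $t \in \Gamma$ with $j(t) \in w + V$. Set $A := V$, so $\nu(A) > 0$. For every $r \in R$ with $r \ne t$ the set $A \cap T_{r-t}^{-1}A = V \cap (V - j(r-t))$ is empty: if not, then $j(r-t) \in V + V$, hence $j(r) = j(t) + j(r-t) \in w + (V + V + V) \subseteq w + W$, contradicting $j(r) \in \overline{j(R)}$. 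Thus no $s \in (R - t) \setminus \{0\}$ satisfies $\nu(A \cap T_s^{-1}A) > 0$, so $R - t$ is not a set of recurrence — contradicting the assumption that every translate of $R$ is a set of recurrence. Hence no enumeration of $R$ is rigid for an ergodic discrete-spectrum system.

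I expect the only real subtlety to be ensuring that $(Y, \nu)$ is an admissible system, which is precisely why I would pass to a \emph{countably} infinite subfamily of $C$ rather than using all of $C$: this keeps $Y$ metrizable (so $(Y,\nu)$ is a Lebesgue space) while still infinite. Everything else is routine: density of $j(\Gamma)$, unique ergodicity of minimal rotations on compact groups, and the neighborhood bookkeeping. (One could equivalently finish via Proposition~\ref{prop: recurrence difference}, since $E := j^{-1}(V)$ has $d^*(E) \ge \nu(V) > 0$ by unique ergodicity while $(E - E) \cap (R - t) = \es$, but the direct dynamical computation above seems cleanest.)
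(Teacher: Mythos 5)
Your proof is correct, and it takes a genuinely different route from the paper's. The paper argues directly that the hypothesis forces $C=\{\chi\in\dual:\chi(r_n)\to 1\}$ to be trivial: for each fixed $t\in\Gamma$ and each $\chi\in C$ it tests the recurrence of $R-t$ against the one-dimensional rotation $z\mapsto\chi(g)z$ on $\T$ with a small arc, concludes $\chi(t)=1$ for every $t$, and then invokes Lemma~\ref{lem: rigid characters}(1). You instead assume $C$ is infinite, assemble countably many of its elements into the Kronecker system on $Y=\overline{j(\Gamma)}\subseteq\T^{\N}$, note that $\overline{j(R)}=j(R)\cup\{e_Y\}$ is a countable proper closed subset while $j(\Gamma)$ is dense, and then \emph{choose} the translate $t$ so that $j(t)$ approximates a point $w$ outside $\overline{j(R)}$; a small symmetric identity neighborhood $V$ then admits no returns along $(R-t)\setminus\{0\}$, breaking recurrence for that single translate. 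Both strategies are sound, but your adversarial choice of $t$ is arguably the more robust one: in the paper's argument, for a \emph{fixed} $t$ the single-character recurrence test only yields that $\chi(t)$ is either $1$ or is exactly attained as $\chi(r)$ for some $r\in R\setminus\{t\}$ (the witnesses produced for successive $\ep$ could all be one and the same element, in which case no genuine subsequence $\chi(r_{n_k})\to\chi(t)$ with $n_k\to\infty$ is obtained), a case the written proof passes over; your argument never confronts exact coincidences because $t$ is selected after $w$. The price is the extra verification that $(Y,\nu)$ is a legitimate non-atomic Lebesgue system, which you handle correctly (infinitely many distinct characters force $Y$ infinite, hence perfect, uncountable, and with non-atomic Haar measure). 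Conversely, the paper's approach, where it applies, yields the slightly stronger conclusion that no nontrivial character converges to $1$ along $R$, whereas yours delivers exactly the stated proposition.
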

\begin{proof}
	We use the criterion from Lemma \ref{lem: rigid characters}.
	Let $(r_n)_{n \in \N}$ be an enumeration of $R$.
	Suppose $\chi \in \dual$ such that $\chi(r_n) \to 1$.
	The character $\chi$ induces an action of $\Gamma$ on $\T$ by $T_gz := \chi(g) \cdot z$.
	
	Now let $t \in \Gamma$.
	Since $R - t$ is a set of recurrence, there exists, for every $\ep > 0$, an element $r \in R \setminus \{t\}$
	such that $|\chi(r) - \chi(t)| = |\chi(r - t) - 1| < \ep$.
	Hence, there is a subsequence $(r_{n_k})_{k \in \N}$ such that $\chi(r_{n_k}) \to \chi(t)$.
	It follows that $\chi(t) = 1$.
	
	Thus, we have shown that if $\chi(r_n) \to 1$, then $\chi \equiv 1$.
	By Lemma \ref{lem: rigid characters}(1), this proves that $(r_n)_{n \in \N}$ is not a rigidity sequence
	for any ergodic system with discrete spectrum.
\end{proof}

Proposition \ref{prop: translates not DS}, in conjunction with Theorem \ref{thm: rigid-recurrent},
shows that there is a sequence $(r_n)_{n \in \N}$ such that $(r_n)_{n \in \N}$ is a rigidity sequence
for a weakly mixing system but not for any ergodic system with discrete spectrum.
This proves Corollary \ref{cor: WM not DS}.

%%%%%%%%%%%%%%%%%%%%%%%%%%%%%%%%%%%%%%%%%%%%%%%%%%%%

\subsection{Construction of rigid-recurrent sequences} \label{sec: rigid-rec construction}

We now prove Theorem \ref{thm: rigid-recurrent} following the method of \cite{griesmer}.

A set $K \subseteq \dual$ is \emph{Kronecker} if for every continuous function $f : K \to \T$
and every $\ep > 0$, there is an element $g \in \Gamma$ such that
\begin{align*}
	\sup_{\chi \in K}{\left| f(\chi) - \chi(g) \right|} < \ep.
\end{align*}

\noindent That is, $C(K,\T)$ is the uniform closure of the set $\{e_g : g \in \Gamma\}$,
where $e_g$ is the evaluation map $e_g(\chi) := \chi(g)$.

For a group with finite exponent $n$,
every character takes values in the $n$th roots of unity,
so a nontrivial Kronecker set is too much to hope for.
However, there is a suitable replacement for Kronecker sets in this case.
Denote by $\Lambda_n \subseteq \T$ the group of all $n$th roots of unity for $n \in \N$.
By \cite[Theorem 2.5.5]{rudin}, if $\Gamma$ has finite exponent,
then $\dual$ contains a copy of $D_q := \prod_{n \in \N}{\Lambda_q}$
as a closed subgroup for some $q \in \N$.
A set $K \subseteq D_q \subseteq \dual$ is called a set of \emph{type $K_q$} if for every continuous function
$f : K \to \Lambda_q$, there is an element $g \in \Gamma$ such that $f = e_g|_K$.

First, we show that Kronecker sets and sets of type $K_q$ in $\dual$ can be used to construct sets in $\Gamma$ such that every translate is a set of recurrence:

\begin{proposition} \label{prop: trans rec}
	Let $K \subseteq \dual$ be a Kronecker set or a set of type $K_q$,
	and let $\sigma \in \P(\dual)$ be a probability measure supported on $K$.
	For every $\ep > 0$, every translate of the set
		\begin{align*}
		R_{\ep} := \left\{ g \in \Gamma : \int_\dual{\left| \chi(g) - 1 \right|~d\sigma(\chi)} < \ep \right\}
	\end{align*}
	
	\noindent is a set of recurrence.
\end{proposition}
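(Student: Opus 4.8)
The plan is to use the reduction tools of Section~\ref{sec: recurrence} to pass from recurrence to a difference-set statement, to translate that into controlling a Fourier transform, and then to exploit the independence built into Kronecker sets (resp.\ sets of type $K_q$) to solve the resulting Diophantine problem.

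First I would fix $t \in \Gamma$ and $\ep > 0$. By Proposition~\ref{prop: recurrence difference}, to show $R_\ep - t$ is a set of recurrence it suffices to show $(E - E) \cap (R_\ep - t) \ne \es$ for every $E \subseteq \Gamma$ with $d^*(E) > 0$, and I will in fact produce a nonzero such element. Applying the Furstenberg correspondence principle (Lemma~\ref{lem: correspondence}) to such an $E$ yields a measure-preserving system $(X,\B,\mu,(T_h)_{h\in\Gamma})$ and a set $A \in \B$ with $\mu(A) = d^*(E) > 0$ and $d^*(E \cap (E - g)) \ge \mu(A \cap T_g^{-1}A)$ for all $g$. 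So it is enough to find $g \ne 0$ with $\mu(A \cap T_g^{-1}A) > 0$ and $\int_{\dual}|\chi(g+t) - 1|\,d\sigma(\chi) < \ep$, i.e.\ $g \in R_\ep - t$.

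Next I would pass to the spectral measure $\sigma_A$ of $\ind_A$, for which $\mu(A \cap T_h^{-1}A) = \hat{\sigma}_A(h)$, $\sigma_A(\dual) = \mu(A)$, and the trivial character carries an atom of mass $c := \sigma_A(\{1\}) = \|\mathbb{E}[\ind_A \mid \mathcal{I}]\|_2^2 \ge \mu(A)^2 > 0$ ($\mathcal{I}$ the $\sigma$-algebra of invariant sets). Decompose $\sigma_A = c\,\delta_1 + \sum_{j\ge 1} c_j\delta_{\psi_j} + \rho$, where the $\psi_j$ are the nontrivial eigencharacters appearing in $\ind_A$ and $\rho$ is continuous. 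Since $\sum_j c_j \le \mu(A) < \infty$, choose $J$ and $\eta > 0$ with $\sum_{j > J} c_j$ and $\eta \sum_{j \le J} c_j$ both small; and by the abelian version of Wiener's lemma the set $N := \{h : |\hat\rho(h)| \ge c/3\}$ has zero upper Banach density. Taking real parts in $\hat{\sigma}_A(g) = c + \sum_j c_j\psi_j(g) + \hat\rho(g)$, one checks that any $g$ with $|\psi_j(g) - 1| < \eta$ for $1\le j\le J$ and $g \notin N$ satisfies $\mu(A \cap T_g^{-1}A) > 0$. Hence it remains only to produce a nonzero $g \notin N$ lying in $W := \{g : |\psi_j(g) - 1| < \eta,\ 1 \le j \le J\} \cap (R_\ep - t)$; since $N$ has zero upper Banach density, for this it is enough that $W$ has positive upper Banach density.

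The hard part will be showing $W$ has positive upper Banach density, and this is exactly where the hypothesis on $K$ enters. Approximating $\sigma$ in total variation by a finitely supported measure on points $\chi_1,\dots,\chi_n \in K$ gives $R_\ep - t \supseteq \{g : |\chi_k(g) - \chi_k(t)^{-1}| < \ep'\ \text{for all }k\}$ for a suitable $\ep' > 0$. Because $\chi_1,\dots,\chi_n$ form an independent set --- a defining feature of a Kronecker set --- and because these points can be chosen (using the abundance of points in $K$) so that none of $\psi_1,\dots,\psi_J$ lies in the closed subgroup they generate, the set $\{g : \psi_j(g)\ \text{near}\ 1,\ \chi_k(g)\ \text{near}\ \chi_k(t)^{-1}\}$ is a nonempty Bohr set and hence has positive upper Banach density; it is contained in $W$. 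For $\Gamma$ of finite exponent the same scheme runs with $K$ a set of type $K_q$ inside $D_q = \prod_{n\in\N}\Lambda_q$, prescribing $\Lambda_q$-valued rather than $\T$-valued data. Combining this with the zero density of $N$ produces the desired nonzero $g$, and Proposition~\ref{prop: recurrence difference} finishes the proof. The delicate point, and the only place one genuinely uses that $K$ is Kronecker (resp.\ of type $K_q$) rather than merely that $R_\ep - t$ is nonempty, is the simultaneous solvability of the $\psi_j$-constraints and the $\chi_k$-constraints: one must see that the eigencharacters of an arbitrary auxiliary system cannot obstruct approximation inside $K$.
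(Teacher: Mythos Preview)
Your spectral approach is genuinely different from the paper's, which follows Griesmer and is purely combinatorial: it partitions $K$ into $d$ cells of equal $\sigma$-measure, identifies step functions on this partition with the finite group $\Lambda_k^d$, uses the Kronecker property to embed a shift of $E$ into $\Lambda_k^d$ with density at least $d^*(E)$, and then invokes a concentration-of-measure lemma (Lemma~\ref{lem: cubes}) saying that any dense subset of $\Lambda_k^d$ has a difference close to every prescribed target. No auxiliary measure-preserving system or spectral decomposition is ever introduced.

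Your outline, however, has a real gap at precisely the step you flag as ``the delicate point.'' First, the reduction to finitely many $\chi_k$ fails as written: a continuous $\sigma$ cannot be approximated in total variation by a finitely supported measure, and the claimed inclusion $R_\ep - t \supseteq \{g : |\chi_k(g)-\chi_k(t)^{-1}|<\ep'\text{ for all }k\}$ is false, since the Kronecker property itself furnishes $g$ with $\chi_k(g)$ prescribed at $\chi_1,\dots,\chi_n$ while $\chi(g)$ is far from any target on the rest of $K$. Second, even granting such a reduction, the simultaneous solvability of the $\psi_j$- and $\chi_k$-constraints is not established. Your condition that no $\psi_j$ lie in the closed subgroup generated by the $\chi_k$ is neither achievable in general (for $\Gamma=\Z$ any single $\chi_k$ of infinite order already generates a dense subgroup of $\dual$) nor sufficient: what is needed is that $(1,\dots,1,\chi_1(t)^{-1},\dots,\chi_n(t)^{-1})$ lie in the closure of the joint orbit $\{(\psi_1(g),\dots,\psi_J(g),\chi_1(g),\dots,\chi_n(g)):g\in\Gamma\}$, and a relation such as $\psi_1^2=\chi_1$ with $\psi_1\notin\overline{\langle\chi_1,\dots,\chi_n\rangle}$ already forces $\chi_1(g)\approx 1$ whenever $\psi_1(g)\approx 1$, obstructing approximation to $\chi_1(t)^{-1}\ne 1$. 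The paper's combinatorial route sidesteps this entirely by never looking at the spectrum of the auxiliary system.
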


\begin{corollary} \label{cor: trans rec}
	\begin{enumerate}[(1)]
		\item	If $K$ is a Kronecker set and $\sigma \in \P(\dual)$ is supported on $K$,
			then for every integrable function $f : K \to \T$ and every $\ep > 0$, every translate of
						\begin{align*}
				R_{\ep, f} := \left\{ g \in \Gamma : \int_\dual{\left| \chi(g) - f \right|~d\sigma(\chi)} < \ep \right\}
			\end{align*}
			
			\noindent is a set of recurrence.
		\item	If $K$ is a set of type $K_q$ and $\sigma \in \P(\dual)$ is supported on $K$,
			then for every integrable function $f : K \to \Lambda_q$ and every $\ep > 0$, every translate of
						\begin{align*}
				R_{\ep, f} := \left\{ g \in \Gamma : \int_\dual{\left| \chi(g) - f \right|~d\sigma(\chi)} < \ep \right\}
			\end{align*}
			
			\noindent is a set of recurrence.
	\end{enumerate}
\end{corollary}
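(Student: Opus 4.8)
The plan is to deduce both parts of Corollary~\ref{cor: trans rec} from Proposition~\ref{prop: trans rec} by one short approximation argument. The idea is that if the (merely measurable) target function $f$ can be approximated on a $\sigma$-almost-full closed subset of $K$ by an evaluation character $\chi\mapsto\chi(g_0)$, then $R_{\ep,f}$ contains the translate $R_{\ep/2}+g_0$ of the set $R_{\ep/2}$ from Proposition~\ref{prop: trans rec}, whose translates are already known to be sets of recurrence; since any set containing a set of recurrence is again a set of recurrence, this finishes the proof. The same mechanism handles (1) and (2), the only difference being that for Kronecker sets the approximation is uniform while for sets of type $K_q$ it is exact.

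Concretely, for part (1) I would fix $t\in\Gamma$, $\ep>0$, and an integrable function $f\colon K\to\T$ (equivalently, a $\sigma$-measurable function, since $f$ is bounded). As $\dual$ is compact metrizable (because $\Gamma$ is countable) and $\sigma$ is a regular Borel probability measure, Lusin's theorem yields a compact set $E\subseteq K$ with $\sigma(K\setminus E)<\ep/8$ and $f|_E$ continuous. Here I would invoke the standard hereditary property that a closed subset of a Kronecker set is again Kronecker; applied to $E\subseteq K$ it provides $g_0\in\Gamma$ with $\sup_{\chi\in E}\lvert\chi(g_0)-f(\chi)\rvert<\ep/8$. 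Splitting the integral over $E$ and $K\setminus E$, using $\lvert\chi(g)-\chi(g_0)\rvert=\lvert\chi(g-g_0)-1\rvert$ on $\dual$, the crude bound $\lvert\chi(g)-f(\chi)\rvert\le 2$ on $K\setminus E$, and that $\sigma$ is supported on $K$, one obtains for every $g\in\Gamma$
\begin{align*}
\int_\dual\lvert\chi(g)-f(\chi)\rvert\,d\sigma(\chi)
&\le \int_\dual\lvert\chi(g-g_0)-1\rvert\,d\sigma(\chi)+\tfrac{\ep}{8}+2\,\sigma(K\setminus E)\\
&< \int_\dual\lvert\chi(g-g_0)-1\rvert\,d\sigma(\chi)+\tfrac{\ep}{2}.
\end{align*}
Hence $R_{\ep/2}+g_0\subseteq R_{\ep,f}$, so $R_{\ep,f}-t\supseteq R_{\ep/2}-(t-g_0)$; by Proposition~\ref{prop: trans rec} the set $R_{\ep/2}-(t-g_0)$ is a set of recurrence, and therefore so is $R_{\ep,f}-t$. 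For part (2) the argument is identical except that the approximation becomes exact: with $f\colon K\to\Lambda_q$ measurable, Lusin gives a compact $E\subseteq K$ with $\sigma(K\setminus E)<\ep/4$ and $f|_E$ continuous, and the analogous hereditary property — a closed subset of a set of type $K_q$ is again of type $K_q$, which holds because $D_q$ (hence $K$ and $E$) is zero-dimensional and $\Lambda_q$ is finite, so continuous $\Lambda_q$-valued functions extend across closed subsets — produces $g_0\in\Gamma$ with $\chi(g_0)=f(\chi)$ for \emph{all} $\chi\in E$. Then $\int_\dual\lvert\chi(g)-f(\chi)\rvert\,d\sigma(\chi)\le\int_\dual\lvert\chi(g-g_0)-1\rvert\,d\sigma(\chi)+2\,\sigma(K\setminus E)<\int_\dual\lvert\chi(g-g_0)-1\rvert\,d\sigma(\chi)+\tfrac{\ep}{2}$, and the same conclusion follows.

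The only inputs beyond routine estimates are Lusin's theorem and the two hereditary properties of Kronecker sets and of sets of type $K_q$ under passage to closed subsets; the former is classical, while the latter rely on Kronecker sets being totally disconnected (resp.\ on $D_q$ being zero-dimensional) together with Tietze-type extension of the approximating function. I expect the main point needing care to be the verification — or the location of a precise citation — for the $K_q$ version of the hereditary property in the present abelian-group setting, since it is less standard than the classical Kronecker case; once that is in hand, everything else is bookkeeping with the triangle inequality.
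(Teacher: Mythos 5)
Your proposal is correct and takes essentially the same route as the paper: the paper's proof is the one-line observation that one may choose $g_0\in\Gamma$ with $\|e_{g_0}-f\|_{L^1(\sigma)}<\ep/2$, whence $R_{\ep/2}+g_0\subseteq R_{\ep,f}$ and Proposition~\ref{prop: trans rec} applies. Your Lusin-theorem argument together with the hereditary property of Kronecker sets (resp.\ sets of type $K_q$) under passage to closed subsets is exactly the justification the paper leaves implicit for that approximation step.
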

\begin{proof}
	Choose $t \in \Gamma$ with $\|e_t - f\|_{L^1(\sigma)} < \frac{\ep}{2}$.
	Then $R_{\ep/2} + t \subseteq R_{\ep, f}$,
	and the result follows from Proposition \ref{prop: trans rec}.
\end{proof}

In order to prove Proposition \ref{prop: trans rec}, we will need the following combinatorial fact:

\begin{lemma}[\cite{griesmer}, Lemma 4.3] \label{lem: cubes}
	Given $\delta > 0$, $\ep > 0$, and $k \in \N$, there is a number $N = N(\delta, \ep, k) \in \N$ such that
	if $d > N$ and $A \subseteq \Lambda_k^d$ has density $\frac{|A|}{|\Lambda_k^d|} \ge \delta$,
	then for every $x \in \Lambda_k^d$, there are $a, b \in A$ such that
	$\rho(ab^{-1}, x) := \frac{1}{d} \sum_{j=1}^d{\left| a_j b_j^{-1} - x_j \right|} < \ep$.
\end{lemma}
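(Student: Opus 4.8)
The plan is to reduce the statement to a concentration-of-measure (``blow-up'') estimate for the uniform measure on the product space $\Lambda_k^d$ equipped with the normalized Hamming metric, and then to invoke a standard bounded-differences inequality.

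First I would record two elementary observations. Write $h(y,z) := \frac1d \#\{1 \le j \le d : y_j \ne z_j\}$ for the normalized Hamming metric on $\Lambda_k^d$. Then $\rho(y,z) \le 2h(y,z)$, since each nonzero summand $|y_j - z_j|$ defining $\rho$ is at most $|y_j| + |z_j| = 2$. Moreover, both $\rho$ and $h$ are translation invariant on the group $\Lambda_k^d$, and in fact $\rho(ab^{-1},x) = \rho(a,xb)$ and $h(ab^{-1},x) = h(a,xb)$ (multiply the $j$-th summand by $|b_j| = 1$). Hence it suffices to produce $a,b \in A$ with $h(a,xb) < \ep/2$; equivalently, to show that $A$ meets the Hamming neighborhood $N_{\ep/2}(xA) := \{ y \in \Lambda_k^d : h(y,xb) < \ep/2 \text{ for some } b \in A \}$ of the translate $xA := \{ xb : b \in A\}$.

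The heart of the matter is the following: if $S \subseteq \Lambda_k^d$ has density at least $\delta$, then $N_{\ep/2}(S)$ has density strictly greater than $1-\delta$ once $d$ exceeds a threshold depending only on $\delta$ and $\ep$. To prove this, put the uniform (product) probability measure $\mu$ on $\Lambda_k^d$ and consider $f(y) := d\cdot h(y,S)$, the unnormalized Hamming distance to $S$; changing one coordinate of $y$ changes $f$ by at most $1$, so $f$ has bounded differences with constants $c_j = 1$. McDiarmid's inequality gives $\Pr(f \le \mathbb{E}f - t) \le e^{-2t^2/d}$ and $\Pr(f \ge \mathbb{E}f + t) \le e^{-2t^2/d}$. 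Taking $t = \mathbb{E}f$ in the first bound and using $\Pr(f=0) = \mu(S) \ge \delta$ gives $\mathbb{E}f \le \sqrt{(d/2)\log(1/\delta)}$; substituting this into the second bound shows that once $d > 8\log(1/\delta)/\ep^2$ one has $\mathbb{E}f \le \ep d/4$ and therefore $\Pr(f \ge \ep d/2) \le e^{-\ep^2 d/8} < \delta$, i.e. $\mu(N_{\ep/2}(S)) > 1-\delta$.

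Applying this with $S = xA$, which has the same density as $A$ since translation is a bijection, we obtain $\mu(N_{\ep/2}(xA)) > 1-\delta$ while $\mu(A) \ge \delta$, so $A \cap N_{\ep/2}(xA) \ne \es$; any $a$ in this intersection, together with the witnessing $b \in A$, satisfies $h(a,xb) < \ep/2$ and hence $\rho(ab^{-1},x) = \rho(a,xb) \le 2h(a,xb) < \ep$. One may take $N(\delta,\ep,k) := \lceil 8\log(1/\delta)/\ep^2 \rceil$ (the dependence on $k$ is in fact not needed). The one step demanding care is the concentration estimate: it must be applied to the distance function $f$ rather than to $\mathbbm{1}_A$ itself (a Fourier-analytic approach fails because small Hamming balls are far from being Fourier-flat), and the tail bound on $f$ must be translated correctly into a lower bound on the measure of the neighborhood. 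The degenerate cases $k=1$, $\delta = 1$, or $\ep \ge 2$ are immediate and can be handled separately.
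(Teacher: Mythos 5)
Your proof is correct, and it takes essentially the same route as the paper: the paper does not reprove Griesmer's Lemma 4.3 but establishes its generalization (Lemma \ref{lem: gen cubes}) by exactly this blow-up argument, using a McDiarmid-type concentration inequality to show that the $\ep$-neighborhood of a density-$\delta$ set has measure exceeding $1-\delta$ and then intersecting $A$ with the translated neighborhood. The only cosmetic difference is that the paper invokes McDiarmid's group-metric version (Theorem 7.9(b)) directly, whereas you apply the standard bounded-differences inequality to the Hamming distance function; both yield the same estimate with the same threshold $N \asymp \ep^{-2}\log(1/\delta)$.
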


Now we can prove Proposition \ref{prop: trans rec}.

\begin{proof}[Proof of Proposition \ref{prop: trans rec}]
	Fix $t \in \Gamma$ and $\ep > 0$.
	We want to show that $R_{\ep} + t$ is a set of recurrence.
	First, note that $s \in R_{\ep} + t$ if and only if
		\begin{align*}
		s - t \in R_{\ep} \iff \int_\dual{\left| \chi(s-t) - 1 \right|~d\sigma(\chi)} < \ep
		 \iff \int_\dual{\left| \chi(s) - \chi(t) \right|~d\sigma(\chi)} < \ep.
	\end{align*}
	
	\noindent Now, by Proposition \ref{prop: recurrence difference}, we can reduce to proving the following:
	for any $E \subseteq \Gamma$ with $d^*(E) > 0$, there exist $a, b \in E$
	such that $\left\| e_{a-b} - e_t \right\|_{L^1(\sigma)} < \ep$.
	
	Now we use a discrete approximation of $e_t$ in order to apply Lemma \ref{lem: cubes}.
	Choose $k \in \N$ sufficiently large that there is a function $\varphi : K \to \Lambda_k$
	with $\left\| \varphi - e_t \right\|_{L^1(\sigma)} < \frac{\ep}{4}$.
	(If $K$ is a set of type $K_q$, take $k = q$.)
	Then choose $d > N \left( d^*(E), \frac{\ep}{4}, k \right)$
	large enough that we can choose such a $\varphi$ to be constant on classes of a partition
	$\P = (P_1, \dots, P_d)$ of $K$ into sets of size $\sigma(P_i) = \frac{1}{d}$.
	
	Consider the set $H_{d,k} :=
	\left\{ \psi = \sum_{j=1}^d{\omega_j \ind_{P_j}} : \omega_1, \dots, \omega_d \in \Lambda_k \right\}$.
	This is a group under pointwise multiplication,
	and the map $\sum_{j=1}^d{\omega_j \ind_{P_j}} \mapsto (\omega_1, \dots, \omega_r)$
	is a group isomorphism $H_{d,k} \simeq \Lambda_k^d$.
	Observe that this group isomorphism is also an isometry
	$(H_{d,k}, \|\cdot\|_{L^1(\sigma)}) \simeq (\Lambda_k^d, \rho)$.
	Indeed, for $\psi = \sum_{j=1}^d{\omega_j \ind_{P_j}}$ and $\psi' = \sum_{j=1}^d{\omega'_j \ind_{P_j}}$
		\begin{align*}
		\left\| \psi - \psi' \right\|_{L^1(\sigma)}
		 & = \int_\dual{\sum_{j=1}^d{\left| \omega_j - \omega'_j \right| \ind_{P_j}}~d\sigma}
		 = \sum_{j=1}^d{\left| \omega_j - \omega'_j \right| \sigma(P_j)} \\
		 & = \frac{1}{d} \sum_{j=1}^d{\left| \omega_j - \omega'_j \right|}
		 = \rho(\omega, \omega').
	\end{align*}
	
	For $\gamma \in \Gamma$, define the sets
		\begin{align*}
		\tilde{E}_{\gamma} := \left\{ \psi \in H_{d,k} :
		 \left\| e_{a + \gamma} - \psi \right\|_{L^1(\sigma)} < \frac{\ep}{4}~\text{for some}~a \in E \right\}
	\end{align*}
	
	\begin{claim} \label{claim: large shift}
		$\left| \tilde{E}_{\gamma} \right| \ge d^*(E) \left| H_{d,k} \right|$
		for some $\gamma \in \Gamma$.
	\end{claim}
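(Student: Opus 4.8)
The plan is to reduce Claim~\ref{claim: large shift} to the Bergelson--Glasscock characterization of upper Banach density (Lemma~\ref{lem: upper Banach density}), applied to a finite subset of $\Gamma$ assembled from approximate preimages of the elements of the finite group $H_{d,k}$.

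First I would fix a small parameter $\eta > 0$ with $\eta < \tfrac{\ep}{4}$ and with $\eta$ less than half the minimal $L^1(\sigma)$-distance between distinct elements of $H_{d,k}$. This minimal distance equals $\tfrac{2}{d}\sin(\pi/k) > 0$: distinct elements of $H_{d,k} \simeq \Lambda_k^d$ differ in at least one coordinate, distinct $k$th roots of unity are at distance at least $|1 - e^{2\pi i/k}| = 2\sin(\pi/k)$, and $\sigma(P_j) = \tfrac{1}{d}$. For each $\psi \in H_{d,k}$, regarded as a continuous $\Lambda_k$-valued function on $K$, I would use the defining property of $K$ to pick $g_{\psi} \in \Gamma$ with $\|e_{g_{\psi}} - \psi\|_{L^1(\sigma)} < \eta$: when $K$ is Kronecker this follows from uniform approximation of $\psi$ by characters on $K$ (the sup norm dominating the $L^1(\sigma)$ norm), and when $K$ is of type $K_q$ (so $k = q$) we may even take $g_\psi$ with $e_{g_\psi}|_K = \psi$, i.e.\ $\eta = 0$. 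The choice of $\eta$ forces $\psi \mapsto g_\psi$ to be injective: if $\psi \neq \psi'$, then $e_{g_\psi}$ and $e_{g_{\psi'}}$ lie within $\eta$ of the distinct points $\psi,\psi'$ of $H_{d,k}$, hence are distinct, so $g_\psi \neq g_{\psi'}$. Thus $G := \{g_\psi : \psi \in H_{d,k}\}$ is a finite subset of $\Gamma$ with $|G| = |H_{d,k}|$, and $g_\psi \mapsto \psi$ is a bijection $G \to H_{d,k}$.

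Next I would apply Lemma~\ref{lem: upper Banach density} with $F = G$ and $\alpha = d^*(E)$ to get $x \in \Gamma$ with $|(E - x) \cap G| \ge d^*(E)\,|G| = d^*(E)\,|H_{d,k}|$, and set $\gamma := -x$. For each $g_\psi \in (E - x) \cap G$, the element $a := g_\psi + x = g_\psi - \gamma$ lies in $E$ and satisfies $a + \gamma = g_\psi$, so
\[
	\|e_{a+\gamma} - \psi\|_{L^1(\sigma)} = \|e_{g_\psi} - \psi\|_{L^1(\sigma)} < \eta \le \tfrac{\ep}{4},
\]
whence $\psi \in \tilde{E}_\gamma$. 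Distinct elements $g_\psi$ of $(E-x) \cap G$ give distinct $\psi$, so $|\tilde{E}_\gamma| \ge |(E-x) \cap G| \ge d^*(E)\,|H_{d,k}|$, which is exactly the claim.

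The only genuinely delicate point is ensuring the approximating group elements $g_\psi$ are pairwise distinct, so that $|G| = |H_{d,k}|$ and the counting argument loses nothing; this is what the quantitative gap $\tfrac{2}{d}\sin(\pi/k)$ between distinct elements of $H_{d,k}$ provides (and it is automatic in the $K_q$ case, where the approximation is exact). Everything else is bookkeeping with the definitions of $\tilde{E}_\gamma$ and $d^*$.
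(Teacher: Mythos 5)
Your proof is correct and follows essentially the same route as the paper: approximate each $\psi \in H_{d,k}$ by a character $e_{g_\psi}$, ensure the map $\psi \mapsto g_\psi$ is injective, and apply Lemma~\ref{lem: upper Banach density} to the resulting finite set to find a shift capturing a $d^*(E)$-proportion of it. Your quantitative justification of injectivity via the minimal $L^1(\sigma)$-gap $\tfrac{2}{d}\sin(\pi/k)$ between distinct elements of $H_{d,k}$ is a welcome precision where the paper merely asserts that the approximants can be chosen distinct.
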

	\begin{proof}[Proof of Claim]
		Since $K$ is a Kronecker set or a set of type $K_q$ (with $k = q$),
		we may choose, for each $\psi \in H_{d,k}$,
		an element $a(\psi) \in \Gamma$ such that
		$\left\| e_{a(\psi)} - \psi \right\|_{L^1(\sigma)} < \frac{\ep}{4}$.
		Moreover, we may choose $a(\psi)$ so that $a(\psi) \ne a(\psi')$ for $\psi \ne \psi'$.
		This gives us a set $F_{d,k} := \{ a(\psi) : \psi \in H_{d,k} \}$ with $|F_{d,k}| = |H_{d,k}|$.
		By Lemma \ref{lem: upper Banach density}, there is a $\gamma \in \Gamma$ such that
		$\left| (E+\gamma) \cap F_{d,k} \right| \ge d^*(E) |F_{d,k}|$.
		
		Now, if $a + \gamma \in (E+\gamma) \cap F_{d,k}$,
		then $a + \gamma = a(\psi)$ for some $\psi$,
		so $\left\| e_{a + \gamma} - \psi \right\|_{L^1(\sigma)} < \frac{\ep}{4}$.
		Since the values $a + \gamma = a(\psi)$ are distinct for distinct $\psi \in H_{d,k}$, we have
				\begin{align*}
			\left| \tilde{E}_{\gamma} \right|
			 \ge \left| (E+\gamma) \cap F_{d,k} \right|
			 \ge d^*(E) |F_{d,k}|
			 = d^*(E) |H_{d,k}|.
		\end{align*}
			\end{proof}
	
	Applying Lemma \ref{lem: cubes} to the set $\tilde{E}_{\gamma} \subseteq H_{d,k} \simeq \Lambda_k^d$
	with the tuple $\left( d^*(E), \frac{\ep}{4}, k \right)$,
	we get functions $\psi_1, \psi_2 \in \tilde{E}_{\gamma}$ such that
	$\left\| \psi_1 \overline{\psi}_2 - \varphi \right\|_{L^1(\sigma)} < \frac{\ep}{4}$.
	Thus, we have found $\psi_1, \psi_2 : K \to \T$, $\gamma \in \Gamma$, and $a, b \in E$
	such that:
		\begin{align*}
		\left\| \psi_1 \overline{\psi}_2 - \varphi \right\|_{L^1(\sigma)} & < \frac{\ep}{4}, \\
		\left\| e_{a+\gamma} - \psi_1 \right\|_{L^1(\sigma)} & < \frac{\ep}{4}, \\
		\left\| e_{b+\gamma} - \psi_2 \right\|_{L^1(\sigma)} & < \frac{\ep}{4}.
	\end{align*}
	
	\noindent By the triangle inequality, this yields the desired result:
		\begin{align*}
		\left\| e_{a-b} - e_t \right\|_{L^1(\sigma)} \le
		 & \left\| e_{a+\gamma} \overline{e}_{b+\gamma}
		 - e_{a+\gamma}\overline{\psi}_2 \right\|_{L^1(\sigma)}
		 + \left\| e_{a+\gamma} \overline{\psi}_2- \psi_1\overline{\psi}_2 \right\|_{L^1(\sigma)} \\
		 & + \left\| \psi_1 \overline{\psi}_2 - \varphi \right\|_{L^1(\sigma)}
		 + \left\| \varphi - e_t \right\|_{L^1(\sigma)} < \ep.
	\end{align*}
	\end{proof}

Now that we have found sets all of whose translates are sets of recurrence,
we need to patch them together in such a way as to create a rigidity sequence.
This is where uniformity of recurrence comes into play.
The next lemma will allow us to extract a set of recurrence from a sequence of such sets.

\begin{lemma} \label{lem: seq rec}
	Let $R_1 \supseteq R_2 \supseteq R_3 \supseteq \cdots$ be a descending chain
	of subsets of $\Gamma$ such that every translate is a set of recurrence.
	Then there is a set $R \subseteq \Gamma$ such that $R \setminus R_n$ is finite for every $n \in \N$
	and every translate of $R$ is a set of recurrence.
\end{lemma}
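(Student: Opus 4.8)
The plan is to build $R$ as an increasing union $R = \bigcup_{k \in \N} F_k$ of finite sets, where each $F_k$ is a finite subset of $R_k$ chosen to contain enough of $R_k$ to witness recurrence for the first $k$ translates at threshold $1/k$. Since $R_k \subseteq R_n$ for all $k \ge n$, the mere requirement $F_k \subseteq R_k$ forces $R \setminus R_n \subseteq \bigcup_{k=1}^{n-1} F_k$, a finite set; so the first conclusion of the lemma is automatic from this setup, and all the work goes into arranging that every translate of $R$ remains a set of recurrence.

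The engine is uniformity of recurrence (Theorem \ref{thm: unif rec}), which converts the qualitative hypothesis into finite data. Fix an enumeration $\Gamma = \{t_1, t_2, \dots\}$. For each $k \in \N$ and each $1 \le j \le k$, the set $R_k - t_j$ is a set of recurrence by hypothesis, so by Theorem \ref{thm: unif rec} there is a finite set $G_{k,j} \subseteq R_k - t_j$ that is a set of $(1/k)$-recurrence. Put $F_k := \bigcup_{j=1}^k (G_{k,j} + t_j)$, which is finite and satisfies $F_k \subseteq R_k$ (since $G_{k,j} + t_j \subseteq R_k$), and set $R := \bigcup_{k \in \N} F_k$. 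As noted, $R \setminus R_n$ is finite for every $n$.

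To check that $R - t$ is a set of recurrence for each $t \in \Gamma$, fix $\delta > 0$, pick $j$ with $t_j = t$, and choose $k \ge \max(j, 1/\delta)$. Then $F_k - t = F_k - t_j \supseteq G_{k,j}$, and $G_{k,j}$ is a set of $(1/k)$-recurrence, hence a set of $\delta$-recurrence (a set of $\delta'$-recurrence is a set of $\delta$-recurrence whenever $\delta' \le \delta$, and any superset of a set of $\delta$-recurrence is again one). Thus $R - t$ contains a finite set of $\delta$-recurrence; since $\delta > 0$ was arbitrary, Theorem \ref{thm: unif rec} yields that $R - t$ is a set of recurrence.

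There is no genuinely hard estimate here once uniformity of recurrence is available; the only real point — and the reason the hypothesis is stated for all translates of the $R_n$ rather than for the $R_n$ alone — is that recurrence of $R$ must be established for every $t \in \Gamma$ at once while $R$ is simultaneously forced to sink into each $R_n$. Theorem \ref{thm: unif rec} is precisely what allows each of the countably many requirements, indexed by a translate $t$ and a density threshold $\delta$, to be satisfied by a finite piece of $R$, after which later stages of the construction only add further elements and never disturb what has already been arranged.
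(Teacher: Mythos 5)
Your proof is correct and follows essentially the same route as the paper: the paper likewise enumerates $\Gamma$, uses uniformity of recurrence (Theorem \ref{thm: unif rec}) to extract from each $R_n$ a finite subset whose first $n$ translates are sets of $\tfrac{1}{n}$-recurrence, and takes $R$ to be the union. Your write-up just makes explicit the details the paper leaves as a one-line verification.
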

\begin{proof}
	Let $g_1, g_2, \dots$ be an enumeration of $\Gamma$.
	For each $n \in \N$, let $R_n' \subseteq R_n$ be a finite subset such that $R_n' + g_i$
	is a set of $\frac{1}{n}$-recurrence for every $i \le n$ by Theorem \ref{thm: unif rec}.
	Then $R := \bigcup_{n \in \N}{R_n'}$ has the desired properties.
\end{proof}

To tie everything together and finish the proof of Theorem \ref{thm: rigid-recurrent},
we will need a perfect\footnote{Recall
that a set is \emph{perfect} if it is closed and has no isolated points.
}
Kronecker set or a perfect set of type $K_q$.
The following theorem provides this missing piece:

\begin{theorem}[\cite{rudin}, Theorem 5.2.2] \label{thm: perfect K sets}
	\begin{enumerate}[(1)]
		\item	If $\Gamma$ has infinite exponent, then $\dual$ contains a perfect Kronecker set.
		\item	The group $D_q$ contains a perfect set of type $K_q$.
	\end{enumerate}
\end{theorem}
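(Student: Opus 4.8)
The plan is to obtain both perfect sets by a single Cantor-scheme construction inside the compact metrizable group $\dual$ (metrizable because $\Gamma$ is countable), the two cases differing only in the ``independence engine'' driving the induction. Throughout I will use that, under the duality $\hat{\dual} \cong \Gamma$, the characters of $\dual$ are exactly the evaluation maps $e_g$, and that the image of $e_g$ in $\T$ is the cyclic subgroup whose order equals the order of $g$ (all of $\T$ when $g$ has infinite order). This already explains the hypotheses: if $\Gamma$ had finite exponent $n$, every $e_g$ would take values in the fixed finite group $\Lambda_n$, so no set could be Kronecker in the full $\T$-sense, which is exactly why one passes to $\Lambda_q$-valued functions in part (2).

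First I would fix the common skeleton. I build $K = \bigcap_{m} K_m$, where each $K_m$ is a disjoint union of finitely many closed ``boxes'' indexed by the level-$m$ nodes of a binary (in case (1)) or $q$-ary (in case (2)) tree, with each level-$(m+1)$ box a proper closed subset of its parent and all diameters tending to $0$. Any such scheme produces a set homeomorphic to Cantor space, hence perfect, so perfectness will be automatic. Because $K$ is totally disconnected, a continuous function $K \to \T$ (resp. $K \to \Lambda_q$) is a uniform limit of, and once the boxes are fine enough is exactly, a function constant on the level-$m$ boxes. Thus it suffices to arrange the following realizability invariant at every stage: for each assignment of target values to the current boxes there is a $g \in \Gamma$ with $e_g$ approximating that assignment to any prescribed tolerance in case (1), and equalling it exactly in case (2).

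The engine that maintains this invariant under refinement is where the hypotheses enter. In case (1), infinite exponent furnishes, for every $m$, an element $g \in \Gamma$ of order exceeding $m$ (or of infinite order), so that $e_g$ has image $\Lambda_{\mathrm{ord}(g)}$, which is $\eta$-dense in $\T$ for $\eta$ as small as desired. When I split each box in two I would use such a fresh element to separate the two halves and to enlarge the stock of achievable value-tuples, shrinking the boxes so that $e_g$ is nearly constant on each and the approximations already secured at earlier stages are not spoiled. Diagonalizing over a countable dense family of target assignments, available since $C(K,\T)$ is separable, then yields in the limit the approximability of every continuous $\T$-valued function, i.e. the Kronecker property. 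In case (2) the construction is cleaner: the coordinate projections $\pi_n : D_q \to \Lambda_q$ are characters hitting every value of $\Lambda_q$, so assigning a fresh block of coordinates to each refinement step makes the pieces genuinely independent; since $\Lambda_q$ is discrete, once the boxes are small enough the target function is locally constant and is realized \emph{exactly} by a single character of $D_q$, which extends to some $e_g$ with $g \in \Gamma$ because characters of the closed subgroup $D_q \le \dual$ extend to $\dual$.

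The main obstacle is precisely the simultaneous bookkeeping in the refinement step: choosing the new element and shrinking the boxes so that (i) the boxes remain disjoint with diameters tending to $0$, (ii) the finitely many approximations achieved at earlier stages survive, and (iii) the newly created boxes become independently realizable. Making these three demands compatible, rather than perfectness, which is free from the splitting, is the technical heart, and it is exactly the algebraic largeness of $\Gamma$ (unbounded character orders in (1), the full product structure of $D_q$ in (2)) that makes them satisfiable at once. I would organize (iii) through a running list of target assignments enumerated densely, handling one at each stage, which is what lets the limiting set serve all continuous functions simultaneously.
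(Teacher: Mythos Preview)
The paper does not prove this statement; it is quoted from Rudin's \emph{Fourier Analysis on Groups} (Theorem 5.2.2) and used as a black box, so there is no in-paper proof to compare against. Your Cantor-scheme outline is essentially the standard construction Rudin carries out, and the overall architecture---build a nested tree of shrinking closed sets, secure at each stage the realizability of all locally constant target functions, and let perfectness fall out of the splitting---is correct.

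One place where your sketch would need tightening is the ``independence engine'' in case (1). Having elements $g$ of arbitrarily large order guarantees only that a \emph{single} character $e_g$ has nearly dense image in $\T$; it does not by itself let you hit an arbitrary tuple of targets on the $2^m$ current boxes. What is actually needed at the inductive step is that the representative points $\chi_1,\dots,\chi_{2^m}$ chosen inside the boxes be independent enough (for instance, generate a subgroup in which the $\chi_i$ are as free as the structure of $\dual$ permits) so that the homomorphism $g \mapsto (\chi_1(g),\dots,\chi_{2^m}(g))$ has dense image in $\T^{2^m}$. Rudin arranges this by first passing to a suitable Cantor subset of $\dual$ where such independence is available, and then running the scheme there. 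Your phrase ``use such a fresh element to separate the two halves and to enlarge the stock of achievable value-tuples'' points in the right direction but conflates the order condition with the joint-density condition; once you make that distinction explicit the argument goes through. Case (2) is, as you say, genuinely easier because the coordinate characters of $D_q$ are already jointly independent and the target is discrete, so exact realization replaces approximation.
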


We are now ready to prove Theorem \ref{thm: rigid-recurrent}, restated below.

\RigRec*

\begin{proof}[Proof of Theorem \ref{thm: rigid-recurrent}]
	We prove the case that $\Gamma$ has infinite exponent.
	The finite exponent case holds by the same argument, replacing Kronecker sets by sets of type $K_q$.
	Let $K \subseteq \dual$ be a perfect Kronecker set.
	Then we can choose $(K_g)_{g \in \Gamma}$ mutually disjoint perfect subsets of $K$.
	Every subset of a Kronecker set is clearly Kronecker, so $K_g$ is Kronecker for each $g \in \Gamma$.
	Now for each $g \in \Gamma$, let $\sigma_g$ be a continuous probability measure supported on $K_g$.
	Let $\sigma$ be a weighted average of the measures $\sigma_g$
	so that $\sigma$ is a continuous probability measure on $K$.
	Let $f : K \to \T$ with $f|_{K_g} = e_g$.
	For each $n \in \N$, define
		\begin{align*}
		R_n := \left\{ g \in \Gamma : \int_\dual{\left| e_g - f \right|~d\sigma} < \frac{1}{n} \right\}.
	\end{align*}
	
	\noindent By Corollary \ref{cor: trans rec}, every translate of $R_n$ is a set of recurrence.
	
	Now let $R$ be the set obtained from Lemma \ref{lem: seq rec}.
	Let $(r_k)_{k \in \N}$ be an enumeration of $R$.
	We want to show that every translate of $(r_k)_{k \in \N}$ is a rigidity sequence.
	Let $\ep > 0$.
	Choose $n \in \N$ so that $\frac{1}{n} < \ep$.
	By construction, $R \setminus R_n$ is finite, so there is an $k_0 \in \N$ such that $r_k \in R_n$
	for every $k \ge k_0$.
	Thus, for $k \ge k_0$, $\| e_{r_k} - f \|_{L^1(\sigma)} < \frac{1}{n} < \ep$.
	We have therefore shown that $e_{r_k} \to f$ in $L^1(\sigma)$.
	Restricting to $K_t$, it follows that $e_{r_k} \to e_t$ in $L^1(\sigma_t)$.
	Thus, $e_{r_k - t} \to 1$ in $L^1(\sigma_t)$,
	so $(r_k - t)_{k \in \N}$ is a rigidity sequence for every $t \in \Gamma$.
\end{proof}

%%%%%%%%%%%%%%%%%%%%%%%%%%%%%%%%%%%%%%%%%%%%%%%%%%%%
%%%%%%%%%%%%%%%%%%%%%%%%%%%%%%%%%%%%%%%%%%%%%%%%%%%%

\section{Freely rigid-recurrent sequences} \label{sec: free}

In this section, we discuss freely rigid-recurrent sequences
with an eye toward extending Theorem \ref{thm: rigid-recurrent}.

%%%%%%%%%%%%%%%%%%%%%%%%%%%%%%%%%%%%%%%%%%%%%%%%%%%%

\subsection{Translates of rigidity sequences for free actions} \label{sec: obstacle}

We begin by showing that there is an obstacle to na\"{i}vely extending Theorem \ref{thm: rigid-recurrent}
to free actions.
Following the approach to freeness in the proof of Theorem \ref{thm: free},
one may be inclined to modify the proof of Theorem \ref{thm: rigid-recurrent}
so that the closed subgroup generated by $\supp{\sigma_g}$ is equal to $\dual$ for every $g \in G$.
Unfortunately, this is not possible in general.
To see this, consider the group $\Gamma = (\Z/3\Z) \oplus \bigoplus_{n=1}^{\infty}{(\Z/2\Z)}$,
which has Pontryagin dual $\dual \simeq \Lambda_3 \times \prod_{n=1}^{\infty}{\Lambda_2}$.
Then the type $K_2$ set $K \subseteq \dual$ appearing in the proof of Theorem \ref{thm: free}
sits entirely inside of the closed proper subgroup $\{0\} \oplus \prod_{n=1}^{\infty}{\Lambda_2}$.

The obstacle encountered here is not just an artifact of the method of proof.
For free actions of $(\Z/3\Z) \oplus \bigoplus_{n=1}^{\infty}{(\Z/2\Z)}$,
it is not possible for every translate of a rigidity sequence to be a rigidity sequence for a free action.
This is demonstrated by Corollary \ref{cor: local obstruction} below.

\begin{proposition} \label{prop: local obstruction}
	Let $\Gamma_1$ be a finite group and $\Gamma_2$ a torsion group with finite exponent
	such that the exponents of $\Gamma_1$ and $\Gamma_2$ are coprime.
	If $(a_n)_{n \in \N} = (a_{n,1}, a_{n,2})_{n \in \N}$ is a rigidity sequence
	for a free action of $\Gamma_1 \oplus \Gamma_2$,
	then $a_{n,1} = 0$ for all but finitely many $n \in \N$.
\end{proposition}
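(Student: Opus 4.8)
The plan is to use the coprimality of the two exponents to replace $(a_n)_{n \in \N}$ by the sequence of its projections onto $\Gamma_1$, which — crucially — is still a rigidity sequence, and then to read off the conclusion from freeness together with the finiteness of $\Gamma_1$.

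First I would fix a free measure-preserving system $\left( X, \B, \mu, (T_g)_{g \in \Gamma} \right)$ with $\Gamma = \Gamma_1 \oplus \Gamma_2$ for which $(a_n)_{n \in \N}$ is rigid, and write $e_1, e_2$ for the exponents of $\Gamma_1, \Gamma_2$ (we may assume $e_1 \ge 2$, the case $e_1 = 1$ being vacuous). Since $\gcd(e_1, e_2) = 1$, I can pick a positive integer $m$ with $m \equiv 1 \pmod{e_1}$ and $m \equiv 0 \pmod{e_2}$ (take a Bézout multiple $v e_2$ of $e_2$ that is $\equiv 1 \pmod{e_1}$ and add a large multiple of $e_1 e_2$ to make it positive). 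Then $m\,(g_1,g_2) = (g_1, 0)$ for every $(g_1,g_2) \in \Gamma$, because $e_1 g_1 = 0$ and $e_2 g_2 = 0$; in particular $m a_n = (a_{n,1},0)$ for all $n$. The one substantive step is then to observe that $(m a_n)_{n \in \N}$ is again a rigidity sequence for the same system: as $T$ is a group action, $T_{m a_n} = (T_{a_n})^m$, and since $T_{a_n}$ is an isometry of $L^2(\mu)$, a telescoping estimate gives, for every $f \in L^2(\mu)$,
\begin{align*}
	\left\| (T_{a_n})^m f - f \right\|_2 \le \sum_{j=0}^{m-1}{\left\| (T_{a_n})^{j+1} f - (T_{a_n})^j f \right\|_2} = m \left\| T_{a_n} f - f \right\|_2 \tendsto{n \to \infty} 0.
\end{align*}

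With this in hand, set $b_n := m a_n = (a_{n,1}, 0) \in \Gamma_1 \times \{0\} \le \Gamma$. Since $\Gamma_1$ is finite, $(b_n)_{n \in \N}$ takes only finitely many values, so if $a_{n,1} \ne 0$ for infinitely many $n$ then by the pigeonhole principle some fixed $g_0 \in \Gamma_1 \setminus \{0\}$ satisfies $b_n = (g_0, 0)$ along an infinite set $N \subseteq \N$. For each $f \in L^2(\mu)$ and $n \in N$ we then have $\left\| T_{(g_0,0)} f - f \right\|_2 = \left\| T_{b_n} f - f \right\|_2 \to 0$ as $n \to \infty$ along $N$, by the rigidity of $(b_n)_{n \in \N}$; hence $T_{(g_0,0)} f = f$. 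As $f$ was arbitrary, $T_{(g_0,0)}$ is the identity operator on $L^2(\mu)$, i.e. $\mu \left( A \triangle T_{(g_0,0)}^{-1} A \right) = 0$ for every $A \in \B$, and on the Lebesgue space $(X,\B,\mu)$ this forces $T_{(g_0,0)} x = x$ for $\mu$-a.e. $x$ (test the preceding identity against a countable point-separating family of measurable sets). Since $(g_0,0) \ne 0$, this contradicts freeness, so $a_{n,1} = 0$ for all but finitely many $n$.

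I expect the main — in fact essentially the only — obstacle to be the opening move: recognizing that multiplication by a single integer $m$ can be arranged to act as the coordinate projection onto $\Gamma_1$, and that this operation preserves rigidity. This is precisely where both hypotheses are used: the finite exponent of $\Gamma_2$ guarantees that one $m$ works simultaneously for all $n$ (so the constant $m$ in the telescoping bound does not blow up with $n$), while coprimality ensures that this same $m$ restricts to the identity on $\Gamma_1$. A secondary point deserving a sentence of care is the passage from ``$T_{(g_0,0)}$ is the identity operator on $L^2(\mu)$'' to ``$T_{(g_0,0)}$ is the identity transformation $\mu$-a.e.'', which is routine on a Lebesgue space but is exactly what lets us invoke the literal definition of freeness.
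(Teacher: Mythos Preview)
Your proof is correct and takes a genuinely different route from the paper. Both arguments establish the same intermediate fact --- that $((a_{n,1},0))_{n\in\N}$ is rigid for the same system --- and then finish identically using finiteness of $\Gamma_1$ together with freeness. The paper proves the intermediate fact spectrally: writing $k=e_1$, $l=e_2$, it observes that characters of $\Gamma_1$ take values in $\Lambda_k$ and characters of $\Gamma_2$ in $\Lambda_l$, and since $\Lambda_k\cap\Lambda_l=\{1\}$ there is a uniform $\delta>0$ separating them; this forces $\hat{\sigma}_f(a_{n,1},0)\to 1$ via an estimate on the spectral measure. Your approach bypasses Fourier analysis entirely: the Chinese remainder choice of $m$ realizes the projection $\Gamma\to\Gamma_1\times\{0\}$ as multiplication by a fixed integer, and then the telescoping bound $\|T_{a_n}^m f - f\|_2 \le m\|T_{a_n}f-f\|_2$ transfers rigidity immediately. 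Your argument is more elementary and makes the role of the finite-exponent hypothesis on $\Gamma_2$ especially transparent (it is exactly what lets one fixed $m$ work for all $n$); the paper's argument, on the other hand, yields a quantitative rate and fits more naturally into the spectral-measure framework used throughout.
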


\begin{corollary} \label{cor: local obstruction}
	Let $\Gamma_1$ and $\Gamma_2$ be as in Proposition \ref{prop: local obstruction}.
	If $(a_n)_{n \in \N}$ is a rigidity sequence for a free action of $\Gamma_1 \oplus \Gamma_2$,
	then for $g \in \Gamma_1 \setminus \{0\}$,
	$(a_n + (g,0))_{n \in \N}$ is not a rigidity sequence for any free action of $\Gamma_1 \oplus \Gamma_2$.
\end{corollary}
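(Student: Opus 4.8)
The plan is to deduce Corollary \ref{cor: local obstruction} directly from Proposition \ref{prop: local obstruction}; all of the substantive work lies in the proposition, and the corollary is a short formal consequence. The one thing worth isolating is that the hypotheses of Proposition \ref{prop: local obstruction} constrain only the ambient group $\Gamma_1 \oplus \Gamma_2$, not the rigidity sequence, so the proposition applies equally to any sequence in $\Gamma_1 \oplus \Gamma_2$ that happens to be rigid for a free action.

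First I would fix a rigidity sequence $(a_n)_{n \in \N}$ for a free action of $\Gamma_1 \oplus \Gamma_2$ and write $a_n = (a_{n,1}, a_{n,2})$ with $a_{n,1} \in \Gamma_1$ and $a_{n,2} \in \Gamma_2$. Proposition \ref{prop: local obstruction} then provides a finite set $S_1 \subseteq \N$ with $a_{n,1} = 0$ for every $n \notin S_1$. Next, fix $g \in \Gamma_1 \setminus \{0\}$ and suppose toward a contradiction that the translated sequence $(a_n + (g,0))_{n \in \N}$ is a rigidity sequence for some free action of $\Gamma_1 \oplus \Gamma_2$. Writing $a_n + (g,0) = (a_{n,1} + g,\, a_{n,2})$, this is again a sequence in $\Gamma_1 \oplus \Gamma_2$, so Proposition \ref{prop: local obstruction} applies to it and yields a finite set $S_2 \subseteq \N$ with $a_{n,1} + g = 0$ for every $n \notin S_2$. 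Since $S_1 \cup S_2$ is finite and $\N$ is infinite, we may pick some $n \notin S_1 \cup S_2$; for this $n$ we have $0 = a_{n,1} = -g$ in $\Gamma_1$, so $g = 0$, contradicting the choice of $g$. Hence no such free action exists, which is precisely the assertion of the corollary.

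As for where the difficulty lies: there is essentially no obstacle at the level of the corollary itself — it is a two-line deduction, the only point needing attention being the (immediate) observation that translating the sequence does not affect the hypotheses of the proposition. The genuine work is entirely contained in Proposition \ref{prop: local obstruction}, whose proof is expected to exploit the coprimality of the exponents of $\Gamma_1$ and $\Gamma_2$: after passing to the Pontryagin dual and invoking the character-theoretic description of rigidity (as in Lemma \ref{lem: rigid characters} and Lemma \ref{lem: free WM}), the first-coordinate component of the relevant characters — eigenvalues in the discrete-spectrum case, or the support of a suitable spectral measure in general — is forced to be trivial, whereas freeness of the action would otherwise force it to be nontrivial along an infinite subsequence.
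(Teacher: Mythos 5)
Your proposal is correct and is exactly the intended deduction: the paper states the corollary without proof as an immediate consequence of Proposition \ref{prop: local obstruction}, and your argument (apply the proposition to both the original and the translated sequence, then pick $n$ outside the union of the two finite exceptional sets to force $g=0$) is precisely that routine verification.
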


Proposition \ref{prop: local obstruction} follows immediately from the following lemma,
since a sequence in a finite group cannot diverge to infinity:

\begin{lemma} \label{lem: local obstruction}
	Let $\Gamma_1$ and $\Gamma_2$ be groups with finite, coprime exponents.
	If $(a_n)_{n \in \N}$ is a rigidity sequence for an action of $\Gamma_1 \oplus \Gamma_2$,
	then $(a_{n,1}, 0)_{n \in \N}$ is also a rigidity sequence for the action.
	In particular, if the action is free, then
	either $a_{n,1} \to \infty$ or $a_{n,1} = 0$ for all but finitely many $n \in \N$.
\end{lemma}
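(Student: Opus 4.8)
The plan is to exploit the direct-sum structure together with the coprimality of the exponents to show that, as a sequence of \emph{group elements}, $(a_{n,1},0)$ is a fixed integer multiple of the original sequence $(a_n)$. Write $m_1$ and $m_2$ for the exponents of $\Gamma_1$ and $\Gamma_2$, and fix, by B\'{e}zout, integers $u,v$ with $um_1+vm_2=1$. Since $m_1a_{n,1}=0$ and $m_2a_{n,2}=0$, a one-line computation in $\Gamma=\Gamma_1\oplus\Gamma_2$ yields
\[
	(a_{n,1},0)=(um_1+vm_2)(a_{n,1},0)=vm_2\,(a_{n,1},0)=vm_2\,(a_{n,1},a_{n,2})=vm_2\,a_n,
\]
where $um_1(a_{n,1},0)=0$ is used in the third equality and $m_2a_{n,2}=0$ in the fourth. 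Consequently $T_{(a_{n,1},0)}=T_{a_n}^{vm_2}$ as (Koopman, hence unitary) operators on $L^2(\mu)$.

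Next I would record the elementary stability of strong operator convergence under a fixed integer power: if $(U_n)$ are unitaries on a Hilbert space with $U_nf\to f$ for every $f$, then $U_n^{\ell}f\to f$ for every fixed $\ell\in\Z$ and every $f$. For $\ell\ge 0$ this is the telescoping estimate $\|U_n^{\ell}f-f\|\le\sum_{j=0}^{\ell-1}\|U_n^{j+1}f-U_n^{j}f\|=\ell\,\|U_nf-f\|$ (each summand equals $\|U_nf-f\|$ since $U_n^{j}$ is an isometry); for $\ell<0$ note that $U_n^{-1}$ also converges strongly to $I$, because applying the isometry $U_n$ gives $\|U_n^{-1}f-f\|=\|f-U_nf\|$. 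Applying this with $U_n=T_{a_n}$ and $\ell=vm_2$, the hypothesis that $(a_n)$ is a rigidity sequence gives $T_{(a_{n,1},0)}=T_{a_n}^{vm_2}\to\id$ strongly, i.e.\ $(a_{n,1},0)_{n\in\N}$ is a rigidity sequence for the same action. That is the first assertion.

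For the ``in particular'' clause I would argue by freeness. Suppose some $c\in\Gamma_1\setminus\{0\}$ has $a_{n,1}=c$ for infinitely many $n$; along that subsequence $T_{(a_{n,1},0)}$ is the constant operator $T_{(c,0)}$, so the strong convergence just established forces $T_{(c,0)}=\id$ on $L^2(\mu)$, that is $T_{(c,0)}x=x$ for $\mu$-a.e.\ $x$. Since $(c,0)\neq 0$ in $\Gamma$, this contradicts freeness. Hence each nonzero element of $\Gamma_1$ is attained by $(a_{n,1})$ only finitely often; if $(a_{n,1})$ takes only finitely many values --- in particular whenever $\Gamma_1$ is finite, the case needed for Proposition \ref{prop: local obstruction} --- then $a_{n,1}=0$ for all but finitely many $n$, and otherwise $(a_{n,1})$ leaves every finite set of nonzero elements, which is the stated dichotomy. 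I do not anticipate a real obstacle here: the only non-formal step is spotting the B\'{e}zout identity $(a_{n,1},0)=vm_2a_n$, after which the proof is just stability of strong convergence under integer powers plus a one-line use of freeness; the mild care required is to apply each exponent relation on the correct coordinate and to treat negative $v$ via $T_{a_n}^{-1}=T_{-a_n}$.
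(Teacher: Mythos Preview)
Your argument is correct and takes a genuinely different route from the paper. The paper works spectrally: for each $f \in L^2(\mu)$ it rewrites $\hat{\sigma}_f(a_n) \to 1$ as $\int |\chi_1(a_{n,1}) - \overline{\chi_2(a_{n,2})}|\,d\sigma_f \to 0$, then uses that $\Lambda_k \cap \Lambda_l = \{1\}$ for coprime $k,l$ to obtain a uniform gap $\delta > 0$ between nontrivial pairs of roots of unity; this gap bounds the $\sigma_f$-mass of $\{\chi_1(a_{n,1}) \neq 1\}$ and hence $|\hat{\sigma}_f(a_{n,1},0) - 1|$. You bypass spectral measures entirely: the B\'ezout identity $(a_{n,1},0) = vm_2\,a_n$ reduces the claim to stability of strong operator convergence under a fixed integer power, which is a telescoping one-liner. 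Your proof is shorter and more elementary, and makes transparent that the phenomenon is purely algebraic in $\Gamma$---no Fourier analysis on $\dual$ is needed. The paper's approach, while longer, stays within the spectral framework used throughout and would still apply in situations where one only has Fourier-coefficient information rather than an identity at the level of group elements. (One trivial nit: in your displayed chain the justifications are off by one---$um_1(a_{n,1},0)=0$ is used at the second equals sign and $m_2a_{n,2}=0$ at the third.)
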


\begin{proof}
	Let $k$ be the exponent of $\Gamma_1$ and $l$ the exponent of $\Gamma_2$.
	
	Suppose $\left( X, \B, \mu, (T_g)_{g \in \Gamma_1 \oplus \Gamma_2} \right)$
	is a measure-preserving system that is rigid along the sequence $(a_n)_{n \in \N}$.
	Let $f \in L^2(\mu)$.
	We want to show $T_{(a_{n,1},0)}f \to f$ in $L^2(\mu)$.
	
	Without loss of generality, we may assume $\|f\|_2 = 1$.
	Then the spectral measure $\sigma_f$ on $\hat{\Gamma_1 \oplus \Gamma_2}
	 \simeq \hat{\Gamma}_1 \times \hat{\Gamma}_2 =: \dual_1 \times \dual_2$ is a probability measure,
	 and our goal is to show $\hat{\sigma}_f(a_{n,1},0) \to 1$.
	
	By assumption, $\hat{\sigma}_f(a_{n,1}, a_{n,2}) \to 1$.
	Equivalently,
		\begin{align*}
		\int_{\dual_1 \times \dual_2}{\left| \chi_1(a_{n,1}) \chi_2(a_{n,2}) - 1 \right|~d\sigma_f(\chi_1,\chi_2)}
		 \to 0.
	\end{align*}
	
	\noindent This can be rewritten as
		\begin{align*}
		\int_{\dual_1 \times \dual_2}{\left| \chi_1(a_{n,1}) - \overline{\chi_2(a_{n,2})} \right|
		 ~d\sigma_f(\chi_1,\chi_2)}
		 \to 0.
	\end{align*}
	
	\noindent Now, $\chi_1$ takes values in $\Lambda_k$ for every $\chi_1 \in \dual_1$,
	and $\chi_2$ takes values in $\Lambda_l$ for every $\chi_2 \in \dual_2$.
	Since $k$ and $l$ are coprime, we have $\Lambda_k \cap \Lambda_l = \{1\}$.
	Hence, there exists $\delta > 0$ such that if $z \in \Lambda_k$, $w \in \Lambda_l$, and $(z,w) \ne (1,1)$,
	then $|z - w| \ge \delta$.
	Thus,
		\begin{align*}
		\int_{\dual_1 \times \dual_2}{\left| \chi_1(a_{n,1}) - \overline{\chi_2(a_{n,2})} \right|
		 ~d\sigma_f(\chi_1,\chi_2)}
		 \ge \delta \cdot \sigma_f(B),
	\end{align*}
	
	\noindent where $B := \left\{ (\chi_1, \chi_2) \in \dual_1 \times \dual_2
	 : \chi_1(a_{n,1}) \ne 1~\text{or}~\chi_2(a_{n,2}) \ne 1 \right\}$.
	Therefore,
		\begin{align*}
		\left| \hat{\sigma}_f(a_{n,1},0) - 1 \right|
		 & = \left| \int_{\dual_1 \times \dual_2}{\left( \chi_1(a_{n,1}) - 1 \right)
		 ~d\sigma_f(\chi_1,\chi_2)} \right| \\
		 & \le \int_{\dual_1 \times \dual_2}{\left| \chi_1(a_{n,1}) - 1 \right|~d\sigma_f(\chi_1,\chi_2)} \\
		 & \le 2 \sigma_f(B) \\
		 & \le \frac{2}{\delta} \int_{\dual_1 \times \dual_2}{\left| \chi_1(a_{n,1})
		 - \overline{\chi_2(a_{n,2})} \right|~d\sigma_f(\chi_1,\chi_2)} \\
		 & \to 0.
	\end{align*}
	\end{proof}

Note that the obstacle encountered here does not rule out the possibility of producing free actions
if we restrict the shifts to a finite index subgroup.
Concretely, in the example $\Gamma = (\Z/3\Z) \oplus \bigoplus_{n=1}^{\infty}{(\Z/2\Z)}$ discussed above,
it is still possible to produce a sequence such that each shift coming from the finite index subgroup
$\Delta = \bigoplus_{n=1}^{\infty}{(\Z/2\Z)}$ is freely rigid-recurrent.
This is the content of Theorem \ref{thm: free fin exp}
for the group $\Gamma = (\Z/3\Z) \oplus \bigoplus_{n=1}^{\infty}{(\Z/2\Z)}$,
which we will prove in Section \ref{sec: finite exponent}.

%%%%%%%%%%%%%%%%%%%%%%%%%%%%%%%%%%%%%%%%%%%%%%%%%%%%

\subsection{Topologically-generating Kronecker sets} \label{sec: dense Kronecker}

Analyzing the proof of Theorem \ref{thm: rigid-recurrent}, we see that, for the resulting sequence $(r_n)_{n \in \N}$,
the translate $(r_n - t)_{n \in \N}$ will be freely rigid-recurrent if
the Gaussian system associated to the measure $\sigma_t$ (as defined in the proof) is free.
By Lemma \ref{lem: free WM}, the Gaussian system associated to $\sigma_t$ is free if and only if
$\supp{\sigma_t} = K_t$ generates a dense subgroup of $\dual$.
This motivates the following definition:

\begin{definition} \label{def: Kronecker gen}
	Let $G$ be a compact metrizable abelian group such that either $G$ has infinite exponent
	or $G$ is isomorphic to $D_q$ for some $q \ge 2$.
	We say that $G$ is \emph{topologically Kronecker-generated} if there is a perfect set $K \subseteq G$
	such that $K$ is a Kroncker set or a set of type $K_q$ and the subgroup $\left\langle K \right\rangle$
	is dense in $G$.
\end{definition}

In the proof of Theorem \ref{thm: rigid-recurrent}, we took disjoint perfect sets $(K_g)_{g \in \Gamma}$
and defined $\sigma_g$ to be a continuous measure supported on $K_g$ for each $g \in \Gamma$.
In the process, we may lose control over the group generated by $K_g$, so we take a slightly different approach here.
Given a perfect set $K$ such that $K$ is Kronecker or of type $K_q$ and $\left\langle K \right\rangle$ is dense in $\dual$,
we can instead take $(\sigma_g)_{g \in \Gamma}$ to be a collection of mutually singular continuous measures
with $\supp{\sigma_g} = K$ for every $g \in \Gamma$.
Then choosing mutually disjoint Borel sets $(K_g)_{g \in \Gamma}$ with $\sigma_g(K_g) = 1$,
we may repeat the remainder of the proof of Theorem \ref{thm: rigid-recurrent}
to construct a sequence, all of whose translates are freely rigid-recurrent sequences.

Hence, for groups with infinite exponent, we have the following conditional result:

\begin{proposition}
	Let $\Gamma$ be a countable discrete abelian group with infinite exponent, and let $\dual = \hat{\Gamma}$.
	If $\dual$ is topologically Kronecker-generated, then there is a sequence $(r_n)_{n \in \N}$ in $\Gamma$
	such that, for every $t \in \Gamma$, $(r_n - t)_{n \in \N}$ is a freely rigid-recurrent sequence.
\end{proposition}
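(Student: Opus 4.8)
The plan is to re-run the proof of Theorem~\ref{thm: rigid-recurrent} almost verbatim, changing only one thing: rather than spreading the mass of the limiting measure over \emph{disjoint} perfect subsets of a Kronecker set, I keep every auxiliary measure supported on \emph{all} of a single chosen Kronecker set, so that the Gaussian systems produced are free (and not merely weakly mixing). First, since $\Gamma$ is countable, $\dual$ is compact metrizable, and since $\dual$ is topologically Kronecker\nobreakdash-generated and $\Gamma$ has infinite exponent, there is a perfect Kronecker set $K \subseteq \dual$ with $\langle K \rangle$ dense in $\dual$. The one new ingredient I need is a family $(\sigma_g)_{g \in \Gamma}$ of pairwise mutually singular continuous Borel probability measures on $\dual$ with $\supp{\sigma_g} = K$ for every $g$. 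This is a standard fact about perfect compact metrizable spaces: for instance, a Baire category argument in $\P(K)$ shows that the non-atomic measures of full support form a dense $G_\delta$, from which countably many pairwise mutually singular such measures can be extracted; alternatively, one pushes the distinct Bernoulli measures $\beta_p$ $(0<p<1)$ on $\{0,1\}^{\N}$ forward along a continuous surjection $\{0,1\}^{\N} \to K$ chosen to be injective off a set that is null for every $\beta_p$, so that full support comes from surjectivity and mutual singularity of the images from the strong law of large numbers applied to digit frequencies. Granting this, mutual singularity lets me pick pairwise disjoint Borel sets $(K_g)_{g \in \Gamma}$ with $\sigma_g(K_g) = 1$, hence $\sigma_h(K_g) = 0$ for $h \ne g$.

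Next I handle the recurrence side. Fix weights $\alpha_g > 0$ with $\sum_g \alpha_g = 1$, set $\sigma := \sum_g \alpha_g \sigma_g \in \P_c(\dual)$ (so $\supp{\sigma} = K$), and define a Borel function $f : K \to \T$ by $f = e_g$ on $K_g$ (and $f \equiv 1$ off the $\sigma$-conull set $\bigcup_g K_g$); then $f$ is integrable. For $n \in \N$ put $R_n := \{g \in \Gamma : \int_\dual |e_g - f|~d\sigma < 1/n\}$. By Corollary~\ref{cor: trans rec}(1), every translate of each $R_n$ is a set of recurrence, and $R_1 \supseteq R_2 \supseteq \cdots$; so Lemma~\ref{lem: seq rec} produces a set $R$ with $R \setminus R_n$ finite for every $n$ and every translate of $R$ a set of recurrence. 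Fix an enumeration $(r_k)_{k \in \N}$ of $R$.

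Now I check that every translate of $(r_k)_{k\in\N}$ is a rigidity sequence for a free action. Fix $t \in \Gamma$. Given $\ep > 0$, choose $n > 1/\ep$; since $R \setminus R_n$ is finite, $\|e_{r_k} - f\|_{L^1(\sigma)} < 1/n$ for all large $k$, so $e_{r_k} \to f$ in $L^1(\sigma)$. Because $\sigma_h(K_t) = 0$ for $h \ne t$ while $\sigma_t(K_t) = 1$, the restriction of $\sigma$ to $K_t$ equals $\alpha_t \sigma_t$, so $\alpha_t \int_\dual |e_{r_k} - e_t|~d\sigma_t = \int_{K_t} |e_{r_k} - f|~d\sigma \to 0$; since $|\chi(r_k) - \chi(t)| = |\chi(r_k - t) - 1|$, this gives $\widehat{\sigma_t}(r_k - t) \to 1$, and hence, by Lemma~\ref{lem: rigidity Gaussian}, $(r_k - t)_{k \in \N}$ is a rigidity sequence for the Gaussian system associated to $\sigma_t$. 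Moreover $G(\sigma_t) = \overline{\langle \supp{\sigma_t} \rangle} = \overline{\langle K \rangle} = \dual$, so by Lemma~\ref{lem: free WM} that Gaussian system is free; together with the fact that $R - t = \{r_k - t : k \in \N\}$ is a set of recurrence, this shows that $(r_k - t)_{k \in \N}$ is freely rigid-recurrent for every $t \in \Gamma$, as required.

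Everything past the first paragraph is a transcription of the proof of Theorem~\ref{thm: rigid-recurrent}; in particular, replacing disjoint perfect carriers by merely Borel carriers $K_g$ is harmless, since Corollary~\ref{cor: trans rec}(1) only needs $f$ integrable (not continuous) and $\sigma$ supported on the Kronecker set $K$. I expect the main obstacle to be exactly the construction of $(\sigma_g)_{g\in\Gamma}$: one must force $\supp{\sigma_g} = K$ for every $g$ (which is what makes the associated Gaussian system free, via Lemma~\ref{lem: free WM}) while keeping the measures pairwise mutually singular (which is what guarantees the disjoint carriers $K_g$ exist), and these two requirements pull against each other; this is precisely the point where countability of $\Gamma$ — i.e.\ metrizability of $\dual$ — enters.
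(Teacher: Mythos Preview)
Your proposal is correct and follows essentially the same approach as the paper: take a perfect Kronecker set $K$ with $\langle K\rangle$ dense in $\dual$, replace the disjoint perfect carriers $(K_g)$ from the proof of Theorem~\ref{thm: rigid-recurrent} by a family of mutually singular continuous measures $(\sigma_g)$ all with full support $K$, choose disjoint Borel carriers $K_g$ via mutual singularity, and then run the proof of Theorem~\ref{thm: rigid-recurrent} verbatim, invoking Lemma~\ref{lem: free WM} at the end to get freeness from $G(\sigma_t)=\overline{\langle K\rangle}=\dual$. The paper simply asserts the existence of the family $(\sigma_g)$, so your sketch of two constructions (Baire category in $\P(K)$, or pushing forward Bernoulli measures) actually supplies more detail than the paper does.
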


We have thereby reduced the infinite exponent case of Conjecture \ref{conj: free rigid-recurrent}
to the following harmonic analytic problem:

\begin{question} \label{q: dense Kronecker}
	Let $G$ be a compact metrizable abelian group with infinite exponent.
	Is $G$ topologically Kronecker-generated?
\end{question}

For groups with finite exponent, additional technical results are needed to piece together sets of type $K_q$
for different values of $q$.
Nevertheless, we will show that if $D_q$ is topologically Kronecker-generated for every $q \ge 2$,
then Conjecture \ref{conj: free rigid-recurrent} holds for every countable discrete abelian group $\Gamma$
with finite exponent.
The details of this argument are presented in Section \ref{sec: finite exponent} below
(see, in particular, Theorem \ref{thm: Delta shifts}).
Additionally, we will show that $D_p$ is topologically Kronecker-generated for prime $p$
and deduce from this Theorem \ref{thm: free fin exp}.

%%%%%%%%%%%%%%%%%%%%%%%%%%%%%%%%%%%%%%%%%%%%%%%%%%%%

\subsection{Groups with finite exponent} \label{sec: finite exponent}

Throughout this section, we make the following assumptions.
We fix finitely many prime powers $2 \le q_1 < q_2 < \dots < q_N$.
We then let $\Gamma$ be the group
\begin{align*}
	\Gamma := \bigoplus_{j=1}^N{\left( \bigoplus_{n=1}^{\infty}{(\Z/q_j\Z)} \right)}.
\end{align*}

\noindent As in Section \ref{sec: rigid-rec construction}, we let $D_q := \prod_{n=1}^{\infty}{\Lambda_q}$
so that $\Gamma$ has dual group
\begin{align*}
	\dual = \prod_{j=1}^N{D_{q_j}}.
\end{align*}

\noindent For each $j = 1, \dots, N$, we let $i_j : \bigoplus_{n=1}^{\infty}{(\Z/q_j\Z)} \to \Gamma$
and $\iota_j : D_{q_j} \to \dual$ be the natural inclusions.
That is,
\begin{align*}
	i_j(g) & = (0, \dots, 0, g, 0, \dots, 0), \\
	\iota_j(\chi) & = (1, \cdots, 1, \chi, 1, \cdots, 1),
\end{align*}

\noindent where the nontrivial entry is in the $j$th coordinate.

The following lemma allows us to use tools available for sets of type $K_q$ when there is non-uniform torsion.

\begin{lemma} \label{lem: gen Kronecker}
	For each $j = 1, \dots, N$, let $K_j \subseteq D_{q_j}$ be a set of type $K_{q_j}$, and let
	$K := \bigcup_{j=1}^N{\iota_j(K_j)}$.
	If $f : K \to \T$ is continuous and $f \left( \iota_j(K_j) \right) \subseteq \Lambda_{q_j}$,
	then there exists an element $g \in \Gamma$ such that $f = \left. e_g \right|_K$.
\end{lemma}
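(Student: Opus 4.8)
The plan is to reduce the claim to $N$ applications of the defining property of sets of type $K_{q_j}$, one for each summand of $\Gamma = \bigoplus_{j=1}^{N}{\Gamma_j}$, where $\Gamma_j := \bigoplus_{n=1}^{\infty}{(\Z/q_j\Z)}$, using the dual decomposition $\dual = \prod_{j=1}^{N}{D_{q_j}}$ and the fact that restriction of characters of $\dual$ to the direct factor $\iota_j(D_{q_j})$ surjects onto $\hat{D_{q_j}} = \Gamma_j$.

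First, for each $j \in \{1, \dots, N\}$, I would pull $f$ back along $\iota_j$ to obtain $f_j : K_j \to \Lambda_{q_j}$, $f_j(\chi) := f(\iota_j(\chi))$ for $\chi \in K_j$. Since $\iota_j$ is continuous and $f$ is continuous, $f_j$ is continuous; and by hypothesis $f(\iota_j(K_j)) \subseteq \Lambda_{q_j}$, so $f_j$ indeed takes values in $\Lambda_{q_j}$. Because $K_j$ is a set of type $K_{q_j}$, there is an element $g_j \in \Gamma_j$ with $f_j = e_{g_j}|_{K_j}$, i.e. $f_j(\chi) = \chi(g_j)$ for every $\chi \in K_j$. (It is harmless to take $g_j$ in $\Gamma_j \le \Gamma$ regardless of whether ``type $K_{q_j}$'' is read relative to $\Gamma_j$ or to the ambient group, since the only characters of $D_{q_j}$ are the maps $\chi \mapsto \chi(h)$ with $h \in \Gamma_j$.)

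Next, I would assemble these into a single element $g := \sum_{j=1}^{N}{i_j(g_j)} \in \Gamma$, that is, the element of $\Gamma$ whose $j$-th block equals $g_j$. To finish, fix $\chi \in K$ and pick $j$ and $\chi_j \in K_j$ with $\chi = \iota_j(\chi_j)$. The character $\iota_j(\chi_j)$ of $\Gamma$ is trivial on every block other than the $j$-th, so $e_g(\chi) = \chi(g) = \chi_j(g_j) = e_{g_j}(\chi_j) = f_j(\chi_j) = f(\iota_j(\chi_j)) = f(\chi)$; as $\chi \in K$ was arbitrary, $f = e_g|_K$. (If $\chi$ admits more than one such representation -- possible only when $\chi$ is the identity of $\dual$ -- every one gives the same value, and one sees in passing that $f$ must send the identity to $1$, consistent with $e_g$, so there is no ambiguity.)

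I do not expect a genuine obstacle: the argument is essentially the bookkeeping of Pontryagin duality for finite direct sums together with one invocation of the $K_{q_j}$-property per summand. The only point deserving care is that each $e_{g_j}$, a priori a character of $D_{q_j}$, is extended canonically (via the coordinate projection $\dual \to D_{q_j}$) to a character of $\dual$ that agrees with it on $\iota_j(K_j)$ while being identically $1$ on $\iota_k(K_k)$ for $k \ne j$; the element $g = \sum_{j} i_j(g_j)$ is exactly the superposition of these $N$ characters, which is why a single $g \in \Gamma$ simultaneously witnesses $f$ on all of $K$.
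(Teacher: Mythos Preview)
Your proof is correct and follows essentially the same approach as the paper: define $f_j = f \circ \iota_j$, use the $K_{q_j}$ property to get $g_j \in \Gamma_j$, assemble $g = \sum_j i_j(g_j)$, and verify coordinatewise that $e_g|_K = f$. Your parenthetical remarks about the identity element and the canonical extension of characters are extra care not present in the paper but harmless.
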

\begin{proof}
	For each $j = 1, \dots, N$, define $f_j : K_j \to \T$ by $f_j = f \circ \iota_j$.
	Note that $f_j$ takes values in $\Lambda_{q_j}$.
	Therefore, since $K_j$ is a set of type $K_{q_j}$, there is an element $g_j \in \bigoplus_{n=1}^{\infty}{(\Z/q_j\Z)}$
	such that $f_j(\chi) = \chi(g_j)$ for every $\chi \in K_j$.
	
	Let $g = \sum_{j=1}^N{i_j(g_j)} \in \Gamma$.
	Then for $\chi \in K_j$, we have
		\begin{align*}
		\left( \iota_j(\chi) \right)(g)
		 & = \left( 1, \cdots, 1, \chi, 1, \cdots, 1 \right)
		 (g_1, \dots, g_{j-1}, g_j, g_{j+1}, \dots, g_N) \\
		 & = \chi(g_j)
		 = f_j(\chi)
		 = f\left( \iota_j(\chi) \right).
	\end{align*}
	
	\noindent Thus, $f = e_g$ on $\iota_j(K_j)$ for each $j$, so this equality holds on their union $K$.
\end{proof}

\begin{proposition} \label{prop: gen trans rec}
	For each $j = 1, \dots, N$, let $K_j \subseteq D_{q_j}$ be a set of type $K_{q_j}$, and let
	$K := \bigcup_{j=1}^N{\iota_j(K_j)}$.
	Let $\sigma \in \P(\dual)$ be a probability measure supported on $K$.
	For every $\ep > 0$, every translate of the set
		\begin{align*}
		R_{\ep} := \left\{ g \in \Gamma : \int_\dual{\left| \chi(g) - 1 \right|~d\sigma(\chi)} < \ep \right\}
	\end{align*}
	
	\noindent is a set of recurrence.
\end{proposition}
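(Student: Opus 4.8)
The plan is to run the proof of Proposition \ref{prop: trans rec} essentially verbatim, with two modifications: Lemma \ref{lem: gen Kronecker} replaces the single ``Kronecker or type $K_q$'' hypothesis, and the combinatorial input (Lemma \ref{lem: cubes}) must be upgraded to a ``mixed cube'' $\prod_{j=1}^N \Lambda_{q_j}^{d_j}$ in which the factors involve different prime powers. Fix $t \in \Gamma$ and $\ep > 0$. Since $s \in R_{\ep} + t$ if and only if $\int_{\dual} |\chi(s) - \chi(t)|\,d\sigma(\chi) < \ep$, Proposition \ref{prop: recurrence difference} reduces the statement to: for every $E \subseteq \Gamma$ with $d^*(E) > 0$ there are $a, b \in E$ with $\|e_{a-b} - e_t\|_{L^1(\sigma)} < \ep$. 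Write $\dual = \prod_{j=1}^N D_{q_j}$; each $\iota_j(K_j)$ is closed in $K$ with closed complement, hence clopen in $K$, and the $\iota_j(K_j)$ are pairwise disjoint except for the identity $1 \in \dual$.

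First I would record the relevant structure of $e_t$ on $K$: if $t_j \in \bigoplus_{n=1}^{\infty}(\Z/q_j\Z)$ is the $j$-th component of $t$, then $(\iota_j(\chi))(t) = \chi(t_j)$ depends only on finitely many coordinates of $\chi$, so $e_t$ restricted to $\iota_j(K_j)$ is locally constant with values in $\Lambda_{q_j}$. Thus $\varphi := e_t$ is a step function on $K$ whose restriction to each $\iota_j(K_j)$ is locally constant with values in $\Lambda_{q_j}$. Moreover, since $\dual$ is totally disconnected, any step function $\psi$ on $K$ with values in $\Lambda_{q_j}$ on $\iota_j(K_j)$ is approximated arbitrarily well in $L^1(\sigma)$ by continuous functions obeying the same constraints, and Lemma \ref{lem: gen Kronecker} realizes those continuous functions exactly as $e_g|_K$, $g \in \Gamma$; so every such $\psi$ lies within any prescribed $L^1(\sigma)$-distance of some $e_g$.

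Now the main argument proceeds exactly as in Proposition \ref{prop: trans rec}. Choose a finite partition $\P = (P_1, \dots, P_d)$ of $K$ refining the block decomposition and the level sets of $\varphi$, with $\sigma(P_i) = 1/d$ and $d$ large (we may take $\sigma$ continuous, which is the case arising in the construction of freely rigid-recurrent sequences; otherwise pass to approximately equal pieces and the weighted version below). Let $j(i)$ be the block index of $P_i$, set $d_j := \#\{i : j(i) = j\}$, and let $H_d := \{\sum_{i=1}^d \omega_i \ind_{P_i} : \omega_i \in \Lambda_{q_{j(i)}}\}$, a group under pointwise multiplication that is isometric, via $\|\cdot\|_{L^1(\sigma)}$, to $\prod_{j=1}^N \Lambda_{q_j}^{d_j}$ with the normalized $\ell^1$ metric; note $\varphi \in H_d$. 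By the previous paragraph, pick pairwise distinct $a(\psi) \in \Gamma$, $\psi \in H_d$, with $\|e_{a(\psi)} - \psi\|_{L^1(\sigma)} < \ep/3$, and put $F_d := \{a(\psi) : \psi \in H_d\}$, so $|F_d| = |H_d|$. By Lemma \ref{lem: upper Banach density} there is $\gamma \in \Gamma$ with $|(E + \gamma) \cap F_d| \ge d^*(E)|F_d|$; then $\tilde{E}_{\gamma} := \{\psi \in H_d : \|e_{a+\gamma} - \psi\|_{L^1(\sigma)} < \ep/3 \text{ for some } a \in E\}$ satisfies $|\tilde{E}_{\gamma}| \ge d^*(E)|H_d|$. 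Taking $d$ above the threshold in the mixed-cube analogue of Lemma \ref{lem: cubes} for the data $(d^*(E), \ep/3, q_1, \dots, q_N)$, we find $\psi_1, \psi_2 \in \tilde{E}_{\gamma}$ with $\|\psi_1\overline{\psi}_2 - \varphi\|_{L^1(\sigma)} < \ep/3$, hence $a, b \in E$ with $\|e_{a+\gamma} - \psi_1\|_{L^1(\sigma)}, \|e_{b+\gamma} - \psi_2\|_{L^1(\sigma)} < \ep/3$; since $e_{a+\gamma}\overline{e}_{b+\gamma} = e_{a-b}$ and $\varphi = e_t$, the triangle inequality (as at the end of the proof of Proposition \ref{prop: trans rec}) gives $\|e_{a-b} - e_t\|_{L^1(\sigma)} < \ep$.

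The main obstacle is supplying the mixed-cube version of Lemma \ref{lem: cubes}: one needs that for each $\delta, \ep > 0$ and prime powers $q_1, \dots, q_N$ there is a threshold beyond which any subset of density $\ge \delta$ in $\prod_j \Lambda_{q_j}^{d_j}$ (with $\min_j d_j$ large) contains a pair whose quotient is $\rho$-close to any prescribed point. This is not literally \cite[Lemma 4.3]{griesmer}, but it follows by the same mechanism --- the blowing-up phenomenon for product spaces in the normalized Hamming metric, whereby the $\ep/2$-neighborhood of a density-$\delta$ set has measure $> 1 - \ep/2$ once the dimensions are large, so two such neighborhoods (one translated) must meet --- and I would prove it as a lemma preceding the proposition. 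A secondary, routine point is that the partition $\P$ must refine both the block decomposition and the level sets of $\varphi$ while having equal $\sigma$-measure pieces; this causes no trouble because (by total disconnectedness of $\dual$ and Lemma \ref{lem: gen Kronecker}) the elements of $H_d$ need only be realized approximately in $L^1(\sigma)$, so the $P_i$ may be taken merely Borel.
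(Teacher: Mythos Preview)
Your approach is essentially the same as the paper's: both reduce via Proposition~\ref{prop: recurrence difference}, discretize $K$ into a finite group of step functions isometric to a mixed cube, invoke Lemma~\ref{lem: gen Kronecker} to realize those step functions as $e_g|_K$ and thereby prove the analogue of Claim~\ref{claim: large shift}, and finish with a generalization of Lemma~\ref{lem: cubes} to products $\prod_l \Lambda_{k_l}^d$ (the paper states this as a separate lemma and proves it via McDiarmid's concentration inequality, which is precisely the blowing-up phenomenon you invoke). Your observation that $e_t|_K$ is already locally constant with values in the correct $\Lambda_{q_j}$, so that one may take $\varphi = e_t$ and use $\ep/3$ rather than approximate and use $\ep/4$, is a minor simplification over the paper's version.
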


Applying Lemma \ref{lem: gen Kronecker}, we immediately get the following
as a corollary of Proposition \ref{prop: gen trans rec}:

\begin{corollary} \label{cor: gen trans rec}
	Let $\sigma \in \P(\dual)$ be a probability measure supported on $K$.
	Let $f : K \to \T$ be an integrable function such that
	$f \left( \iota_j(K_j) \right) \subseteq \Lambda_{q_j}$ for $j = 1, \dots, N$.
	Then for every $\ep > 0$, every translate of the set
		\begin{align*}
		R_{\ep, f} := \left\{ g \in \Gamma : \int_\dual{\left| \chi(g) - f \right|~d\sigma(\chi)} < \ep \right\}
	\end{align*}
	
	\noindent is a set of recurrence.
\end{corollary}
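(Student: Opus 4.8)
The plan is to repeat the proof of Corollary~\ref{cor: trans rec} almost verbatim, with Proposition~\ref{prop: gen trans rec} in place of Proposition~\ref{prop: trans rec} and Lemma~\ref{lem: gen Kronecker} in place of the defining property of Kronecker sets (resp.\ sets of type $K_q$). Fix $\ep > 0$ and fix a translate $R_{\ep,f} + t$ with $t \in \Gamma$; the goal is to show it is a set of recurrence. The whole argument hinges on producing a single element $g \in \Gamma$ with $\|e_g - f\|_{L^1(\sigma)} < \ep/2$. Granting such a $g$, for every $h$ in the set $R_{\ep/2}$ of Proposition~\ref{prop: gen trans rec} (applied with $\ep/2$ in place of $\ep$), using that $|\chi(g)| = 1$ for all $\chi \in \dual$,
\[
\int_{\dual} |\chi(h+g) - f|\,d\sigma
\le \int_{\dual} |\chi(h) - 1|\,d\sigma + \|e_g - f\|_{L^1(\sigma)}
< \frac{\ep}{2} + \frac{\ep}{2} = \ep ,
\]
so $h + g \in R_{\ep,f}$. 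Hence $R_{\ep/2} + g \subseteq R_{\ep,f}$, and therefore $R_{\ep/2} + (g+t) \subseteq R_{\ep,f} + t$. Since $g + t \in \Gamma$, Proposition~\ref{prop: gen trans rec} shows the left-hand set is a set of recurrence, and a superset of a set of recurrence is a set of recurrence (immediate from the definition); this completes the proof.

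It remains to construct $g$. First I would approximate $f$ in $L^1(\sigma)$ by a \emph{continuous} function $\tilde f : K \to \T$ satisfying $\tilde f(\iota_j(K_j)) \subseteq \Lambda_{q_j}$ for each $j = 1, \dots, N$, and then apply Lemma~\ref{lem: gen Kronecker} to write $\tilde f = e_g|_K$ for some $g \in \Gamma$; since $\sigma$ is supported on $K$, this gives $\|e_g - f\|_{L^1(\sigma)} = \|\tilde f - f\|_{L^1(\sigma)}$, which is as small as we like. For the approximation one works one piece at a time: on $\iota_j(K_j) \subseteq D_{q_j}$ the function $f$ is measurable with values in the finite set $\Lambda_{q_j}$, and since $D_{q_j} = \prod_{n}\Lambda_{q_j}$ is compact, metrizable and totally disconnected, its clopen sets approximate any Borel set in $\sigma$-measure. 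Approximating the (finitely many) level sets of $f\circ\iota_j$ by clopen sets and disjointifying yields a locally constant — hence continuous — function $\tilde f_j : \iota_j(K_j) \to \Lambda_{q_j}$ arbitrarily close to $f$ in $L^1(\sigma)$; patching the $\tilde f_j$ across $j$ produces $\tilde f$.

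The one subtlety, and the point I expect to require the most care, is the patching step: the sets $\iota_j(K_j)$ need not be pairwise disjoint, their only possible common point being the trivial character $1 = (1,\dots,1) \in \dual$, which lies in $\iota_j(K_j)$ exactly when $1 \in K_j$. At such a point the $\tilde f_j$ must agree for $\tilde f$ to be well-defined and continuous on $K$, which I would force by arranging each relevant $\tilde f_j$ to equal $f(1)$ on a small clopen neighborhood of $1$ in $D_{q_j}$; this is legitimate because $f(1) \in \bigcap_j \Lambda_{q_j}$, because $1$ has a basis of clopen neighborhoods, and because the $\sigma$-mass of a small enough clopen neighborhood of $1$ lying outside the level set $(f\circ\iota_j)^{-1}(f(1))$ (which contains $1$) is negligible, leaving the $L^1$ estimate intact. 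In the constructions where Corollary~\ref{cor: gen trans rec} is actually applied the sets $\iota_j(K_j)$ are taken to be genuinely disjoint, so this bookkeeping can be avoided entirely. All the substantive content — the combinatorics of recurrence and the simultaneous interpolation across the non-uniform torsion — is already packaged in Proposition~\ref{prop: gen trans rec} and Lemma~\ref{lem: gen Kronecker}.
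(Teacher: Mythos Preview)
Your proposal is correct and follows essentially the same route as the paper, which simply states that the corollary follows immediately from Proposition~\ref{prop: gen trans rec} by applying Lemma~\ref{lem: gen Kronecker} (mirroring the proof of Corollary~\ref{cor: trans rec}). You have filled in the details the paper leaves implicit---the $L^1(\sigma)$-approximation of the integrable $f$ by a continuous function respecting the $\Lambda_{q_j}$-constraints, and the bookkeeping at the possible common point $1 \in \dual$---and these details are handled correctly.
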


Before proving Proposition \ref{prop: gen trans rec}, we discuss
how it can be used to construct freely rigid-recurrent sequences.
Assume $D_{q_j}$ is topologically Kronecker generated for every $j = 1, \dots, N$.
For each $j = 1, \dots, N$, let $K_j \subseteq D_{q_j}$ be a perfect Kronecker set
such that $\left\langle K_j \right\rangle$ is dense in $D_{q_j}$.
Then let $(\sigma_{j,g})_{g \in \Gamma}$ be a family of mutually singular continuous probability measures on $\dual$
with $\supp{\sigma_{j,g}} = \iota_j(K_j)$.
Put $\sigma_g := \frac{1}{N} \sum_{j=1}^N{\sigma_{j,g}}$ for $g \in \Gamma$
so that $\supp{\sigma_g} = K := \bigcup_{j=1}^N{\iota_j(K_j)}$.
It is easily checked that
\begin{align*}
	\left\langle K \right\rangle = \prod_{j=1}^N{\left\langle K_j \right\rangle},
\end{align*}

\noindent so $K$ generates a dense subgroup of $\prod_{j=1}^N{D_{q_j}} = \dual$.
Running through the remainder of the proof of
Theorem \ref{thm: rigid-recurrent} with Corollary \ref{cor: gen trans rec} in place of Corollary \ref{cor: trans rec} gives:

\begin{proposition} \label{prop: conditional free torsion}
	If $D_q$ is topologically Kronecker-generated for each $q \in \{q_1, \dots,\break q_N\}$,
	then there is a sequence $(r_n)_{n \in \N}$ in $\Gamma$ such that, for every $t \in \Gamma$,
	$(r_n - t)_{n \in \N}$ is a freely rigid-recurrent sequence.
\end{proposition}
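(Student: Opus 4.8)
The plan is to re-run the construction in the proof of Theorem~\ref{thm: rigid-recurrent} almost verbatim; the only change is in how the auxiliary measures are chosen. Instead of placing continuous measures on mutually disjoint perfect subsets of a single Kronecker-type set, I place \emph{mutually singular} continuous measures whose support is all of the generating set $K$, so that the support of each one still generates a dense subgroup of $\dual$ and Lemma~\ref{lem: free WM} yields freeness. Concretely: using the hypothesis, fix for each $j = 1, \dots, N$ a perfect set $K_j \subseteq D_{q_j}$ of type $K_{q_j}$ with $\langle K_j\rangle$ dense in $D_{q_j}$, and put $K := \bigcup_{j=1}^N \iota_j(K_j) \subseteq \dual$. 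Each $\iota_j(K_j)$ is a perfect compact metrizable space, hence carries a countable family $(\sigma_{j,g})_{g \in \Gamma}$ of pairwise mutually singular continuous probability measures (a standard fact for perfect Polish spaces, e.g.\ via Kakutani's dichotomy for product measures transported through a Cantor subset). Set $\sigma_g := \frac{1}{N}\sum_{j=1}^N \sigma_{j,g}$, so the $\sigma_g$ are continuous, pairwise mutually singular, and $\supp{\sigma_g} = K$ for every $g$. By mutual singularity choose pairwise disjoint Borel sets $(K_g)_{g\in\Gamma}$ with $\sigma_g(K_g) = 1$, fix weights $\alpha_g > 0$ with $\sum_{g} \alpha_g = 1$, and set $\sigma := \sum_{g\in\Gamma} \alpha_g \sigma_g$, a continuous probability measure with $\supp{\sigma} = K$. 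Define $f : \dual \to \T$ by $f = e_g$ on $K_g$ and $f \equiv 1$ off $\bigcup_g K_g$; since every character in $D_{q_j}$ takes values in $\Lambda_{q_j}$ and $1 \in \Lambda_{q_j}$, we have $f\bigl(\iota_j(K_j)\bigr) \subseteq \Lambda_{q_j}$ for each $j$, so $f$ satisfies the hypotheses of Corollary~\ref{cor: gen trans rec}.

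For the recurrence side, for $n \in \N$ put
\[
	R_n := \left\{ g \in \Gamma : \int_\dual \left| \chi(g) - f(\chi) \right|~d\sigma(\chi) < \frac{1}{n} \right\},
\]
so that $R_1 \supseteq R_2 \supseteq \cdots$. By Corollary~\ref{cor: gen trans rec}, every translate of every $R_n$ is a set of recurrence. Applying Lemma~\ref{lem: seq rec} to the chain $(R_n)_{n\in\N}$ produces a set $R \subseteq \Gamma$ with $R \setminus R_n$ finite for every $n$ and every translate of $R$ a set of recurrence; let $(r_k)_{k\in\N}$ enumerate $R$. In particular $\{r_k - t : k \in \N\} = R - t$ is a set of recurrence for every $t \in \Gamma$.

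It remains to verify that each translate $(r_k - t)_{k\in\N}$ is a rigidity sequence for a free action. Fix $t \in \Gamma$. Given $\ep > 0$, choose $n > 1/\ep$; since $R \setminus R_n$ is finite, $\|e_{r_k} - f\|_{L^1(\sigma)} < 1/n < \ep$ for all large $k$, so $\|e_{r_k} - f\|_{L^1(\sigma)} \to 0$. The restriction of $\sigma$ to $K_t$ equals $\alpha_t\sigma_t$ (because $\sigma_g(K_t) = 0$ for $g \ne t$), and $f = e_t$ on $K_t$, whence $\alpha_t\|e_{r_k} - e_t\|_{L^1(\sigma_t)} \le \|e_{r_k} - f\|_{L^1(\sigma)} \to 0$; thus $\|e_{r_k - t} - 1\|_{L^1(\sigma_t)} \to 0$ and $\hat{\sigma}_t(r_k - t) \to 1$. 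By Lemma~\ref{lem: rigidity Gaussian}, $(r_k - t)_{k\in\N}$ is rigid for the Gaussian system associated to $\sigma_t$; since $\sigma_t$ is continuous this system is weakly mixing (Proposition~\ref{prop: WM Gaussian}), and since $\supp{\sigma_t} = K$ with $\langle K\rangle = \prod_{j=1}^N \langle K_j\rangle$ dense in $\prod_{j=1}^N D_{q_j} = \dual$, Lemma~\ref{lem: free WM} shows it is free. Together with the recurrence statement above, this gives that $(r_k - t)_{k\in\N}$ is a freely rigid-recurrent sequence for every $t \in \Gamma$.

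The bulk of this is a direct rerun of the proof of Theorem~\ref{thm: rigid-recurrent}, so I expect the only genuinely new points — and hence the only places requiring care — to be (i) the existence of the countable family of pairwise mutually singular continuous probability measures on the perfect set $\iota_j(K_j)$, and (ii) the identity $\langle \bigcup_j \iota_j(K_j)\rangle = \prod_j \langle K_j\rangle$, which is what upgrades ``$\langle K_j\rangle$ dense in $D_{q_j}$'' to ``$\langle K\rangle$ dense in $\dual$'' and thereby, via Lemma~\ref{lem: free WM}, to freeness of the Gaussian systems associated to the $\sigma_t$.
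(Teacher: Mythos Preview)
Your proposal is correct and matches the paper's own argument essentially step for step: the paper likewise takes perfect sets $K_j \subseteq D_{q_j}$ of type $K_{q_j}$ with $\langle K_j\rangle$ dense, forms $\sigma_g = \frac{1}{N}\sum_j \sigma_{j,g}$ from mutually singular continuous measures with $\supp{\sigma_{j,g}} = \iota_j(K_j)$, picks disjoint Borel carriers $K_g$, and then reruns the proof of Theorem~\ref{thm: rigid-recurrent} with Corollary~\ref{cor: gen trans rec} in place of Corollary~\ref{cor: trans rec}, invoking $\langle K\rangle = \prod_j \langle K_j\rangle$ and Lemma~\ref{lem: free WM} for freeness. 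The two points you flag as requiring care---existence of countably many mutually singular continuous probability measures with full support on a perfect metrizable space, and the identity for $\langle K\rangle$---are precisely the points the paper also singles out (and asserts without further proof).
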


In fact, we can prove the following more general result:

\begin{theorem} \label{thm: Delta shifts}
	Let $\Gamma$ be a countable discrete abelian group with finite exponent, and write
		\begin{align*}
		\Gamma := \bigoplus_{k=1}^M{(\Z/r_k\Z)^{m_k} }
		 \oplus \bigoplus_{j=1}^N{\left( \bigoplus_{n=1}^{\infty}{(\Z/q_j\Z)} \right)}
	\end{align*}
	
	\noindent with $q_1, \dots, q_N, r_1, \dots, r_M$ distinct prime powers and $m_1, \dots, m_M \in \N$.
	Let $\Delta \le \Gamma$ be the finite index subgroup
		\begin{align*}
		\Delta := \bigoplus_{j=1}^N{\left( \bigoplus_{n=1}^{\infty}{(\Z/q_j\Z)} \right)}.
	\end{align*}
	
	\noindent If $D_q$ is topologically Kronecker-generated for each $q \in \{q_1, \dots, q_N\}$,
	then there is a sequence $(r_n)_{n \in \N}$ in $\Gamma$ such that, for every $s \in \Delta$,
	$(r_n - s)_{n \in \N}$ is a freely rigid-recurrent sequence.
\end{theorem}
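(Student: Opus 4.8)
The plan is to reduce to the uniform-torsion situation already covered by Proposition \ref{prop: conditional free torsion}, carrying out the construction inside the finite-index subgroup $\Delta$, and then to reinstate the finite factor $F$ by two essentially costless moves. Restricting the shifts to $\Delta$ is precisely what evades the obstruction of Proposition \ref{prop: local obstruction}: a shift with nonzero $F$-component could never yield a rigidity sequence for a free action, whereas shifts from $\Delta$ keep everything inside $\Delta$.

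First I would note that $\Delta = \bigoplus_{j=1}^N\left(\bigoplus_{n=1}^\infty(\Z/q_j\Z)\right)$ is exactly of the form to which Proposition \ref{prop: conditional free torsion} applies, and that the hypothesis of Theorem \ref{thm: Delta shifts} --- $D_q$ topologically Kronecker-generated for each $q \in \{q_1, \dots, q_N\}$ --- is exactly what that proposition requires. Applying it to $\Delta$ produces a sequence $(r_n)_{n\in\N}$ in $\Delta \subseteq \Gamma$ such that, for every $s \in \Delta$, the translate $(r_n - s)_{n\in\N}$ is a freely rigid-recurrent sequence \emph{for the group $\Delta$}; concretely, $\{r_n - s : n \in \N\}$ is a set of recurrence in $\Delta$, and $(r_n - s)_{n\in\N}$ is a rigidity sequence for some free action of $\Delta$. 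I claim this same sequence works for $\Gamma$.

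For recurrence I would use the elementary fact that a subset $B \subseteq \Delta$ which is a set of recurrence in $\Delta$ is automatically a set of recurrence in $\Gamma$: any measure-preserving action of $\Gamma$ on a non-atomic Lebesgue space restricts to one of $\Delta$ on the same space, and the identity element is common to $\Delta$ and $\Gamma$. Since $r_n - s \in \Delta$ whenever $s \in \Delta$, the set $\{r_n - s : n \in \N\}$ is thus a set of recurrence in $\Gamma$. For rigidity, write $\Gamma \cong F \times \Delta$ and fix a free action $(Y, \mathcal{C}, \rho, (T_g)_{g\in\Delta})$ of $\Delta$ that is rigid along $(r_n - s)_{n\in\N}$, together with the translation action of $F$ on itself, where $R_g$ is fixed-point-free for $g \neq 0$. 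Their product is an action of $\Gamma \cong F \times \Delta$ on $F \times Y$, which is a non-atomic Lebesgue space because $Y$ is; it is free because the fixed-point set of $T_{(g_F, g_\Delta)} = R_{g_F} \times T_{g_\Delta}$ has positive measure only if $g_F = 0$ and $g_\Delta = 0$. Finally, since $r_n - s$ lies in the $\Delta$-factor, $T_{(0,\, r_n - s)} = \id_F \times T_{r_n - s} \to \id$ on $L^2(F \times Y)$ by the rigidity of $T$ along $(r_n - s)$ and a density argument. Hence $(r_n - s)_{n\in\N}$ is a rigidity sequence for a free action of $\Gamma$, and, together with the recurrence statement, a freely rigid-recurrent sequence in $\Gamma$ for every $s \in \Delta$.

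As for where the difficulty actually sits: the substantive work is entirely inside Proposition \ref{prop: conditional free torsion}, and underneath it in the hypothesis that $D_q$ is topologically Kronecker-generated. In the present argument the only point requiring care --- and it is a conceptual point rather than a technical one --- is to recognize that restricting the shifts to $\Delta$ simultaneously dodges the local obstruction of Proposition \ref{prop: local obstruction} and renders both patching steps (the subgroup reduction for recurrence and the product construction for rigidity) completely harmless.
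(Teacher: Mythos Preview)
Your argument is correct and follows the same overall strategy as the paper: apply Proposition~\ref{prop: conditional free torsion} inside $\Delta$ to obtain the sequence, observe that recurrence lifts trivially from $\Delta$ to $\Gamma$, and then lift the free rigid action from $\Delta$ to $\Gamma$. The only difference lies in this last step. The paper works on the dual side: given the continuous measure $\sigma_{0,s} \in \P_c(\hat{\Delta})$ whose support generates $\hat{\Delta}$, it pulls back along the finite quotient $\dual \to \hat{\Delta}$ to obtain $\sigma_s \in \P_c(\dual)$ with $G(\sigma_s) = \dual$, and then takes the associated Gaussian system, invoking Lemmas~\ref{lem: rigidity Gaussian} and~\ref{lem: free WM}. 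You instead work directly on the system side, forming the product of the free $\Delta$-system with the regular representation of $F$ on itself.

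Both lifts are valid and the verifications are of comparable difficulty. The paper's route has the incidental advantage that the resulting Gaussian system is automatically weakly mixing (since $\sigma_s$ remains continuous), whereas your product action carries the nontrivial characters of $F$ as eigenfunctions and is therefore only ergodic, not weakly mixing. This is harmless for the definition of ``freely rigid-recurrent'' as stated, and in any event Theorem~\ref{thm: free} would upgrade your free ergodic action to a free weakly mixing one if that were desired.
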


\begin{remark} \label{rem: Delta'}
	The group $\Delta$ in Theorem \ref{thm: Delta shifts} is not optimal in all situations,
	but it is nearly so.
	In particular, it is possible for there to be a group $\Delta < \Delta' \le \Gamma$
	with $\Delta' \simeq \Delta$, in which case the same result will hold for $\Delta'$.
	This happens if $q_j \mid r_k$ for some $j, k$.
	
	As an illustration, consider the group $\Gamma = (\Z/4\Z) \oplus \left( \bigoplus_{n=1}^{\infty}{(\Z/2\Z)} \right)$,
	and write elements of $\Gamma$ as $g = (g_0; g_1, g_2, \dots)$
	with $g_0 \in \Z/4\Z$ and $g_n \in \Z/2\Z$ for $n \ge 1$.
	Then $\Delta = \{g \in \Gamma : g_0 = 0\}$, which is an index 4 subgroup.
	However, we could instead take $\Delta' = \{g \in \Gamma : g_0 \in \{0,2\}\}$,
	which contains $\Delta$ and has index 2.
	
	The argument in the proof of Lemma \ref{lem: local obstruction}
	can be adapted to show that $\Delta'$ is optimal.
	Namely, if $(r_n)_{n \in \N}$ is rigid for a free action of $\Gamma$,
	then $r_{n,0} \equiv 0 \pmod{2}$ for all but finitely many $n \in \N$.
\end{remark}

\begin{proof}[Proof of Theorem \ref{thm: Delta shifts}]
	Let $\dual_0 := \hat{\Delta}$.
	Then $\dual_0 \simeq \dual/\Delta^{\perp}$,
	where $\Delta^{\perp} := \{\chi \in \dual : \chi(s) = 1~\text{for all}~s \in \Delta\}$
	is the annihilator of $\Delta$.
	Since $\Delta \le \Gamma$ has finite index, $\Delta^{\perp} \simeq \hat{\Gamma/\Delta}$ is finite.
	
	Apply Proposition \ref{prop: conditional free torsion} to $(\Delta, \dual_0)$
	to obtain a sequence $(r_n)_{n \in \N}$ in $\Delta$.
	We claim that $(r_n)_{n \in \N}$ has the desired properties (now considered as a sequence in $\Gamma$).
	
	Let $s \in \Delta$.
	Clearly $\{r_n - s : n \in \N\}$, being a set of recurrence in $\Delta$,
	is also a set of recurrence in $\Gamma$.
	We now verify that $(r_n - s)_{n \in \N}$ is a rigidity sequence for a free action of $\Gamma$.
	By construction and Lemmas \ref{lem: rigid characters}(2) and \ref{lem: free WM},
	there is a continuous probability measure $\sigma_{0,s} \in \P_c(\dual_0)$
	such that $\hat{\sigma}_{0,s}(r_n) \to 1$ as $n \to \infty$
	and $G(\sigma_{0,s}) := \overline{\left\langle \supp{\sigma_{0,s}} \right\rangle} = \dual_0$.
	Define $\sigma_s \in \P_c(\dual)$ via $\sigma_s(A) = \sigma_{0,s}(\pi(A))$,
	where $\pi : \dual \to \dual_0$ is the projection map.
	Then $G(\sigma_s) := \overline{\left\langle \supp{\sigma_{s}} \right\rangle} = \dual$, and
		\begin{align*}
		\hat{\sigma}_s(r_n - s) & = \int_{\dual}{\chi(r_n-s)~d\sigma_s(\chi)} \\
		 & = \frac{1}{|\Delta^{\perp}|} \sum_{\chi_1 \in \Delta^{\perp}}
		 {\int_{\dual_0}{\chi_0(r_n - s) \chi_1(r_n - s)~d\sigma_{0,s}(\chi_0)}} \\
		 & = \int_{\dual_0}{\chi_0(r_n- s)~d\sigma_{0,s}(\chi_0)} \\
		 & = \hat{\sigma}_{0,s}(r_n - s) \to 1.
	\end{align*}
	
	\noindent By Lemma \ref{lem: rigidity Gaussian}, the Gaussian system associated to $\sigma_s$
	is rigid along the sequence $(r_n - s)_{n \in \N}$.
	Moreover, by Lemma \ref{lem: free WM}, this system is free.
\end{proof}

By Theorem \ref{thm: Delta shifts}, all that remains to prove Theorem \ref{thm: free fin exp} is the following observation:

\begin{lemma} \label{lem: finite exp Kronecker gen}
	For any prime number $p$, the group $D_p$ is topologically Kronecker-generated.
\end{lemma}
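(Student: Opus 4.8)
The plan is to construct the perfect set $K \subseteq D_p$ directly by a Cantor scheme indexed by a tree of bases of finite $\F_p$-vector spaces, rather than extracting it from Theorem~\ref{thm: perfect K sets}. Write $D_p = \prod_{n=1}^{\infty}{\Lambda_p}$ and identify its character group with $\Gamma := \bigoplus_{n=1}^{\infty}{(\Z/p\Z)}$, identifying $\Lambda_p$ additively with $\F_p$, so that for finitely supported $g = (g_n) \in \Gamma$ and $\chi = (\chi_n) \in \prod_n \F_p = D_p$ one has $e_g(\chi) = \exp\!\left( \tfrac{2\pi i}{p} \sum_n g_n \chi_n \right)$. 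The key point, available because $p$ is prime, is that $\langle K \rangle$ is dense in $D_p$ exactly when $K^{\perp} = \{0\}$ in $\Gamma$, and $K^{\perp} = \{0\}$ can be forced by arranging that the ``coordinate vectors'' of $K$ at each finite stage form a \emph{basis} of $\F_p^{F}$.

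Concretely, I will build a strictly increasing chain of finite sets $F_0 \subsetneq F_1 \subsetneq \cdots$ with $|F_s| = 2^s$ and $\bigcup_{s}{F_s} = \N$, together with, for each $s$, vectors $(v_{\eta})_{\eta \in \{0,1\}^s}$ in $\F_p^{F_s}$ forming a basis of $\F_p^{F_s}$ and coherent with the tree: $v_{\eta 0}|_{F_s} = v_{\eta 1}|_{F_s} = v_{\eta}$ and $v_{\eta 0} \ne v_{\eta 1}$ for all $\eta$. The only computation is the inductive step: given the basis $(v_\eta)$ of $\F_p^{F_s}$, write $F_{s+1} = F_s \sqcup G$ with $|G| = 2^s$, fix a basis $(w_\eta)_{\eta \in \{0,1\}^s}$ of $\F_p^G$, and set $v_{\eta 0} := v_\eta$ (extended by $0$ on $G$) and $v_{\eta 1} := v_\eta + w_\eta$; the resulting $2^{s+1}$ vectors span $\F_p^{F_{s+1}} = \F_p^{F_s} \oplus \F_p^G$ (their pairwise differences recover the $w_\eta$) and number $\dim \F_p^{F_{s+1}}$, hence form a basis, while $v_{\eta 0} \ne v_{\eta 1}$ since $w_\eta \ne 0$. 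Choosing the new coordinates in $G$ so as to eventually exhaust $\N$ is free. Then put $C^s_\eta := \{\chi \in D_p : \chi|_{F_s} = v_\eta\}$ and $K := \bigcap_{s \ge 0}{\bigcup_{\eta \in \{0,1\}^s}{C^s_\eta}}$.

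It then remains to verify three things. First, $K$ is perfect: the $C^s_\eta$ are clopen, pairwise disjoint at each level, and split genuinely at the next level (because $v_{\eta 0} \ne v_{\eta 1}$), so $\chi \mapsto$ (its branch) is a homeomorphism of $K$ onto the Cantor space $\{0,1\}^{\N}$. Second, $K$ is a set of type $K_p$: a continuous $f : K \to \Lambda_p$ is locally constant, hence constant with some value $c_\eta \in \F_p$ on $C^s_\eta \cap K$ for a single level $s$; since $(v_\eta)_{\eta \in \{0,1\}^s}$ is a basis of $\F_p^{F_s}$, the map $h \mapsto \left( \sum_{n \in F_s}{h_n (v_\eta)_n} \right)_{\eta}$ from $\F_p^{F_s}$ onto $\F_p^{\{0,1\}^s}$ is invertible, so there is $h$ with $\sum_{n \in F_s}{h_n (v_\eta)_n} = c_\eta$ for every $\eta$; extending $h$ by zero yields $g \in \Gamma$ with $e_g|_K = f$. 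Third, $\langle K \rangle$ is dense: if $g \in \Gamma$ satisfies $\chi(g) = 1$ for all $\chi \in K$, pick $s$ with $\supp{g} \subseteq F_s$; then $\sum_{n \in F_s}{g_n (v_\eta)_n} \equiv 0 \pmod p$ for every $\eta$, and since the $v_\eta$ span $\F_p^{F_s}$ this forces $g|_{F_s} = 0$, i.e. $g = 0$; hence $\overline{\langle K \rangle} = K^{\perp\perp} = D_p$.

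The main obstacle is a small bookkeeping one: one must take $|F_s| = 2^s$ \emph{exactly} (rather than merely large enough to make the $v_\eta$ linearly independent), so that linear independence upgrades to being a basis. It is precisely this exact matching that makes $h \mapsto \left( \sum_{n \in F_s}{h_n (v_\eta)_n} \right)_\eta$ bijective, which simultaneously delivers the type-$K_p$ property and forces $K^{\perp} = \{0\}$. Everything else is routine: continuity and compactness, and the dictionary between density of $\langle K \rangle$ and annihilators recalled before Lemma~\ref{lem: free WM}. Note that the use of $\F_p$ being a field (for ``basis'' and invertibility) is exactly where primality of $p$ enters, which is why this argument does not extend verbatim to $D_q$ for a prime power $q$.
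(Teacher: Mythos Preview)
Your argument is correct and takes a genuinely different route from the paper. The paper's proof is a three-line argument that invokes Theorem~\ref{thm: perfect K sets} to obtain \emph{some} perfect set $K \subseteq D_p$ of type $K_p$, observes that the closed subgroup $G = \overline{\langle K \rangle}$ is infinite and hence (since $p$ is prime and every infinite closed subgroup of $D_p$ is a closed infinite $\F_p$-subspace, so isomorphic to $D_p$ by duality) there is a topological isomorphism $\varphi : G \to D_p$, and then checks that $\varphi(K)$ is still perfect and of type $K_p$ while now generating all of $D_p$.

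Your construction instead builds $K$ from scratch so that the type-$K_p$ property and the density of $\langle K \rangle$ are forced simultaneously by the same mechanism: at each level the coordinate vectors $(v_\eta)_{\eta \in \{0,1\}^s}$ form a \emph{basis} of $\F_p^{F_s}$, making the pairing $h \mapsto (h \cdot v_\eta)_\eta$ a bijection. This is self-contained (no appeal to Rudin's Theorem~5.2.2, and no need for the structure of closed subgroups of $D_p$) and more explicit; the paper's argument is shorter but relies on heavier black boxes. Both arguments use primality of $p$ at essentially the same point --- you need $\F_p$ to be a field so that ``$2^s$ linearly independent vectors in a $2^s$-dimensional space'' upgrades to ``basis,'' while the paper needs it so that closed subgroups of $D_p$ are $\F_p$-subspaces and hence classifiable --- which is why neither proof extends directly to $D_q$ for prime powers $q$.
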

\begin{proof}
	By Theorem \ref{thm: perfect K sets}, let $K \subseteq D_p$ be a perfect set of type $K_p$.
	Then the closed subgroup $G$ generated by $K$ must be isomorphic to $D_p$.
	Indeed, every infinite closed subgroup of $D_p$ is isomorphic to $D_p$.
	Letting $\varphi : G \to D_p$ be a homeomorphic isomorphism,
	it is easy to check that $\varphi(K)$ is a perfect set of type $K_p$ generating a dense subgroup of $D_p$.
\end{proof}

Now we proceed to prove Proposition \ref{prop: gen trans rec}.
We begin the same way as the proof of Proposition \ref{prop: trans rec}.
Fix an element $t \in \Gamma$ and a set $E \subseteq \Gamma$ of positive density $d^*(E) = \delta > 0$.
We want to find $a, b \in E$ such that $\|e_{a-b} - e_t\|_{L^1(\sigma)} < \ep$.

Discarding a negligible portion of $K$, we may partition, for some large $M \in \N$,
each of the sets $\iota_j(K_j)$ into cells of $\sigma$-measure $\frac{1}{M}$.
Thus, we may assume that $K = \bigcup_{l=1}^M{\tilde{K}_l}$ with the property that, for each $l = 1, \dots, M$:

\begin{enumerate}[(1)]
	\item	there exists $k_l \in \N$ such that every element of $\tilde{K}_l$ has order $k_l$,
		and $\tilde{K}_l$ is a set of type $K_{k_l}$; and
	\item	$\sigma \left( \tilde{K}_l \right) = \frac{1}{M}$.
\end{enumerate}

Now define $\Lambda = \prod_{l=1}^M{\Lambda_{k_l}}$,
and choose $d > N\left( \delta, \frac{\ep}{4}, \Lambda \right)$, to be defined later, large enough
that there are functions $\varphi_l : \tilde{K}_l \to \Lambda_{k_l}$ such that
$\int_{\tilde{K}_l}{|\varphi_l - e_t|~d\sigma} < \frac{\ep}{4M}$ and $\varphi_l$ is constant on classes of a partition
$\P_l = (P_{1,l}, \dots, P_{d,l})$ of $\tilde{K}_l$ into sets of measure $\sigma(P_{j,l}) = \frac{1}{dM}$.
Define $\varphi : K \to \T$ by $\varphi(\chi) = \varphi_l(\chi)$ if $\chi \in \tilde{K}_l$.
Note that $\|\varphi - e_t\|_{L^1(\sigma)} < \frac{\ep}{4}$.

The partitions $\P_l$ allow us to produce a copy of $\Lambda^d$ containing the function $\varphi$.
Let $H :=
 \left\{ \psi = \sum_{j=1}^d{\sum_{l=1}^M{\omega_{j,l}\ind_{P_{j,l}}}} : \omega_{j,l} \in \Lambda_{k_l} \right\}$.
By construction, every element of $H$ satisfies $f \left( \iota_j(\tilde{K}_l) \right) \subseteq \Lambda_{k_l}$.
Moreover, the map
$\psi = \sum_{j=1}^d{\sum_{l=1}^M{\omega_{j,l}\ind_{P_{j,l}}}}
 \mapsto (\omega_{j,l})_{1 \le j \le d, 1 \le l \le M}$
is an isometric isomorphism of the metric groups $(H, \|\cdot\|_{L^1(\sigma)}) \simeq (\Lambda^d, \rho)$,
where
\begin{align*}
	\rho(\omega, \omega') = \frac{1}{dM} \sum_{j=1}^d{\sum_{l=1}^M{\left| \omega_{j,l} - \omega'_{j,l} \right|}}.
\end{align*}

It can be shown that the set
\begin{align*}
	\tilde{E}_{\gamma}
	 := \left\{ \psi \in H : \left\| e_{a + \gamma} - \psi\right\|_{L^1(\sigma)} < \frac{\ep}{4}
	 ~\text{for some}~a \in E \right\}
\end{align*}

\noindent has cardinality at least $\delta |H|$ for some $\gamma \in \Gamma$
(see the proof of Claim \ref{claim: large shift}).

To prove Proposition \ref{prop: gen trans rec}, it therefore suffices to prove
the following generalization of Lemma \ref{lem: cubes}:

\begin{lemma} \label{lem: gen cubes}
	Let $M \in \N$ and $k_1, \dots, k_M \in \N$, and set $\Lambda = \prod_{l=1}^M{\Lambda_{k_l}}$.
	Given $\delta, \ep > 0$, there is a number $N = N(\delta, \ep, \Lambda)$ such that if $d > N$
	and $A \subseteq \Lambda^d$ has density $\frac{|A|}{|\Lambda^d|} \ge \delta$,
	then for every $x \in \Lambda^d$, there are $a, b \in A$ such that
		\begin{align*}
		\frac{1}{dM} \sum_{j=1}^d{\sum_{l=1}^M{\left| a_{j,l}b_{j,l}^{-1} - x_{j,l} \right|}} < \ep.
	\end{align*}
	
\end{lemma}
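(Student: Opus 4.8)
The plan is to reduce the metric condition to the purely combinatorial statement that $a$ and $b$ can be chosen so that $ab^{-1}$ agrees with $x$ coordinatewise outside a small set of indices, and then to derive that statement from a ``blow-up'' estimate for the Hamming cube. Throughout I would treat $\Lambda = \prod_{l=1}^{M}\Lambda_{k_l}$ as a single fixed finite abelian group, each of whose coordinates lies in $\T$, and I would write $\mathrm{d}_H(y,y')$ for the Hamming distance on $\Lambda^d$: the number of indices $j \in \{1,\dots,d\}$ at which $y_j \ne y_j'$ as elements of $\Lambda$.

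First I would observe that, since $|a_{j,l}b_{j,l}^{-1} - x_{j,l}| \le 2$ for all $j,l$, it suffices to produce $a,b \in A$ for which $J := \{\, j : a_j b_j^{-1} \ne x_j \text{ in } \Lambda\,\}$ satisfies $|J| \le \eta d$, where $\eta := \ep/3$: for such a pair,
\begin{align*}
	\frac{1}{dM}\sum_{j=1}^{d}\sum_{l=1}^{M}\left| a_{j,l}b_{j,l}^{-1} - x_{j,l}\right|
	= \frac{1}{dM}\sum_{j \in J}\sum_{l=1}^{M}\left| a_{j,l}b_{j,l}^{-1} - x_{j,l}\right|
	\le \frac{2|J|}{d} \le 2\eta < \ep.
\end{align*}
Since $a_j b_j^{-1} = x_j$ is the same as $b_j = x_j^{-1}a_j$, this amounts to finding $a, b \in A$ with $\mathrm{d}_H\bigl(b,\ x^{-1}a\bigr) \le \eta d$, where $x^{-1}a$ denotes the coordinatewise product. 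Writing $\mathcal{N}_r(S) := \{\, z \in \Lambda^d : \mathrm{d}_H(z,S) \le r \,\}$ and $x^{-1}A := \{\, x^{-1}a : a \in A\,\}$, what I need is therefore $A \cap \mathcal{N}_{\eta d}(x^{-1}A) \ne \es$.

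The core of the argument is the following blow-up claim: there is $N = N(\delta,\ep)$ such that for $d > N$, every $S \subseteq \Lambda^d$ with $|S| \ge \delta|\Lambda|^d$ satisfies $|\mathcal{N}_{\eta d}(S)| > (1-\delta)|\Lambda|^d$. Granting this and applying it with $S = x^{-1}A$, which is a translate of $A$ and hence has density $\ge \delta$, the set $\mathcal{N}_{\eta d}(x^{-1}A)$ has density exceeding $1-\delta$, so it must meet $A$; any $b$ in the intersection together with a witnessing $a \in A$ is the desired pair. To prove the claim I would apply a standard bounded-differences concentration inequality (McDiarmid's inequality) to $g(z) := \mathrm{d}_H(z,S)$ under the uniform measure on $\Lambda^d$: changing a single coordinate of $z$ alters $g$ by at most $1$, so $\mathbb{P}\bigl(|g - \mathbb{E}g| \ge t\bigr) \le 2\exp(-2t^2/d)$. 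Since $\mathbb{P}(g = 0) = |S|/|\Lambda|^d \ge \delta$, the lower tail with $t = \mathbb{E}g$ forces $\mathbb{E}g \le \sqrt{(d/2)\log(1/\delta)}$, which lies below $\frac{\eta}{2}d$ once $d$ exceeds an explicit threshold depending only on $\delta$ and $\eta$; the upper tail with $t = \frac{\eta}{2}d$ then gives $\mathbb{P}(g > \eta d) \le \exp(-\eta^2 d/2) < \delta$ for $d$ large, which is exactly the claim. (If one prefers to avoid probabilistic language, the same blow-up estimate is an instance of Harper's vertex-isoperimetric inequality on the Hamming graph.)

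The only non-routine step is this blow-up claim; the remainder is the translation-invariance reduction and bookkeeping with $\eta = \ep/3$. It is worth emphasizing that the passage from the metric to exact coordinatewise agreement is where the group structure of $\Lambda^d$ enters, via $b_j = x_j^{-1}a_j$, and that the resulting $N$ in fact depends only on $\delta$ and $\ep$ --- both the bounded-differences bound and the reduction constant being insensitive to $|\Lambda|$ --- so the $\Lambda$-dependence permitted in the statement is merely a safe over-estimate.
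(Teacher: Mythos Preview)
Your argument is correct and follows essentially the same strategy as the paper's: reduce to showing that the $\ep$-neighborhood of any dense subset of $\Lambda^d$ occupies more than a $(1-\delta)$-fraction of the group, and then establish this via McDiarmid-type concentration. The only packaging differences are that you first coarsen to the Hamming metric and apply the bounded-differences inequality to the distance function in two steps (bound the mean, then the upper tail), whereas the paper works directly with the $\ell^1$ metric $\rho$ and invokes a one-shot group-theoretic form of McDiarmid's theorem to bound $|A_\ep|$; your side remark that $N$ is actually independent of $|\Lambda|$ is borne out by the paper's explicit choice $N = \lceil 16\,\ep^{-2}\ln(\delta^{-1})\rceil$.
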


Indeed, assuming Lemma \ref{lem: gen cubes} and using the isomorphism $H \simeq \Lambda^d$,
we get functions $\psi_1, \psi_2 \in \tilde{E}_{\gamma}$
such that $\|\psi_1 \overline{\psi_2} - \varphi\|_{L^1(\sigma)} < \frac{\ep}{4}$,
and the result follows by an application of the triangle inequality.

To prove Lemma \ref{lem: gen cubes}, we use a result of McDiarmid:

\begin{theorem}[\cite{mcdiarmid}, Theorem 7.9(b)] \label{thm: mcdiarmid}
	Let $G$ be a group with translation-invariant metric $\rho$.
	Let $G = G_0 \supseteq G_1 \supseteq \cdots \supseteq G_n = \{e\}$
	be a decreasing sequence of subgroups,
	and let $c_k$ be the diameter of $G_{k-1}/G_k$ for $k = 1, \dots, n$.
	If $A \subseteq G$ has density $\frac{|A|}{|G|} = \alpha \in (0,1)$, then for any $t > 0$
	and $\gamma > 0$,
		\begin{align*}
		\frac{|A_t|}{|G|} \ge 1 - \left( \frac{1}{\alpha} \right)^{\gamma^2}
		 \exp \left( - \frac{2 \left( \frac{\gamma}{1 + \gamma} \right)^2 t^2}{\sum_{j=1}^n{c_k^2}} \right),
	\end{align*}
	
	\noindent where $A_t = \left\{ x \in G : \rho(x,a) < t~\text{for some}~a \in A \right\}$.
\end{theorem}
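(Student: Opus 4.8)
The plan is to deduce this from the martingale method of bounded differences (the Azuma--Hoeffding inequality), applied to the $1$-Lipschitz function $f(x) := \min_{a \in A} \rho(x,a)$, the distance from $x$ to $A$. Let $X$ be a uniformly random element of $G$, so that $\mathbb{P}(f(X) = 0) \ge \mathbb{P}(X \in A) = \alpha$ and $|A_t|/|G| = \mathbb{P}(f(X) < t)$; the goal is an upper bound on $\mathbb{P}(f(X) \ge t)$. Using the chain $G = G_0 \supseteq \cdots \supseteq G_n = \{e\}$, I introduce the filtration $\mathcal{F}_k := \sigma(X G_k)$, which increases from trivial ($\mathcal{F}_0$) to full ($\mathcal{F}_n$), and form the Doob martingale $Z_k := \mathbb{E}[f(X) \mid \mathcal{F}_k]$, so that $Z_0 = \mu := \mathbb{E}[f(X)]$ and $Z_n = f(X)$.

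The key step, which I expect to be the main technical obstacle, is to show that conditioned on $\mathcal{F}_{k-1}$ the increment $Z_k - Z_{k-1}$ ranges over an interval of length at most $c_k$. Conditioning on the coset $W = X G_{k-1}$ makes $X$ uniform on $W$, and $\mathcal{F}_k$ records which $G_k$-subcoset $C \subseteq W$ contains $X$; these subcosets $C_1, \dots, C_m$ (with $m = [G_{k-1} : G_k]$) are equiprobable, on $C_i$ we have $Z_k = g_i := \mathbb{E}[f(X) \mid X \in C_i]$, and $Z_{k-1}$ is their uniform average, so it suffices to bound $\max_i g_i - \min_i g_i$. For two subcosets $C_i = u_i G_k$ and $C_j = u_j G_k$ of $W$ I build a bijection $\phi : C_i \to C_j$ of uniformly small displacement: since the quotient metric on $G_{k-1}/G_k$ satisfies $\bar\rho(u_i G_k, u_j G_k) \le c_k$, there are $h_0, h_0' \in G_k$ with $\rho(u_i h_0, u_j h_0') \le c_k$, and translation invariance lets me set $\phi(u_i h) := u_j h_0' h_0^{-1} h$, so that $\rho(x, \phi(x)) \le c_k$ for every $x \in C_i$. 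As $f$ is $1$-Lipschitz, $|f(x) - f(\phi(x))| \le c_k$, and averaging over $C_i$ gives $|g_i - g_j| \le c_k$; hence the conditional range of $Z_k - Z_{k-1}$ is at most $c_k$, as claimed.

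With the increments confined to intervals of length $c_k$, the Hoeffding form of the Azuma inequality gives, writing $V := \sum_{k=1}^n c_k^2$, the two-sided bounds $\mathbb{P}(f(X) \ge \mu + s) \le \exp(-2s^2/V)$ and $\mathbb{P}(f(X) \le \mu - s) \le \exp(-2s^2/V)$ for all $s \ge 0$. The lower tail controls the otherwise inaccessible mean: since $\{f = 0\} \supseteq A$, taking $s = \mu$ yields $\alpha \le \mathbb{P}(f(X) \le 0) \le \exp(-2\mu^2/V)$, that is, $2\mu^2/V \le \log(1/\alpha)$.

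Finally I combine these estimates. In the nontrivial regime $t \ge \mu$ the upper tail gives $\mathbb{P}(f(X) \ge t) \le \exp(-2(t-\mu)^2/V)$, and after taking logarithms and substituting $2\mu^2/V \le \log(1/\alpha)$, the target bound reduces to the elementary inequality $(t-\mu)^2 + \gamma^2 \mu^2 - \left(\tfrac{\gamma}{1+\gamma}\right)^2 t^2 \ge 0$. Viewed as a quadratic in $t$ with positive leading coefficient $1 - \left(\tfrac{\gamma}{1+\gamma}\right)^2$, its minimum equals $\tfrac{2\gamma^3}{1+2\gamma}\,\mu^2 \ge 0$, so the inequality holds; this yields $\mathbb{P}(f(X) \ge t) \le (1/\alpha)^{\gamma^2} \exp\!\left(-2\left(\tfrac{\gamma}{1+\gamma}\right)^2 t^2 / V\right)$, which is precisely the bound $1 - |A_t|/|G| \le (1/\alpha)^{\gamma^2}\exp(\cdots)$. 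The complementary regime $t < \mu$ is handled trivially, since there the right-hand side exceeds $(1/\alpha)^{\gamma^2} \ge 1$. The only mild points to pin down are the convention for the quotient metric and the side of translation invariance used in constructing $\phi$, both harmless in the intended abelian application where $\rho$ is bi-invariant.
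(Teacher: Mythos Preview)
Your argument is essentially correct and is precisely the martingale/bounded-differences proof that McDiarmid gives; note that the present paper does not prove this theorem at all but merely quotes it from \cite{mcdiarmid}, so there is no in-paper proof to compare against. One small slip: in the regime $t<\mu$ you write that the subtracted term ``exceeds $(1/\alpha)^{\gamma^2}\ge 1$,'' but $\exp\!\bigl(-2(\gamma/(1+\gamma))^2 t^2/V\bigr)<1$, so the product does \emph{not} exceed $(1/\alpha)^{\gamma^2}$. What you actually need (and what your earlier estimate $2\mu^2/V\le\log(1/\alpha)$ gives) is that the product is at least $1$ whenever $t\le(1+\gamma)\mu$, hence in particular for $t<\mu$; then $1-(1/\alpha)^{\gamma^2}\exp(\cdots)\le 0\le |A_t|/|G|$ and the bound is vacuous. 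With that correction the proof goes through.
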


\begin{proof}[Proof of Lemma \ref{lem: gen cubes}]
	Fix $d \in \N$.
	Define a translation-invariant metric $\rho$ on $\Lambda^d$ by
		\begin{align*}
		\rho(x,y) := \frac{1}{dM} \sum_{j=1}^d{\sum_{l=1}^M{\left| x_{j,l} - y_{j,l} \right|}}.
	\end{align*}
	
	\noindent We want to show that, if $d$ is sufficiently large,
	then every set $A \subseteq \Lambda^d$ with density at least $\delta$
	satisfies $A \cap (x \cdot A_{\ep}) \ne \es$ for every $x \in \Lambda^d$.
	Indeed, if $a \in A \cap (x \cdot A_{\ep})$, then $a = xbt$ for some $b \in A$ and $t \in \Lambda^d$
	with $\rho(t,1) < \ep$, so
		\begin{align*}
		\rho(ab^{-1}, x) = \rho(a, xb) = \rho(xbt,xb) = \rho(t,1) < \ep.
	\end{align*}
	
	\noindent Thus, it suffices to show that $|A| + |A_{\ep}| > |\Lambda^d|$ for large $d$.
	
	For $k = 1, \dots, d$, let
	$G_k = \{x = (x_1, \dots, x_d) \in \Lambda^d : x_j = 1~\text{for}~j \le k\} \simeq \Lambda^{d-k}$.
	Then
		\begin{align*}
		c_k := \textrm{diam}_{\rho}(G_{k-1}/G_k)
		 = \frac{1}{dM} \sum_{l=1}^M{\textrm{diam}(\Lambda_{k_l})} \le \frac{2}{d}.
	\end{align*}
	
	\noindent Taking $\gamma = 1$ in Theorem \ref{thm: mcdiarmid}, we have
		\begin{align} \label{eq: A_t bound}
		\frac{|A_t|}{|\Lambda^d|} \ge 1 - \alpha^{-1} \exp(-dt^2/8).
	\end{align}
	
	\noindent for subsets $A \subseteq \Lambda^d$, $\alpha = \frac{|A|}{|\Lambda^d|}$ and $t > 0$.
	
	Let $N = \ceil{16 \ep^{-2} \ln{\left( \delta^{-1} \right)}}$.
	For $d > N$, if $A \subseteq \Lambda^d$ has density at least $\delta$, then by \eqref{eq: A_t bound},
		\begin{align*}
		\frac{|A| + |A_{\ep}|}{|\Lambda^d|} \ge \delta + 1 - \delta^{-1} \exp(-d\ep^2/8)
		 > \delta + 1 - \delta^{-1} \exp(2\ln{\delta}) = 1.
	\end{align*}
	\end{proof}

% You may incorporate your references as follows in your main tex file.
% Using BibTex is not recommended but can be handled.
%%%%%%%%%%%%%%%%%%%%%%%%%%%%%%%%%%%%%%%%%%%%%%%%%%%%
%%%%%%%%%%%%%%%%%%%%%%%%%%%%%%%%%%%%%%%%%%%%%%%%%%%%

\end{document}